\newtheorem{Thm*}{Theorem}[section]
\newtheorem{Thm}{Theorem}[subsection]
\newtheorem{Lem}[Thm]{Lemma}
\newtheorem{Prop}[Thm]{Proposition}
\newtheorem{Prop*}[Thm*]{Proposition}
\newtheorem{Rem}[Thm]{Remark}
\newtheorem{Def*}[Thm*]{Definition}
\newtheorem{Def}[Thm]{Definition}
\newtheorem*{ThmI}{Theorem A}
\newtheorem{Exa}[Thm]{Example}
\newcommand{\C}{\mathbb{C}}           % Use for complex numbers.
\newcommand{\Z}{\mathbb{Z}}
\newcommand{\Q}{\mathbb{Q}}
\newcommand{\Hom}{\mathrm{Hom} \,}
\newcommand{\cl}{\mathrm{cl}}
\newcommand{\fg}{{\mathfrak g}}
\newcommand{\ga}{\alpha}
\newcommand{\gb}{\beta}
\newcommand{\gl}{\lambda}
\newcommand{\gL}{\Lambda}
\newcommand{\gd}{\delta}
\newcommand{\gD}{\Delta}
\newcommand{\gT}{\Theta}
\newcommand{\gt}{\theta}
\newcommand{\gO}{\Omega}
\newcommand{\gG}{\Gamma}
\newcommand{\gS}{\Sigma}
\newcommand{\gee}{\varepsilon}
\newcommand{\ol}{\overline}
\newcommand{\wti}{\widetilde}
\newcommand{\wt}{\mathrm{wt}}
\newcommand{\bk}{\mathbf{k}}
\newcommand{\gmod}[1]{{#1}\mathrm{\mathchar`-gmod}}
\renewcommand{\Hom}[1]{\mathrm{Hom}_{#1}}
\newcommand{\sg}{\mathsf{g}}
\newcommand{\bS}{\mathbb{S}}
\newcommand{\br}{\mathbf{r}}
\newcommand{\cF}{\mathcal{F}}
\newcommand{\cP}{\mathcal{P}}
\newcommand{\cA}{\mathcal{A}}
\newcommand{\cD}{\mathcal{D}}
\newcommand{\cL}{\mathcal{L}}
\newcommand{\scC}{\mathscr{C}}
\newcommand{\scD}{\mathscr{D}}
\newcommand{\scL}{\mathscr{L}}
\newcommand{\tfd}{\textfrak{d}}
\newcommand{\hd}{\mathrm{hd}}
\newcommand{\soc}{\mathrm{soc}}
\newcommand{\nab}{\, \nabla \,}
\newcommand{\Del}{\, \Delta \,}
\newcommand{\sL}{\mathsf{L}}
\newcommand{\Bup}{\mathbf{B}^{\mathrm{up}}}
\newcommand{\Irr}{\mathrm{Irr}}
\newcommand{\wh}[1]{\widehat{#1}}
\newcommand{\bi}{\bm{i}}
\newcommand{\scaleg}{\scalebox{0.6}{$\fg$}}
\newcommand{\up}{\mathrm{up}}
\newcommand{\HF}{\mathrm{HF}}
\newcommand{\HFt}{\mathrm{HF}^{\mathrm{tw}}}
\newcommand{\mrD}{\mathrm{D}}
\newcommand{\mrU}{\mathrm{U}}
\newcommand{\tw}{\mathrm{tw}}
\newcommand{\Vj}{V\!\langle j \rangle}
\newcommand{\aj}{\langle j \rangle}
\newcommand{\FT}{\mathrm{FT}}
\title[Strong duality Data of type $A$ and extended $T$-systems]
{Strong duality Data of type $A$ and \\ extended $T$-systems}
\author[K.~Naoi]{Katsuyuki Naoi}
\address[K.~Naoi]{%
Institute of Engineering \\
Tokyo University of Agriculture and Technology\\
2-24-16 Naka-cho, Koganei-shi, Tokyo 184-8588, JAPAN}
\email{naoik@cc.tuat.ac.jp}
\keywords{quantum affine algebra, $T$-system, duality datum}
\subjclass[2010]{17B37, 81R50,17B10}
\begin{document}

\begin{abstract}
 The extended $T$-systems are a number of relations in the Grothendieck ring of the category of finite-dimensional modules over the quantum affine algebras of types $A_n^{(1)}$
 and $B_n^{(1)}$,
 introduced by Mukhin and Young as a generalization of the $T$-systems.
 In this paper we establish the extended $T$-systems for more general modules, which are constructed from an arbitrary strong duality datum of type $A$.
 Our approach does not use the theory of $q$-characters, and so also provides a new proof to the original Mukhin--Young's extended $T$-systems.
\end{abstract}

\maketitle

\if0
Plan

1. Introduction
2. Preliminaries on simple Lie algebra of type ADE
Notation of simply-laced simple Lie algebra
dual PBW basis
crystal basis
Lusztig data

3. Preliminaries on quantum affine algebras
Notation
invariants
duality datum
reality, primeness, T-systems
simple modules

4. adapted reduced expression

5. twisted adapted reduced expression

\fi

%%%%%%%%%%%%%%%%%%%%%%%%%%%%%%%%%%%%%%%%%%%%%%%%%%%%%%%%%%%%%%%
\section{Introduction}

The $T$-systems are remarkable functional relations appearing in solvable lattice models (see \cite{MR2591898} and references therein).
Let $\scC_\fg$ denote the category of integrable finite-dimensional modules over a quantum affine algebra $U_q'(\fg)$.
It has been proved by Nakajima \cite{MR1993360} and Hernandez \cite{MR2254805,MR2576287} that the $q$-characters (or equivalently, the classes in the Grothendieck ring)
of Kirillov--Reshetikhin (KR) modules in $\scC_\fg$ satisfy
the $T$-systems.
These $T$-system relations of KR modules play an essential role in the recent developments of the theory of monoidal categorifications of cluster algebras 
(\cite{MR2682185,zbMATH06578174,kashiwara2020monoidal,kashiwara2103monoidal}).

\textit{Snake modules} are a relatively large family of simple modules in $\scC_\fg$ of types $A_n^{(1)}$ and $B_n^{(1)}$ introduced in \cite{zbMATH06084095},
which contain all minimal affinizations (\cite{MR1367675}) of these types.
%Let $\fg_0$ denote a simple Lie algebra of type $A$ or $B$ with index set $I$, $\fg$ the corresponding untwisted affine Lie algebra, and $U_q'(\fg)$ the quantum affine algebra.
Via the monomial parametrization of simple modules in $\scC_\fg$ (see \cite{MR1745260}),
each snake module is expressed as $L(\prod_{r} Y_{i_r,aq^{k_r}})$ with a sequence $\big((i_1,k_1),\ldots,(i_p,k_p)\big) \in (I_{\fg_0} \times \Z)^p$ satisfying 
some combinatorial conditions, where $I_{\fg_0}$ denotes the index set of the simple Lie subalgebra $\fg_0$ of $\fg$.

Mukhin and Young introduced in \cite{MR2960028} a number of relations satisfied by the classes of snake modules as generalizations of the $T$-systems.
They call them the \textit{extended $T$-systems}.
Let $L(\prod_{r=1}^p Y_{i_r,aq^{k_r}})$ be a prime snake module (recall that a simple module having no nontrivial tensor factorization is called prime).
The corresponding extended $T$-system is the relation in the Grothendieck ring of the form
\[ \left[L(\prod_{r=1}^{p-1} Y_{i_r,aq^{k_r}})\right]\left[L(\prod_{r=2}^p Y_{i_r,aq^{k_r}})\right]= \left[L(\prod_{r=1}^p Y_{i_r,aq^{k_r}})\right]\left[L(\prod_{r=2}^{p-1}Y_{i_r,aq^{k_r}})\right]
   +\left[M\right] \left[N\right],
\]
where $M$ and $N$ are other snake modules.
In \cite{MR2960028}, these relations were proved using the formula for the $q$-characters of snake modules established in \cite{zbMATH06084095}.

It is a natural problem to find extended $T$-systems (containing, at least, all the minimal affinizations) in other types (see \cite[Appendix A]{MR2960028}). 
Such a family of relations was found for type $G_2$ in \cite{zbMATH06289987} and for type $C_3$ in \cite{zbMATH06417500}, 
but in general types this is still open as far as the author knows.
One essential obstacle is that the $q$-characters of minimal affinizations in other types are more complicated: they are not thin or special 
in the terminology of $q$-characters, while 
the $q$-characters of all the snake modules (in types $A_n^{(1)}$ and $B_n^{(1)}$) have these properties.
To overcome this difficulty, one possible way is to establish an approach to extended $T$-systems not relying on the theory of $q$-characters.

For the $T$-systems, such an approach was developed in \cite{kashiwara2103monoidal} using a \textit{strong duality datum} and \textit{affine cuspidal modules} introduced in \cite{kashiwara2020pbw}.
Let $\sg$ be a simple Lie algebra of type $ADE$.
A strong duality datum associated with $\sg$ is a family of simple 
$U_q'(\fg)$-modules $\cD=\{\sL_i\}_{i \in I_\sg} \subseteq \scC_\fg$, characterized using the invariant $\tfd$ introduced in 
\cite{kashiwara2020monoidal}.
%One distinguished feature is that, given a strong duality datum, one can construct a monoidal exact functor from the module category of a quiver Hecke algebra
%to the category $\scC_\fg$ of finite-dimensional $U_q'(\fg)$-modules, sending simple modules to simple modules.
Given a strong duality datum $\cD$ and a reduced word $\bm{i}=(i_1,\ldots,i_N)$ of the longest element of the Weyl group of $\sg$, 
one can construct the associated affine cuspidal modules $\{S_k^{\cD,\bm{i}}\}_{k \in \Z} \subseteq \scC_\fg$.
When the pair $(\cD, \bm{i})$ is associated with a \textit{$Q$-datum} (\cite{zbMATH07355935}), the affine cuspidal modules $\{S_k^{\cD,\bm{i}}\}_{k\in\Z}$ consist of fundamental modules.
In \cite{kashiwara2103monoidal}, the authors showed for any pair $(\cD,\bm{i})$ that \textit{affine determinantial modules}, each of which is constructed as the head of 
the tensor product of some $S^{\cD,\bm{i}}_k$'s,
satisfy exact sequences corresponding to the $T$-systems.
When $(\cD,\bm{i})$ is associated with a $Q$-datum, affine determinantial modules coincide with KR modules, and  
the classical $T$-systems are recovered.
This result was obtained by applying several properties of strong duality data and affine cuspidal modules, instead of $q$-characters.

In this paper, by applying a similar approach, we generalize the extended $T$-systems of type $A_n^{(1)}$ and $B_n^{(1)}$ 
to more general modules constructed from an arbitrary strong duality datum of type $A$,
which we hope to be a first step toward extended $T$-systems of general types.

In the first part of this paper, we give a sufficient condition for the head of the tensor product of affine cuspidal modules of general type
to satisfy a short exact sequence similar to the $T$-system.
More explicitly, for an arbitrary pair $(\cD,\bm{i})$ of a strong duality datum and a reduced word and the associated affine cuspidal modules $S_k:=S_k^{\cD,\bm{i}}$ ($k \in \Z$),
the following theorem is proved.

\begin{ThmI}[Theorem \ref{Thm:Main_thm_general}]
 Let $\bm{k}=(k_1<\cdots<k_p)$ be an increasing sequence of integers with $p\geq 2$.
 For $1\leq a < b \leq p$, write $\bS_{\bm{k}}[a,b] = \hd(S_{k_a}\otimes S_{k_{a+1}} \otimes \cdots \otimes S_{k_b})$, and set $\bS_{\bm{k}} = \bS_{\bm{k}}[1,p]$.
 Assume that the following two conditions are satisfied:
 \begin{itemize}\setlength{\leftskip}{-15pt} 
  \item[(a)] for any $1\leq a < b \leq p$, we have $\tfd(S_{k_a},\bS_{\bm{k}}[a+1,b]) = 1$, and
  \item[(b)] for any $1\leq a < b \leq p$, we have $\tfd(\bS_{\bm{k}}[a,b-1],S_{k_b}) =1$.
 \end{itemize}
 Then there exists a short exact sequence 
 \begin{equation*}\label{eq:intro}
  0 \to \hd \left(\bigotimes_{a=1}^{p-1} (S_{k_a} \Del S_{k_{a+1}})\right) \to \bS_{\bm{k}}[1,p-1] \otimes \bS_{\bm{k}}[2,p] \to \bS_{\bm{k}} \otimes \bS_{\bm{k}}[2,p-1] \to 0,
 \end{equation*}
 where the tensor product in the second term is ordered from left to right, and $M \Del N$ denotes the socle of $M \otimes N$.
 Moreover, the first and third terms are both simple.
\end{ThmI}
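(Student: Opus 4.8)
The plan is to run an induction on $p$, reducing the statement to a single $\tfd$-computation together with two identifications of simple modules, all carried out within the $R$-matrix calculus for $\tfd$, $\nab$ and $\Del$ rather than through $q$-characters, in the spirit of \cite{kashiwara2103monoidal}. The base case $p=2$ is immediate: there $\hd(S_{k_1}\Del S_{k_2})=S_{k_1}\Del S_{k_2}$, the middle term is $S_{k_1}\otimes S_{k_2}$, and (reading $\bS_{\bm{k}}[2,1]$ as the trivial module) the third term is $\bS_{\bm{k}}[1,2]=S_{k_1}\nab S_{k_2}$, so the claim is exactly the length-two exact sequence $0\to M\Del N\to M\otimes N\to M\nab N\to 0$ for a pair of real simple modules $M,N$ with $\tfd(M,N)=1$, applied to $S_{k_1},S_{k_2}$, the equality $\tfd(S_{k_1},S_{k_2})=1$ being hypothesis (a) (equivalently (b)) for $(a,b)=(1,2)$.

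For the inductive step I would first set up the bookkeeping. Using that the $S_k$ are real simple, that ordered tensor products of affine cuspidal modules have simple head (\cite{kashiwara2020pbw}), and the identities $\bS_{\bm{k}}[a,b]=\hd(S_{k_a}\otimes\bS_{\bm{k}}[a+1,b])=\hd(\bS_{\bm{k}}[a,b-1]\otimes S_{k_b})$, I would check --- propagating the constraints (a), (b) through the (sub)additivity of the invariant $\Lambda$ and the standard properties of $\tfd$ --- that every $\bS_{\bm{k}}[a,b]$ is real, that $\hd\bigl(\bigotimes_{a=1}^{p-1}(S_{k_a}\Del S_{k_{a+1}})\bigr)$ is simple, and, crucially, the \emph{overlap} relations describing $\tfd\bigl(\bS_{\bm{k}}[a,b],\bS_{\bm{k}}[c,d]\bigr)$ in terms of how the intervals $[a,b]$ and $[c,d]$ meet. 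From these the key equality
\[
 \tfd\bigl(\bS_{\bm{k}}[1,p-1],\,\bS_{\bm{k}}[2,p]\bigr)=1
\]
should follow: the upper bound by writing $\bS_{\bm{k}}[1,p-1]=\hd(S_{k_1}\otimes\bS_{\bm{k}}[2,p-1])$ and combining subadditivity with $\tfd(S_{k_1},\bS_{\bm{k}}[2,p])=1$ (hypothesis (a)), the lower bound by the reverse estimate applied to $\bS_{\bm{k}}[2,p]=\hd(\bS_{\bm{k}}[2,p-1]\otimes S_{k_p})$ together with $\tfd(\bS_{\bm{k}}[1,p-1],S_{k_p})=1$ (hypothesis (b)). Once this is known, the length-two exact sequence for $\tfd=1$ gives
\[
 0\to \bS_{\bm{k}}[1,p-1]\Del\bS_{\bm{k}}[2,p]\to \bS_{\bm{k}}[1,p-1]\otimes\bS_{\bm{k}}[2,p]\to \bS_{\bm{k}}[1,p-1]\nab\bS_{\bm{k}}[2,p]\to 0
\]
with both ends simple, and it remains to match these ends with the modules in Theorem~\ref{Thm:Main_thm_general}.

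For the head, an iterated-head computation using the overlap relations should show $\bS_{\bm{k}}[1,p-1]\nab\bS_{\bm{k}}[2,p]\cong\bS_{\bm{k}}[1,p]\otimes\bS_{\bm{k}}[2,p-1]=\bS_{\bm{k}}\otimes\bS_{\bm{k}}[2,p-1]$, which is then automatically simple, giving the third term. For the socle, I would construct a nonzero homomorphism $\bigotimes_{a=1}^{p-1}(S_{k_a}\Del S_{k_{a+1}})\to\bS_{\bm{k}}[1,p-1]\otimes\bS_{\bm{k}}[2,p]$: realizing the target as a head of the full string $S_{k_1}\otimes\cdots\otimes S_{k_{p-1}}\otimes S_{k_2}\otimes\cdots\otimes S_{k_p}$, one composes the canonical inclusions $S_{k_a}\Del S_{k_{a+1}}\hookrightarrow S_{k_a}\otimes S_{k_{a+1}}$ with suitable normalized $R$-matrices, and then verifies that this map composed with the projection onto the head vanishes. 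Its image then lies in the radical, which (the middle module having length two) is the simple socle $\bS_{\bm{k}}[1,p-1]\Del\bS_{\bm{k}}[2,p]$; being nonzero, the image equals this socle, so the socle is a quotient of $\bigotimes_{a}(S_{k_a}\Del S_{k_{a+1}})$ and hence, by the simplicity of its head recorded above, equals $\hd\bigl(\bigotimes_{a=1}^{p-1}(S_{k_a}\Del S_{k_{a+1}})\bigr)$ --- completing the induction.

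The main obstacle is twofold. Establishing $\tfd(\bS_{\bm{k}}[1,p-1],\bS_{\bm{k}}[2,p])=1$ and the auxiliary overlap relations among the interval modules is where hypotheses (a) and (b) are genuinely used, and it demands a careful analysis of how $\Lambda$ and $\tfd$ behave under the head operation --- very possibly an induction nested inside the one on $p$. Even more delicate is the socle identification: exhibiting the homomorphism $\bigotimes_{a}(S_{k_a}\Del S_{k_{a+1}})\to\bS_{\bm{k}}[1,p-1]\otimes\bS_{\bm{k}}[2,p]$ and, above all, proving that its image falls inside the radical is exactly the point at which the $q$-character computation of Mukhin--Young must be replaced by $R$-matrix arguments, and it is the technical heart of the proof; passing instead through a decategorified extended $T$-system proved independently does not circumvent it, since recognizing a simple module from its class in the Grothendieck ring still requires producing such a morphism.
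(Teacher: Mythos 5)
Your skeleton---induct on $p$, show the middle term $\bS_{\bm{k}}[1,p-1]\otimes\bS_{\bm{k}}[2,p]$ has length two, and identify its head and socle---is the same as the paper's, and your base case and head identification are essentially right. But two steps, which are precisely where the work lies, are not established. First, your claim that the \emph{lower} bound $\tfd(\bS_{\bm{k}}[1,p-1],\bS_{\bm{k}}[2,p])\geq 1$ follows from a ``reverse estimate'' is unjustified: Proposition \ref{Prop:weakly_decreasing} only bounds $\tfd$ of a head from \emph{above}, and the equality statements available (Lemma \ref{Lem:invariance_by_head}, or additivity via Lemma \ref{Lem:two_normal_sequences}) require hypotheses such as $\tfd\big(\scD\,\bS_{\bm{k}}[2,p-1],\bS_{\bm{k}}[1,p-1]\big)=0$ that are not verifiable here (they involve terms like $\tfd(S_{k_a},\scD S_{k_a})=1$ for root modules); indeed Claim 2 of the paper's Lemma \ref{Lem:for_Main2} shows that a closely analogous additivity genuinely \emph{fails}. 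The paper never proves $\tfd=1$ directly: it proves only $\tfd\leq 1$ (Lemma \ref{Lem:for_Main1}(i)), computes both $\bS_{\bm{k}}[1,p-1]\nab\bS_{\bm{k}}[2,p]$ and $\bS_{\bm{k}}[2,p]\nab\bS_{\bm{k}}[1,p-1]$, and shows these are non-isomorphic by evaluating $\tfd(\scD S_{k_1},-)$ on each; this excludes $\tfd=0$ and then Proposition \ref{Prop:length2} applies. You need some such indirect argument.

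Second, the socle identification $\bS_{\bm{k}}[2,p]\nab\bS_{\bm{k}}[1,p-1]\cong\hd\big(\bigotimes_{a}(S_{k_a}\Del S_{k_{a+1}})\big)$ is left entirely as a plan: you neither construct the nonzero map from $\bigotimes_a(S_{k_a}\Del S_{k_{a+1}})$ into the middle term (the naive composition of the inclusions $S_{k_a}\Del S_{k_{a+1}}\hookrightarrow S_{k_a}\otimes S_{k_{a+1}}$ lands in the wrong tensor product and must be rerouted through R-matrices into a \emph{head}), nor give any mechanism for showing its image avoids the head. The paper argues in the opposite direction: by induction it builds a surjection $\bS_{\bm{k}}[2,p-1]\otimes S_{k_p}\otimes\bS_{\bm{k}}[1,p-1]\twoheadrightarrow\mathbb{H}$, which requires Claim 1 (an application of Lemma \ref{Lem:technical} whose hypothesis is checked by a $\gL$-computation using assumption (b)) together with normal-sequence arguments, and then kills the subobject $(S_{k_p}\nab\bS_{\bm{k}}[2,p-1])\otimes\bS_{\bm{k}}[1,p-1]$ by another $\tfd(\scD S_{k_1},-)$ evaluation (using Lemma \ref{Lem:expression_of_socle}), so that the surjection factors through $\bS_{\bm{k}}[2,p]\otimes\bS_{\bm{k}}[1,p-1]$. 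Since you yourself flag this as the technical heart and supply no argument for it, the proposal does not constitute a proof.
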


We also give sufficient conditions for $\bS_{\bm{k}}$ to be prime or real (Propositions \ref{Prop:primeness} and \ref{Prop:reality}).

In the latter half of this paper, we focus on cuspidal modules $S_k^{\cD,\bm{i}}$ such that $\cD$ is associated with $\mathfrak{sl}_{n+1}$
and $\bm{i}$ belongs to either of two special families: reduced words \textit{adapted to a height function} and
\textit{adapted to a twisted height function} (see Subsection \ref{Subsection:quivers} for the definitions).
Let $\bm{i}^{\text{hf}}$ (resp.\ $\bm{i}^\tw$) be a reduced word adapted to a height (resp.\ twisted height) function (these notations are for this introduction only).
For an arbitrary strong duality datum $\cD$ associated with $\mathfrak{sl}_{n+1}$ and $\bm{i} \in \{\bm{i}^{\mathrm{hf}}, \bm{i}^\tw\}$,
we define an associated \textit{snake module} by the head of the tensor product of $S_k^{\cD,\bm{i}}$'s satisfying some conditions.
When $(\cD,\bm{i}^{\mathrm{hf}})$ (resp.\ $(\cD,\bm{i}^{\tw})$) is associated with a $Q$-datum corresponding to $U_q'(\fg)$ of type $A_n^{(1)}$ (resp.\ $B_{n_0}^{(1)}$ with $n=2n_0-1$),
these modules coincide with the Mukhin--Young's snake modules (recall that in both cases corresponding $Q$-data are of type $A_n$, see \cite{zbMATH07355935}).
We show that these snake modules satisfy short exact sequences of the form in Theorem A, and moreover give more concrete description to the first terms 
(Theorems \ref{Thm:MainA} and \ref{Thm:Main_twisted}).
When associated with a $Q$-datum, these recover the Mukhin--Young's extended $T$-systems.
We also show that snake modules are real, and give a necessary and sufficient condition for them to be prime.

We explain our strategy for the proof of the short exact sequences.
Given a strong duality datum $\cD$, through the \textit{quantum affine Schur--Weyl duality functor} $\cF_\cD$ (\cite{kang2018symmetric}), 
we can define a crystal structure on a subset of the isomorphism classes of simple modules in $\scC_\fg$,
which is isomorphic to the crystal base of $U_q^-(\sg)$.
Letting $\gee_i, \gee_i^*$ ($i \in I_\sg$) be the functions on this crystal,
the conditions (a) and (b) in Theorem A can be rephrased in terms of the values of these functions $\gee_i, \gee_i^*$ (see Proposition \ref{Prop:Kashiwara_Park} (c)) at
certain elements expressed in Lusztig's parametrization (\cite{lusztig1990canonical}) with respect to $\bm{i}$.
Reineke introduced in \cite{zbMATH01031515} a useful algorithm to calculate the values of $\gee_i$ and $\gee_i^*$ at an element expressed in Lusztig's parametrization with respect to
a reduced word of some type.
We can apply this algorithm to the word $\bm{i}^{\mathrm{hf}}$, and show that snake modules associated with $\bm{i}^{\mathrm{hf}}$ satisfy the conditions (a) and (b)
of Theorem A.
Hence the existence of the short exact sequences in the case $\bm{i}=\bm{i}^{\mathrm{hf}}$ is shown.

The Reineke's algorithm cannot be applied in the case $\bm{i} = \bm{i}^{\mathrm{tw}}$.
Instead, we give a detailed description of the connection between Lusztig's parametrizations with respect to $\bm{i}^{\mathrm{hf}}$ and $\bm{i}^\tw$.
A similar work was previously done in \cite[Section 12]{hernandez2019quantum}, but our result is more involved:
we give a transition formula for all elements corresponding to snake modules (Proposition \ref{Prop:change_of_words}).
Using this formula and the results of the previous case $\bm{i} = \bm{i}^{\mathrm{hf}}$, we can show the existence of the short exact sequences in the case $\bm{i}=\bm{i}^\tw$ as well.

Note that in \cite{MR2960028}, the extended $T$-systems are given in terms of the relations in the Grothendieck ring,
and therefore there are two possibilities of short exact sequences.
As another advantage of our approach, we can determine which one is correct.

Theorem A holds for a strong duality datum of a general type, not only of type $A$, and we hope that it will help us to study extended $T$-systems of other types.
One difficulty is that, not in type $A$, the Reineke's algorithm cannot be applied in full generality for any reduced word.
We hope to return this problem in the future.

This paper is organized as follows. 
In Section \ref{Section:simple_Lie_algebras}, we recall basic notions concerning a simple Lie algebra of type $ADE$, 
such as the upper global and dual PBW bases, and crystals.
In Section \ref{Section:QAA}, we recall the basic notions and several properties on quantum affine algebras, the invariants $\gL$ and $\tfd$, and affine cuspidal modules.
In Section \ref{Section:Reality,etc}, we give sufficient conditions for the head of the tensor product of affine cuspidal modules to be prime or real, and prove Theorem A.
In Section \ref{Section:Snake_modules} we give the definition of snake modules associated with $\cD$ of type $A$ and $\bm{i}^{\mathrm{hf}}$ or $\bm{i}^\tw$,
and several related notions.
In Section \ref{Section:Untwisted}, we show that snake modules associated with $\bm{i}^{\mathrm{hf}}$ satisfy the extended $T$-systems, 
and at the same time we discuss their reality and primeness.
In Section \ref{Section:twisted}, we show analogous assertions for snake modules associated with $\bm{i}^\tw$.

%%%%%%%%%%%%%%%%%%%%%%%%%%%%%%%%%%%%%%%%%%%%%%%%%%%%%%%%%%%%%%%

\section{Preliminaries on simple Lie algebras of type $ADE$}\label{Section:simple_Lie_algebras}
\noindent\textbf{Conventions.}\ 
\begin{itemize}\setlength{\leftskip}{-15pt}
 \item[(i)] For a base field $\bk$, we write $\otimes$ for $\otimes_{\bk}$ when no confusion is likely.
 \item[(ii)] For $a,b \in \Z$ such that $a\leq b$, we denote by $[a,b]$ the set $\{k \in \Z \mid a \leq k \leq b\}$. 
  We set $[a,b]=\emptyset$ if $a>b$.
\end{itemize}

\subsection{Basic notation}

Let $\sg$ be a complex simple Lie algebra of type $ADE$, with an index set $I$ and a Cartan matrix $A=(a_{ij})_{i,j \in  I}$.
Let $\ga_i$ ($i\in I$) be the simple roots, $R$ the root system, $R^+$ the set of positive roots, $P$ the weight lattice, $W$ the Weyl group with simple reflections $\{s_i \mid i \in  I \}$, and $w_0 \in W$ the longest element.
Denote by $\ell(w)$ for $w \in W$ the length of $w$, and set  $N =\ell(w_0)$.
For $i \in I$, define $i^* \in I$ by $w_0(\ga_i) = -\ga_{i^*}$.
Let 
\[ R(w_0) = \{\bm{i}= (i_1,\cdots,i_N) \in I^{N} \mid s_{i_1}\cdots s_{i_N} = w_0\}
\]
denote the set of reduced words of $w_0$.
For two words $\bm{i},\bi'$, we say $\bi$ and $\bi'$ are \textit{commutation equivalent} if $\bi'$ is obtained from $\bi$ by applying a sequence of operations which 
transform some adjacent components $(i,j)$ such that $a_{ij}=0$ into $(j,i)$.
An equivalence class for this relation is called a \textit{commutation class}.
%For $\bi \in R(w_0)$, denote by $[\bi]$ its commutation class.

%%%%%%%%%%%%%%%%%%%%%%%%%%%%%%%%%%%%%%%%%%%%%%%%%%%%%%%%%%%%%%%
\subsection{Dual PBW bases and crystals}\label{Subsection:dual_PBW}

Let $\bk$ be a base field containing $\Q(q)$,
and $U_q(\sg)$ the quantized enveloping algebra associated with $\sg$ over $\bk$ with 
generators $\{e_i,f_i,q^{h_i}\mid i \in I\}$.
%Denote by $\ga_i^{\sg}$ ($i \in  J $) the simple roots,
%, by $\gD$ the root system, and by $\gD_+$ the set of positive roots.
%By abuse of notation, here we use the symbols $e_i,f_i,q^{h_i}$ ($i \in  J $) for the Chevalley generators of $U_q(\sg)$, not for $U_q'(\fg)$.
Let $U_q^-(\sg)$ be the $\bk$-subalgebra of $U_q(\sg)$ generated by $f_i$ ($i \in  I$),
and denote by $\mathbf{B}^{\mathrm{up}}$ the \textit{upper global basis} (or \textit{dual canonical basis}) of $U_q^-(\sg)$ (see \cite{MR1115118}).
Let $\cA_0 \subseteq \Q(q)$ be the subring of rational functions that are regular at $q=0$, and set $\mathscr{L} \subseteq U_q^-(\sg)$ to be the $\cA_0$-span of $\Bup$.
%Denote by $\scB(\infty) \subseteq \scL/q\scL$ the set of the images of $\Bup$ under the canonical projection $\scL \twoheadrightarrow \scL/q\scL$.
%and for $b \in \scB(\infty)$ denote by $G^{\mathrm{up}}(b) \in \Bup$ its inverse image.

We briefly recall dual PBW bases of $U_q^-(\sg)$. % (for more details, see \cite{zbMATH06047791}).
Let $T_i = T_{i,+}''$ ($i \in  I$) denote the algebra automorphism of $U_q(\sg)$ given in \cite[Chapter 37]{MR1227098},
and take a reduced word $\bm{i}=(i_1,\ldots,i_N) \in R(w_0)$.
For each $1\leq k \leq N$, set $\gb_k = s_{i_1}\cdots s_{i_{k-1}} (\ga_{i_k}) \in R^+$,
\begin{equation}\label{eq:dual_PBW}
 F^{\bm{i}}_{\mathrm{low}}(\gb_k) = T_{i_1}\cdots T_{i_{k-1}}(f_{i_k}), \ \ \ \text{and} \ \ \ F^{\bm{i}}(\gb_k) = \frac{F_{\mathrm{low}}^{\bm{i}}(\gb_k)}{\big(F_{\mathrm{low}}^{\bm{i}}(\gb_k),
 F^{\bm{i}}_{\mathrm{low}}(\gb_k)\big)},
\end{equation}
where $(\ , \ )$ is the bilinear form on $U_q^-(\sg)$ given in \cite[Section 3.4]{MR1115118}.
$F^{\bm{i}}(\gb_k)$ is called a \textit{dual root vector}.
We have $F^{\bm{i}}(\gb_k) \in \Bup$ for any $\bm{i}$ and $k$.
For $\bm{c}=(c_1,\ldots,c_N) \in \Z_{\geq 0}^N$, set
\[ F^{\bm{i}}(\bm{c}) = q^{\frac{1}{2}\sum_k c_k(c_k-1)}F^{\bm{i}}(\gb_1)^{c_1}\cdots F^{\bm{i}}(\gb_k)^{c_k}.
\]
Then $\{F^{\bm{i}}(\bm{c})\mid \bm{c} \in \Z_{\geq 0}^N\}$ forms a basis of $U_q^-(\sg)$,
and we call this the \textit{dual PBW basis} associated with $\bm{i}$.
For each $\bm{c} \in \Z_{\geq 0}^N$, there is a unique element $B^{\bm{i}}(\bm{c}) \in \Bup$ such that 
\[ F^{\bm{i}}(\bm{c}) \equiv B^{\bm{i}}(\bm{c}) \ \ \ \text{mod $q\scL$},
\]
see \cite[Theorem 4.29]{zbMATH06047791}.

Let $*$ be the $\bk$-algebra anti-involution on $U_q^-(\sg)$ defined by $*f_i = f_i$ ($i\in  I$).
This $*$ preserves $\Bup$ (\cite{MR1240605}, \cite[Lemma 3.5]{zbMATH06047791}).

\begin{Lem}[{\cite[Subsection 2.11]{lusztig1990canonical}}]\label{Lem:duality_of_B}
 For $\bm{i} = (i_1,\ldots,i_N) \in R(w_0)$ and $\bm{c} = (c_1,\ldots,c_N) \in \Z_{\geq 0}^N$, set 
 \[ \bm{i}^\vee = (i_N^*,i_{N-1}^*,\ldots,i_1^*) \in R(w_0) \ \ \text{and} \ \ \bm{c}^\vee=(c_N,c_{N-1},\ldots,c_1).
 \]
 Then we have $*F^{\bm{i}}(\bm{c}) = F^{\bm{i}^\vee}(\bm{c}^\vee)$ and $*B^{\bm{i}}(\bm{c}) = B^{\bm{i}^\vee}(\bm{c}^\vee)$.
\end{Lem}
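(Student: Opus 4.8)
The plan is to first deduce the statement about the upper global basis $\Bup$ from the statement about the dual PBW basis, and then to reduce the latter, by a purely formal manipulation, to the single case $\bm{c}=\bm{e}_k$ (a standard basis vector) — i.e.\ to a statement about the dual root vectors $F^{\bm{i}}(\gb_k)$. That last case is the only place where genuine quantum-group computation enters, and it is essentially classical.

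Granting the identity $*F^{\bm{i}}(\bm{c}) = F^{\bm{i}^\vee}(\bm{c}^\vee)$, the identity $*B^{\bm{i}}(\bm{c}) = B^{\bm{i}^\vee}(\bm{c}^\vee)$ is formal. Indeed, $*$ is $\cA_0$-linear (it fixes $\bk\supseteq\Q(q)$) and permutes $\Bup$, so it preserves the $\cA_0$-lattice $\scL$, and hence also $q\scL$. Therefore $*B^{\bm{i}}(\bm{c})\in\Bup$, and reducing modulo $q\scL$ gives $*B^{\bm{i}}(\bm{c}) \equiv *F^{\bm{i}}(\bm{c}) = F^{\bm{i}^\vee}(\bm{c}^\vee) \equiv B^{\bm{i}^\vee}(\bm{c}^\vee)$. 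By the uniqueness of the element of $\Bup$ congruent to $F^{\bm{i}^\vee}(\bm{c}^\vee)$ modulo $q\scL$, we conclude $*B^{\bm{i}}(\bm{c}) = B^{\bm{i}^\vee}(\bm{c}^\vee)$.

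For $*F^{\bm{i}}(\bm{c}) = F^{\bm{i}^\vee}(\bm{c}^\vee)$ I would first record how the induced orderings of $R^+$ match up. From $w_0\ga_i=-\ga_{i^*}$ (equivalently $w_0 s_i w_0 = s_{i^*}$) and $s_{i_1}\cdots s_{i_N}=w_0$ one gets $s_{i_1}\cdots s_{i_{k-1}} = s_{i_N^*}\cdots s_{i_k^*}\,w_0$, and applying this to $\ga_{i_k}$ (using $w_0\ga_{i_k}=-\ga_{i_k^*}$) yields $\gb_k = s_{i_1}\cdots s_{i_{k-1}}(\ga_{i_k}) = s_{i_N^*}\cdots s_{i_{k+1}^*}(\ga_{i_k^*})$, which is exactly the $(N+1-k)$-th root attached to $\bm{i}^\vee$ (in particular $\bm{i}^\vee\in R(w_0)$). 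Now apply $*$ to $F^{\bm{i}}(\bm{c}) = q^{\frac{1}{2}\sum_k c_k(c_k-1)}F^{\bm{i}}(\gb_1)^{c_1}\cdots F^{\bm{i}}(\gb_N)^{c_N}$: being an algebra anti-involution fixing $\bk$, it reverses the order of the factors and fixes the scalar, so
\[
 *F^{\bm{i}}(\bm{c}) = q^{\frac{1}{2}\sum_k c_k(c_k-1)}\,\bigl(*F^{\bm{i}}(\gb_N)\bigr)^{c_N}\cdots\bigl(*F^{\bm{i}}(\gb_1)\bigr)^{c_1}.
\]
Since $\sum_k c_k(c_k-1)$ is unchanged under $\bm{c}\mapsto\bm{c}^\vee$, and the positive roots match up as just described, comparing this with the definition of $F^{\bm{i}^\vee}(\bm{c}^\vee)$ shows that the identity reduces to the case $\bm{c}=\bm{e}_k$, namely
\[
 *F^{\bm{i}}(\gb_k) = F^{\bm{i}^\vee}\bigl(\gb^{\bm{i}^\vee}_{N+1-k}\bigr), \qquad 1\le k\le N.
\]

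It remains to prove this last identity. Both sides are dual root vectors, hence lie in $\Bup$ (the left-hand side because $F^{\bm{i}}(\gb_k)\in\Bup$ and $*$ preserves $\Bup$); moreover $*$ preserves weight spaces of $U_q^-(\sg)$, so by the root computation above both sides have weight $-\gb_k$. Since two proportional elements of a basis must be equal, it therefore suffices to show that $*\,T_{i_1}\cdots T_{i_{k-1}}(f_{i_k})$ is a nonzero scalar multiple of $T_{i_N^*}\cdots T_{i_{k+1}^*}(f_{i_k^*})$. This proportionality is the heart of the matter and is exactly the content of \cite[Subsection 2.11]{lusztig1990canonical}: it follows from the standard interplay between the anti-involution $*$ and Lusztig's braid operators $T_i=T''_{i,+}$ of \cite{MR1227098}. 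I would either quote that result directly, or prove it by induction on $k$ — the case $k=1$ being immediate, since $*F^{\bm{i}}(\gb_1)$ and $F^{\bm{i}^\vee}(\gb^{\bm{i}^\vee}_N)$ are both the unique element of $\Bup$ in the one-dimensional weight space of weight $-\ga_{i_1}$, and the inductive step using the conjugation formula relating $*$ with the $T_i$ together with the braid relations that make $T_w$ well defined. I expect this braid-operator identity to be the only real obstacle; everything else is bookkeeping.
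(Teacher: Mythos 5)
The paper offers no proof of this lemma at all---it is quoted verbatim from \cite[Subsection 2.11]{lusztig1990canonical}---so your write-up is in effect a correct unfolding of that citation rather than a divergence from the paper. Your reductions are all sound: the passage from the $F$-statement to the $B$-statement via the $*$-stability of $\Bup$ and hence of $\scL$ and $q\scL$, the Weyl-group bookkeeping showing $\gb_k=\gb^{\bm{i}^\vee}_{N+1-k}$, and the reduction to the single dual-root-vector identity (where membership in $\Bup$ upgrades proportionality to equality); the one genuinely nontrivial input you isolate, the compatibility of $*$ with the braid operators $T_i$, is precisely what the cited subsection of Lusztig supplies.
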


\begin{Prop}\label{Prop:relations_of_FB}
 For any $\bm{i} \in R(w_0)$ and $\bm{c} \in \Z_{\geq 0}^N$, we have 
 \[ F^{\bm{i}}(\bm{c}) \in B^{\bm{i}}(\bm{c}) + \sum_{\bm{c}' \prec \bm{c}} q\Z[q] B^{\bm{i}}(\bm{c}'),
 \]
 where $\prec$ is the bi-lexicographic order on $\Z_{\geq 0}^N$, namely, $(a_1,\ldots,a_N)\prec (b_1,\ldots,b_N)$ 
 if and only if there are $1\leq k\leq l \leq N$ satisfying $a_k<b_k$, $a_l<b_l$, and $a_j=b_j$ for 
 all $j$ such that $j <k$ or $l<j$.
% Let $\bm{i} \in R(w_0)$ and $\bm{c} \in \Z_{\geq 0}^N$.
%  \begin{itemize}\setlength{\leftskip}{-15pt}
%   \item[{\normalfont(i)}] There is a unique element $B^{\bm{i}}(\bm{c}) \in \Bup$ such that 
%    \[ F^{\bm{i}}(\bm{c}) \equiv B^{\bm{i}}(\bm{c}) \ \ \ \text{mod $q\scL$}.
%    \]
%   \item[{\normalfont(ii)}] We have 
%    \[ F^{\bm{i}}(\bm{c}) \in B^{\bm{i}}(\bm{c}) + \sum_{\bm{c}' \prec \bm{c}} q\Z[q] B^{\bm{i}}(\bm{c}'),
%    \]
%    where $\prec$ is the bi-lexicographic order on $\Z_{\geq 0}^N$, namely, $(a_1,\ldots,a_N)\prec (b_1,\ldots,b_N)$ if and only if there are $1\leq k\leq l \leq N$ satisfying $a_k<b_k$, $a_l<b_l$, and $a_j=b_j$ for 
%    all $j$ such that $j <k$ or $l<j$.
%  \end{itemize}
\end{Prop}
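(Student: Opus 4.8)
The plan is to deduce the statement, via the bilinear form $(\ ,\ )$, from the classical triangularity between the lower PBW basis and the lower global basis of $U_q^-(\sg)$. Let $E^{\bm{i}}(\bm{c})$ denote the lower PBW monomial attached to $\bm{i}$ and $\bm{c}$ --- the suitably ordered product of divided powers of the lower root vectors $F^{\bm{i}}_{\mathrm{low}}(\gb_k)$ --- and let $\{B^{\bm{i}}_{\mathrm{low}}(\bm{c})\}_{\bm{c}}$ be the lower global basis, labelled by $\bm{c}$ through the PBW parametrization for $\bm{i}$. Since $(\ ,\ )$ annihilates pairs of elements of distinct weights, root vectors attached to distinct $\gb_k$ are orthogonal; combining this with the normalization in \eqref{eq:dual_PBW}, the factor $q^{\frac{1}{2}\sum_k c_k(c_k-1)}$, and the standard form of the coproduct of a power of a root vector, one obtains the orthogonality relations
\[ (E^{\bm{i}}(\bm{c}),\, F^{\bm{i}}(\bm{d})) = \gd_{\bm{c},\bm{d}} \qquad\text{and}\qquad (B^{\bm{i}}_{\mathrm{low}}(\bm{c}),\, B^{\bm{i}}(\bm{d})) = \gd_{\bm{c},\bm{d}}, \]
so that $\{F^{\bm{i}}(\bm{c})\}_{\bm{c}}$ and $\Bup$ are the $(\ ,\ )$-dual bases of $\{E^{\bm{i}}(\bm{c})\}_{\bm{c}}$ and of the lower global basis, with compatible labellings.

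Next I would invoke Lusztig's triangularity on the lower side: for a fixed weight only finitely many $\bm{c}$ occur, and the transition matrix between $\{B^{\bm{i}}_{\mathrm{low}}(\bm{c})\}$ and $\{E^{\bm{i}}(\bm{c})\}$ is unitriangular for the bi-lexicographic order $\prec$, with off-diagonal entries in $q\Z[q]$ (in the appropriate convention for the $T_i$). Unitriangularity, $\prec$-triangularity, and polynomiality of the off-diagonal entries are preserved under inversion and transposition, so feeding this through the two orthogonality relations --- which transport the lower-side statement to the dual side --- yields exactly $F^{\bm{i}}(\bm{c}) = B^{\bm{i}}(\bm{c}) + \sum_{\bm{c}'\prec\bm{c}} m_{\bm{c},\bm{c}'}B^{\bm{i}}(\bm{c}')$ with all $m_{\bm{c},\bm{c}'}\in q\Z[q]$ and the coefficient of $B^{\bm{i}}(\bm{c})$ equal to $1$.

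The one genuinely substantial input is the bi-lexicographic precision of the lower-side triangularity, as opposed to triangularity for a coarser partial order; this is the step I expect to be the main obstacle, and in type $ADE$ it is available from Lusztig's study of canonical bases (cf.\ \cite{MR1227098}). If one prefers to bypass it, I would argue directly on the dual side by induction on the weight: writing $F^{\bm{i}}(\bm{c})$ (up to a power of $q$) as $F^{\bm{i}}(\gb_j)\,F^{\bm{i}}(\bm{c}-\bm{e}_j)$ for the least $j$ with $c_j>0$ and using the induction hypothesis on the second factor, the required expansion follows from the Levendorskii--Soibelman straightening relations for the vectors $T_{i_1}\cdots T_{i_{k-1}}(f_{i_k})$ --- a disordered product $F^{\bm{i}}_{\mathrm{low}}(\gb_l)F^{\bm{i}}_{\mathrm{low}}(\gb_k)$ with $k<l$ equals a scalar multiple of $F^{\bm{i}}_{\mathrm{low}}(\gb_k)F^{\bm{i}}_{\mathrm{low}}(\gb_l)$ plus a $\Z[q,q^{-1}]$-combination of ordered monomials supported strictly between $k$ and $l$ --- because each such rewriting strictly decreases the exponent vector in both the left-to-right and the right-to-left lexicographic orders, hence in $\prec$. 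In either approach the $*$-symmetry of Lemma \ref{Lem:duality_of_B}, under which $(\bm{i},\bm{c})\mapsto(\bm{i}^\vee,\bm{c}^\vee)$ and which preserves $\prec$, gives a consistency check that the assertion is stable under this involution, as it must be.
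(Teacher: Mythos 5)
Your overall strategy is workable in outline, but it stalls at exactly the point you flag as ``the main obstacle'', and that point is not a citable fact but the actual content of the proposition. The statements available in the literature (Lusztig's on the lower side, and on the dual side \cite[Theorem 4.29]{zbMATH06047791}, which is what the paper invokes) give unitriangularity of the PBW-to-global-basis transition, with off-diagonal coefficients in $q\Z[q]$, with respect to a \emph{single} lexicographic order, not the bi-lexicographic order $\prec$. Your first route therefore assumes what is to be proved (merely transported to the lower side by your duality argument), and your fallback via Levendorskii--Soibelman straightening only controls rewritings \emph{inside} the PBW basis; it does not by itself convert a $\prec$-triangular PBW identity into a $\prec$-triangular expansion in $\Bup$ with entries in $q\Z[q]$ --- for that you still need the global-basis unitriangularity, and the order with respect to which it holds is precisely the issue.

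The missing idea is that the $*$-involution is not a consistency check but the upgrading mechanism. By \cite[Theorem 4.29]{zbMATH06047791} one has $F^{\bm{i}}(\bm{c}) \in B^{\bm{i}}(\bm{c}) + \sum_{\bm{c}' <_{\mathrm{lex}} \bm{c}} q\Z[q]\, B^{\bm{i}}(\bm{c}')$ for the left-to-right lexicographic order, and this holds for \emph{every} reduced word. Applying it to $\bm{i}^\vee$ and $\bm{c}^\vee$ and then applying $*$, Lemma \ref{Lem:duality_of_B} converts it into $F^{\bm{i}}(\bm{c}) \in B^{\bm{i}}(\bm{c}) + \sum q\Z[q]\, B^{\bm{i}}(\bm{d}^\vee)$ with $\bm{d} <_{\mathrm{lex}} \bm{c}^\vee$, i.e.\ triangularity for the right-to-left lexicographic order on the same word $\bm{i}$. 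Since $\Bup$ is a basis, the two expansions coincide, so every $\bm{c}'$ occurring with nonzero coefficient is smaller than $\bm{c}$ in both orders, which is exactly $\bm{c}' \prec \bm{c}$. This short argument replaces both of your routes; the detour through $(\ ,\ )$, the lower PBW basis and the lower global basis is unnecessary.
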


\begin{proof}
 Except for the triangularity with respect to the bi-lexicographic order, the assertion follows from \cite[Theorem 4.29]{zbMATH06047791}.
 The triangularity is proved by applying $*$ and using Lemma \ref{Lem:duality_of_B}.
% The assertions except the triangularity of (ii) with respect to the bi-lexicographic order follow from \cite[Theorem 4.29]{zbMATH06047791},
% and the triangularity is proved from \cite[Theorem 3.1]{mcnamara2015finite} using the categorification of $U_q^-(\sg)$ via a quiver Hecke algebra
% (see Subsection \ref{Subsection:strong_duality_datum} of the present paper).
\end{proof}

%We define a bijection $\Psi_{\bm{i}}\colon \Bup \to \Z_{\geq 0}^N$ by $\Psi_{\bm{i}}(B^{\bm{i}}(\bm{c})) = \bm{c}$.
%The relation of $\Psi_{\bm{i}}$ and $\Psi_{\bm{i}'}$ for different $\bm{i},\bm{i}' \in R(w_0)$ is described as follows.

\begin{Prop}[{\cite[Subsection 2.3]{lusztig1990canonical}}]\label{Prop:2-move_3-move}
 Let $\bm{i},\bm{i}' \in R(w_0)$, and $\bm{c}, \bm{c}' \in \Z_{\geq 0}^N$.
% $b \in \Bup$, and assume that $\Psi_{\bm{i}}(b)=(c_1,\ldots,c_N)$ and $\Psi_{\bm{i}'}(b)=(c_1',\ldots,c_N')$.
 \begin{itemize}\setlength{\leftskip}{-15pt}
 \item[{\normalfont(i)}] Assume for some $1\leq k <N$ that $a_{i_ki_{k+1}}=0$, $i'_k = i_{k+1}$, $i'_{k+1} = i_k$, and $i_l = i'_l$ for all $l \neq k,k+1$.
  Then $B^{\bm{i}}(\bm{c}) = B^{\bm{i}'}(\bm{c}')$ holds if and only if 
 \[ c_k'=c_{k+1}, \ \ \ c_{k+1}' = c_k, \ \ \text{and} \ \ c_l'=c_l \ \ \text{for all } l \neq k,k+1.
 \]
 \item[{\normalfont(ii)}] Let $i,j \in I$ be such that $a_{ij}=-1$, and assume for some $1<k<N$ that $(i_{k-1},i_k,i_{k+1}) = (i,j,i)$, $(i_{k-1}',i_k',i_{k+1}')=(j,i,j)$, and
 $i_l=i_l'$ for all $l \notin \{k,k\pm 1\}$. 
 Then $B^{\bm{i}}(\bm{c}) = B^{\bm{i}'}(\bm{c}')$ holds if and only if $c_l'=c_l$ for all $l \notin \{k,k\pm 1\}$, and 
 \begin{equation}\label{eq:3-move}
  c_{k-1}'= c_k+c_{k+1}-c_0, \ \ c_k' = c_0, \ \ c_{k+1}' = c_{k-1}+c_{k}-c_0,
 \end{equation}
 where we set $c_0 = \min(c_{k-1},c_{k+1})$.
 \end{itemize}
\end{Prop}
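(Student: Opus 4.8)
The plan is to reduce both statements to the corresponding (and already-known) statements about the \emph{lower} dual PBW bases $F^{\bm{i}}_{\mathrm{low}}$ together with the leading-term characterization of $B^{\bm{i}}(\bm{c})$ modulo $q\scL$. Since $B^{\bm{i}}(\bm{c})$ is, by definition, the unique element of $\Bup$ congruent to $F^{\bm{i}}(\bm{c})$ modulo $q\scL$, and $\Bup$ does not depend on the choice of reduced word, the equality $B^{\bm{i}}(\bm{c}) = B^{\bm{i}'}(\bm{c}')$ holds if and only if $F^{\bm{i}}(\bm{c})$ and $F^{\bm{i}'}(\bm{c}')$ are congruent modulo $q\scL$. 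So in both (i) and (ii) it suffices to determine, for the two reduced words $\bm{i}, \bm{i}'$ differing by a single braid/commutation move, exactly which pairs $(\bm{c},\bm{c}')$ give $F^{\bm{i}}(\bm{c}) \equiv F^{\bm{i}'}(\bm{c}') \bmod q\scL$.

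For part (i) (the commutation move), the point is that when $a_{i_k i_{k+1}} = 0$ the automorphisms $T_{i_k}$ and $T_{i_{k+1}}$ commute and the dual root vectors $F^{\bm{i}}(\gb_k), F^{\bm{i}}(\gb_{k+1})$ $q$-commute in the simplest possible way; concretely the monomials $F^{\bm{i}}(\bm{c})$ and $F^{\bm{i}'}(\bm{c}')$ are literally equal (up to the normalizing power of $q$, which matches) precisely when $\bm{c}'$ is obtained from $\bm{c}$ by swapping the $k$-th and $(k+1)$-st entries. First I would recall from \cite[Chapter 37, 42]{MR1227098} (or directly from the $q$-commutation relation between the two root vectors) that $\{\gb_k,\gb_{k+1}\}$ is unchanged as a set by the move, that $F^{\bm{i}}(\gb_k) = F^{\bm{i}'}(\gb_{k+1})$ and $F^{\bm{i}}(\gb_{k+1}) = F^{\bm{i}'}(\gb_k)$, and that all other $\gb_l$ and dual root vectors are literally unchanged. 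Then $F^{\bm{i}}(\bm{c}) = F^{\bm{i}'}(\bm{c}')$ as elements of $U_q^-(\sg)$ exactly under the stated condition on $\bm{c},\bm{c}'$, which forces the same equality for the global basis elements; conversely, since the $F^{\bm{i}'}(\bm{d}')$ form a basis, equality of global basis elements forces $F^{\bm{i}}(\bm{c}) \equiv F^{\bm{i}'}(\bm{c}')$, hence (by the triangularity in Proposition \ref{Prop:relations_of_FB} with respect to $\bm{c}'$) $\bm{c}'$ is as claimed.

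For part (ii) (the braid move), the combinatorics is genuinely the content of Lusztig's computation in \cite[\S2.3]{lusztig1990canonical}: one is comparing, inside the rank-two subalgebra generated by $f_i, f_j$ with $a_{ij} = -1$, the two dual PBW monomials built from the orderings $(\ga_i, \ga_i+\ga_j, \ga_j)$ versus $(\ga_j, \ga_i+\ga_j, \ga_i)$ of the positive roots of $A_2$. The plan is: first localize to this rank-two situation by applying the (commuting) automorphisms $T_{i_1}\cdots T_{i_{k-2}}$, which identifies the three relevant dual root vectors for $\bm{i}$ (resp.\ $\bm{i}'$) with the images under this common operator of the $A_2$ dual root vectors; second, recall (or rederive via the $q$-Serre relation) the straightening identity in $U_q^-(A_2)$ expressing $F(\ga_i)^{a} F(\ga_i+\ga_j)^{b} F(\ga_j)^{c}$ in the other PBW basis, and read off that the leading term mod $q\scL$ is governed by the $\min$-formula \eqref{eq:3-move}; third, transport this back and combine with the triangularity of Proposition \ref{Prop:relations_of_FB} exactly as in part (i) to pin down $\bm{c}'$ uniquely. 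The main obstacle is the second step: the explicit rank-two straightening and the verification that, after the $q$-power normalizations in \eqref{eq:dual_PBW} and in the definition of $F^{\bm{i}}(\bm{c})$, the surviving coefficient at $q = 0$ is $1$ precisely for the exponent vector given by \eqref{eq:3-move} and all other terms lie in $q\Z[q]\Bup$. This is a finite but delicate $A_2$ calculation; once it is in hand, everything else is bookkeeping with commuting braid operators and the already-established triangularity, and we may simply cite \cite[\S2.3]{lusztig1990canonical} for it.
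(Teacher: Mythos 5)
The paper does not actually prove this proposition: it is stated as a pure citation to \cite[Subsection 2.3]{lusztig1990canonical}, and your part (ii) ultimately bottoms out in the same citation, so there is no divergence of approach to speak of. Your reduction is sound: $B^{\bm{i}}(\bm{c})=B^{\bm{i}'}(\bm{c}')$ iff $F^{\bm{i}}(\bm{c})\equiv F^{\bm{i}'}(\bm{c}')$ mod $q\scL$; for a commutation move the two dual root vectors genuinely commute (being the images under the algebra automorphism $T_{i_1}\cdots T_{i_{k-1}}$ of the commuting generators $f_{i_k},f_{i_{k+1}}$) and the normalizing $q$-power is symmetric in $\bm{c}$, so the monomials coincide literally; injectivity of $\bm{c}\mapsto B^{\bm{i}}(\bm{c})$ via the unitriangularity of Proposition \ref{Prop:relations_of_FB} then pins down $\bm{c}'$. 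The one point you gloss over is that Lusztig's Subsection 2.3 is carried out for the lower PBW monomials and the canonical basis, whereas the proposition concerns the dual PBW basis and the upper global basis; one needs the (standard, but not free) compatibility of the two Lusztig parametrizations, which follows from the duality of the two bases with respect to Kashiwara's bilinear form, or can be taken from the dual-setting treatment in \cite{zbMATH06047791} already used in the paper. If instead you carry out the rank-two straightening directly in the dual PBW basis, as you propose as an alternative, this translation issue disappears.
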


By identifying $\Bup$ with its image under the projection $\scL \to \scL/q\scL$, we define the canonical (abstract) crystal structure on $\Bup$  (see \cite{MR1240605}).
Let
\[ \wt\colon \Bup \to P, \ \ \ \gee_i, \varphi_i\colon \Bup \to \Z \ \ \text{and} \ \ \tilde{e}_i,\tilde{f}_i\colon \Bup \to \Bup \sqcup \{0\} \ \ \text{for} \ 
   i\in  I 
\]
be the maps giving this crystal structure.
%Let $*$ be the $\bk$-algebra anti-involution on $U_q^-(\sg)$ defined by $*f_i = f_i$ ($i\in  I$).
%This $*$ preserves $\Bup$ (\cite{MR1240605}, \cite[Lemma 3.5]{zbMATH06047791}).
%
%
%\begin{Lem}[{\cite[Subsection 2.11]{lusztig1990canonical}}]\label{Lem:duality_of_B}
% For $\bm{i} = (i_1,\ldots,i_N) \in R(w_0)$ and $\bm{c} = (c_1,\ldots,c_N) \in \Z_{\geq 0}^N$, set 
% \[ \bm{i}^\vee = (i_N^*,i_{N-1}^*,\ldots,i_1^*) \in R(w_0) \ \ \text{and} \ \ \bm{c}^\vee=(c_N,c_{N-1},\ldots,c_1).
% \]
% Then we have $*B^{\bm{i}}(\bm{c}) = B^{\bm{i}^\vee}(\bm{c}^\vee)$.
%\end{Lem}
%
%
We also define maps $\gee^*_i$, $\varphi^*_i$, $\tilde{e}_i^*$, $\tilde{f}_i^*$ ($i \in  I $) on $\Bup$ by
\[ \gee^*_i=\gee_i \circ *, \ \ \ \varphi_i^* = \varphi_i \circ *, \ \ \ \tilde{e}_i^* = * \circ \tilde{e}_i \circ *, \ \ \ \tilde{f}_i^* = * \circ \tilde{f}_i \circ *.
\]
For $\bm{i}=(i_1,\ldots,i_N) \in R(w_0)$ and $\bm{c}=(c_1,\ldots,c_N) \in \Z_{\geq 0}^N$, it follows from \cite{MR1227098} and Lemma \ref{Lem:duality_of_B} that
\begin{equation}\label{eq:simple_case}
 \gee_{i_1} \big(B^{\bm{i}}(\bm{c})\big) = c_1 \ \ \ \text{and} \ \ \ \gee_{i_N^*}^*\big(B^{\bm{i}}(\bm{c})\big) = c_N.
\end{equation}

%%%%%%%%%%%%%%%%%%%%%%%%%%%%%%%%%%%%%%%%%%%%%%%%%%%%%%%%%%%%%

\section{Preliminaries on quantum affine algebras}\label{Section:QAA}

\subsection{Basic notation}

Let $\fg$ be an affine Kac--Moody Lie algebra with index set $I_\fg$ and simple roots $\{\ga_i^{\scalebox{0.6}{$\fg$}} \mid i \in I_\fg\}$.
Denote by $0 \in I_\fg$ the special element prescribed in \cite[Section 4]{MR1104219}, 
except $A_{2n}^{(2)}$-type in which we set $\ga_0^{\scaleg}$ to be the longest simple root.
Let $I^0_\fg=I_\fg \setminus \{0\}$,
$P^\fg$ be the weight lattice of $\fg$, and $P^\fg_{\cl} = P^\fg / (P^\fg \cap \Q \gd)$, where $\gd$ is the indivisible imaginary positive root.

Now we fix the base field $\bk$ to be the algebraic closure of $\C(q)$ in $\bigcup_{m >0} \C (\! (q^{1/m})\!)$, 
and denote by $U_q'(\fg)$ the quantized enveloping algebra over $\bk$ associated with $\fg$
with generators $\{e_i,f_i,q^h \mid i \in I_\fg, h \in P^{\fg,\vee}_{\cl}:=\Hom{\Z} (P^\fg_{\cl},\Z) \}$
(here, by abuse of notation, we use the same symbols with the generators of $U_q(\sg)$).
We call $U_q'(\fg)$ a \textit{quantum affine algebra} in the sequel.
Let $\gD \colon U_q'(\fg) \to U_q'(\fg) \otimes U_q'(\fg)$ denote the coproduct (we follow the convention in \cite[Section 7]{MR1890649}).

A $U_q'(\fg)$-module $M$ is said to be \textit{integrable} if $M = \bigoplus_{\gl \in P_\cl^\fg} M_\gl$ with $M_\gl = \{ v \in M\mid q^h v=q^{\langle h,\gl\rangle}v \ (h \in P_\cl^{\fg,\vee})\}$,
and $e_i, f_i$ ($i \in I_\fg$) act nilpotently on $M$.
We denote by $\scC_{\fg}$ the category of integrable finite-dimensional $U_q'(\fg)$-modules.
Let $\bm{1} \in \scC_\fg$ denote the trivial module.
For $M,N \in \scC_{\fg}$, the tensor product $M \otimes N$ is also an object of $\scC_{\fg}$ via the coproduct $\gD$,
and this gives a monoidal category structure on $\scC_\fg$ with unit object $\bm{1}$.
Moreover, the monoidal category $\scC_\fg$ is rigid,
namely, every object $M \in \scC_\fg$ has its right dual $\scD M$ and left dual $\scD^{-1} M$.
There are isomorphisms
\begin{align*}
 \mathrm{Hom}_{\scC_\fg}(M \otimes X,Y) &\cong \mathrm{Hom}_{\scC_\fg}(X,\scD M\otimes Y), \ \ \ \text{and}\\
 \mathrm{Hom}_{\scC_\fg}(X \otimes M, Y) &\cong \mathrm{Hom}_{\scC_\fg}(X,Y\otimes \scD^{-1}M),
\end{align*}
which are functorial in $X,Y \in \scC_\fg$.

For simple modules $M$ and $N$ in $\scC_\fg$, we say that $M$ and $N$ \textit{commute} if $M\otimes N \cong N \otimes M$,
and \textit{strongly commute} if $M\otimes N$ is simple.
Note that, if $M$ and $N$ strongly commute then they commute, since the Grothendieck ring of $\scC_\fg$ is commutative \cite{MR1745260}. 
We say $M$ is \textit{real} if $M$ strongly commutes with itself.

\begin{Prop}[\cite{zbMATH06424572}]\label{Prop:simple_heads}\
 \begin{itemize}\setlength{\leftskip}{-15pt}
  \item[{\normalfont(i)}] Let $M_j$ $(j=1,2,3)$ be a module in $\scC_\fg$, and assume that $M_2$ is simple.
   If $f\colon L \to M_2 \otimes M_3$ and $g\colon M_1 \otimes M_2 \to L'$ are nonzero homomorphisms, then the composition
   \[ M_1 \otimes L \stackrel{M_1 \otimes f}{\longrightarrow} M_1 \otimes M_2 \otimes M_3 \stackrel{g \otimes M_3}{\longrightarrow} L' \otimes M_3
   \]
   does not vanish.
  \item[{\normalfont(ii)}] 
   Let $M$ and $N$ be simple modules in $\scC_\fg$, and assume that one of them is real.
   Then both $M \otimes N$ and $N \otimes M$ have simple socles and simple heads.
 \end{itemize}
\end{Prop}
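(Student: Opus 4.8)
The plan is to follow \cite{zbMATH06424572}: part (i) is purely categorical and follows from the rigidity of $\scC_\fg$ alone, and part (ii) is deduced from (i) together with the theory of renormalized R-matrices, the reality hypothesis entering only at the very last step.

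\emph{Proof of (i).} First I would reduce to the case where $f$ is injective and $g$ is surjective: replacing $L$ by $\Img f$ and $L'$ by $\Img g$ only pre- or post-composes the morphism in question with an injection or a surjection, which does not affect whether it vanishes. So assume $L\subseteq M_2\otimes M_3$ and $g\colon M_1\otimes M_2\twoheadrightarrow L'$, and suppose for contradiction that $(g\otimes M_3)\circ(M_1\otimes f)=0$. As $M_1\otimes f$ is now the inclusion $M_1\otimes L\hookrightarrow M_1\otimes M_2\otimes M_3$ and $\Ker(g\otimes M_3)=\Ker(g)\otimes M_3$, this says precisely $M_1\otimes L\subseteq\Ker(g)\otimes M_3$. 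Using the right dual $\scD M_3$ and the evaluation $\mathrm{ev}\colon M_3\otimes\scD M_3\to\bm{1}$, form the morphism of $U_q'(\fg)$-modules
\[
 L\otimes\scD M_3\ \hookrightarrow\ M_2\otimes M_3\otimes\scD M_3\ \xrightarrow{\ \id_{M_2}\otimes\,\mathrm{ev}\ }\ M_2 ,
\]
and let $V\subseteq M_2$ be its image; being the image of a module homomorphism, $V$ is a submodule, and it is nonzero because $L\neq 0$ (for $0\neq\ell=\sum_i u_i\otimes v_i\in L$ with the $v_i$ linearly independent in $M_3$ and some $u_j\neq 0$, pairing $\scD M_3\cong M_3^{*}$ dually against $v_j$ gives $u_j\in V$). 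Hence $V=M_2$ by simplicity of $M_2$. On the other hand, applying $\id_{M_1\otimes M_2}\otimes\xi$ to $M_1\otimes L\subseteq\Ker(g)\otimes M_3$ for each $\xi\in M_3^{*}$ gives $M_1\otimes\big((\id_{M_2}\otimes\xi)(L)\big)\subseteq\Ker(g)$, and summing over $\xi$ yields $M_1\otimes V\subseteq\Ker(g)$. Thus $M_1\otimes M_2=\Ker(g)$, i.e.\ $g=0$, contradicting the hypothesis.

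\emph{Proof of (ii).} Passing to right or left duals sends real modules to real modules, reverses tensor products, and exchanges heads with socles, so it suffices to prove, with $N$ real, that both $M\otimes N$ and $N\otimes M$ have simple heads. The main tool is the renormalized R-matrix $\mathbf{r}_{M,N}\colon M\otimes N\to N\otimes M$ coming from the normalized R-matrix on the affinization of $M$; its relevant features are that it is nonzero and functorial, that it is compatible with tensor products in each variable up to nonzero scalars, and --- since $N\otimes N$ is simple, so $\End_{\scC_\fg}(N\otimes N)=\bk$ --- that $\mathbf{r}_{N,N}$ is a nonzero scalar multiple of $\id_{N\otimes N}$. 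Put $S:=\Img(\mathbf{r}_{M,N})$, which is at once a quotient of $M\otimes N$ and a submodule of $N\otimes M$. I would then show: (1) every nonzero homomorphism from $M\otimes N$ to a simple module annihilates $\Ker(\mathbf{r}_{M,N})$, so that $\hd(M\otimes N)=\hd(S)$ --- for this one tensors such a homomorphism on the right by $N$, uses the tensor-compatibility of $\mathbf{r}$ and the invertibility of $\mathbf{r}_{N,N}$ to rewrite it, and invokes part (i) to descend back to $M\otimes N$; and (2) $S$ is simple --- if $0\neq S'\subsetneq S$ one has surjections $M\otimes N\twoheadrightarrow S\twoheadrightarrow S/S'$ together with the inclusions $S'\hookrightarrow S\hookrightarrow N\otimes M$, and feeding these and $\mathbf{r}_{N,M}$ (using reality of $N$ to control the composite $\mathbf{r}_{N,M}\circ\mathbf{r}_{M,N}$) into part (i) produces a non-vanishing composition incompatible with $S'$ being proper. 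The same reasoning applied to $\mathbf{r}_{N,M}$ gives that $N\otimes M$ has simple head, and the two socle assertions follow by applying what has been proved to $\scD(M\otimes N)$ and $\scD(N\otimes M)$.

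I expect step (2) --- simplicity of $\Img(\mathbf{r}_{M,N})$ --- to be the main obstacle: it is the point at which the abstract non-vanishing supplied by (i) must be upgraded to the absence of a proper nonzero submodule, and it is exactly here that reality of $N$ (or of $M$) has to be used in an essential way, since for two simple modules neither of which is real the image of the R-matrix need not be simple.
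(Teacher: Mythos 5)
Your part (i) is a complete and correct proof, and it is essentially the argument of \cite[Lemma 3.1]{zbMATH06424572} (which is all the present paper cites for this statement): reduce to $f$ injective and $g$ surjective, show that the ``left coefficient space'' $V\subseteq M_2$ of $\Img f$ is a nonzero submodule and hence all of $M_2$, and deduce that vanishing of the composition would force $M_1\otimes M_2=M_1\otimes V\subseteq\Ker g$. The only point worth making explicit is that the maps $\id_{M_1\otimes M_2}\otimes\xi$ are merely linear, not $U_q'(\fg)$-linear, but since the containment being propagated is one of vector subspaces this is harmless.

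Part (ii), by contrast, is a plan rather than a proof, and the two steps you isolate are exactly where the content of the theorem sits; neither is carried out. In step (1), the claim that any nonzero $\phi\colon M\otimes N\to L$ with $L$ simple annihilates $\Ker(\br_{M,N})$ rests on an identity of the shape $c\cdot\br_{L,N}\circ(\phi\otimes N)=(N\otimes\phi)\circ(\br_{M,N}\otimes N)\circ(M\otimes\br_{N,N})$. Functoriality of the \emph{normalized} R-matrices gives such an identity over $\bk(z)$, but the two sides are renormalized by different denominators ($d_{L,N}$ versus $d_{M,N}d_{N,N}$), so after specializing at $z=1$ the constant $c$ is a limit of the ratio $d_{M,N}(z)d_{N,N}(z)/d_{L,N}(z)$, which could a priori vanish or fail to exist at $z=1$; ruling this out is a substantive part of the argument in \cite{zbMATH06424572} that ``tensor-compatibility of $\br$'' does not supply for free, and you also do not say which instance of part (i) then ``descends back'' to $M\otimes N$. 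Step (2), the simplicity of $S=\Img(\br_{M,N})$, you concede is open, and the hint does not close it: when $\tfd(M,N)>0$ the composite $\br_{N,M}\circ\br_{M,N}$ is the \emph{zero} endomorphism of $M\otimes N$ (it is $(d_{M,N}d_{N,M})(1)\,\id$ up to the usual identifications), so ``controlling'' it yields no non-vanishing composition, and it is not specified which $M_1,M_2,M_3$, $f$ and $g$ in part (i) are supposed to contradict the existence of $0\neq S'\subsetneq S$. Since simplicity of $\Img(\br_{M,N})$ is precisely the assertion of the theorem, part (ii) must be regarded as unproved as it stands.
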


For $M \in \scC_\fg$, we denote by $\hd(M)$ (resp.\ $\soc(M)$) the head (resp.\ socle) of $M$.
For $M, N \in \scC_\fg$, we also use the notation $M\, \nabla\, N$ (resp.\ $M\, \Delta\, N$) to denote $\hd(M \otimes N)$ (resp.\ $\soc(M\otimes N)$).

\begin{Prop}[{\cite[Corollary 3.14]{zbMATH06424572}}]\label{Prop:bijective_of_head}
 Let $M,N$ be simple modules in $\scC_\fg$, and assume that $M$ is real.
 Then we have 
 \[ \scD^{-1}M \nab (N \nab M) \cong N \ \ \ \text{and} \ \ \ (M \nab N) \nab \scD M \cong N.
 \]
\end{Prop}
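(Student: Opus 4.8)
The plan is to deduce both isomorphisms formally from the rigidity adjunctions together with Proposition~\ref{Prop:simple_heads}(ii). The single substantial ingredient that one has to import is the pair of ``symmetry'' identities $\hd(A\otimes M)\cong\soc(M\otimes A)$ and $\hd(M\otimes A)\cong\soc(A\otimes M)$, valid whenever $M$ is real simple and $A$ is simple; these are among the results of \cite{zbMATH06424572}.

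First I would record two elementary points. (1) If $M$ is real, then so are $\scD M$ and $\scD^{-1}M$, because $\scD^{\pm1}$ are category equivalences and $\scD^{\pm1}(X\otimes Y)\cong\scD^{\pm1}Y\otimes\scD^{\pm1}X$ (which follows by iterating the stated rigidity isomorphisms), so that $\scD^{\pm1}M\otimes\scD^{\pm1}M\cong\scD^{\pm1}(M\otimes M)$ is simple. (2) If $A$ is a module with simple head and $N$ is simple, then $\mathrm{Hom}_{\scC_\fg}(A,N)\neq0$ if and only if $\hd(A)\cong N$: a module with simple head has a unique maximal submodule, so any nonzero map $A\to N$ is the projection $A\twoheadrightarrow\hd(A)$ followed by an isomorphism. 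Dually, if $A$ has simple socle and $N$ is simple, then $\mathrm{Hom}_{\scC_\fg}(N,A)\neq0$ if and only if $N\cong\soc(A)$.

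Now for the first isomorphism: put $L:=N\nab M=\hd(N\otimes M)$, which is simple by Proposition~\ref{Prop:simple_heads}(ii). Substituting $\scD^{-1}M$ for $M$ in the adjunction $\mathrm{Hom}_{\scC_\fg}(M\otimes X,Y)\cong\mathrm{Hom}_{\scC_\fg}(X,\scD M\otimes Y)$ and using $\scD(\scD^{-1}M)=M$ gives a natural isomorphism $\mathrm{Hom}_{\scC_\fg}(\scD^{-1}M\otimes L,N)\cong\mathrm{Hom}_{\scC_\fg}(L,M\otimes N)$. Since $\scD^{-1}M$ is real by (1), the module $\scD^{-1}M\otimes L$ has simple head $\scD^{-1}M\nab L$; by the first part of (2) the left-hand space is nonzero precisely when $\scD^{-1}M\nab L\cong N$. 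Since $M$ is real, $\soc(M\otimes N)$ is simple, so by the second part of (2) the right-hand space is nonzero precisely when the simple module $L=\hd(N\otimes M)$ is isomorphic to $\soc(M\otimes N)$, which is true by the symmetry identity. Comparing, $\scD^{-1}M\nab(N\nab M)\cong N$. The second isomorphism is obtained by the mirror argument: with $L':=M\nab N=\hd(M\otimes N)$ simple, substituting $\scD M$ for $M$ in $\mathrm{Hom}_{\scC_\fg}(X\otimes M,Y)\cong\mathrm{Hom}_{\scC_\fg}(X,Y\otimes\scD^{-1}M)$ yields $\mathrm{Hom}_{\scC_\fg}(L'\otimes\scD M,N)\cong\mathrm{Hom}_{\scC_\fg}(L',N\otimes M)$, and the identity $\hd(M\otimes N)\cong\soc(N\otimes M)$ then gives $(M\nab N)\nab\scD M\cong N$.

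I expect the main obstacle to be exactly the symmetry identity $\hd(A\otimes M)\cong\soc(M\otimes A)$ (equivalently, that the image of the renormalized $R$-matrix $\mathbf{r}_{A,M}\colon A\otimes M\to M\otimes A$ is simple when $M$ is real): this nonzero map realizes a common subquotient of $A\otimes M$ and of $M\otimes A$, forcing its head to be $\hd(A\otimes M)$ and its socle to be $\soc(M\otimes A)$, and the whole point is that reality of $M$ makes this subquotient irreducible. Everything else above is purely formal manipulation with the rigidity adjunctions and Proposition~\ref{Prop:simple_heads}, so in the write-up I would either cite the symmetry identity directly from \cite{zbMATH06424572} or insert this short $R$-matrix argument for it.
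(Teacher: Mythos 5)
Your argument is correct. Note that the paper does not prove this proposition itself but imports it from \cite[Corollary 3.14]{zbMATH06424572}, and your derivation is essentially the standard proof given there: the rigidity adjunctions $\mathrm{Hom}(\scD^{-1}M\otimes L,N)\cong\mathrm{Hom}(L,M\otimes N)$ and $\mathrm{Hom}(L'\otimes\scD M,N)\cong\mathrm{Hom}(L',N\otimes M)$, combined with Proposition \ref{Prop:simple_heads}(ii) and the Hom-characterizations of simple heads and socles. The one nontrivial input you flag, $\hd(N\otimes M)\cong\soc(M\otimes N)$ for $M$ real, is exactly (\ref{eq:head_socle}) of the paper (with the roles of $M$ and $N$ swapped), so nothing is missing.
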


%%%%%%%%%%%%%%%%%%%%%%%%%%%%%%%%%%%%%%%%%%%%%%%%%%%%%%%%%%%%%%%%%%%

\subsection{R-matrices and invariants}\label{Subsection:R-matrices}

%In \cite{kashiwara2020monoidal}, the authors introduced some invariants on pairs of modules in $\scC_\fg$, and these invariants play essential roles in this paper.
In this subsection we briefly recall the definitions and properties of some invariants on pairs of modules in $\scC_\fg$, which were introduced in \cite{kashiwara2020monoidal}.
For more details, see \cite{kashiwara2020monoidal,kashiwara2020pbw,kashiwara2103monoidal}.

For any simple module $M$ in $\scC_\fg$,
there is unique $\gl \in P^\fg_\cl$ such that $M_\gl \neq 0$ and $M_\mu =0$ unless $\mu \in \gl - \sum_{i \in I_\fg\setminus \{0\}}\Z_{\geq 0} \cl(\ga^{\scaleg}_i)$,
where $\cl\colon P^\fg \to P^\fg_{\cl}$ is the canonical projection.
We call a nonzero vector $u \in M_\gl$ an \textit{$\ell$-highest weight vector}, which is unique up to a scalar multiplication.

For $M \in \scC_\fg$ and an indeterminate $z$, denote by $M_z$ the $U_q'(\fg)$-module $\bk[z^{\pm 1}] \otimes M $ defined by
\[ e_i\big(g(z) \otimes u\big) = z^{\gd_{i0}}g(z)\otimes e_iu, \ \ \ f_i\big(g(z) \otimes u\big) = z^{-\gd_{i0}} g(z) \otimes f_iu, \ \ \ q^h\big(g(z) \otimes u\big)=g(z) \otimes q^hu
\]
for $i \in I_\fg$, $h \in P^{\fg,\vee}_\cl$, $g(z) \in \bk[z^{\pm 1}]$ and $u \in M$. 
We write $u_z = 1 \otimes u \in M_z$.
%For $x \in \bk^\times$, set $M_x= M_z/(z-x)M_z \in \scC_\fg$.

For simple modules $M,N$ in $\scC_\fg$ with $\ell$-highest weight vectors $u \in M$ and $v \in N$,
there exists a unique $\bk(z) \otimes U_q'(\fg)$-module isomorphism 
\[ R^{\mathrm{norm}}_{M,N_z}\colon \bk(z) \otimes_{\bk[z^{\pm 1}]} (M \otimes N_z) \stackrel{\sim}{\to} \bk(z) \otimes_{\bk[z^{\pm 1}]} (N_z \otimes M)
\]
satisfying $R^{\mathrm{norm}}_{M,N_z}(u\otimes v_z) = v_z \otimes u$ (\cite{MR1890649}).
$R^{\mathrm{norm}}_{M,N_z}$ is called the \textit{normalized R-matrix} of $M$ and $N$.
Let $d_{M,N}(z) \in \bk[z]$ be the monic polynomial of the smallest degree satisfying
\[ d_{M,N}(z)R_{M,N_z}^{\mathrm{norm}}(M\otimes N_z) \subseteq N_z \otimes M.
\]
The polynomial $d_{M,N}(z)$ is called the \textit{denominator} of $R_{M,N_z}^{\mathrm{norm}}$.
We denote by $\mathbf{r}_{M,N}$ the specialization at $z=1$:
\[ \mathbf{r}_{M,N} = \big(d_{M,N}(z)R_{M,N_z}^{\mathrm{norm}}\big)\Big|_{z=1}\colon M \otimes N \to N \otimes M.
\]
We call this nonzero homomorphism $\mathbf{r}_{M,N}$ the \textit{R-matrix} of $M$ and $N$.
If either $M$ or $N$ is real, then the image of $\mathbf{r}_{M,N}$ is isomorphic to $M \nab N$ and $N \Del M$ (\cite{zbMATH06424572}),
and in particular we have
\begin{equation}\label{eq:head_socle}
 M \nab N \cong N \Del M.
\end{equation}

\begin{Def}[\cite{kashiwara2020monoidal}]\normalfont
  Let $M,N$ be simple modules in $\scC_\fg$.
  \begin{itemize}\setlength{\leftskip}{-15pt}
   \item[(i)] We define $\tfd(M,N) \in \Z_{\geq 0}$ by the order of the zero of the polynomial $d_{M,N}(z)d_{N,M}(z)$ at $z=1$.
   \item[(ii)] We define $\gL(M,N) \in \Z$ by
    \begin{align*}
     \gL(M,N)&= \sum_{k \in \Z_{\geq 0}} (-1)^{k}\,\tfd(M,\scD^kN) - \sum_{k \in \Z_{<0}} (-1)^{k}\,\tfd(M,\scD^kN). 
    \end{align*}
  \end{itemize}
\end{Def}

\begin{Rem}\normalfont
 These definitions are different from \cite[Definitions 3.6 and 3.14]{kashiwara2020monoidal}, but equivalent to those by \cite[Propositions 3.16 and 3.22]{kashiwara2020monoidal}.
 In \cite{kashiwara2020monoidal}, the invariant $\gL(M,N)$ are defined for not necessarily simple $M$ and $N$ as well,
 but we do not need this since in the present paper we will only treat the cases where $M$ and $N$ are simple.
\end{Rem}

Let us list several properties of the invariants.

\begin{Prop}[{\cite[Lemma 3.7 and Corollaries 3.19 and 3.17]{kashiwara2020monoidal}}]\label{Prop:fundamental_properties_of_delta}
 Let $M$ and $N$ be simple modules in $\scC_\fg$.
 \begin{itemize}\setlength{\leftskip}{-15pt}
  \item[{\normalfont(i)}] We have
   \[ \tfd(M,N) = \frac{1}{2}\big(\gL(M,N) + \gL(N,M)\big)=\tfd(\scD M,\scD N).
   \]
  \item[{\normalfont(ii)}] Assume further that either $M$ or $N$ is real. 
   Then $M$ and $N$ strongly commute if and only if $\tfd(M,N)=0$.
 \end{itemize}
\end{Prop}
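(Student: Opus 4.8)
\textbf{Proof plan for Theorem A (Theorem \ref{Thm:Main_thm_general}).}

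The plan is to reduce the assertion to the abstract T-system machinery of \cite{kashiwara2103monoidal}, which gives short exact sequences of the required shape whenever the relevant $\tfd$-values are controlled; conditions (a) and (b) are precisely what guarantee that control along every ``sub-interval.'' First I would set up notation: write $T^- = \bS_{\bm{k}}[1,p-1]$, ${}^-T = \bS_{\bm{k}}[2,p]$, $T = \bS_{\bm{k}} = \bS_{\bm{k}}[1,p]$, ${}^-T^- = \bS_{\bm{k}}[2,p-1]$, and $K = \hd\!\big(\bigotimes_{a=1}^{p-1}(S_{k_a}\Del S_{k_{a+1}})\big)$. The first block of the argument establishes, by induction on $b-a$, that each $\bS_{\bm{k}}[a,b]$ is a real simple module, that it is isomorphic to the \emph{ordered} head $S_{k_a}\nab(S_{k_{a+1}}\nab(\cdots\nab S_{k_b}))$, and more precisely that the natural composition $S_{k_a}\otimes\cdots\otimes S_{k_b}\twoheadrightarrow \bS_{\bm{k}}[a,b]$ factors through every ``bracketing''; this uses Proposition \ref{Prop:simple_heads}(i)–(ii) together with the hypotheses. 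Here condition (a) with $b$ ranging says $S_{k_a}$ strongly commutes with $\bS_{\bm{k}}[a+1,b]$ after at most one application of $\scD^{\pm1}$ — in fact $\tfd=1$ forces a very specific normalized $R$-matrix behaviour — and condition (b) is the mirror statement on the right end; both are needed to run the induction from the two ends.

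The core of the proof is the three-term exact sequence. I would produce the surjection $T^-\otimes{}^-T \twoheadrightarrow T\otimes{}^-T^-$ by composing $R$-matrices: insert $S_{k_p}$ into $T^-$ on the right and $S_{k_1}$ into ${}^-T$ on the left, use $\bm{r}_{\bullet,\bullet}$ to slide these cuspidal factors past the appropriate sub-products, and read off that the image is $(T^-\nab S_{k_p})\otimes(S_{k_1}\nab{}^-T^-) \cong T\otimes{}^-T^-$, invoking Proposition \ref{Prop:bijective_of_head} to identify $T^-\nab S_{k_p}$ with $\bS_{\bm{k}}$. The hypotheses $\tfd=1$ (not $0$) are exactly what makes this composite nonzero yet non-injective, so the kernel is a nonzero proper submodule. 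To identify the kernel with $K$ I would compare $\ell$-highest weights — the weight of $K$ equals $\wt(T^-\otimes{}^-T)-\wt(T\otimes{}^-T^-)$ by a telescoping computation from $S_{k_a}\Del S_{k_{a+1}}$ — and then show (i) $K$ is simple, by checking $\tfd(S_{k_a}\Del S_{k_{a+1}},\,\cdots)=0$ along the product using (a),(b) and the subadditivity/normalization properties of $\tfd$ and $\gL$ in Proposition \ref{Prop:fundamental_properties_of_delta}, and (ii) that $K$ embeds into $T^-\otimes{}^-T$ and its image lies in the kernel, e.g.\ by a diagrammatic ``hexagon'' argument with the $R$-matrices, after which a length/weight count forces equality.

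The simplicity of the third term $T\otimes{}^-T^-$ is immediate from reality of $T$ and Proposition \ref{Prop:simple_heads}(ii) once one checks $\tfd(T,{}^-T^-)=0$, which again follows by telescoping conditions (a) and (b) and the additivity properties of $\gL$. I expect the \textbf{main obstacle} to be the identification of the kernel with the specific module $K = \hd\big(\bigotimes_a(S_{k_a}\Del S_{k_{a+1}})\big)$ rather than merely showing the kernel is some simple module of the correct weight: one must produce an explicit nonzero map from $\bigotimes_a(S_{k_a}\Del S_{k_{a+1}})$ into $T^-\otimes{}^-T$ landing in the kernel and compatible with the $R$-matrix description, and verify it does not die in the head. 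This is where the hypothesis that \emph{all} sub-intervals satisfy (a) and (b), not just the extreme ones, gets used in full strength — it is what lets the induction pass the ``middle'' factors $S_{k_a}\Del S_{k_{a+1}}$ through the two big products $T^-$ and ${}^-T$ without collisions, so that the composite map is nonzero. A secondary technical point is keeping track of which of the two a priori possible exact sequences occurs; this is settled by the direction in which the $R$-matrices are composed, i.e.\ by the left-to-right ordering of the tensor product in the statement.
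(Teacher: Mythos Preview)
Your proposal addresses the wrong statement. The statement you were asked to prove is Proposition~\ref{Prop:fundamental_properties_of_delta}, which records basic properties of the invariants $\tfd$ and $\gL$; you have instead written a proof plan for Theorem~\ref{Thm:Main_thm_general} (Theorem~A). In the paper, Proposition~\ref{Prop:fundamental_properties_of_delta} carries no proof at all: it is quoted directly from \cite[Lemma~3.7 and Corollaries~3.19 and~3.17]{kashiwara2020monoidal}, so there is nothing to compare against.

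Since you did sketch an argument for Theorem~\ref{Thm:Main_thm_general}, a brief comparison with the paper's actual proof may still be useful. The paper does \emph{not} build the surjection $T^-\otimes{}^-T\twoheadrightarrow T\otimes{}^-T^-$ by sliding $S_{k_1}$ and $S_{k_p}$ through via $R$-matrices, nor does it identify the kernel by a weight/embedding argument. Instead it proceeds in three short steps: (1) show $\tfd(T^-,{}^-T)\leq 1$ (Lemma~\ref{Lem:for_Main1}(i)), so that Proposition~\ref{Prop:length2} gives a length-two filtration of $T^-\otimes{}^-T$ with head $T^-\nab{}^-T$ and socle ${}^-T\nab T^-$; (2) identify the head $T^-\nab{}^-T$ with $T\otimes{}^-T^-$ by a single application of Proposition~\ref{Prop:simple_heads}(i) (Lemma~\ref{Lem:for_Main1}(ii)); (3) identify the socle ${}^-T\nab T^-$ with $K$ by an induction on $p$ that uses Lemma~\ref{Lem:technical} and the strong-unmixedness of cuspidals (Lemma~\ref{Lem:for_Main2}). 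The simplicity of both outer terms then comes for free from Proposition~\ref{Prop:reality} and Lemma~\ref{Lem:expression_of_socle}, not from a separate $\tfd(T,{}^-T^-)=0$ computation. Your plan to identify the kernel by matching $\ell$-highest weights and producing an explicit embedding would be substantially harder to make rigorous than the paper's route, which avoids ever constructing a map into the kernel by instead computing ${}^-T\nab T^-$ directly.
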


\begin{Prop}[{\cite[Proposition 4.2 and Lemma 3.10]{kashiwara2020monoidal}}]\label{Prop:weakly_decreasing}
 Let $X$, $Y$ and $Z$ be simple modules in $\scC_\fg$. 
  \begin{itemize}\setlength{\leftskip}{-15pt}
   \item[{\normalfont(i)}] For any simple subquotient $S$ of $X \otimes Y$,
    we have
    \[ \tfd(S,Z) \leq \tfd(X,Z) + \tfd(Y,Z).
    \]
   \item[{\normalfont(ii)}] Assume further that $X$ and $Y$ strongly commute.
    Then we have 
    \[ \tfd(X \otimes Y, Z) = \tfd(X,Z) + \tfd(Y,Z).
    \]
  \end{itemize}
\end{Prop}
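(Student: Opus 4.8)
\emph{Plan.} This proposition is extracted from \cite{kashiwara2020monoidal}, and the argument rests on the behaviour of normalized $R$-matrices under tensor products. The basic tool is the following factorization, a consequence of the hexagon relations for the universal $R$-matrix: if $M_1,M_2$ each have a one-dimensional space of $\ell$-highest weight vectors — in particular if they are simple, or tensor products of simple modules, since then $u_{M_1}\otimes u_{M_2}$ is such a vector for $M_1\otimes M_2$ — then for any simple $N$ one has
\[
 R^{\mathrm{norm}}_{M_1\otimes M_2,\,N_z}=\bigl(R^{\mathrm{norm}}_{M_1,N_z}\otimes\mathrm{id}_{M_2}\bigr)\circ\bigl(\mathrm{id}_{M_1}\otimes R^{\mathrm{norm}}_{M_2,N_z}\bigr),
\]
together with the mirror identity for $R^{\mathrm{norm}}_{N_z,\,M_1\otimes M_2}$. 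I would first extract from this a subadditivity statement for $\tfd$ (and additivity for $\gL$) with respect to $\otimes$, then deduce (ii) at once and reduce (i) to a subquotient estimate.

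\emph{Additivity step.} From the displayed factorization, $d_{M_1\otimes M_2,N}(z)$ divides $d_{M_1,N}(z)\,d_{M_2,N}(z)$, and symmetrically $d_{N,M_1\otimes M_2}(z)$ divides $d_{N,M_1}(z)\,d_{N,M_2}(z)$; comparing the orders of vanishing at $z=1$ gives $\tfd(M_1\otimes M_2,N)\le\tfd(M_1,N)+\tfd(M_2,N)$. Since the factorization is moreover an isomorphism over $\bk(z)$, the corresponding universal coefficients multiply, so that this upgrades to the equalities $\gL(M_1\otimes M_2,N)=\gL(M_1,N)+\gL(M_2,N)$ and $\gL(N,M_1\otimes M_2)=\gL(N,M_1)+\gL(N,M_2)$ for all such $M_1,M_2$ and all simple $N$.

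\emph{Deducing (ii) and reducing (i).} For (ii): since $X$ and $Y$ strongly commute, $X\otimes Y$ is simple, so Proposition \ref{Prop:fundamental_properties_of_delta}(i) gives $\tfd(X\otimes Y,Z)=\frac12\bigl(\gL(X\otimes Y,Z)+\gL(Z,X\otimes Y)\bigr)$; expanding the right-hand side by the additivity step and regrouping via Proposition \ref{Prop:fundamental_properties_of_delta}(i) applied separately to $(X,Z)$ and $(Y,Z)$ yields $\tfd(X\otimes Y,Z)=\tfd(X,Z)+\tfd(Y,Z)$. For (i): by the subadditivity it suffices to prove $\tfd(S,Z)\le\tfd(X\otimes Y,Z)$ for every simple subquotient $S$ of $M:=X\otimes Y$. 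By induction on the length of $M$ one reduces to $M$ of length two, $0\to M'\to M\to M''\to 0$ with $M',M''$ simple and $S\in\{M',M''\}$; since $\otimes$ is exact, $N_z\otimes M'$ is a submodule and $N_z\otimes M''$ a quotient of $N_z\otimes M$, and one compares $R^{\mathrm{norm}}_{M,N_z}$ with $R^{\mathrm{norm}}_{M',N_z}$ and $R^{\mathrm{norm}}_{M'',N_z}$ (and likewise on the mirror side) to conclude that $d_{S,Z}(z)\,d_{Z,S}(z)$ divides $d_{M,Z}(z)\,d_{Z,M}(z)$, whence $\tfd(S,Z)\le\tfd(M,Z)$.

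\emph{Main obstacle.} The delicate part is the last step, the subquotient estimate. The normalized $R$-matrix is only rationally defined, and its normalization is pinned down through the $\ell$-highest weight vector, which need not behave uniformly along a composition series: the $\ell$-highest weight vector of $M$ may fail to lie in $M'$, and its weight may differ from those of $M'$ and $M''$, so that $R^{\mathrm{norm}}_{M,N_z}$ and $R^{\mathrm{norm}}_{M',N_z}$ differ by a rational factor that can have zeros or poles at $z=1$. Keeping track of these factors is exactly where the careful bookkeeping of \cite{kashiwara2020monoidal} is needed, and it is not a one-line divisibility argument.
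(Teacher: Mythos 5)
First, note that the paper offers no proof of this proposition: it is quoted verbatim from \cite{kashiwara2020monoidal} (Proposition~4.2 and Lemma~3.10 there), and the surrounding Remark even stresses that the definitions of $\gL$ and $\tfd$ adopted here are the \emph{simplified} ones valid only for simple modules. So the comparison can only be against the cited source, whose argument your sketch partially reconstructs.

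Your overall strategy is the right one, but as written it does not constitute a proof, and the gap is precisely the one you flag yourself. For part (i), everything is reduced to the claim that $d_{S,Z}(z)\,d_{Z,S}(z)$ divides $d_{M,Z}(z)\,d_{Z,M}(z)$ for a simple subquotient $S$ of $M=X\otimes Y$, and you correctly observe that the normalization of $R^{\mathrm{norm}}_{M,N_z}$ through an $\ell$-highest weight vector does not pass to subquotients, so the two normalized $R$-matrices differ by a rational function whose order of vanishing at $z=1$ is not controlled by divisibility alone. This is not a bookkeeping issue that can be deferred: it is the mathematical content of the statement. The way \cite{kashiwara2020monoidal} avoids it is by \emph{not} working with denominators at this point: $\gL(M,N)$ is defined there as the homogeneous degree of the lowest-degree nonvanishing coefficient of the universal $R$-matrix on the affinizations, which makes sense for non-simple $M$, is additive under $\otimes$ (because the universal coefficient $a_{M,N}$ is multiplicative), and is monotone under passage to subquotients (restricting the renormalized $R$-matrix to a nonzero subquotient can only raise the leading degree). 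Adding the two resulting inequalities $\gL(S,Z)\le\gL(X,Z)+\gL(Y,Z)$ and $\gL(Z,S)\le\gL(Z,X)+\gL(Z,Y)$ and halving gives (i); the same additivity plus Proposition~\ref{Prop:fundamental_properties_of_delta}(i) gives (ii). In your framework the intermediate quantities $\gL(X\otimes Y,Z)$ and $\tfd(X\otimes Y,Z)$ for non-simple $X\otimes Y$ are not even defined, so the ``additivity step'' is asserted rather than available. To close the argument you would either have to import the degree-based definition of $\gL$ from \cite{kashiwara2020monoidal} or find an independent control of the normalization discrepancy at $z=1$; neither is done here.
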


\begin{Lem}\label{Lem:invariance_by_head}
 Let $X,Y,Z \in \scC_\fg$ be simple modules, and assume that $Z$ is real.
 \begin{itemize}\setlength{\leftskip}{-15pt}
  \item[{\normalfont (i)}] If $\tfd(X,Z) = \tfd(X,\scD^{-1} Z)=0$, then we have $\tfd(X, Y\nab Z) = \tfd(X,Y)$.
  \item[{\normalfont (ii)}] If $\tfd(X,Z) = \tfd(X,\scD Z)=0$, then we have $\tfd(X, Z \nab Y) = \tfd(X,Y)$.
 \end{itemize}
\end{Lem}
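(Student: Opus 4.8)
The plan is to establish each of the two equalities as a pair of opposite inequalities, using only the sub-additivity of $\tfd$ under tensor products (Proposition \ref{Prop:weakly_decreasing}(i)) together with the cancellation isomorphisms of Proposition \ref{Prop:bijective_of_head}. Throughout I will freely use that $\tfd$ is symmetric in its two arguments (by Proposition \ref{Prop:fundamental_properties_of_delta}(i), or directly from its definition via $d_{M,N}(z)d_{N,M}(z)$), and that, $Z$ being real, the modules $Y \nab Z$ and $Z \nab Y$ are simple by Proposition \ref{Prop:simple_heads}(ii).

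For (i), I would first note that $Y \nab Z = \hd(Y \otimes Z)$ is a simple subquotient of $Y \otimes Z$, so Proposition \ref{Prop:weakly_decreasing}(i) yields $\tfd(X, Y \nab Z) \leq \tfd(X, Y) + \tfd(X, Z) = \tfd(X, Y)$ by the hypothesis $\tfd(X, Z) = 0$. For the reverse inequality I would apply Proposition \ref{Prop:bijective_of_head} to the real module $Z$, which gives $\scD^{-1} Z \nab (Y \nab Z) \cong Y$; hence $Y$ is a simple subquotient of $\scD^{-1} Z \otimes (Y \nab Z)$, and Proposition \ref{Prop:weakly_decreasing}(i) gives $\tfd(X, Y) \leq \tfd(X, \scD^{-1} Z) + \tfd(X, Y \nab Z) = \tfd(X, Y \nab Z)$ by the hypothesis $\tfd(X, \scD^{-1} Z) = 0$. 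Combining the two bounds proves (i).

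Part (ii) is entirely parallel: $Z \nab Y = \hd(Z \otimes Y)$ is a simple subquotient of $Z \otimes Y$, so $\tfd(X, Z \nab Y) \leq \tfd(X, Z) + \tfd(X, Y) = \tfd(X, Y)$; conversely, the companion cancellation isomorphism $(Z \nab Y) \nab \scD Z \cong Y$ of Proposition \ref{Prop:bijective_of_head} exhibits $Y$ as a simple subquotient of $(Z \nab Y) \otimes \scD Z$, whence $\tfd(X, Y) \leq \tfd(X, Z \nab Y) + \tfd(X, \scD Z) = \tfd(X, Z \nab Y)$ by the hypothesis $\tfd(X, \scD Z) = 0$.

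The argument is essentially formal, so I do not anticipate a genuine obstacle; the only points deserving care are checking that each module to which Proposition \ref{Prop:weakly_decreasing}(i) is applied is really simple (this is precisely where reality of $Z$ is used, for $Y \nab Z$ and $Z \nab Y$, while $\scD^{-1} Z \nab (Y \nab Z)$ and $(Z \nab Y) \nab \scD Z$ are simple because Proposition \ref{Prop:bijective_of_head} identifies them with $Y$), and remembering to invoke the symmetry of $\tfd$ when quoting Proposition \ref{Prop:weakly_decreasing}(i), whose statement places the fixed module in the second slot.
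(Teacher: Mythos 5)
Your proposal is correct and follows essentially the same route as the paper's proof: the forward inequality via the subadditivity of $\tfd$ on simple subquotients (Proposition \ref{Prop:weakly_decreasing}) together with $\tfd(X,Z)=0$, and the reverse inequality via the cancellation isomorphism $\scD^{-1}Z \nab (Y\nab Z)\cong Y$ of Proposition \ref{Prop:bijective_of_head} together with $\tfd(X,\scD^{-1}Z)=0$, with (ii) handled symmetrically. The extra care you take about simplicity of the heads and the symmetry of $\tfd$ is sound and matches the implicit use in the paper.
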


\begin{proof}
 (i) We have 
 \[ \tfd(X,Y \nab Z) \leq \tfd(X,Y) + \tfd(X,Z) =\tfd(X,Y)
 \]
 by Proposition \ref{Prop:weakly_decreasing}. On the other hand, we have 
 \[ \tfd(X,Y ) = \tfd\big(X, \scD^{-1} Z \nab (Y \nab Z)\big) \leq \tfd(X, \scD^{-1} Z) + \tfd(X, Y\nab Z) = \tfd(X,Y\nab Z)
 \]
 by Proposition \ref{Prop:bijective_of_head}. Hence the assertion is proved. 
 The proof of (ii) is similar.
\end{proof}

\begin{Prop}[{\cite[Proposition 4.7]{kashiwara2020monoidal}}]\label{Prop:length2}
 Let $M$ and $N$ be simple modules in $\scC_\fg$, and assume that one of them is real and $\tfd(M,N)=1$.
 Then the composition length of $M \otimes N$ is $2$, and we have an exact sequence
 \[ 0 \to N \nab M \to M \otimes N \to M \nab N \to 0.
 \]
\end{Prop}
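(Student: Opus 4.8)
The plan is to show that $M\otimes N$ has simple head $M\nab N$, simple socle isomorphic to $N\nab M$, and no further composition factors; once this is known, the asserted short exact sequence is just the two‑step filtration $0\to\soc(M\otimes N)\to M\otimes N\to\hd(M\otimes N)\to0$.

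First I would record the structural facts that come for free. Since one of $M,N$ is real, Proposition~\ref{Prop:simple_heads}(ii) gives that $M\otimes N$ and $N\otimes M$ each have a simple head and a simple socle. The R‑matrix $\mathbf{r}_{M,N}\colon M\otimes N\to N\otimes M$ is nonzero with image isomorphic to $M\nab N$ (properties of $\mathbf{r}_{M,N}$ recalled before \eqref{eq:head_socle}), so $\hd(M\otimes N)=M\nab N$ and $\Ker\mathbf{r}_{M,N}=\mathrm{rad}(M\otimes N)$. Likewise $\Img\mathbf{r}_{M,N}$ is simple, hence equals $\soc(N\otimes M)$, and $\Img\mathbf{r}_{N,M}$ is a simple submodule of $M\otimes N$, hence equals $\soc(M\otimes N)$, which by \eqref{eq:head_socle} (for the pair $(N,M)$) is isomorphic to $N\nab M$. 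Finally $\tfd(M,N)=1\neq0$ forces $M\otimes N$ to be non‑simple by Proposition~\ref{Prop:fundamental_properties_of_delta}(ii), so $\soc(M\otimes N)\subseteq\mathrm{rad}(M\otimes N)=\Ker\mathbf{r}_{M,N}$. Thus the entire content is the reverse inclusion $\Ker\mathbf{r}_{M,N}\subseteq\Img\mathbf{r}_{N,M}$, and this is where $\tfd(M,N)=1$ — rather than merely $\geq1$ — must be used.

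For the reverse inclusion I would bring in the spectral parameter. Let $R_z\colon M\otimes N_z\to N_z\otimes M$ be the cleared normalized R‑matrix $d_{M,N}(z)\,R^{\mathrm{norm}}_{M,N_z}$, and let $\widetilde R_z\colon N_z\otimes M\to M\otimes N_z$ be the analogous morphism in the opposite order; both are $\bk[z^{\pm1}]\otimes U_q'(\fg)$‑module homomorphisms, and at $z=1$ they specialize to $\mathbf{r}_{M,N}$ and to $c\,\mathbf{r}_{N,M}$ for some $c\in\bk^{\times}$, respectively. By the unitarity of normalized R‑matrices, $\widetilde R_z\circ R_z=\gph(z)\,\id$ for some $\gph(z)\in\bk[z^{\pm1}]$ whose order of vanishing at $z=1$ is $\tfd(M,N)=1$, so $\gph(z)=(z-1)u(z)$ with $u(1)\neq0$. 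Expanding $R_z=\sum_{k\ge0}R_k(z-1)^k$ and $\widetilde R_z=\sum_{k\ge0}\widetilde R_k(z-1)^k$ around $z=1$, with $R_0=\mathbf{r}_{M,N}$ and $\widetilde R_0=c\,\mathbf{r}_{N,M}$, the coefficient of $(z-1)^1$ in $\widetilde R_z\circ R_z=\gph(z)\,\id$ gives $\widetilde R_0 R_1+\widetilde R_1 R_0=u(1)\,\id_{M\otimes N}$. Restricting to $K:=\Ker\mathbf{r}_{M,N}=\mathrm{rad}(M\otimes N)$ annihilates the term $\widetilde R_1 R_0$, so $\widetilde R_0\circ(R_1|_K)=u(1)\,\iota_K$ with $\iota_K\colon K\hookrightarrow M\otimes N$ the inclusion.

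It remains to turn $R_1|_K$ into an honest module map modulo the socle. Comparing the $(z-1)$‑expansions in the intertwining identities $R_z\cdot(x\text{-action on }M\otimes N_z)=(x\text{-action on }N_z\otimes M)\cdot R_z$ for the generators $x\in\{e_0,f_0\}$ — the only generators whose action on $N_z$ involves $z$ — one finds that on $K$ the discrepancy between $R_1$ and the $U_q'(\fg)$‑action lies in $\Img\mathbf{r}_{M,N}=\soc(N\otimes M)$ (for $x=e_i,f_i$ with $i\neq0$ and for $q^h$ there is no discrepancy, since those actions on $N_z$ are $z$‑independent). Hence the composite $\overline R_1\colon K\xrightarrow{R_1}N\otimes M\twoheadrightarrow(N\otimes M)/\soc(N\otimes M)$ is a $U_q'(\fg)$‑module homomorphism. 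On the other hand $\mathbf{r}_{N,M}$ annihilates $\soc(N\otimes M)$ (its kernel is $\mathrm{rad}(N\otimes M)$ and $N\otimes M$ is non‑simple), so it factors as $\overline{\mathbf{r}}_{N,M}\circ(\text{quotient map})$, and $\overline{\mathbf{r}}_{N,M}\circ\overline R_1$ is then a nonzero scalar times $\iota_K$. Consequently $K\subseteq\Img\overline{\mathbf{r}}_{N,M}=\Img\mathbf{r}_{N,M}=\soc(M\otimes N)$, which combined with $\soc(M\otimes N)\subseteq K$ yields $\mathrm{rad}(M\otimes N)=\soc(M\otimes N)$. Thus $M\otimes N$ has composition length $2$ and the filtration by its socle is exactly the asserted exact sequence $0\to N\nab M\to M\otimes N\to M\nab N\to0$. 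I expect the main obstacle to be this last paragraph: making precise the $(z-1)$‑expansion of the normalized R‑matrix together with the $z$‑twisted $U_q'(\fg)$‑action on $N_z$, and checking that the order‑$1$ obstruction lands exactly in $\soc(N\otimes M)$ so that $\overline R_1$ really descends; the fact that $\gph$ vanishes to order exactly $1$ at $z=1$ — i.e. $u(1)\neq0$ — is precisely what keeps $\overline R_1$ nonzero and hence pins the length down to $2$ rather than something larger.
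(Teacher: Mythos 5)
The paper offers no proof of this proposition: it is imported verbatim from \cite[Proposition 4.7]{kashiwara2020monoidal}, so there is no internal argument to compare against, and your write-up has to be judged as an independent derivation. As such it is essentially correct, and the decisive point is already reached at the end of your second paragraph: from $\widetilde R_0\circ(R_1|_K)=u(1)\,\iota_K$ with $u(1)\neq0$ you get, for every $v\in K$, that $v=\widetilde R_0\bigl(u(1)^{-1}R_1(v)\bigr)\in\Img \widetilde R_0=\Img \mathbf{r}_{N,M}=\soc(M\otimes N)$. This is pure linear algebra and does not require $R_1|_K$ to be, or to descend to, a module homomorphism; only the target $\Img\widetilde R_0$ needs a module structure, and it already has one. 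Your entire third paragraph (descending $R_1|_K$ modulo $\soc(N\otimes M)$ via the intertwining identities for $e_0,f_0$) is therefore superfluous, and it is also the shakiest part of the text, since the claim that the order-one discrepancy lands exactly in $\soc(N\otimes M)$ is asserted rather than checked. What the argument genuinely rests on, and what should be made explicit, are three standard facts about normalized R-matrices: unitarity $R^{\mathrm{norm}}_{N_z,M}\circ R^{\mathrm{norm}}_{M,N_z}=\id$; the fact that the denominator of $R^{\mathrm{norm}}_{N_z,M}$ in the variable $z$ is $d_{N,M}(1/z)$ up to a unit, so that $\gph(z)$ vanishes at $z=1$ to order exactly $\tfd(M,N)=1$; and the fact that $\widetilde R_0$ is a \emph{nonzero scalar multiple} of $\mathbf{r}_{N,M}$ (for instance because $\mathrm{Hom}_{\scC_\fg}(N\otimes M,M\otimes N)$ is one-dimensional when one factor is real), which is exactly what guarantees that $\Img\widetilde R_0$ is the simple socle rather than a larger submodule. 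With these acknowledged, your Laurent-expansion argument is a clean and self-contained route to the length-two statement, stylistically different from the degree-counting via the invariants $\gL$ and $\tfd$ on simple subquotients of $\Ker\mathbf{r}_{M,N}$ that \cite{kashiwara2020monoidal} employs in that part of the paper.
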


\begin{Prop}[{\cite[Lemma 2.22]{kashiwara2103monoidal}}]\label{Prop:reality_of_MN}
 Let $M,N$ be real simple modules in $\scC_{\fg}$ such that $\tfd(M,N) \leq 1$. Then $M \nab N$ is real.
 
\end{Prop}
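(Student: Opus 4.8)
The plan is to reduce the statement to the simplicity of $(M\nab N)^{\otimes 2}$; note that $M\nab N$ is already simple by Proposition~\ref{Prop:simple_heads}(ii), since $M$ is real. Because $\tfd(M,N)\in\{0,1\}$, I would treat the two cases $\tfd(M,N)=0$ and $\tfd(M,N)=1$ separately.

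If $\tfd(M,N)=0$, then $M$ and $N$ strongly commute by Proposition~\ref{Prop:fundamental_properties_of_delta}(ii), so $M\nab N=M\otimes N$ and $N\otimes M\cong M\otimes N$; swapping the middle two factors gives a module isomorphism $(M\otimes N)^{\otimes 2}\cong M^{\otimes 2}\otimes N^{\otimes 2}$. A routine induction based on Propositions~\ref{Prop:weakly_decreasing}(ii) and \ref{Prop:fundamental_properties_of_delta}(ii) shows that every tensor power of a real simple module is real, and two applications of Proposition~\ref{Prop:weakly_decreasing}(ii) give $\tfd(M^{\otimes 2},N^{\otimes 2})=4\,\tfd(M,N)=0$; hence $M^{\otimes 2}$ and $N^{\otimes 2}$ strongly commute (again by Proposition~\ref{Prop:fundamental_properties_of_delta}(ii)), so $(M\otimes N)^{\otimes 2}$ is simple and $M\nab N=M\otimes N$ is real.

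Now assume $\tfd(M,N)=1$ and set $L=M\nab N$. By \eqref{eq:head_socle} we also have $L\cong N\Del M$, so there are a surjection $M\otimes N\twoheadrightarrow L$ and an injection $L\hookrightarrow N\otimes M$; consequently $L\otimes L$ is at once a quotient of $(M\otimes N)^{\otimes 2}$ and a submodule of $(N\otimes M)^{\otimes 2}$. By Proposition~\ref{Prop:length2}, $M\otimes N$ and $N\otimes M$ are uniserial of length two, with composition factors $M\nab N$ and $N\nab M$, and the $R$-matrix $\mathbf{r}_{M,N}$ factors as $M\otimes N\twoheadrightarrow L\hookrightarrow N\otimes M$ (its image being isomorphic to $L$ as $M$ is real). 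The strategy is: first show that $L\otimes L$ has a simple head $H$ and a simple socle $S$ — which I would derive from the reality of $M$ and $N$ by the standard analysis of heads and socles of tensor products with a real module (using Propositions~\ref{Prop:simple_heads} and \ref{Prop:bijective_of_head} together with the adjunction isomorphisms), so that in particular $L\otimes L$ is indecomposable — and then show that $H\cong S$ and that $H$ occurs in $L\otimes L$ with multiplicity one. Granting these, the unique composition factor of $L\otimes L$ isomorphic to $H$ must lie both in the head and in the socle, which forces the radical of $L\otimes L$ to vanish, so $L\otimes L\cong H$ is simple and $M\nab N$ is real.

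I expect the last point — the identification $H\cong S$ and the multiplicity-one statement — to be the main obstacle, and this is where the finer structure of $R$-matrices enters. The natural tool is the homomorphism $\mathbf{r}_{M,N}\otimes\mathbf{r}_{M,N}\colon(M\otimes N)^{\otimes 2}\to(N\otimes M)^{\otimes 2}$, whose image equals $L\otimes L$ because the image of $\mathbf{r}_{M,N}$ is $L$: by comparing the composition series of $L\otimes L$ read off from its presentation as a quotient of $(M\otimes N)^{\otimes 2}$ with the one read off from its presentation as a submodule of $(N\otimes M)^{\otimes 2}$, and using that $M\otimes N$ and $N\otimes M$ are uniserial of length two, one should be able to pin down $H$ and $S$ and their multiplicities; this also yields $\tfd(L,L)=0$, which one might instead try to extract directly from Lemma~\ref{Lem:invariance_by_head} and Proposition~\ref{Prop:bijective_of_head} by computing $\tfd(L,M)$ and $\tfd(L,N)$.
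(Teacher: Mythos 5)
First, note that the paper does not prove this proposition at all: it is imported verbatim from \cite[Lemma 2.22]{kashiwara2103monoidal}, so there is no internal proof to compare against. Judging your argument on its own terms: the case $\tfd(M,N)=0$ is fine, and the overall shape of the case $\tfd(M,N)=1$ (simple head $\cong$ simple socle plus a multiplicity-one count forces simplicity) is the right one, but the two load-bearing steps are not actually established. The presentations you chose do not give the simple head and socle of $L\otimes L$ (with $L=M\nab N$): Proposition \ref{Prop:simple_heads}(ii) applies to a tensor product of two \emph{simple} modules one of which is real, whereas $(M\otimes N)^{\otimes 2}$ and $(N\otimes M)^{\otimes 2}$ are built from the length-two module $M\otimes N$, so "the standard analysis of heads and socles" does not apply to them. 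The missing idea is to first prove $\tfd(L,M)=\tfd(L,N)=0$ via Proposition \ref{Prop:strongly_decreasing} (applied to the simple subquotient $L$ of $M\otimes N$, using that $M$ and $N$ are real and $\tfd(M,N)=1>0$). Then $L$ strongly commutes with both $M$ and $N$ by Proposition \ref{Prop:fundamental_properties_of_delta}(ii), and one can realize $L\otimes L$ as a quotient of $(L\otimes M)\otimes N$ and a submodule of $(L\otimes N)\otimes M$ — now genuinely of the form (simple)$\,\otimes\,$(real) — which is what yields the simple head $H$ and simple socle $S$, and also gives $H\cong S$ immediately from (\ref{eq:head_socle}) applied to $L\otimes M$ and $N$. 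You mention computing $\tfd(L,M)$ and $\tfd(L,N)$ only as an optional aside for getting $\tfd(L,L)=0$; but beware that $\tfd(L,L)=0$ alone does not imply $L$ is real, since the equivalence in Proposition \ref{Prop:fundamental_properties_of_delta}(ii) presupposes reality — that route is circular.

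The second gap is the multiplicity-one claim. "Comparing the composition series" of $L\otimes L$ read off from the uniserial modules $M\otimes N$ and $N\otimes M$ cannot work: the Grothendieck-group identity $[(M\otimes N)^{\otimes 2}]=([L]+[N\nab M])^2$ tells you nothing about how $[L]^2$ decomposes into simples, which is exactly what is at stake. What one actually needs is the sharper form of the simple-head theorem of \cite{zbMATH06424572} (not quoted in this paper): for $X$ simple and $Y$ real simple, $\hd(X\otimes Y)$ occurs with multiplicity one in $X\otimes Y$. Applied to $(L\otimes M)\otimes N$, this bounds the multiplicity of $H$ in its quotient $L\otimes L$ by one, and together with $H\cong S$ and indecomposability (forced by the simple head) this completes the argument. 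As written, your proposal records the target of each step but supplies neither the commutation input that makes the head/socle computation legitimate nor a valid source for the multiplicity bound.
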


\begin{Prop}[{\cite[Proposition 2.17]{kashiwara2020pbw}}]\label{Prop:strongly_decreasing}
 Let $M,N$ be simple modules in $\scC_\fg$, and assume that $N$ is real.
 If $\tfd(M,N)>0$, then we have 
 \[ \tfd(S,N) < \tfd(M,N)
 \]
 for any simple subquotient $S$ of $M\otimes N$ and also for any simple subquotient $S$ of $N \otimes M$.
\end{Prop}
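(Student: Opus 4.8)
The plan is to argue by induction on $d:=\tfd(M,N)\ge 1$. Since $N$ is real, Proposition~\ref{Prop:fundamental_properties_of_delta}(ii) gives $\tfd(N,N)=0$, and $N^{\otimes k}$ is again a real simple module with $\tfd(M,N^{\otimes k})=k\,d$ by Proposition~\ref{Prop:weakly_decreasing}(ii). Moreover, because the Grothendieck ring of $\scC_\fg$ is commutative, $M\otimes N$ and $N\otimes M$ have the same composition factors, so it suffices to treat a simple subquotient $S$ of $M\otimes N$; for any such $S$, Proposition~\ref{Prop:weakly_decreasing}(i) already gives $\tfd(S,N)\le\tfd(M,N)+\tfd(N,N)=d$, and the whole content of the statement is to exclude the equality $\tfd(S,N)=d$.

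The essential case is the head $S=\hd(M\otimes N)=M\nab N$, which is simple because $N$ is real and which sits in a short exact sequence $0\to\mathrm{rad}(M\otimes N)\to M\otimes N\to M\nab N\to 0$. Here I would use that $X\mapsto X\nab N$ is a bijection on isomorphism classes of simple modules with inverse $Y\mapsto\scD^{-1}N\nab Y$ (Proposition~\ref{Prop:bijective_of_head}), so that $M\cong\scD^{-1}N\nab(M\nab N)$, together with the identity $\tfd(-,-)=\tfrac12\bigl(\gL(-,-)+\gL(-,-)\bigr)$ from Proposition~\ref{Prop:fundamental_properties_of_delta}(i) and the additivity and monotonicity properties of $\gL$ under tensor products and passage to composition factors, in order to extract the strict inequality $\tfd(M\nab N,N)\le d-1$. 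When $d=1$ this is nothing but the assertion that the two composition factors $M\nab N$ and $N\nab M$ of the length-$2$ module $M\otimes N$ provided by Proposition~\ref{Prop:length2} both strongly commute with $N$, i.e.\ have $\tfd$ against $N$ equal to $0$ by Proposition~\ref{Prop:fundamental_properties_of_delta}(ii).

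For the remaining composition factors --- those of $\mathrm{rad}(M\otimes N)$, including the socle $\soc(M\otimes N)\cong N\nab M$ obtained from \eqref{eq:head_socle} --- I would propagate the estimate. Writing $\mathrm{rad}(M\otimes N)=\Ker\bigl(\mathbf{r}_{M,N}\colon M\otimes N\to N\otimes M\bigr)$ and using the vanishings $\mathbf{r}_{N,M}\circ\mathbf{r}_{M,N}=0=\mathbf{r}_{M,N}\circ\mathbf{r}_{N,M}$ (which hold since $\tfd(M,N)>0$) together with \eqref{eq:head_socle}, one expresses every such composition factor $S$ as a composition factor of a tensor product $X\otimes N$ in which $X$ is assembled from $M\nab N$, $N\nab M$ and their iterated heads against $N$; by the head estimate and the induction hypothesis (applied, when needed, with $N^{\otimes k}$ in place of $N$) one has $\tfd(X,N)<d$, whence $\tfd(S,N)\le\tfd(X,N)+\tfd(N,N)<d$ by Proposition~\ref{Prop:weakly_decreasing}(i). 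The statement for subquotients of $N\otimes M$ then follows from that for $M\otimes N$ by the reduction of the first paragraph.

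The step I expect to be the main obstacle is the head estimate $\tfd(M\nab N,N)\le d-1$: the formal subadditivity of $\tfd$ only yields $\le d$, and $M\nab N$ need not be real, so neither Proposition~\ref{Prop:reality_of_MN} nor Lemma~\ref{Lem:invariance_by_head} applies to it directly --- in particular $\tfd(N,\scD N)$ is in general strictly positive, which defeats the obvious attempt to pass through the identity $M\cong\scD^{-1}N\nab(M\nab N)$. The strict drop has to be read off from the finer arithmetic of $\gL$, reflecting that forming $\hd(M\otimes N)$ absorbs precisely one unit of the ``$N$-content'' of $M$ measured by $\tfd(M,N)$; once this is in hand, organizing the induction so as to cover all the other composition factors is the remaining (more routine) bookkeeping.
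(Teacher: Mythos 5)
The paper does not prove this proposition; it is imported verbatim from \cite[Proposition 2.17]{kashiwara2020pbw}, so your argument has to stand on its own, and as written it does not. The step you yourself flag as ``the main obstacle'' --- the strict drop $\tfd(M\nab N,N)\le \tfd(M,N)-1$ --- is precisely the content of the proposition for the head, and you never establish it: subadditivity (Proposition \ref{Prop:weakly_decreasing}) only gives $\le \tfd(M,N)$, and you correctly observe that the obvious routes through Proposition \ref{Prop:bijective_of_head} or Lemma \ref{Lem:invariance_by_head} are blocked because $\tfd(N,\scD N)$ need not vanish. Even your base case $d=1$ is circular: saying that the two composition factors of the length-two module $M\otimes N$ ``strongly commute with $N$'' is a restatement of the claim, not a consequence of Proposition \ref{Prop:length2}. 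The subsequent propagation to the composition factors of $\mathrm{rad}(M\otimes N)=\Ker\mathbf{r}_{M,N}$ is also not an argument: those factors are not visibly subquotients of any $X\otimes N$ with $X$ ``assembled from $M\nab N$, $N\nab M$ and their iterated heads,'' and the vanishing $\mathbf{r}_{N,M}\circ\mathbf{r}_{M,N}=0$ (which is correct, since $d_{M,N}(1)d_{N,M}(1)=0$) does not produce such a presentation.

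The proof in the cited reference does not induct on $\tfd(M,N)$ and treats all composition factors at once. It rests on the finer $\gL$-estimates you allude to but do not carry out: for every simple subquotient $S$ of $M\otimes N$ one has $\gL(S,N)\le\gL(M,N)$ and $\gL(N,S)\le\gL(N,M)$, and --- this is the key input --- equality in the first forces $S\cong\hd(M\otimes N)$ while equality in the second forces $S\cong\soc(M\otimes N)$. Since $\tfd(M,N)>0$, the head and socle of $M\otimes N$ are not isomorphic (otherwise $M\nab N\cong N\nab M$ and $M\otimes N$ would be simple, by \cite{zbMATH06424572}), so for each $S$ at least one of the two inequalities is strict; adding them and invoking Proposition \ref{Prop:fundamental_properties_of_delta} (i) gives $2\,\tfd(S,N)<2\,\tfd(M,N)$. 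If you want to salvage your write-up, the equality characterizations for $\gL$ are the missing ingredient, and once you have them the induction and the radical bookkeeping become unnecessary.
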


\begin{Prop}[{\cite[Proposition 2.25]{kashiwara2103monoidal}}]\label{Prop:three_terms}
 Let $X$, $Y$ and $Z$ be simple modules in $\scC_\fg$ such that $Y$ is real. Assume that
  \begin{itemize}\setlength{\leftskip}{-15pt}
   \item[{\normalfont(i)}] $\tfd(\scD X,Y)= \tfd(\scD Y, Z) =0$, and
   \item[{\normalfont(ii)}] $X \otimes Y \otimes Z$ has a simple head.
  \end{itemize}
    Then we have 
    \[ \tfd\big(Y,\hd(X\otimes Y \otimes Z)\big) = \tfd(Y,X\nab Y) + \tfd(Y, Y\nab Z).
    \]
\end{Prop}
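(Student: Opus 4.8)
The plan is first to recognise $W:=\hd(X\otimes Y\otimes Z)$ as an iterated head, and then to compute $\tfd(Y,W)$ through the invariant $\gL$. Set $A:=X\nab Y$ and $B:=Y\nab Z$; since $Y$---hence also $\scD Y$---is real, $A$ and $B$ are simple by Proposition~\ref{Prop:simple_heads}(ii), and by Proposition~\ref{Prop:fundamental_properties_of_delta}(ii) hypothesis~(i) says exactly that $\scD X$ strongly commutes with $Y$ and that $\scD Y$ strongly commutes with $Z$.

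First I would note that $W$ is ``associative'': from the surjection $X\otimes Y\twoheadrightarrow A$ we get $X\otimes Y\otimes Z\twoheadrightarrow A\otimes Z$, and since the source has simple head $W$, so does $A\otimes Z$, whence $W\cong A\nab Z$; symmetrically $W\cong X\nab B$. Applying Proposition~\ref{Prop:bijective_of_head} to the real module $Y$ also gives $X\cong\scD^{-1}Y\nab A$ and $Z\cong B\nab\scD Y$.

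Next, using $\tfd(Y,M)=\tfrac12\big(\gL(Y,M)+\gL(M,Y)\big)$ for simple $M$ (Proposition~\ref{Prop:fundamental_properties_of_delta}(i)), I would reduce the assertion to the two additivity identities $\gL(Y,W)=\gL(Y,A)+\gL(Y,B)$ and $\gL(W,Y)=\gL(A,Y)+\gL(B,Y)$, the second being entirely analogous (with left and right duals interchanged). To establish the first, I would combine the two descriptions $W\cong A\nab Z\cong X\nab B$ with the structural properties of $\gL$ from \cite{kashiwara2020monoidal,kashiwara2103monoidal} that already underlie the sub-additivity of $\tfd$ (Proposition~\ref{Prop:weakly_decreasing}) and the invariance of $\tfd$ under taking heads against a strongly commuting real module (Lemma~\ref{Lem:invariance_by_head}): the strong commutation of $\scD Y$ with $Z$ is exactly what lets $\gL(Y,-)$ split off the factor $Z$ from $W\cong A\nab Z$ with remainder $\gL(Y,B)$ rather than $\gL(Y,Z)$, while the strong commutation of $\scD X$ with $Y$ lets $\gL(Y,-)$ split off $X$ from $W\cong X\nab B$ with remainder $\gL(Y,A)$; putting the two together yields the identity.

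The hard part will be precisely this splitting: hypothesis~(i), and not merely the simplicity of $\hd(X\otimes Y\otimes Z)$, is what one needs. That some such input is unavoidable is already visible from the crude bound $\tfd(Y,W)\le\tfd(Y,X)+\tfd(Y,B)$ (from $W\cong X\nab B$ and Proposition~\ref{Prop:weakly_decreasing}(i)), which overshoots the claimed value $\tfd(Y,A)+\tfd(Y,B)$ as soon as $\tfd(X,Y)>0$, since then $\tfd(Y,A)=\tfd(Y,X\nab Y)<\tfd(Y,X)$ by Proposition~\ref{Prop:strongly_decreasing}. So the two strong-commutation conditions in~(i) must be used in an essential way, to see that the defect lost in passing from $X$ to $A$ is cancelled exactly by the defect lost in passing from $Z$ to $B$; getting this cancellation right---most cleanly at the level of $\gL$, or else via a careful bookkeeping of the composition factors of the radicals of $A\otimes Z$ and of $X\otimes Y$ using Propositions~\ref{Prop:weakly_decreasing} and~\ref{Prop:strongly_decreasing} together with the identities $X\cong\scD^{-1}Y\nab A$ and $Z\cong B\nab\scD Y$---is the crux.
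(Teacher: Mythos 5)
First, a caveat: this paper does not prove Proposition \ref{Prop:three_terms} at all --- it is imported verbatim from \cite[Proposition 2.25]{kashiwara2103monoidal} and used as a black box --- so there is no in-paper proof to measure your attempt against. Judged on its own terms, your write-up is a correct \emph{setup} followed by an explicit deferral of the main step, and therefore has a genuine gap. The parts you do carry out are fine: $A=X\nab Y$ and $B=Y\nab Z$ are simple by Proposition \ref{Prop:simple_heads}(ii), hypothesis (i) translates into strong commutation via Proposition \ref{Prop:fundamental_properties_of_delta}(ii), the identifications $W\cong A\nab Z\cong X\nab B$ follow from hypothesis (ii), and the identities $X\cong\scD^{-1}Y\nab A$, $Z\cong B\nab\scD Y$ are correct applications of Proposition \ref{Prop:bijective_of_head}. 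The reduction of the statement to the pair of identities $\gL(Y,W)=\gL(Y,A)+\gL(Y,B)$ and $\gL(W,Y)=\gL(A,Y)+\gL(B,Y)$ is also legitimate. But those two identities \emph{are} the proposition (their sum is literally the assertion, after dividing by $2$), and you do not prove them: the passage ``the strong commutation of $\scD Y$ with $Z$ is exactly what lets $\gL(Y,-)$ split off the factor $Z$ \dots with remainder $\gL(Y,B)$ rather than $\gL(Y,Z)$'' names the desired conclusion rather than deriving it, and you then say outright that getting the cancellation right ``is the crux.'' A proof that ends by identifying the crux without resolving it is not a proof.

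There is also a concrete obstruction to the route you gesture at. The only $\gL$-additivity available in this paper is Proposition \ref{Prop:normal_implies_simple_head}, which applies to \emph{normal sequences of real simple modules}; here only $Y$ is assumed real, and neither $X$, $Z$ nor $A$, $B$ need be real, nor is $(X,Y,Z)$ known to be normal (Proposition \ref{Prop:normality} would require $\scD X$ and $Z$ to strongly commute, which is not assumed --- hypothesis (ii) is there precisely because normality is unavailable). So even granting your reduction, you cannot invoke the normal-sequence formulas to split $\gL(Y,W)$, and applying Proposition \ref{Prop:normal_implies_simple_head} naively to $W\cong X\nab B$ would in any case produce $\gL(Y,X)+\gL(Y,B)$, not $\gL(Y,A)+\gL(Y,B)$; as you yourself observe, these differ whenever $\tfd(X,Y)>0$. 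Closing the gap requires either reproducing the argument of \cite[Proposition 2.25]{kashiwara2103monoidal} (a direct analysis of the relevant R-matrices/denominators under hypotheses (i) and (ii)) or an equally explicit computation; the present text supplies neither.
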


Following \cite[Definition 4.14]{kashiwara2020monoidal} (see also \cite[Definition 2.5]{kashiwara2019laurent}), 
we say a sequence $(M_1,\ldots,M_r)$ of real simple modules in $\scC_\fg$ is a \textit{normal sequence} if the composition of the R-matrices
\begin{align}
  (\mathbf{r}_{M_{r-1},M_r}) \circ \cdots \circ (\mathbf{r}_{M_2,M_r}\circ \cdots \circ \mathbf{r}_{M_2,M_3})&\circ (\br_{M_1,M_r}\circ \cdots \circ \br_{M_1,M_2})\nonumber\\
  &\colon M_1 \otimes \cdots \otimes M_r \to M_r \otimes \cdots \otimes M_1\label{eq:composition_of_R}
\end{align}
does not vanish.

\begin{Prop}[{\cite[Lemma 4.15]{kashiwara2020monoidal}}]\label{Prop:normal_implies_simple_head}
 If $(M_1,\ldots,M_r)$ is a normal sequence of real simple modules in $\scC_\fg$, then $\hd(M_1 \otimes \cdots \otimes M_r)$ and $\soc(M_r\otimes \cdots \otimes M_1)$
 are simple and isomorphic to the image of the composition {\normalfont (\ref{eq:composition_of_R})}.
 Moreover both $(M_2,\ldots,M_r)$ and $(M_1,\ldots,M_{r-1})$ are normal sequences, and we have
  \begin{align*}
   \gL(M_1,\hd(M_2 \otimes \cdots \otimes M_r)) &= \sum_{k=2}^r \gL(M_1,M_k) \ \ \text{and}\\
   \gL(\hd(M_1 \otimes \cdots \otimes M_{r-1}), M_r) &= \sum_{k=1}^{r-1} \gL(M_k,M_r).
  \end{align*}
\end{Prop}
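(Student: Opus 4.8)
The plan is to argue by induction on $r$, with the cases $r\le 2$ essentially immediate. For $r=1$ there is nothing to prove. For $r=2$ the composition \eqref{eq:composition_of_R} is the single R-matrix $\br_{M_1,M_2}$, which is always nonzero, so every pair of real simple modules is a normal sequence; since one of $M_1,M_2$ is real, Proposition \ref{Prop:simple_heads}(ii) gives that $M_1\otimes M_2$ and $M_2\otimes M_1$ have simple heads and socles, and the discussion around \eqref{eq:head_socle} identifies the image of $\br_{M_1,M_2}$ with $M_1\nab M_2\cong M_2\Del M_1$, which is simultaneously $\hd(M_1\otimes M_2)$ and $\soc(M_2\otimes M_1)$; the two $\gL$-identities are then vacuous. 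So I would assume $r\ge 3$ and that the statement holds for all shorter normal sequences.

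For the inductive step I would first factor \eqref{eq:composition_of_R}. Set $N=M_2\otimes\cdots\otimes M_r$. Reading \eqref{eq:composition_of_R} from the right, the first block $\br_{M_1,M_r}\circ\cdots\circ\br_{M_1,M_2}$ is a map $\varphi_1\colon M_1\otimes N\to N\otimes M_1$ (it moves the tensor factor $M_1$ to the far right), while all the remaining blocks act only within the first $r-1$ tensor factors and compose to the map $\psi$ which is exactly \eqref{eq:composition_of_R} for $(M_2,\ldots,M_r)$; hence the whole composition is $\varphi=(\psi\otimes\id_{M_1})\circ\varphi_1$. Since $\varphi\ne 0$ we get $\psi\ne 0$, so $(M_2,\ldots,M_r)$ is a normal sequence, and by a symmetric argument — grouping \eqref{eq:composition_of_R} so that $M_r$ is instead moved to the far left, or applying the right-dual functor $\scD$, which carries \eqref{eq:composition_of_R} to a nonzero scalar multiple of the analogous composition for the dual sequence — the sequence $(M_1,\ldots,M_{r-1})$ is normal too. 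By the induction hypothesis $S:=\hd(M_2\otimes\cdots\otimes M_r)=\soc(M_r\otimes\cdots\otimes M_2)$ is simple and equals $\Img\psi$. Using the naturality of the normalized R-matrix in its second argument, $\varphi$ factors, up to a nonzero scalar, through $M_1\otimes S$, namely as $\id_{M_1}$ tensored with the projection $N\twoheadrightarrow S$, followed by $\br_{M_1,S}\colon M_1\otimes S\to S\otimes M_1$, followed by the inclusion $S\hookrightarrow M_r\otimes\cdots\otimes M_2$. As $M_1$ is real, $M_1\otimes S$ and $S\otimes M_1$ have simple heads and socles by Proposition \ref{Prop:simple_heads}(ii), with $\Img\br_{M_1,S}=M_1\nab S\cong S\Del M_1$, and iterating the non-vanishing lemma Proposition \ref{Prop:simple_heads}(i) over the factors $M_2,\dots,M_r$ (as in \cite{zbMATH06424572,kashiwara2020monoidal}) identifies $\hd(M_1\otimes\cdots\otimes M_r)$ with $M_1\nab S$ and $\soc(M_r\otimes\cdots\otimes M_1)$ with $S\Del M_1$; since $\varphi\ne 0$ and these are simple, they coincide with $\Img\varphi$.

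For the $\gL$-identities I would use that grouping a consecutive segment of a normal sequence by its head again produces a normal sequence (a fact established in the same inductive step), so that $\hd(M_2\otimes\cdots\otimes M_r)=\hd(M_2\otimes\cdots\otimes M_{r-1})\nab M_r$; then, applying the induction hypothesis to $(M_1,\ldots,M_{r-1})$, it suffices to prove the additivity $\gL(M_1,A\nab M_r)=\gL(M_1,A)+\gL(M_1,M_r)$ for $A:=\hd(M_2\otimes\cdots\otimes M_{r-1})$, and its mirror image. Since $\scD$ commutes with $\nab$ on these modules and $\gL(M_1,-)$ is the alternating sum of the integers $\tfd(M_1,\scD^k(-))$, this comes down to the corresponding additivity of $\tfd$, namely $\tfd(M_1,\scD^kA\nab\scD^kM_r)=\tfd(M_1,\scD^kA)+\tfd(M_1,\scD^kM_r)$ for all $k\in\Z$; the inequality $\le$ is Proposition \ref{Prop:weakly_decreasing}, and the reverse inequality I would derive from the three-term formula Proposition \ref{Prop:three_terms}, whose hypotheses are guaranteed by the normality of $(M_1,\dots,M_r)$ and of its grouped variants.

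I expect the main obstacle to be exactly this last $\tfd$/$\gL$-additivity: the easy inequality is immediate, but the reverse direction requires a careful accounting of which pairs among $M_1,\dots,M_r$ actually contribute, and this is precisely the point where the normality hypothesis and the three-term formula are genuinely used. A secondary, more routine subtlety is to arrange all the R-matrix manipulations above — the factorization of \eqref{eq:composition_of_R}, the passage to $M_1\otimes S$ via naturality, and the iterated use of Proposition \ref{Prop:simple_heads}(i) — so that no composition vanishes accidentally; this is standard but must be carried out with attention to denominators.
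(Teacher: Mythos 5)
This proposition is imported verbatim from \cite[Lemma 4.15]{kashiwara2020monoidal}; the paper contains no proof of it, so there is nothing internal to compare your argument against. Your overall architecture (induction on $r$, factoring the composition (\ref{eq:composition_of_R}) as $(\psi\otimes\id_{M_1})\circ\varphi_1$ and, via reduced-word independence of compositions of renormalized R-matrices, as $\varphi_2\circ(\psi'\otimes\id_{M_r})$; then pushing $\varphi_1$ down to $\br_{M_1,S}$ by naturality and a denominator count) is the standard one and those steps can all be made rigorous. The two places where the proposal does not yet constitute a proof are the following.

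First, simplicity of $\hd(M_1\otimes\cdots\otimes M_r)$. What your factorization delivers is that $M_1\nab S$ is \emph{one} simple quotient of $M_1\otimes\cdots\otimes M_r$, realized as $\Img\varphi$; it does not show that it is the \emph{only} one, i.e.\ that the maximal semisimple quotient is simple. "Iterating Proposition \ref{Prop:simple_heads}(i)" cannot close this: that proposition is a non-vanishing statement about compositions and never rules out a second, independent simple quotient (equivalently, it does not show that $M_1\otimes\ker(N\twoheadrightarrow S)$ lies in the radical of $M_1\otimes N$). The genuine content here is the rigidity argument of \cite{zbMATH06424572}: one uses that $\Img\varphi$ is simultaneously a quotient of the forward product and a submodule of the reversed product, together with the adjunctions $\Hom{\scC_\fg}(M\otimes X,Y)\cong\Hom{\scC_\fg}(X,\scD M\otimes Y)$, to show every simple quotient coincides with $\Img\varphi$ with multiplicity one. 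This step needs to be supplied, not cited away.

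Second, the $\gL$-identities. Your reduction of $\gL(M_1,A\nab M_r)=\gL(M_1,A)+\gL(M_1,M_r)$ to the additivity of $\tfd\big(M_1,\scD^k(A\nab M_r)\big)$ for \emph{every} $k\in\Z$ is not valid: $\gL$ is an alternating sum of the $\tfd(M_1,\scD^k(-))$, and additivity of the sum does not require (and normality does not provide) termwise additivity; termwise additivity for all $k$ is a far stronger statement than the hypothesis, and there is no reason for it to hold. Moreover, Proposition \ref{Prop:three_terms} computes $\tfd$ of the \emph{middle} factor against the head, so it does not apply to $\tfd(M_1,\hd(M_2\otimes\cdots\otimes M_r))$; and both Proposition \ref{Prop:three_terms} and Lemma \ref{Lem:two_normal_sequences} come from \cite{kashiwara2103monoidal,kashiwara2020pbw}, whose proofs rest on the present lemma, so invoking them here is circular. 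The correct route is intrinsic to the R-matrix: $\gL(M,N)$ measures the order of vanishing in the normalization of $R^{\mathrm{univ}}_{M,N_z}$ at $z=1$, and the non-vanishing of the composition (\ref{eq:composition_of_R}) is precisely the statement that these orders add when one replaces $M_2\otimes\cdots\otimes M_r$ by its head; this is how \cite{kashiwara2020monoidal} obtains the identity, and it is independent of the $\tfd$-by-$\tfd$ bookkeeping you propose.
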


\begin{Prop}[{\cite[Lemma 4.17]{kashiwara2020monoidal}}]\label{Prop:normality}
 For real simple modules $X$, $Y$ and $Z$ in $\scC_\fg$, the triple $(X,Y,Z)$ is a normal sequence if
% one of the following conditions holds:\\
% {\normalfont(i)} $X$ and $Y$ strongly commute,\\
% {\normalfont(ii)} $Y$ and $Z$ strongly commute,\\
% {\normalfont(iii)} 
 $\scD X$ and $Z$ strongly commute. 
\end{Prop}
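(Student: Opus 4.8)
This is \cite[Lemma 4.17]{kashiwara2020monoidal}, and here is the route I would take. The idea is to restore spectral parameters, reduce the non-vanishing of the composite in \eqref{eq:composition_of_R} to the statement that clearing denominators introduces no superfluous zero along the diagonal, and then observe that the hypothesis on $\scD X$ and $Z$ removes the only possible such zero.

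First I would reinstate spectral parameters, writing $\Phi(x,y,z)$ for the composite $(R^{\mathrm{norm}}_{Y_y,Z_z}\otimes X_x)\circ(Y_y\otimes R^{\mathrm{norm}}_{X_x,Z_z})\circ(R^{\mathrm{norm}}_{X_x,Y_y}\otimes Z_z)$ of normalized R-matrices, with tensor slots arranged as in \eqref{eq:composition_of_R}. Over $\bk(x,y,z)$ this is, up to a scalar, the unique $U_q'(\fg)$-linear isomorphism $X_x\otimes Y_y\otimes Z_z\xrightarrow{\sim}Z_z\otimes Y_y\otimes X_x$ carrying the product of $\ell$-highest weight vectors to its reversal. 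Set $P(x,y,z)=d_{X,Y}(y/x)\,d_{X,Z}(z/x)\,d_{Y,Z}(z/y)$; then $P\,\Phi$ maps the $\bk[x^{\pm},y^{\pm},z^{\pm}]$-forms into one another, and its specialization at $x=y=z$ is, up to a nonzero scalar, precisely the composite \eqref{eq:composition_of_R}. Hence $(X,Y,Z)$ is normal if and only if $P\,\Phi$ does not vanish on the diagonal $\{x=y=z\}$, that is, if and only if $P$ and the minimal denominator of $\Phi$ vanish to the same order along that diagonal.

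Next I would localize where $P$ could overcount. Using the Yang--Baxter and hexagon relations, the two inner maps compose into an R-matrix of $X_x$ against the block $Y_y\otimes Z_z$, with denominator dividing $d_{X,Y}(y/x)\,d_{X,Z}(z/x)$, and there is a parallel description of the outer composition with $R^{\mathrm{norm}}_{Y_y,Z_z}$. Tracking these factorizations shows that the contributions of $d_{X,Y}$ and $d_{Y,Z}$ are genuinely needed along the diagonal (they record that $\br_{X,Y}$ and $\br_{Y,Z}$ are not isomorphisms), so any superfluous zero of $P$ on the diagonal must come from the factor $d_{X,Z}$, and more precisely from the behaviour of $d_{X,Z}$ (and $d_{Z,X}$) at the spectral parameter at which $\scD X$ sits relative to $Z$. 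Now the hypothesis enters: $X$ real implies $\scD X$ real, and $\scD X$ strongly commutes with $Z$, so $\tfd(\scD X,Z)=0$ by Proposition \ref{Prop:fundamental_properties_of_delta}(ii); identifying $\scD$ with the distinguished spectral shift of $\scC_\fg$, this is exactly the statement that $d_{X,Z}$ and $d_{Z,X}$ are regular at that parameter. Hence $P$ produces no superfluous zero on the diagonal, $P\,\Phi$ does not vanish there, and $(X,Y,Z)$ is a normal sequence.

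The main obstacle is the bookkeeping in the previous paragraph: proving rigorously, via the hexagon identities, that the denominator of the threefold composite $\Phi$ agrees with $P$ up to a unit near the diagonal except possibly for a $d_{X,Z}$-factor, and pinning that factor down to the spectral shift governing $\scD X$ versus $Z$. Everything else — the reduction to \eqref{eq:composition_of_R}, the Yang--Baxter/hexagon identities, and the passage from strong commutation of $\scD X$ and $Z$ to $\tfd(\scD X,Z)=0$ via Proposition \ref{Prop:fundamental_properties_of_delta} — is routine.
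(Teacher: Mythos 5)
The paper offers no proof of this proposition: it is imported verbatim from \cite[Lemma 4.17]{kashiwara2020monoidal}, so the only thing to compare your argument against is that reference. Judged on its own terms, your sketch has a genuine gap precisely at the step you yourself flag as ``the main obstacle'': the assertion that the denominator of the threefold composite $\Phi$ agrees with $P=d_{X,Y}(y/x)\,d_{X,Z}(z/x)\,d_{Y,Z}(z/y)$ near the diagonal, except possibly for a factor controlled by $\tfd(\scD X,Z)$, \emph{is} the theorem, and nothing in the proposal establishes it. The justification you give for the $d_{X,Y}$ and $d_{Y,Z}$ factors being ``genuinely needed'' --- that $\br_{X,Y}$ and $\br_{Y,Z}$ are not isomorphisms --- only says that each renormalized factor is individually nonzero at the special point; it does not rule out the image of one factor landing in the kernel of the next, which is exactly how normality fails. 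Moreover the reduction ``normal iff $P$ and the minimal denominator of $\Phi$ vanish to the same order along the diagonal'' is not well posed as written: $R^{\mathrm{norm}}_{X_x,Y_y}$ depends only on $y/x$, so $\Phi$ cannot be restricted to the diagonal before renormalizing, and in three variables the set of denominators is an ideal that need not be principal, so the order-counting has to be set up with much more care. Making all of this precise is the content of the $\gL$-invariant machinery of \cite{kashiwara2020monoidal}, which your sketch replaces by an assertion.

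Two further specific problems. First, ``identifying $\scD$ with the distinguished spectral shift of $\scC_\fg$'' is false in general: $\scD L(m)\cong L(m^*)$ involves the diagram involution $i\mapsto i^*$ as well as a shift (cf.\ (\ref{eq:dual_of_monomial})), so $\scD X$ is not a spectral translate of $X$, and $\tfd(\scD X,Z)=0$ is by definition a statement about $d_{\scD X,Z}(z)\,d_{Z,\scD X}(z)$ at $z=1$, which is related to $d_{X,Z}$ only through $d_{Z,X}(1/z)$ and the universal coefficient, not through regularity of $d_{X,Z}$ at a shifted parameter. Second, under the hypotheses of the proposition $\tfd(X,Z)$ may well be positive, so $d_{X,Z}(z/x)$ genuinely vanishes on the diagonal; the question is not whether that zero is present but whether it is fully ``consumed'' by the composite, and this is governed by the asymmetric invariant $\gL(X,Z)$ versus $-\gL(Z,X)$ (whose difference is $2\,\tfd(X,Z)$), which your $\tfd$-level bookkeeping cannot see. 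The rigorous route in \cite{kashiwara2020monoidal} reduces normality of $(X,Y,Z)$ to an additivity statement for $\gL$ (in the spirit of Lemma \ref{Lem:two_normal_sequences} and Proposition \ref{Prop:normal_implies_simple_head}), proves one inequality by subadditivity of $\gL$ on simple subquotients, and uses $\tfd(\scD X,Z)=0$ through the identity $\gL(\scD X,Z)=-\gL(Z,\scD X)$, which holds exactly when $\scD X$ and $Z$ strongly commute (Proposition \ref{Prop:fundamental_properties_of_delta}); if you want to complete your argument you should pass to that framework rather than to raw denominators.
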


\begin{Lem}[{\cite[Lemma 2.24]{kashiwara2020pbw}}]\label{Lem:two_normal_sequences}
 For real simple modules $X,Y,Z$ in $\scC_\fg$, $\tfd(X, Y\nab Z) =\tfd(X,Y) + \tfd(X,Z)$ holds if and only if both $(X,Y,Z)$ and $(Y,Z,X)$ are normal sequences.
\end{Lem}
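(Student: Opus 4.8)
The plan is to work directly with the R-matrix compositions of (\ref{eq:composition_of_R}), using a spectral parameter, and to convert each of the two normality conditions into an equality of orders of vanishing at $z=1$ of certain denominator polynomials; adding the two equalities will then yield the lemma. Set $W = Y\nab Z = \hd(Y\otimes Z)$. As one of $Y,Z$ is real, $W$ is simple by Proposition~\ref{Prop:simple_heads}(ii), and by (\ref{eq:head_socle}) we also have $W = Z\Del Y = \soc(Z\otimes Y)$. The image of $\br_{Y,Z}\colon Y\otimes Z\to Z\otimes Y$ is isomorphic to the simple module $W$; since $\hd(Y\otimes Z)=W$ is simple, the kernel of $\br_{Y,Z}$ is the radical of $Y\otimes Z$, so $\br_{Y,Z}$ factors, up to a nonzero scalar, as $Y\otimes Z\xrightarrow{\,p\,}W\xrightarrow{\,\iota\,}Z\otimes Y$ with $p$ the canonical surjection and $\iota$ the inclusion of the socle.

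I would first treat the ordering $(X,Y,Z)$. Introducing a spectral parameter on $X$ and invoking the coproduct-compatibility and naturality of normalized R-matrices, the specialization at $z=1$ of the composite that moves $X_z$ to the right past $Y$ and then past $Z$, postcomposed with $p\otimes\mathrm{id}_X$, equals $e(1)\,\br_{X,W}\circ(\mathrm{id}_X\otimes p)$, where
\[ e(z) \;=\; \frac{d_{X,Y}(z)\,d_{X,Z}(z)}{d_{X,W}(z)} \]
lies in $\bk[z]$ because $W$ is a quotient of $Y\otimes Z$, so $d_{X,W}(z)\mid d_{X,Y}(z)\,d_{X,Z}(z)$. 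Composing once more with $\iota\otimes\mathrm{id}_X$ recovers exactly the composition (\ref{eq:composition_of_R}) for $(X,Y,Z)$, since its final factor $\br_{Y,Z}$ is $\iota\circ p$ up to a scalar. As $\iota\otimes\mathrm{id}_X$ is injective, $\mathrm{id}_X\otimes p$ is surjective and $\br_{X,W}\neq 0$, that composition is nonzero if and only if $e(1)\neq 0$, i.e.\ if and only if $\mathrm{ord}_{z=1}d_{X,W} = \mathrm{ord}_{z=1}d_{X,Y}+\mathrm{ord}_{z=1}d_{X,Z}$. The mirror computation — moving $X_z$ to the left past the block $Z\otimes Y$ and using the inclusion $\iota\colon W\hookrightarrow Z\otimes Y$ — shows in the same way that $(Y,Z,X)$ is normal if and only if $\mathrm{ord}_{z=1}d_{W,X} = \mathrm{ord}_{z=1}d_{Y,X}+\mathrm{ord}_{z=1}d_{Z,X}$. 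Each of these two identities is equality in an a priori inequality $\le$ coming from $d_{X,W}\mid d_{X,Y}d_{X,Z}$ and $d_{W,X}\mid d_{Y,X}d_{Z,X}$; adding the two inequalities and recalling that $\tfd(M,N)=\mathrm{ord}_{z=1}\bigl(d_{M,N}(z)\,d_{N,M}(z)\bigr)$, the sum of the left sides is $\tfd(X,W)=\tfd(X,Y\nab Z)$ and the sum of the right sides is $\tfd(X,Y)+\tfd(X,Z)$. Hence both $(X,Y,Z)$ and $(Y,Z,X)$ are normal if and only if $\tfd(X,Y\nab Z)=\tfd(X,Y)+\tfd(X,Z)$, as claimed.

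The main obstacle is the input used without proof above: the coproduct-compatibility and naturality statements for normalized R-matrices that furnish the commuting squares (in particular that restricting the composite moving $X_z$ past $Y\otimes Z$ along $p$, resp.\ along $\iota$, produces $\br_{X,W}$, resp.\ $\br_{W,X}$, up to the explicit polynomial factor), together with the denominator divisibilities $d_{X,W}\mid d_{X,Y}d_{X,Z}$ and $d_{W,X}\mid d_{Y,X}d_{Z,X}$; these are standard but must be set up carefully. I note finally that the implication ``both normal $\Rightarrow$ the $\tfd$-identity'' also has a short, purely categorical proof: apply Proposition~\ref{Prop:normal_implies_simple_head} to the normal sequences $(X,Y,Z)$ and $(Y,Z,X)$ to obtain $\gL(X,Y\nab Z)=\gL(X,Y)+\gL(X,Z)$ and $\gL(Y\nab Z,X)=\gL(Y,X)+\gL(Z,X)$, add these, and invoke $2\tfd(M,N)=\gL(M,N)+\gL(N,M)$ from Proposition~\ref{Prop:fundamental_properties_of_delta}(i).
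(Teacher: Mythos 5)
The paper does not prove this lemma; it quotes it verbatim from \cite[Lemma 2.24]{kashiwara2020pbw}, so there is no internal argument to measure you against. Your proof is essentially the argument from that reference, unfolded to the level of denominators of $R$-matrices, and it is sound. The logical skeleton is exactly right: normality of $(X,Y,Z)$ is equivalent to $e(1)\neq 0$ for $e=d_{X,Y}d_{X,Z}/d_{X,W}$, normality of $(Y,Z,X)$ to $e'(1)\neq 0$ for $e'=d_{Y,X}d_{Z,X}/d_{W,X}$, each is an equality case of an a priori inequality of orders of vanishing at $z=1$, and summing the two recovers $\tfd(X,Y\nab Z)\le \tfd(X,Y)+\tfd(X,Z)$ with equality iff both summands are equalities — which also gives the converse direction, since a sum of inequalities is an equality only if each one is. The inputs you flag as unproven (the hexagon identity $R^{\mathrm{norm}}_{X,(Y\otimes Z)_z}=(R^{\mathrm{norm}}_{X,Z_z})(R^{\mathrm{norm}}_{X,Y_z})$, naturality along $p$ and $\iota$, and the divisibilities $d_{X,W}\mid d_{X,Y}d_{X,Z}$, $d_{W,X}\mid d_{Y,X}d_{Z,X}$) are genuinely standard and are in \cite{MR1890649} and \cite{zbMATH06424572}, so this is an honest citation rather than a gap. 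Two details to pin down when writing it carefully: the scalar in the commuting square is exactly $e(z)$ (not merely a unit multiple) because $p$ and $\iota$ match $\ell$-highest weight vectors — for $\iota$ this uses that $\soc(Z\otimes Y)\cong Y\nab Z$ has the same highest $\ell$-weight as $Z\otimes Y$ and that weight space is one-dimensional, so $u_Z\otimes u_Y$ lies in the socle; and the factorization $\br_{Y,Z}=c\,\iota\circ p$ uses that $\hd(Y\otimes Z)$ and $\soc(Z\otimes Y)$ are simple (Proposition \ref{Prop:simple_heads}(ii)), so the kernel of $\br_{Y,Z}$ is the unique maximal submodule and its image is the unique simple submodule. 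Your closing observation is also correct: the implication from normality to the $\tfd$-identity needs none of this machinery and follows directly from Propositions \ref{Prop:normal_implies_simple_head} and \ref{Prop:fundamental_properties_of_delta}(i), which the paper already quotes; it is only the converse that forces the quantitative $R$-matrix analysis.
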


Following \cite[Definition 2.16]{kashiwara2103monoidal}, we say a sequence $(M_1,\ldots,M_r)$ of real simple modules in $\scC_{\fg}$ is 
\textit{unmixed} (resp.\ \textit{strongly unmixed}) if for all $1 \leq j < k \leq r$ we have
\[ \tfd(\scD M_j, M_k) = 0 \ \ \ \big(\text{resp.\ } \tfd(\scD^l M_j,M_k) = 0 \ \ \text{for all $l \in \Z_{>0}$}\big).
\]

\begin{Prop}[{\cite[Lemma 5.3]{kashiwara2020pbw}}]\label{Prop:unmixed_normal}
 Any unmixed sequence of real simple modules is a normal sequence.
\end{Prop}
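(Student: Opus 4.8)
The natural approach is induction on the length $r$ of the sequence. For $r\le 2$ there is nothing to prove, since the composition \eqref{eq:composition_of_R} is then either empty or a single renormalized R-matrix $\br_{M_1,M_2}$, which is nonzero by construction.

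Assume $r\ge 3$ and that every unmixed sequence of shorter length is normal, and let $(M_1,\dots,M_r)$ be unmixed. The truncation $(M_2,\dots,M_r)$ is again unmixed, hence normal by the induction hypothesis, so by Proposition \ref{Prop:normal_implies_simple_head} the composition \eqref{eq:composition_of_R} for $(M_2,\dots,M_r)$ is nonzero and equals $\iota\circ\pi$ up to a nonzero scalar, where $\pi\colon M_2\otimes\cdots\otimes M_r\twoheadrightarrow L$ is the projection onto the simple head $L:=\hd(M_2\otimes\cdots\otimes M_r)$ and $\iota\colon L\hookrightarrow M_r\otimes\cdots\otimes M_2$ is the inclusion of the socle. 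Reading off the parenthesization in \eqref{eq:composition_of_R}, the composition for $(M_1,\dots,M_r)$ equals the composition for $(M_2,\dots,M_r)$ --- acting on the leftmost $r-1$ tensor factors --- tensored with $\mathrm{id}_{M_1}$, precomposed with $\Theta:=\br_{M_1,M_r}\circ\cdots\circ\br_{M_1,M_2}$; that is, it equals $(\iota\otimes\mathrm{id}_{M_1})\circ(\pi\otimes\mathrm{id}_{M_1})\circ\Theta$. Since $\iota\otimes\mathrm{id}_{M_1}$ is injective, it therefore suffices to show that
\[
 \Xi := (\pi\otimes\mathrm{id}_{M_1})\circ\Theta \colon M_1\otimes M_2\otimes\cdots\otimes M_r \longrightarrow L\otimes M_1
\]
is nonzero.

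To this end I would show that $\Xi$ is a nonzero multiple of $\br_{M_1,L}\circ(\mathrm{id}_{M_1}\otimes\pi)$, which is nonzero because $\mathrm{id}_{M_1}\otimes\pi$ is surjective and $\br_{M_1,L}\ne 0$. The idea is to ``slide'' the projection $\pi$ leftward through the chain of R-matrices in $\Theta$, moving $M_1$ past $M_2,\dots,M_r$ one factor at a time; at each step one applies the diamond Proposition \ref{Prop:simple_heads}(i), whose required simplicity of the ``middle'' module is furnished by the simplicity of the nested heads $\hd(M_k\otimes\cdots\otimes M_r)$ --- guaranteed by the induction hypothesis together with Proposition \ref{Prop:normal_implies_simple_head}. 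The unmixedness hypothesis is precisely what keeps each crossing non-degenerate: the relevant datum for the crossing of $M_1$ with $M_k$ is the vanishing $\tfd(\scD M_1,M_k)=0$, which by Proposition \ref{Prop:fundamental_properties_of_delta}(ii) means that $\scD M_1$ and $M_k$ strongly commute; via Proposition \ref{Prop:normality} applied to a triple of the form $(M_1,\ \hd(M_{k+1}\otimes\cdots\otimes M_r),\ M_k)$, or else through a further application of Proposition \ref{Prop:simple_heads}(i), this guarantees that the crossing does not collapse the map.

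The heart of the proof --- and the step I expect to be the main obstacle --- is making this ``sliding'' argument precise. One must fix the order in which the factors $M_2,\dots,M_r$ are processed so that every ``middle'' module arising in an application of Proposition \ref{Prop:simple_heads}(i) is one of the provably simple nested heads, and one must track the tensor-factor orderings carefully, repeatedly using \eqref{eq:head_socle} to pass between the head and socle descriptions of the modules that occur. A point to be watchful of is that the argument should rely only on the \emph{simplicity} of the intermediate heads, which the induction hypothesis provides, and not on their \emph{reality}, which is not yet available at this stage; should an appeal to Proposition \ref{Prop:normality} (which requires real modules) prove unavoidable for some triple, one would either have to supply the missing reality by strengthening the induction hypothesis or rework that step using Proposition \ref{Prop:simple_heads}(i) alone.
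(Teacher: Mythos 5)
The paper gives no proof of this proposition; it is imported directly from \cite[Lemma 5.3]{kashiwara2020pbw}, so there is no internal argument to compare against. On its own terms, your set-up is correct: the composition \eqref{eq:composition_of_R} for $(M_1,\dots,M_r)$ does factor as the composition for $(M_2,\dots,M_r)$ (on the leftmost $r-1$ factors) tensored with $\mathrm{id}_{M_1}$, precomposed with $\Theta$, and the reduction to the non-vanishing of $\Xi=(\pi\otimes\mathrm{id}_{M_1})\circ\Theta$ via the injectivity of $\iota\otimes\mathrm{id}_{M_1}$ is valid.

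The problem is that the proof stops exactly where the content of the lemma begins. The identity $\Xi=c\cdot\br_{M_1,L}\circ(\mathrm{id}_{M_1}\otimes\pi)$ for \emph{some} scalar $c$ follows from functoriality of normalized R-matrices, but $c$ is essentially the value at $z=1$ of $\prod_{k\ge 2}d_{M_1,M_k}(z)\big/d_{M_1,L_z}(z)$, and it vanishes precisely when the renormalization overshoots --- equivalently, when $\gL\big(M_1,\hd(M_2\otimes\cdots\otimes M_r)\big)<\sum_{k\ge2}\gL(M_1,M_k)$. Proving that this additivity holds is the entire point of the lemma, it is exactly where the hypothesis $\tfd(\scD M_1,M_k)=0$ must enter quantitatively, and you give no argument for it. Moreover, the two escape routes you name do not assemble into one: Proposition \ref{Prop:normality} is unavailable because the intermediate heads $\hd(M_k\otimes\cdots\otimes M_r)$ are not known to be real (reality of such heads genuinely fails in general and is only recovered under extra hypotheses, cf.\ Proposition \ref{Prop:reality}); and Proposition \ref{Prop:simple_heads}(i) only yields non-vanishing for compositions of the shape $(g\otimes M_3)\circ(M_1\otimes f)$ through a simple middle factor, which is not the shape of the chain of R-matrices in $\Theta$ without further manipulation that you do not supply. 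Since you explicitly concede that you cannot carry out the ``sliding'' step, the proposal identifies the right obstacle but does not prove the proposition.
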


\begin{Lem}[{\cite[Lemma 4.26]{kashiwara2103monoidal}}]\label{Lem:technical}
 Let $X,Y,Z$ be real simple modules in $\scC_\fg$, and assume the following two conditions:
  \begin{itemize}\setlength{\leftskip}{-15pt}
   \item[{\normalfont(i)}] $(X,Y,Z)$ is a normal sequence, and
   \item[{\normalfont(ii)}] $\gL(Y,X) +\gL(Y,Z) -\gL(Y,X\nab Z) = 2\,\tfd(X,Y)$.
  \end{itemize}
   Then the composition 
   \[ X \otimes Y \otimes Z \stackrel{\br_{X,Y}\otimes Z}{\longrightarrow} Y\otimes X \otimes Z \twoheadrightarrow Y \nab (X \nab Z)
   \]
   is surjective and induces an isomorphism $\hd(X\otimes Y \otimes Z) \stackrel{\sim}{\to} Y \nab (X \nab Z)$.
\end{Lem}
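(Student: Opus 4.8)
The plan is to reduce the assertion to the non-vanishing of a single homomorphism, and then to use the two hypotheses separately: normality of $(X,Y,Z)$ supplies ``most'' of the non-vanishing, while the numerical identity~(ii) is exactly what rules out an accidental cancellation. First I would record that both modules in the conclusion are simple: $\hd(X\otimes Y\otimes Z)$ is simple by Proposition~\ref{Prop:normal_implies_simple_head} (as $(X,Y,Z)$ is a normal sequence), and $Y\nab(X\nab Z)$ is simple by two applications of Proposition~\ref{Prop:simple_heads}(ii) (using that $X,Z$ are real, so $X\nab Z$ is simple, and then that $Y$ is real). Write $\phi\colon X\otimes Y\otimes Z\to Y\nab(X\nab Z)$ for the composite in the statement. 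Since its target is simple, $\phi$ is either zero or surjective; and a nonzero homomorphism into a simple module factors through the head of its source, so if $\phi\neq0$ it induces a nonzero, hence invertible, homomorphism $\hd(X\otimes Y\otimes Z)\xrightarrow{\sim}Y\nab(X\nab Z)$. Thus the lemma reduces to proving $\phi\neq0$.

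Next I would decompose $\phi=q\circ\psi$, where $p\colon X\otimes Z\twoheadrightarrow X\nab Z$ is the canonical surjection, $\psi=(Y\otimes p)\circ(\br_{X,Y}\otimes Z)\colon X\otimes Y\otimes Z\to Y\otimes(X\nab Z)$, and $q\colon Y\otimes(X\nab Z)\twoheadrightarrow Y\nab(X\nab Z)$ is the projection onto the (simple) head, so $\ker q=\mathrm{rad}\big(Y\otimes(X\nab Z)\big)$; hence $\phi\neq0$ is equivalent to $\Img(\psi)\not\subseteq\ker q$. To control $\psi$ I would bring in the composition of $R$-matrices $\Theta\colon X\otimes Y\otimes Z\to Z\otimes Y\otimes X$ of~\eqref{eq:composition_of_R} for the sequence $(X,Y,Z)$, which is nonzero precisely because $(X,Y,Z)$ is normal. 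Since $X$ is real, $\br_{X,Z}$ factors as $X\otimes Z\xrightarrow{p}X\nab Z\xrightarrow{\iota}Z\otimes X$ (with $\iota$ the embedding of $X\nab Z\cong\soc(Z\otimes X)$), whence $\Theta=\Theta'\circ\psi$ with $\Theta'=(\br_{Y,Z}\otimes X)\circ(Y\otimes\iota)\colon Y\otimes(X\nab Z)\to Z\otimes Y\otimes X$. Consequently, if $\Theta'$ annihilates $\ker q$, then $\Theta$ factors through $\phi=q\circ\psi$, and $\phi\neq0$ follows from $\Theta\neq0$.

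The crux, then, is to show that $\Theta'$ kills $\mathrm{rad}\big(Y\otimes(X\nab Z)\big)$, and this is where hypothesis~(ii) is consumed. I would verify it by a spectral-parameter computation: express $\psi$ and $\Theta'$ through the normalized $R$-matrices $R^{\mathrm{norm}}_{X,Z_z}$, $R^{\mathrm{norm}}_{Y,X_z}$, $R^{\mathrm{norm}}_{Y,Z_z}$ and $R^{\mathrm{norm}}_{Y,(X\nab Z)_z}$, and compare the orders of vanishing at $z=1$ of the relevant denominator products. The additivity of $\gL$ along the normal sequence $(X,Y,Z)$ (Proposition~\ref{Prop:normal_implies_simple_head}) gives $\gL(X,Y\nab Z)=\gL(X,Y)+\gL(X,Z)$ and $\gL(X\nab Y,Z)=\gL(X,Z)+\gL(Y,Z)$; rewriting~(ii) as $\gL(Y,X\nab Z)=\gL(Y,Z)-\gL(X,Y)$ and combining these, one finds that the vanishing order carried by $\Theta'$ equals the one already forced by the inclusion $\iota$, so that passing to the head of $Y\otimes(X\nab Z)$ does not further annihilate $\Theta'$; this is exactly $\ker\Theta'\supseteq\ker q$. (One could instead attempt a composition-multiplicity argument: using~\eqref{eq:head_socle}, Proposition~\ref{Prop:length2} and~(ii), show that every composition factor of $Y\otimes X\otimes Z$ isomorphic to $Y\nab(X\nab Z)$ already occurs inside the submodule $\soc(Y\otimes X)\otimes Z$, which again forces $\phi\neq0$.)

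The main obstacle is precisely this last step: distilling from the single scalar identity~(ii) the order comparison that lets $\Theta'$ descend to the head of $Y\otimes(X\nab Z)$. Carrying it out cleanly requires tracking simultaneously the $R$-matrices and the $\gL$- and $\tfd$-values among $X$, $Y$, $Z$ and the heads $X\nab Y$, $Y\nab Z$, $X\nab Z$, and checking that~(ii) is exactly the balancing relation among them. It is the only place where the numerical hypothesis, as opposed to the normality hypothesis, is genuinely used; everything else is the formal calculus of heads, socles and $R$-matrices recalled in Section~\ref{Section:QAA}.
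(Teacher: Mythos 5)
First, a point of comparison: the paper does not prove this lemma at all --- it is imported verbatim from [KKOP, Lemma 4.26], as the bracketed citation indicates --- so your proposal can only be judged against what a complete proof must contain. Your reductions are correct and standard: both $\hd(X\otimes Y\otimes Z)$ and $Y\nab(X\nab Z)$ are simple, so the lemma is equivalent to $\phi=q\circ\psi\neq0$; since $X$ is real, $\br_{X,Z}$ factors as $\iota\circ p$ through its simple image $X\nab Z\cong\soc(Z\otimes X)$, whence the normal-sequence composition factors as $\Theta=\Theta'\circ\psi$ with $\Theta\neq0$; and if $\Theta'$ killed $\ker q$ then $\phi\neq0$ would follow.

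The gap is the step you yourself flag, and it is not a bookkeeping issue but the entire content of the lemma; moreover, the intermediate statement you propose to establish appears to be the wrong one. What the hexagon identity for normalized R-matrices yields, after clearing denominators, is an identity of the shape
\[
 (Z\otimes\br_{Y,X})\circ\Theta' \;=\; c(1)\,(\iota\otimes Y)\circ\br_{Y,\,X\nab Z},
\]
where $c(z)$ is a ratio of denominators and normalizing factors whose order of vanishing at $z=1$ is measured by $\gL(Y,X)+\gL(Y,Z)-\gL(Y,X\nab Z)-2\,\tfd(X,Y)$; hypothesis (ii) is precisely the statement that $c(1)\neq0$. This shows that $(Z\otimes\br_{Y,X})\circ\Theta'$ factors through $q$ --- not that $\Theta'$ does. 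The identity only gives $\Theta'\big(\mathrm{rad}(Y\otimes(X\nab Z))\big)\subseteq Z\otimes\Ker\br_{Y,X}$, which has room to be nonzero whenever $\tfd(X,Y)>0$, so your inclusion $\ker\Theta'\supseteq\ker q$ is unsupported and I see no reason for it to hold in general. If you reroute through $(Z\otimes\br_{Y,X})\circ\Theta'$, you then still owe the non-vanishing of $(Z\otimes\br_{Y,X})\circ\Theta$: post-composing the nonzero map $\Theta$ with $Z\otimes\br_{Y,X}$ could a priori annihilate its simple image, and ruling this out is a genuine further step (this is where the quantum Yang--Baxter equation and a degree count on compositions of R-matrices enter), which your sketch does not address. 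Finally, the identification of the order of vanishing of $c(z)$ with the left-hand side of (ii) is asserted rather than derived; without unwinding the definitions of $\gL$ and $\tfd$ through the denominators $d_{M,N}(z)$, hypothesis (ii) has not actually been consumed.
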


%%%%%%%%%%%%%%%%%%%%%%%%%%%%%%%%%%%%%%%%%%%%%%%%%%%%%%%%%%%%%%%%%

\subsection{Strong duality data}\label{Subsection:strong_duality_datum}

Let $\sg$ be a simple Lie algebra of type $ADE$ with a Cartan matrix $A=(a_{ij})_{i,j\in I}$ as in Section \ref{Section:simple_Lie_algebras}.
We freely use the notation in the section.
Recall that we mainly use plain symbols such as $I$, $\ga_i$, not for $\fg$ but $\sg$.

A module $L \in \scC_\fg$ is called a \textit{root module} if $L$ is a real simple module such that
\[ \tfd(L,\scD^{k}L) = \gd_{k,-1}+\gd_{k,1} \ \ \ \text{for any $k \in \Z$},
\]
see \cite[Section 3]{kashiwara2020pbw}.

\begin{Def}[{\cite[Definition 4.7]{kashiwara2020pbw}}]\normalfont
 Let $\mathcal{D}=\{\sL_i\}_{i \in  I}$ be a family of simple modules in $\scC_\fg$.
 \begin{itemize}\setlength{\leftskip}{-15pt}
  \item[(i)] We say $\mathcal{D}$ is a \textit{duality datum} associated with $\sg$ if the following conditions are satisfied: 
  \begin{enumerate}\setlength{\leftskip}{-15pt}
   \item[(a)] $\sL_i$ is real for all $i \in  I$, and
   \item[(b)] $\tfd(\sL_i,\sL_j) = -a_{ij}$ for all $i,j \in I$ such that $i\neq j$.
  \end{enumerate}
  \item[(ii)] We say $\mathcal{D}$ is a \textit{strong duality datum} associated with $\sg$ if the following conditions are satisfied:
  \begin{enumerate}\setlength{\leftskip}{-15pt}
   \item[(c)] $\sL_i$ is a root module for all $i \in I$, and
   \item[(d)]   $\tfd(\sL_i,\scD^k \sL_j) = -\gd_{k,0}a_{ij}$ for all $k \in \Z$ and $i,j \in I$ such that $i\neq j$.
  \end{enumerate}
 \end{itemize}
\end{Def}

Let $Q_+ = \sum_{i \in I}\Z_{\geq 0}\ga_i$, and for $\gb \in Q_+$ let $R(\gb) = R^\sg(\gb)$ be a \textit{symmetric quiver Hecke algebra} at $\gb$ associated with $\sg$ (see \cite{kang2018symmetric}).
If $\cD=\{\sL_i\}_{i \in I}$ is a duality datum associated with $\sg$, then there exists an exact monoidal functor (\textit{quantum affine Schur--Weyl duality functor})
\[ \cF_{\cD}\colon \bigoplus_{\gb\in Q_+} \gmod{R(\gb)} \to \scC_\fg,
\] 
satisfying $\cF_{\cD}(q M ) \cong \cF_{\cD} (M)$ for any $M \in \bigoplus_{\gb} \gmod{R(\gb)}$ and $\cF_{\cD}\big(L(i)\big) = \sL_i$ for all $i \in  I$ (\cite{kang2018symmetric}).
Here $\bigoplus_{\gb} \gmod{R(\gb)}$ is the direct sum of the categories of finite-dimensional graded $R(\gb)$-modules equipped with a monoidal structure via the convolution product,
$q$ the grading shift of degree $1$,
and $L(i)$ the $1$-dimensional simple module over $R(\ga_i)$.
In the sequel, we write $\gmod{R} = \bigoplus_{\gb} \gmod{R(\gb)}$.
By \cite{khovanov2009diagrammatic,rouquier20082}, there is a unique $\Z[q,q^{-1}]$-algebra isomorphism
$U_\Z^-(\sg)^{\mathrm{up}} \stackrel{\sim}{\to} K(\gmod{R})$
mapping $f_i$ to $[L(i)]$, where $U_\Z^-(\sg)^{\up}$ denotes the $\Z[q,q^{-1}]$-subalgebra of $U_q^-(\sg)$ spanned by $\Bup$,
and $K(\gmod{R})$ the Grothendieck ring of $\gmod{R}$.
This isomorphism induces a bijection between the upper global basis and the set of not necessarily degree preserving isomorphism classes 
of simple modules in $\gmod{R}$ (\cite{varagnolo2011canonical,rouquier2012quiver}).

Given a duality datum $\cD=\{\sL_i\}_{i \in I}$, define a ring homomorphism $\cL_{\cD}$ from $U_\Z^-(\sg)^{\up}$ to the Grothendieck ring $K(\scC_\fg)$ by the composition
 \begin{equation}\label{eq:composition}
   \mathcal{L}_{\cD}\colon U_\Z^- (\sg)^{\up} \stackrel{\sim}{\to} K(\gmod{R}) \to K(\scC_\fg),
 \end{equation}
 where the second one is induced from $\cF_{\cD}$.
%, induces an injection from $\Bup$ to the set $\Irr (\scC_{\fg})$ of the isomorphism classes of simple modules in $\scC_\fg$.
By the properties of the isomorphism $U_\Z^- (\sg)^{\up} \stackrel{\sim}{\to} K(\gmod{R})$ stated above and \cite[Corollary 4.14]{kashiwara2020pbw}, we obtain the following lemma
(hereafter, we occasionally identify the isomorphism class of a simple module in $\scC_\fg$
with its class in $K(\scC_\fg)$).
%If $\cD$ is a strong duality datum,
%the above functor $\cF_{\cD}$ has the following distinguished property.

\begin{Lem}\label{Lem:injection_of_irr}
 If $\cD=\{\sL_i\}_{i \in I}$ is a strong duality datum associated with $\sg$, $\cL_{\cD}$
 induces an injection, which we also denote by $\cL_\cD$, from $\Bup$ to the set $\Irr (\scC_{\fg})$ of isomorphism classes of simple modules in $\scC_\fg$.
\end{Lem}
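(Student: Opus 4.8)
The plan is to assemble the statement from three ingredients already recorded in the excerpt. First I would recall that by \cite{khovanov2009diagrammatic,rouquier20082} (stated above) there is a $\Z[q,q^{-1}]$-algebra isomorphism $U_\Z^-(\sg)^{\up}\stackrel{\sim}{\to}K(\gmod{R})$ carrying $f_i$ to $[L(i)]$, and by \cite{varagnolo2011canonical,rouquier2012quiver} it restricts to a bijection between $\Bup$ and the set of (not necessarily degree-preserving) isomorphism classes of simple modules in $\gmod{R}$. Composing with the map $K(\gmod R)\to K(\scC_\fg)$ induced by $\cF_\cD$ gives $\cL_\cD\colon U_\Z^-(\sg)^{\up}\to K(\scC_\fg)$ as in \eqref{eq:composition}. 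So for each $b\in\Bup$, $\cL_\cD(b)$ is the image in $K(\scC_\fg)$ of the class of a single simple module $S_b\in\gmod R$; the content of the statement is that $[\cF_\cD(S_b)]$ is (up to the identification of a simple module with its class) the class of a simple module, and that $b\mapsto[\cF_\cD(S_b)]$ is injective.

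Second, the simplicity of $\cF_\cD(S_b)$ and the injectivity on $\Bup$ are exactly \cite[Corollary 4.14]{kashiwara2020pbw}, which is cited above as the source; since $\cD$ is a \emph{strong} duality datum, the hypotheses of that corollary are met. Thus I would simply invoke it: for a strong duality datum, $\cF_\cD$ sends each simple $R$-module to a simple module in $\scC_\fg$, and sends non-isomorphic simples to non-isomorphic simples. Composing the bijection $\Bup\leftrightarrow\Irr(\gmod R)$ with this injective map $\Irr(\gmod R)\hookrightarrow\Irr(\scC_\fg)$ yields the desired injection $\cL_\cD\colon\Bup\hookrightarrow\Irr(\scC_\fg)$, and one checks it is the restriction of the ring homomorphism $\cL_\cD$ (after identifying a simple module with its class in $K(\scC_\fg)$, as the parenthetical remark before the lemma permits).

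Third — and this is the only step requiring a word of care rather than a citation — I would note the grading-shift ambiguity: $\cF_\cD(qM)\cong\cF_\cD(M)$, so $\cF_\cD$ only sees the underlying ungraded simple, which is why one must work with the ungraded isomorphism classes on both sides; this is precisely why the bijection $\Bup\leftrightarrow\Irr(\gmod R)$ is phrased with ``not necessarily degree preserving'' classes, and it matches the formulation of the lemma, where $\Bup$ (a basis, with no residual $q$-ambiguity) maps to honest isomorphism classes in $\scC_\fg$. The main (and really the only) obstacle is bookkeeping: making sure that the map named $\cL_\cD$ on $\Bup$ genuinely agrees with the ring homomorphism $\cL_\cD$ of \eqref{eq:composition} under the identification of simples with their Grothendieck classes, and that strongness of $\cD$ is what upgrades the a priori merely-well-defined map to an injection. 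All the mathematical substance is imported from \cite[Corollary 4.14]{kashiwara2020pbw} and the categorification isomorphism, so the proof is short.

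\begin{proof}
 By \cite{khovanov2009diagrammatic,rouquier20082} the isomorphism $U_\Z^-(\sg)^{\up}\stackrel{\sim}{\to}K(\gmod R)$ sending $f_i\mapsto[L(i)]$ restricts, thanks to \cite{varagnolo2011canonical,rouquier2012quiver}, to a bijection from $\Bup$ onto the set of (ungraded) isomorphism classes of simple modules in $\gmod R$. On the other hand, since $\cD$ is a strong duality datum, \cite[Corollary 4.14]{kashiwara2020pbw} asserts that $\cF_\cD$ carries every simple module in $\gmod R$ to a simple module in $\scC_\fg$, and carries non-isomorphic simple modules to non-isomorphic ones. Composing, and identifying a simple module with its class in $K(\scC_\fg)$, we obtain from \eqref{eq:composition} that $\cL_\cD$ maps $\Bup$ into $\Irr(\scC_\fg)$ and that this map is injective.
\end{proof}
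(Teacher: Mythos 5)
Your proposal is correct and follows exactly the route the paper takes: the paper gives no separate proof but derives the lemma directly from the bijection $\Bup\leftrightarrow\Irr(\gmod R)$ supplied by the categorification isomorphism and from \cite[Corollary 4.14]{kashiwara2020pbw}, which is precisely your argument. Your additional remark about the grading-shift ambiguity is a reasonable piece of bookkeeping but adds nothing beyond what the paper already implicitly handles by phrasing the bijection in terms of not-necessarily-degree-preserving isomorphism classes.
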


%Hereafter, we occasionally identify (the isomorphism class of) a simple object with its class in the Grothendieck group.
%For a strong duality datum $\cD$, let $\scC_{\cD}$ be the smallest full subcategory of $\scC_{\fg}$ such that
%\begin{enumerate}\setlength{\leftskip}{-15pt}
% \item[(i)] it contains $\cF_{\cD}(L)$ for any simple module $L$ in $\gmod{R}$, and
% \item[(ii)] it is stable under taking subquotients, extensions, and tensor products.
%\end{enumerate} 
%Denote by $\Irr(\scC_{\cD})$ the set of the isomorphism classes of simple modules in $\scC_{\cD}$.
%From Lemma \ref{Lem:injection_of_irr} we obtain a bijection $\Bup \stackrel{\sim}{\to} \Irr(\scC_{\cD})$, which we also denote by $\mathcal{L}_{\cD}$.
%Denote the injection $\Bup \hookrightarrow \Irr(\scC_\fg)$ by $\cL_\cD$ as well.
The bicrystal structure on $\Bup$ is described in terms of $U_q'(\fg)$-modules as follows.

\begin{Prop}[{\cite[Lemma 3.2]{kashiwara2022categorical}}]\label{Prop:Kashiwara_Park}
 For a strong duality datum $\cD=\{\sL_i\}_{i \in  I }$, $b \in \Bup$ and $i \in  I $, we have
 \begin{enumerate}\setlength{\leftskip}{-15pt}
  \item[(a)] $\cL_{\cD}(\tilde{e}_i b) \cong \cL_{\cD}(b) \nab \scD \sL_i$ if $\gee_i(b) \neq 0$, $\cL_{\cD}(\tilde{e}^*_i b) \cong \scD^{-1}\sL_i \nab \cL_{\cD}(b)$ if $\gee_i^*(b)\neq 0$,
  \item[(b)] $\cL_{\cD}(\tilde{f}_i b) \cong \sL_i \nab \cL_{\cD}(b)$, $\cL_{\cD}(\tilde{f}_i^*b) \cong \cL_{\cD}(b) \nab \sL_i$,
  \item[(c)] $\gee_i(b) =\tfd\big(\scD \sL_i,\cL_{\cD}(b)\big)$ and $\gee_i^*(b) = \tfd\big(\scD^{-1}\sL_i,\cL_{\cD}(b)\big)$.
 \end{enumerate}
\end{Prop}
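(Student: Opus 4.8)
The plan is to move the whole statement to the category $\gmod{R}$ of graded modules over the symmetric quiver Hecke algebra via the functor $\cF_\cD$, exploit the standard categorical realization of the bicrystal $\Bup$ inside $\gmod{R}$, and then transport the resulting identities back to $\scC_\fg$ using the rigidity of $\scC_\fg$ (Proposition~\ref{Prop:bijective_of_head}) together with the properties of R-matrices and $\tfd$ collected in Subsection~\ref{Subsection:R-matrices}. Since $\cD$ is strong, $\cF_\cD$ sends a simple $R$-module whose class lies in $\Bup$ to a simple module of $\scC_\fg$ (this is what underlies Lemma~\ref{Lem:injection_of_irr}), $\cF_\cD$ is exact and monoidal, and it kills grading shifts. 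I would use that, for the simple $R$-module $M$ attached to $b\in\Bup$, one has $\tilde{f}_i b\leftrightarrow\hd(L(i)\circ M)$, $\tilde{f}_i^* b\leftrightarrow\hd(M\circ L(i))$, and that $\gee_i(b)$ (resp.\ $\gee_i^*(b)$) is the largest $n$ with $\tilde{e}_i^{\,n}b\neq0$ (resp.\ $\tilde{e}_i^{*\,n}b\neq0$).

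With this, (b) is immediate: applying the exact monoidal functor $\cF_\cD$ to the canonical surjection $L(i)\circ M\twoheadrightarrow\hd(L(i)\circ M)$ yields a surjection $\sL_i\otimes\cL_\cD(b)\twoheadrightarrow\cL_\cD(\tilde{f}_i b)$ whose target is simple; since $\sL_i$ is real, $\sL_i\otimes\cL_\cD(b)$ has simple head $\sL_i\nab\cL_\cD(b)$ (Proposition~\ref{Prop:simple_heads}), and a surjection onto a simple module factors through the head, so $\cL_\cD(\tilde{f}_i b)\cong\sL_i\nab\cL_\cD(b)$; the $\tilde{f}_i^*$ case is identical with the convolution taken on the other side. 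Part (a) is then formal: when $\gee_i(b)\neq0$ one has $\tilde{f}_i\tilde{e}_i b=b$, so $\cL_\cD(b)\cong\sL_i\nab\cL_\cD(\tilde{e}_i b)$ by (b), and Proposition~\ref{Prop:bijective_of_head} applied to the real module $\sL_i$ gives $\cL_\cD(\tilde{e}_i b)\cong\big(\sL_i\nab\cL_\cD(\tilde{e}_i b)\big)\nab\scD\sL_i\cong\cL_\cD(b)\nab\scD\sL_i$; the other isomorphism of Proposition~\ref{Prop:bijective_of_head} gives $\cL_\cD(\tilde{e}_i^* b)\cong\scD^{-1}\sL_i\nab\cL_\cD(b)$ when $\gee_i^*(b)\neq0$.

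For (c) I would induct on $m=\gee_i(b)$ (the $\gee_i^*$-assertion being the left–right mirror, with $\scD$ replaced by $\scD^{-1}$); write $b_0=\tilde{e}_i^{\,m}b$, so $\gee_i(b_0)=0$. Since $\sL_i$ and $\scD\sL_i$ are root modules, $\tfd(\scD\sL_i,\scD\sL_i)=0$ and $\tfd(\scD\sL_i,\sL_i)=1$. For the lower bound: by (b), $\cL_\cD(\tilde{e}_i^{\,j}b)\cong\sL_i\nab\cL_\cD(\tilde{e}_i^{\,j+1}b)$; if $\gee_i(\tilde{e}_i^{\,j}b)\geq1$ then $\tfd(\scD\sL_i,\cL_\cD(\tilde{e}_i^{\,j}b))\neq0$, because strong commutation would force, via (a), the identity $\cL_\cD(\tilde{e}_i^{\,j+1}b)\cong\cL_\cD(\tilde{e}_i^{\,j}b)\otimes\scD\sL_i$, contradicting a comparison of $\ell$-highest weights (through $\cL_\cD$, the $\sg$-weight $\ga_i$ maps to $-\mathrm{wt}_{\cl}(\sL_i)$, and $\mathrm{wt}_{\cl}(\sL_i)+\mathrm{wt}_{\cl}(\scD\sL_i)\neq0$ since $\sL_i$ is a root module, hence $\neq\bm{1}$); combining this with the strict decrease of $\tfd(\scD\sL_i,\cL_\cD(-))$ along $\tilde{e}_i$ provided by (a) and Proposition~\ref{Prop:strongly_decreasing} forces $\tfd(\scD\sL_i,\cL_\cD(b))\geq m$. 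For the upper bound, $\tfd(\scD\sL_i,\cL_\cD(\tilde{e}_i^{\,j}b))\leq\tfd(\scD\sL_i,\sL_i)+\tfd(\scD\sL_i,\cL_\cD(\tilde{e}_i^{\,j+1}b))=1+\tfd(\scD\sL_i,\cL_\cD(\tilde{e}_i^{\,j+1}b))$ by Proposition~\ref{Prop:weakly_decreasing}(i), so everything comes down to the base case $\tfd(\scD\sL_i,\cL_\cD(b_0))=0$, i.e.\ to the statement that $\gee_i(b_0)=0$ forces $\scD\sL_i$ and $\cL_\cD(b_0)$ to strongly commute.

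That base case is where the real content lies, and it is the step I expect to be the main obstacle. The condition $\gee_i(b_0)=0$ says that the $i$-restriction of $M_{b_0}$ vanishes; fixing a reduced word $\bm{i}\in R(w_0)$ with $i_1=i$ and writing $b_0=B^{\bm{i}}(\bm{c})$, it means $c_1=0$ (cf.\ \eqref{eq:simple_case}), so that the first affine cuspidal module $S_1^{\cD,\bm{i}}=\sL_i$ does not occur in the cuspidal factorization of $\cL_\cD(b_0)$. I would then invoke the structure theory of strong duality data and affine cuspidal modules from \cite{kashiwara2020pbw} — the reflection/braid operations on duality data, and the control of $\tfd$ between cuspidal modules along the bi-infinite family $\{S_k^{\cD,\bm{i}}\}$ — to conclude that $\scD\sL_i$ strongly commutes with every cuspidal factor of $\cL_\cD(b_0)$, hence, by the multiplicativity of $\tfd$ over strongly commuting tensor products (Proposition~\ref{Prop:weakly_decreasing}(ii)), with $\cL_\cD(b_0)$ itself; concretely, one expects to realize $\cL_\cD(b_0)$ as $\cL_{\cD'}(b_0')$ for a reflected strong duality datum $\cD'$ in which $\scD\sL_i$ plays the role of a root module avoided by $b_0'$. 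Making this precise — pinning down the position of $\scD\sL_i$ relative to the cuspidal family and the corresponding pairwise $\tfd$-vanishings — is the technical heart of the proof. Finally, the $\gee_i^*$-halves of (a), (b) and (c) follow by the symmetric argument, convolving with $L(i)$ on the right throughout.
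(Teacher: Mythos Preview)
The paper does not supply a proof of this proposition; it is quoted from \cite[Lemma~3.2]{kashiwara2022categorical} as a known result. Your argument is correct and follows a natural route through the quiver Hecke category and the rigidity of $\scC_\fg$. One remark: the base case $\tfd(\scD\sL_i,\cL_\cD(b_0))=0$ when $\gee_i(b_0)=0$, which you flag as the main obstacle, is actually immediate from the tools already collected in the paper. Pick $\bm{i}\in R(w_0)$ with $i_1=i$; then $\gee_i(b_0)=0$ means $c_1=0$ in $b_0=B^{\bm{i}}(\bm{c})$ by \eqref{eq:simple_case}, so $\cL_\cD(b_0)\cong\hd(S_2^{\otimes c_2}\otimes\cdots\otimes S_N^{\otimes c_N})$ by Lemma~\ref{Lem:parametrization}. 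Proposition~\ref{Prop:basics_of_affine_cuspidal_mod}~(ii) says $(S_1,S_k)$ is strongly unmixed for every $k\in[2,N]$, hence in particular $\tfd(\scD S_1,S_k)=0$; Proposition~\ref{Prop:weakly_decreasing}~(i) then gives $\tfd(\scD\sL_i,\cL_\cD(b_0))\leq\sum_{k=2}^N c_k\,\tfd(\scD S_1,S_k)=0$. No reflection of the duality datum or further structural input from \cite{kashiwara2020pbw} is needed here.
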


Assume that $\cD=\{\sL_i\}_{i \in  I}$ is a strong duality datum associated with $\sg$,
and fix a reduced word $\bm{i} = (i_1,\ldots,i_N) \in R(w_0)$.
For each $1\leq k \leq N$, setting  $\gb_k = s_{i_1}\cdots s_{i_{k-1}}(\ga_{i_k})$, we denote by $S_k = S_k^{\cD,\bm{i}}$ the simple module $\cL_{\cD}\big(F^{\bm{i}}(\gb_k)\big) \in \scC_\fg$
(see  (\ref{eq:dual_PBW})). 
Note that we have
\begin{equation}\label{eq:Li_Sk}
 S_k \cong \sL_i \text{ if and only if } \gb_k = \ga_{i} \ \ \ \text{for $1\leq k \leq N$ and $i \in I$}.
\end{equation}
It follows from the construction that 
\begin{equation}\label{eq:Image_of_F}
  \cL_{\cD}\big(F^{\bm{i}}(\bm{c})\big) = [ S_1^{\otimes c_1} \otimes \cdots \otimes S_N^{\otimes c_N}] \in K(\scC_\fg)
\end{equation}
for any $\bm{c}=(c_1,\ldots,c_N) \in \Z_{\geq 0}^N$.
Moreover, we also have the following.

\begin{Lem}\label{Lem:parametrization}
 For any $\bm{c}=(c_1,\ldots,c_N) \in \Z_{\geq 0}^N$, we have 
 \[ \cL_{\cD}\big(B^{\bm{i}}(\bm{c})\big) \cong \hd(S_1^{\otimes c_1} \otimes \cdots \otimes S_{N}^{\otimes c_N}).
 \] 
\end{Lem}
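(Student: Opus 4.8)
The plan is to prove, equivalently, that $M := S_1^{\otimes c_1}\otimes\cdots\otimes S_N^{\otimes c_N}$ has a simple head isomorphic to $\cL_\cD\big(B^{\bm i}(\bm c)\big)$. I would deduce this from two independent facts: that $M$ has a simple head at all, and that $\cL_\cD\big(B^{\bm i}(\bm c)\big)$ occurs as a quotient of $M$. Given both, the argument closes at once, since a finite length module with simple head has a unique simple quotient.

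For the first fact I would show that the sequence obtained from $(S_1^{\otimes c_1},\ldots,S_N^{\otimes c_N})$ by discarding the terms with $c_k=0$ is a normal sequence, so that $\hd(M)$ is simple by Proposition~\ref{Prop:normal_implies_simple_head} (note $M = (S_1^{\otimes c_1})\otimes\cdots\otimes(S_N^{\otimes c_N})$ literally, so grouping the copies of each $S_k$ does not change the head). Each affine cuspidal module $S_k$ is a root module (\cite{kashiwara2020pbw}), hence real with $\tfd(S_k,S_k)=0$; a short induction on $m$ using Propositions~\ref{Prop:fundamental_properties_of_delta}(ii) and \ref{Prop:weakly_decreasing}(ii) then shows that $S_k^{\otimes m}$ is real simple and that $\tfd(S_k^{\otimes m},S_k)=0$ for all $m$ (and likewise for $\scD S_k$, which is again a root module). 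Moreover, by the basic orthogonality properties of affine cuspidal modules (\cite{kashiwara2020pbw}) one has $\tfd(\scD S_j,S_k)=0$ whenever $j<k$, and hence $\tfd\big(\scD(S_j^{\otimes c_j}),S_k^{\otimes c_k}\big)=c_jc_k\,\tfd(\scD S_j,S_k)=0$ by Proposition~\ref{Prop:weakly_decreasing}(ii) together with symmetry of $\tfd$ (Proposition~\ref{Prop:fundamental_properties_of_delta}(i)). Thus the sequence is unmixed, hence normal by Proposition~\ref{Prop:unmixed_normal}.

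For the second fact I would pass to the quiver Hecke side. Let $\mathsf L_k$ be the simple $R$-module corresponding to $F^{\bm i}(\gb_k)\in\Bup$ and $\mathsf S(\bm c)$ the simple $R$-module corresponding to $B^{\bm i}(\bm c)\in\Bup$; then $\cF_\cD(\mathsf L_k)\cong S_k$ and $\cF_\cD\big(\mathsf S(\bm c)\big)\cong\cL_\cD\big(B^{\bm i}(\bm c)\big)$, because $\cF_\cD$ sends each of these simple modules to a module whose class in $K(\scC_\fg)$ is the class of a simple module (Lemma~\ref{Lem:injection_of_irr}), hence to a simple module. By the theory of PBW bases for quiver Hecke algebras in finite type --- of which Proposition~\ref{Prop:relations_of_FB} is the Grothendieck ring shadow --- the convolution product $\mathsf L_1^{\circ c_1}\circ\cdots\circ\mathsf L_N^{\circ c_N}$ has simple head isomorphic to $\mathsf S(\bm c)$ (\cite{zbMATH06047791}). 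Applying the exact monoidal functor $\cF_\cD$ (which satisfies $\cF_\cD(qX)\cong\cF_\cD(X)$) to the surjection $\mathsf L_1^{\circ c_1}\circ\cdots\circ\mathsf L_N^{\circ c_N}\twoheadrightarrow\mathsf S(\bm c)$ yields a surjection $M\twoheadrightarrow\cL_\cD\big(B^{\bm i}(\bm c)\big)$, as wanted. (As a consistency check, applying $\cL_\cD$ to Proposition~\ref{Prop:relations_of_FB} and using (\ref{eq:Image_of_F}) gives $[M]=\cL_\cD\big(B^{\bm i}(\bm c)\big)+\sum_{\bm c'\prec\bm c}m_{\bm c'}\,\cL_\cD\big(B^{\bm i}(\bm c')\big)$ with $m_{\bm c'}\in\Z_{\geq0}$, so $\cL_\cD\big(B^{\bm i}(\bm c)\big)$ occurs in $M$ with multiplicity one.)

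The main obstacle is supplying the two external inputs in exactly the form needed: the orthogonality $\tfd(\scD S_j,S_k)=0$ for $j<k$, which must be extracted, with the correct variance, from the properties of affine cuspidal modules in \cite{kashiwara2020pbw}; and the simple-head statement for the PBW-ordered convolution product of cuspidal quiver Hecke modules. If one prefers to avoid the latter, an alternative is to stay inside $\scC_\fg$: once $\hd(M)$ is known to be simple, write $\hd(M)=\cL_\cD(b)$ for a unique $b\in\Bup$, compute $\gee_{i_1}(b)$ and $\gee^*_{i_N^*}(b)$ from Proposition~\ref{Prop:Kashiwara_Park}(c) and (\ref{eq:simple_case}), and run an induction on $N$ by splitting off $S_1$ or $S_N$; this is longer but uses only the material of this section.
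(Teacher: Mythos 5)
Your proposal is correct and follows essentially the same route as the paper, whose proof is a one-line citation of (\ref{eq:Image_of_F}) together with the simple-head theorem for PBW-ordered convolution products of cuspidal quiver Hecke modules (\cite[Corollary 4.8]{kato2014poincare}); your separate verification that $\hd(S_1^{\otimes c_1}\otimes\cdots\otimes S_N^{\otimes c_N})$ is simple via unmixed/normal sequences is exactly the content of Proposition \ref{Prop:basics_of_affine_cuspidal_mod}(iii), quoted from \cite{kashiwara2020pbw}. The only slip is bibliographic: the quiver-Hecke-side simple-head statement should be attributed to \cite{kato2014poincare} rather than to \cite{zbMATH06047791}, which works on the quantum group side and does not contain that module-theoretic result.
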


\begin{proof}
% By (\ref{eq:isom_of_KLR}) and the existence of $\cF_{\cD}$,
% there is an algebra homomorphisms
% \[ U_\Z^-(\sg) \stackrel{\sim}{\to} K(\gmod{R}) \twoheadrightarrow K(\scC_{\cD}).
% \]
 This follows from (\ref{eq:Image_of_F}) and \cite[Corollary 4.8]{kato2014poincare}.
% By the definition, we see that the image of the dual PBW basis $F^{\mathrm{up}}(\bm{c},\bm{i})$ under the composition (\ref{eq:composition}) coincides with the class of
% $S_1^{\otimes c_1} \otimes \cdots \otimes S_{N}^{\otimes c_N}$.
% Then the assertion follows from and the properties of $\cF_{\cD}$.
\end{proof}

Following \cite{kashiwara2020pbw}, we extend the above definition of $S_k = S_k^{\cD,\bm{i}}$ to all $k \in \Z$ by
\begin{equation}\label{eq:N-shift}
 S_{k-N} = \scD S_k \ \ \ \text{for all} \ k \in \Z.
\end{equation}
These modules $S_k$ ($k \in \Z$) are called the \textit{affine cuspidal modules} corresponding to $\cD$ and $\bm{i}$.

\begin{Rem}\normalfont
 Our convention of affine cuspidal modules is different from that of \cite{kashiwara2020pbw}.
 Setting $\bm{i}^\vee= (i_N^*,\ldots,i_1^*) \in R(w_0)$, our $S_k^{\cD,\bm{i}}$ coincides with 
 $``\mathsf{S}_{N+1-k}^{\cD,\bm{i}^\vee}"$ in \cite[Definition 5.6]{kashiwara2020pbw}.
\end{Rem}
\begin{Prop}[{\cite[Propositions 5.7]{kashiwara2020pbw}}]\label{Prop:basics_of_affine_cuspidal_mod}
 The modules $\{S_k\}_{k \in \Z}$ satisfy the following.
 \begin{enumerate}\setlength{\leftskip}{-15pt}
  \item[(i)] $S_k$ is a root module for all $k \in \Z$.
  \item[(ii)] For any $a,b \in \Z$ with $a<b$, the pair $(S_a,S_b)$ is strongly unmixed.
  \item[(iii)] For any increasing sequence $k_1<k_2<\cdots <k_p$ of integers and $(a_1,\ldots,a_p) \in \Z_{> 0}^p$,
   $(S_{k_1}^{\otimes a_1},\ldots, S_{k_p}^{\otimes a_p})$ is a normal sequence
   and $\hd\big(S_{k_1}^{\otimes a_1} \otimes \cdots \otimes S_{k_p}^{\otimes a_p}\big)$ is simple.
 \end{enumerate}
\end{Prop}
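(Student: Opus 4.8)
The plan is to deduce all three assertions from \cite[Proposition 5.7]{kashiwara2020pbw}, the only point requiring attention being the dictionary between the affine cuspidal modules $S_k=S_k^{\cD,\bm{i}}$ used here and the modules $\mathsf{S}_k^{\cD,\bm{j}}$ $(k\in\Z)$ of \cite[Definition 5.6]{kashiwara2020pbw}. By the Remark above, with $\bm{i}^\vee=(i_N^*,\dots,i_1^*)\in R(w_0)$ one has $S_k^{\cD,\bm{i}}\cong\mathsf{S}_{N+1-k}^{\cD,\bm{i}^\vee}$; the first thing I would do is check that this holds for every $k\in\Z$, which, since both families are obtained from their restrictions to $[1,N]$ by an $N$-periodic application of $\scD^{\pm1}$ — on our side this is (\ref{eq:N-shift}) — comes down to matching the two periodicity rules. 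Granting the dictionary, (i) is exactly the statement of \cite[Proposition 5.7]{kashiwara2020pbw} that each $\mathsf{S}_k^{\cD,\bm{i}^\vee}$ is a root module, and (ii) is its strongly-unmixed statement for a pair; the substitution $k\mapsto N+1-k$ reverses the order of indices, which is precisely what reconciles it with the opposite ordering convention used in \cite{kashiwara2020pbw}.

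For (iii), that same substitution carries an increasing sequence $k_1<\cdots<k_p$ to a decreasing one, so the normality of $(S_{k_1}^{\otimes a_1},\dots,S_{k_p}^{\otimes a_p})$ and the simplicity of $\hd(S_{k_1}^{\otimes a_1}\otimes\cdots\otimes S_{k_p}^{\otimes a_p})$ follow from the corresponding part of \cite[Proposition 5.7]{kashiwara2020pbw} for $\mathsf{S}^{\cD,\bm{i}^\vee}$. As a sanity check one can recover fragments of (iii) internally once (i) and (ii) are in hand: for $a_1=\cdots=a_p=1$ the sequence $(S_{k_1},\dots,S_{k_p})$ is unmixed by (ii), hence normal by Proposition \ref{Prop:unmixed_normal} and with simple head by Proposition \ref{Prop:normal_implies_simple_head}; and for $k_1,\dots,k_p\in[1,N]$ the simplicity of the head with arbitrary exponents is Lemmas \ref{Lem:parametrization} and \ref{Lem:injection_of_irr}. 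But normality in full generality genuinely uses \cite[Proposition 5.7]{kashiwara2020pbw}, as it is not a formal consequence of the head being simple.

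The passage to all $k\in\Z$ is already part of \cite[Proposition 5.7]{kashiwara2020pbw}, and it is harmless to invoke it there directly: by Proposition \ref{Prop:fundamental_properties_of_delta}(i) the invariant $\tfd$ is symmetric and satisfies $\tfd(\scD M,\scD N)=\tfd(M,N)$, so being a root module and being a strongly unmixed pair are both stable under a simultaneous $N$-shift, consistently with (\ref{eq:N-shift}). Thus the real obstacle is organisational rather than substantive: one must pin down the dictionary $S_k^{\cD,\bm{i}}\leftrightarrow\mathsf{S}_{N+1-k}^{\cD,\bm{i}^\vee}$ — in particular the passage to the dual reduced word $\bm{i}^\vee$, for which the underlying mechanism is the $*$-duality of Lemma \ref{Lem:duality_of_B}, and the resulting reversal of orderings — after which (i), (ii) and (iii) are direct translations of \cite[Proposition 5.7]{kashiwara2020pbw}.
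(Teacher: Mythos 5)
Your proposal is correct and matches the paper's treatment: the paper gives no independent proof of this proposition but simply imports it from \cite[Proposition 5.7]{kashiwara2020pbw}, with the preceding Remark supplying exactly the dictionary $S_k^{\cD,\bm{i}}\leftrightarrow \mathsf{S}_{N+1-k}^{\cD,\bm{i}^\vee}$ that you work out. Your additional internal sanity checks (unmixedness implying normality, and the head-simplicity via Lemma \ref{Lem:parametrization}) are consistent with the surrounding machinery but are not needed, since the cited result already covers all three assertions.
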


We have $(i_1,\ldots,i_N) \in R(w_0)$ if and only if $(i_2,\ldots,i_N,i_1^*) \in R(w_0)$.
The following lemma is proved from \cite[Propositions 5.9 and 5.10]{kashiwara2020pbw}.

\begin{Lem}\label{Lem:change_of_cuspidal}
 Let $\{S_k\}_{k \in \Z}$ be the sequence of affine cuspidal modules corresponding to a strong duality datum $\cD$ associated with $\sg$ and $(i_1,\ldots,i_N) \in R(w_0)$.
 We extend $i_k$ to all $k \in \Z$ by  
 \[ i_{k-N} = i^*_k \ \ \ (k \in \Z).
 \] 
 Fix $a \in \Z$, and set $i_k' = i_{a+k}$ and $S'_k=S_{a+k}$ for all $k \in \Z$.
 Set $\gb_k'=s_{i_1'}\cdots s_{i_{k-1}'}(\ga_{i_k'})$ for $1\leq k \leq N$, and let $k(i) \in [1,N]$ $(i \in I)$ be the unique integer satisfying $\gb_{k(i)}' = \ga_i$.
 Then the family $\cD'=\{S'_{k(i)}\}_{i \in I}$ forms a strong duality datum associated with $\sg$, and $\{S'_k\}_{k \in \Z}$ is the sequence of affine cuspidal modules corresponding to 
 $\cD'$ and $(i_1',\ldots,i_N') \in R(w_0)$.
\end{Lem}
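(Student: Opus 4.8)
The plan is to derive the lemma from \cite[Propositions 5.9 and 5.10]{kashiwara2020pbw}, which describe the behaviour of affine cuspidal modules under a shift by $\pm 1$, after translating between the convention for affine cuspidal modules adopted in this paper and that of \cite{kashiwara2020pbw} by means of the identity $S_k^{\cD,\bm{i}}\cong \mathsf{S}_{N+1-k}^{\cD,\bm{i}^\vee}$ recorded in the Remark preceding Proposition \ref{Prop:basics_of_affine_cuspidal_mod}.

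First I would reduce to the cases $a=1$ and $a=-1$. The point is that the ``shift by $a$'' operation is transitive: once the lemma is known for a value $a_1$, applying it yields a triple $(\cD',\bm{i}',\{S'_k\}_{k\in\Z})$ of exactly the same nature --- a strong duality datum, a reduced word of $w_0$, and its affine cuspidal modules --- and, crucially for the iteration, the extension conventions attached to $(\cD',\bm{i}')$ agree with the assignments $i'_k=i_{a_1+k}$, $S'_k=S_{a_1+k}$ for all $k\in\Z$, since $(i'_k)^*=(i_{a_1+k})^*=i_{a_1+k-N}$ and $\scD S'_k=\scD S_{a_1+k}=S_{a_1+k-N}$ by the definitions $i_{k-N}=i_k^*$ and $S_{k-N}=\scD S_k$ for the original data. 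Applying the construction once more with a parameter $a_2$ then returns precisely the data attributed to the shift by $a_1+a_2$, and an evident induction on $|a|$ reduces everything to $a=\pm 1$, the case $a=0$ being trivial.

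For $a=1$ the word is $\bm{i}'=(i_2,\ldots,i_N,i_1^*)$, which belongs to $R(w_0)$ by the observation made immediately before the statement, and a short computation in the Weyl group gives $\gb'_k=s_{i_1}(\gb_{k+1})$ for $1\le k\le N-1$ together with $\gb'_N=\ga_{i_1}$, so that $k(i_1)=N$ and the other $k(i)$ are read off from the positions of the simple roots among the $\gb'_k$; the case $a=-1$ is symmetric, with $\bm{i}'=(i_N^*,i_1,\ldots,i_{N-1})\in R(w_0)$. There then remains, for $a=\pm 1$, the identification of the affine cuspidal modules of $(\cD',\bm{i}')$ with $\{S_{a+k}\}_{k\in\Z}$ and the verification that $\cD'$ is a strong duality datum. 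Since each component $S'_{k(i)}=S_{a+k(i)}$ of $\cD'$ is an affine cuspidal module for the \emph{original} pair $(\cD,\bm{i})$, it is automatically a root module by Proposition \ref{Prop:basics_of_affine_cuspidal_mod}(i); hence the only statements that genuinely require proof are the identification of the cuspidal modules and the equalities $\tfd(S'_{k(i)},\scD^m S'_{k(j)})=-\gd_{m,0}a_{ij}$ for $i\ne j$, and these are exactly what \cite[Propositions 5.9 and 5.10]{kashiwara2020pbw} provide once both are rewritten in terms of $\bm{i}^\vee$ using $S_k^{\cD,\bm{i}}\cong\mathsf{S}_{N+1-k}^{\cD,\bm{i}^\vee}$: under this rewriting the rotation $\bm{i}\rightsquigarrow\bm{i}'$ of our word corresponds to the analogous rotation of $\bm{i}^\vee$ in the opposite direction, while the reindexing $k\mapsto N+1-k$ combines with the index shift in the cited propositions to yield precisely $S'_k\cong S_{a+k}$.

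I expect the main obstacle to be precisely this last translation: one must keep track simultaneously of the two conventions for affine cuspidal modules, of the resulting reversal of the direction of the rotation, and of the compensating reindexing $k\mapsto N+1-k$, so that \cite[Propositions 5.9 and 5.10]{kashiwara2020pbw} deliver $S'_k\cong S_{a+k}$ on the nose, and not, say, $S_{a-k}$ or $S_{a+k}$ shifted by a multiple of $N$. Everything else amounts to a routine comparison of definitions.
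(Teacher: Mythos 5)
Your proposal is correct and follows essentially the same route as the paper, which simply asserts that the lemma ``is proved from \cite[Propositions 5.9 and 5.10]{kashiwara2020pbw}'' without further detail; your reduction to $a=\pm 1$ by transitivity and the translation between the two conventions via $S_k^{\cD,\bm{i}}\cong \mathsf{S}_{N+1-k}^{\cD,\bm{i}^\vee}$ is exactly how that citation is meant to be unpacked.
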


The following lemma is easily proved from the construction and \cite[Subsection 39.2.5]{MR1227098}.

\begin{Lem}\label{Lem:independent_of_2-move}
 Let $\{S_k\}_{k \in \Z}$ be the sequence of affine cuspidal modules corresponding to a strong duality datum $\cD$ and $\bm{ i  }=( i_1,\ldots, i_N) \in R(w_0)$.
 Assume that $a_{i_l i_{l+1}}=0$ for some $1\leq l < N$, and let $\bm{ i }' \in R(w_0)$ be such that $ i_{l}'= i_{l+1}$, $ i_{l+1}'= i_{l}$ and $ i_k' =  i_k$ for $k\neq l,l+1$.
 For $k \in \Z$, set
 \[ S_k' = \begin{cases} S_{k+1} & \text{if $k \equiv l$ mod $N$},\\
                         S_{k-1} & \text{if $k \equiv l+1$ mod $N$},\\
                         S_k     & \text{otherwise.}\end{cases}
 \]
 Then $\{S_k'\}_{k \in \Z}$ is the affine cuspidal modules corresponding to $\cD$ and $\bm{ i }'$.
\end{Lem}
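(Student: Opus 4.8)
The plan is to track how the dual root vectors $F^{\bm{i}}(\gb_k)$ and $F^{\bm{i}'}(\gb'_k)$ are related when $\bm{i}$ and $\bm{i}'$ differ by a single commutation move at positions $l,l+1$, apply $\cL_\cD$, and then extend the comparison to all $k\in\Z$ using the $N$-periodicity \eqref{eq:N-shift}. First I would record the commutative-move identity for dual PBW bases: from \cite[Subsection 39.2.5]{MR1227098} (the compatibility of the braid operators $T_i$ with the commutation $s_is_j=s_js_i$ when $a_{ij}=0$), together with the definition \eqref{eq:dual_PBW}, one gets $\gb'_l=\gb_{l+1}$, $\gb'_{l+1}=\gb_l$, and $\gb'_k=\gb_k$ for $k\neq l,l+1$, and correspondingly
\[
 F^{\bm{i}'}(\gb'_l)=F^{\bm{i}}(\gb_{l+1}),\qquad F^{\bm{i}'}(\gb'_{l+1})=F^{\bm{i}}(\gb_l),\qquad F^{\bm{i}'}(\gb'_k)=F^{\bm{i}}(\gb_k)\ \ (k\neq l,l+1).
\]
Applying $\cL_\cD$ to each dual root vector and recalling $S_k=\cL_\cD\big(F^{\bm{i}}(\gb_k)\big)$, $S'_k=\cL_\cD\big(F^{\bm{i}'}(\gb'_k)\big)$ for $1\le k\le N$, this immediately gives $S'_k=S_k$ for $k\neq l,l+1$, $S'_l=S_{l+1}$, and $S'_{l+1}=S_l$, which is exactly the prescribed formula on the range $[1,N]$.

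Next I would handle the extension to all $k\in\Z$. By definition \eqref{eq:N-shift}, $S_{k-N}=\scD S_k$ and $S'_{k-N}=\scD S'_k$, and $\scD$ is invertible; so the identity $\{S'_k\}=\{S_k'\}$ (with the stated index shift) on one block $[1,N]$ propagates to every block $[mN+1,(m+1)N]$ by applying $\scD^{-m}$ to the $N=0$ case. Concretely, for $k\equiv l \pmod N$ write $k=mN+l$; then $S'_k=\scD^{-m}S'_l=\scD^{-m}S_{l+1}=S_{l+1+mN}=S_{k+1}$, and similarly for $k\equiv l+1$ and the remaining residues. This shows $\{S'_k\}_{k\in\Z}$ is precisely the family defined by the displayed piecewise formula.

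Finally I would invoke Lemma \ref{Lem:change_of_cuspidal} (or directly the construction of affine cuspidal modules from \cite{kashiwara2020pbw}) to confirm that $\{S'_k\}_{k\in\Z}$ is genuinely \emph{the} sequence of affine cuspidal modules attached to $(\cD,\bm{i}')$: one checks that $\cD'=\{S'_{k(i)}\}_{i\in I}$, where $k(i)\in[1,N]$ is determined by $\gb'_{k(i)}=\ga_i$, coincides with the original $\cD=\{\sL_i\}_{i\in I}$ — this holds because the commutation move fixes the set of simple roots occurring as $\gb'_k$ and matches the corresponding vectors, using \eqref{eq:Li_Sk} — and hence the affine cuspidal construction applied to $(\cD,\bm{i}')$ produces exactly the modules $\cL_\cD\big(F^{\bm{i}'}(\gb'_k)\big)$ computed above. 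I expect the only genuinely delicate point to be the first step: extracting the precise dual-root-vector identity from \cite[Subsection 39.2.5]{MR1227098}, since one must be careful that the normalization in \eqref{eq:dual_PBW} (dividing by the bilinear-form norm) is preserved under the commutation — but since $T_iT_j=T_jT_i$ when $a_{ij}=0$ and the form is $T$-invariant up to the standard adjustments, the normalized vectors match, and everything else is bookkeeping with $\scD$ and the periodicity.
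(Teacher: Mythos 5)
Your proposal is correct and follows exactly the route the paper intends: the paper gives no written proof beyond the remark that the lemma ``is easily proved from the construction and [Lusztig, Subsection 39.2.5],'' and your argument is precisely the fleshing-out of that remark — swap of dual root vectors under a commutation move (using $T_iT_j=T_jT_i$ and $T_i f_j=f_j$ when $a_{ij}=0$, so the unnormalized vectors literally coincide and the normalization is automatic), application of $\cL_\cD$, and propagation to all of $\Z$ via the $\scD$-periodicity (\ref{eq:N-shift}). The only superfluous step is the final verification that $\cD'=\cD$: the affine cuspidal modules for $(\cD,\bm{i}')$ are by definition $\cL_\cD\big(F^{\bm{i}'}(\gb_k')\big)$ extended by $\scD$, so no appeal to Lemma \ref{Lem:change_of_cuspidal} is needed.
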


\subsection{simple modules in $\scC_\fg$}\label{Subsection:simple modules}

The notations in this subsection will be used later in some examples for illustrating notions or formulas.
For simplicity, we assume that $\fg$ is of untwisted type only in this subsection.

Let $\mathcal{P}^+=\big(1+u\bk[u]\big)^{I_\fg^0}$ be the abelian monoid (via coordinate-wise multiplication) of $I_\fg^0$-tuples of polynomials with indeterminate $u$ 
and constant term $1$.
By \cite{MR1300632,MR1357195}, the isomorphism classes of simple modules in $\scC_\fg$ are parametrized by $\mathcal{P}^+$.
For $m \in \mathcal{P}^+$, denote by $L(m)$ a simple module belonging to the associated isomorphism class.

Following \cite{MR1745260,MR1810773}, we denote elements of $\mathcal{P}^+$ via monomials.
Throughout the rest of this paper we pick and fix $\ga \in \bk^\times$ once and for all,
and for $i \in I_\fg^0$ and $k \in \Z$ define $Y_{i,k} =\big(Y_{i,k}^j(u)\big)_{j \in I_\fg^0}\in \cP^+$ by
\[ Y_{i,k}^j(u)= \begin{cases} 1-\ga q^k u & j = i,\\ 1 & j \neq i.\end{cases}
\]
All elements of $\cP^+$ appearing below will be expressed as monomials in $\{Y_{i,k}\mid i \in I_\fg^0, k \in \Z\}$.
%namely, we have $m= \prod_{(i,k)\in I_\fg \times \Z} Y_{i,k}^{c_{i,k}}$ with $c_{i,k} \in \Z_{\geq 0}$ and $c_{i,k}=0$ except for 
%finitely many $(i,k)$.
We call a simple module $L(Y_{i,k})$ a \textit{fundamental module}.
For a sequence $(i_1,k_1),\ldots,(i_p,k_p)$ of elements of $I_\fg^0 \times \Z$, it follows from \cite{MR1883181,MR1890649,varagnolo2002standard} that 
\begin{align}\label{eq:monomial_of_tensor}
  \hd\big(L(Y_{i_1,k_1}) \otimes \cdots \otimes L(Y_{i_p,k_p})\big) \cong L(Y_{i_1,k_1}\cdots Y_{i_p,k_p}) \ \ \text{if} \ \ k_1 \geq \cdots \geq k_p.
\end{align}
We also have 
\begin{equation}\label{eq:dual_of_monomial}
 \scD^{\pm 1} L(Y_{i_1,k_1}\cdots Y_{i_p,k_p}) \cong L(Y_{i_1^*,k_1 \pm h^\vee}\cdots Y_{i_p^*,k_p \pm h^\vee})
\end{equation}
by \cite{MR1376937,MR1810773},
where $h^\vee$ is the dual Coxeter number of the simple Lie subalgebra $\fg_0 \subseteq \fg$ corresponding to $I_\fg^0$.

%%%%%%%%%%%%%%%%%%%%%%%%%%%%%%%%%%%%%%%%%%%%%%%%%%%%%%%%%%%%%%

\section{Primeness, reality, and short exact sequences}\label{Section:Reality,etc}

\textit{Throughout this section, fix a strong duality datum $\cD = \{ \sL_i\}_{i \in I}\subseteq \scC_\fg$ associated with $\sg$,
a reduced word $\bm{i}=(i_1,\ldots,i_N) \in R(w_0)$, and an increasing sequence $\bm{k}=(k_1<k_2<\cdots<k_p)$ of integers}.
Let $S_k = S_k^{\cD,\bm{i}}$ ($k \in \Z$) be the affine cuspidal modules corresponding to $\cD$ and $\bm{i}$.
For $a,b \in [1,p]$ with $a \leq b$, we write
\[ \bS_{\bm{k}}[a,b] = \hd(S_{k_a} \otimes S_{k_{a+1}} \otimes \cdots \otimes S_{k_b}),
\]
which is simple by Proposition \ref{Prop:basics_of_affine_cuspidal_mod}.
Set $\bS_{\bm{k}} = \bS_{\bm{k}}[1,p]$, and $\bS_{\bm{k}}[a,b]=\bm{1}$ if $a>b$.

%%%%%%%%%%%%%%%%%%%%%%%%%%%%%%%%%%%%%%%%%%%%%%%%%%%%%%%%%%%%%%%%%%

\subsection{Primeness and reality}

A simple module $M \in \scC_{\fg}$ is said to be \textit{prime} if any tensor decomposition $M \cong N_1 \otimes N_2$ satisfies $N_1 \cong \bm{1}$ or $N_2 \cong \bm{1}$.
%We end this subsection with the following lemma, which gives a sufficient condition for $\bS_{\bm{k}}$ to be prime.

\begin{Prop}\label{Prop:primeness}
 Assume that one of the following two conditions is satisfied:
 \begin{itemize}\setlength{\leftskip}{-15pt}
  \item[{\normalfont(i)}] For all $1\leq a < p$, we have $\tfd(S_{k_a}, \bS_{\bm{k}}[a+1,p]) > 0$.
  \item[{\normalfont(ii)}] For all $1< b \leq p$, we have $\tfd(\bS_{\bm{k}}[1,b-1], S_{k_b}) > 0$.
 \end{itemize}
 Then $\bS_{\bm{k}}$ is prime.
\end{Prop}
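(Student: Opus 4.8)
The plan is to argue by contradiction: suppose $\bS_{\bm{k}} \cong N_1 \otimes N_2$ with both $N_1,N_2 \not\cong \bm{1}$, and derive a contradiction with condition (i) (the case of condition (ii) being symmetric, handled by applying the anti-involution $*$ or equivalently passing to $\bm{i}^\vee$ via Lemma \ref{Lem:duality_of_B} and Lemma \ref{Lem:change_of_cuspidal}). First I would recall, via Lemma \ref{Lem:parametrization}, that $\bS_{\bm{k}} = \cL_\cD\big(B^{\bm{i}}(\bm{c})\big)$ where $\bm{c}$ is the characteristic vector of the sequence $\bm{k}$ — that is, $c_j = 1$ if $j \in \{k_1,\ldots,k_p\}$ (reduced mod $N$ appropriately, after shifting by Lemma \ref{Lem:change_of_cuspidal} so that all $k_r$ lie in $[1,N]$) and $c_j=0$ otherwise. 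Since $\cL_\cD$ is injective on $\Bup$ (Lemma \ref{Lem:injection_of_irr}) and is a ring homomorphism, a tensor factorization $\bS_{\bm{k}} \cong N_1\otimes N_2$ forces a corresponding factorization of the class $[\bS_{\bm{k}}] = B^{\bm{i}}(\bm{c})$ in the Grothendieck ring, hence (pulling back through $\cL_\cD^{-1}$ and the isomorphism $U_\Z^-(\sg)^{\up} \cong K(\gmod R)$) a factorization of $B^{\bm{i}}(\bm{c})$ as a product of two elements of $\Bup$, say $B^{\bm{i}}(\bm{c}) = \cL_\cD^{-1}(N_1)\cdot \cL_\cD^{-1}(N_2)$ up to a power of $q$.

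The key step is then to translate condition (i) into a statement about the crystal and use it to rule out such a factorization. By Proposition \ref{Prop:Kashiwara_Park}(c), the hypothesis $\tfd(S_{k_a}, \bS_{\bm{k}}[a+1,p]) > 0$ says that $\gee_{?}$ or $\gee^*_{?}$ of the relevant crystal element is nonzero; more precisely, after the shift making $k_a$ the first index, $S_{k_a}$ plays the role of $\sL_{i}$ for the appropriate $i \in I$, and $\bS_{\bm{k}}[a+1,p]$ is $\cL_\cD$ of a crystal element whose $\gee^*_i$-value equals $\tfd(\scD^{-1}\sL_i, \cL_\cD(b)) = \tfd(S_{k_a}, \bS_{\bm{k}}[a+1,p])$. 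So condition (i) says: \emph{every} proper "tail" $\bS_{\bm{k}}[a+1,p]$ still sees the head module $S_{k_a}$. Concretely, I would set up an induction on $p$: from $\tfd(S_{k_1},\bS_{\bm{k}}[2,p])>0$ together with Proposition \ref{Prop:basics_of_affine_cuspidal_mod}(iii) (normality of $(S_{k_1},\ldots,S_{k_p})$) and Proposition \ref{Prop:three_terms} or Lemma \ref{Lem:two_normal_sequences}, deduce that $\bS_{\bm{k}} = S_{k_1} \nab \bS_{\bm{k}}[2,p]$ is a "genuine" head — i.e. $S_{k_1}$ does not strongly commute with $\bS_{\bm{k}}[2,p]$ — and then show that in any factorization $\bS_{\bm{k}} \cong N_1\otimes N_2$ the module $S_{k_1}$ (being a root module, hence prime and real by Proposition \ref{Prop:basics_of_affine_cuspidal_mod}(i)) must "appear in" exactly one factor, say $N_1$, while $N_2$ must be a factor of $\bS_{\bm{k}}[2,p]$; by the inductive hypothesis applied to $\bm{k}' = (k_2<\cdots<k_p)$ (whose hypotheses (i) are inherited) $\bS_{\bm{k}}[2,p]$ is prime, forcing $N_2 \cong \bm{1}$, a contradiction. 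The base case $p=1$ is trivial since $S_{k_1}$ is a root module, hence prime.

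The main obstacle I anticipate is making rigorous the step "$S_{k_1}$ appears in exactly one tensor factor." The clean way is to work entirely on the algebra side: a factorization of the simple module corresponds (via $\cL_\cD$ injective and the monoidal structure) to $B^{\bm{i}}(\bm{c}) \doteq b_1 b_2$ with $b_1, b_2 \in \Bup$, and one knows $\gee^*_{i}(B^{\bm{i}}(\bm{c}))$ behaves additively/monotonically under such products — indeed $\gee^*_i(b_1 b_2) = \gee^*_i(b_1) + \gee^*_i(b_2)$ when the product lies in $\Bup$ (this is the crystal-theoretic incarnation of Proposition \ref{Prop:weakly_decreasing}(ii) through Proposition \ref{Prop:Kashiwara_Park}(c), using that the $S_{k_r}$ strongly commute appropriately, Proposition \ref{Prop:basics_of_affine_cuspidal_mod}(ii)). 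From the string of positivity conditions (i) and an analysis of which of $b_1,b_2$ carries the contribution of each $S_{k_a}$, one forces all of $\bm{k}$ into a single factor. An alternative, perhaps more transparent, route avoiding delicate crystal combinatorics: use Proposition \ref{Prop:bijective_of_head} — if $\bS_{\bm{k}} \cong N_1\otimes N_2$ then, since $S_{k_1}$ is real, applying $\scD^{-1}S_{k_1} \nab (-)$ and comparing with $\bS_{\bm{k}} = S_{k_1}\nab \bS_{\bm{k}}[2,p]$ together with $\tfd(S_{k_1},\bS_{\bm{k}}[2,p])>0$ should pin down that $S_{k_1}$ "divides" one of $N_1, N_2$; iterating gives the result. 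I would pursue whichever of these the earlier sections have set up most directly, but either way the crux is the bookkeeping linking $\tfd$-positivity, the crystal functions $\gee^*_i$, and multiplicativity of $\Bup$ under tensor/convolution product.
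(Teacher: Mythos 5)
Your overall skeleton (induction on $p$, peel off one cuspidal factor, use positivity of $\tfd$ to collapse the factorization) is the right one, and your "alternative route" at the end is in fact the paper's argument --- but the crux is left unproved in both of your routes, and your primary route has an additional unjustified step. Concretely: to pull a factorization $\bS_{\bm{k}}\cong N_1\otimes N_2$ back to a product $B^{\bm{i}}(\bm{c})\doteq b_1b_2$ in $\Bup$, you would need to know that $N_1$ and $N_2$ themselves lie in the image of $\cL_\cD$. Lemma \ref{Lem:injection_of_irr} gives only injectivity of $\cL_\cD$ on $\Bup$, not that its image is closed under taking tensor divisors of a simple module, so the crystal-side factorization is not available as stated. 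The paper never leaves the module category.

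The step you flag as "the main obstacle" --- making precise that $S_{k_1}$ (or $S_{k_p}$) sits in exactly one factor and that the other factor divides the shorter head --- is resolved in the paper as follows (stated for condition (ii), with $k=k_p$). First one computes $\tfd(\scD^{-1}S_{k},\bS_{\bm{k}})=\tfd(\scD^{-1}S_{k},S_{k})=1$ via Lemma \ref{Lem:invariance_by_head} (ii), strong unmixedness (Proposition \ref{Prop:basics_of_affine_cuspidal_mod} (ii)) and the root-module property; by Proposition \ref{Prop:weakly_decreasing} this value splits as a sum over the two factors, so exactly one factor, say $N$, satisfies $\tfd(\scD^{-1}S_k,N)=1$ and the other satisfies $\tfd(\scD^{-1}S_k,M)=0$. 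Then $\bS_{\bm{k}}[1,p-1]\cong\scD^{-1}S_k\nab(M\otimes N)\cong M\otimes(\scD^{-1}S_k\nab N)$ by Proposition \ref{Prop:bijective_of_head} and strong commutation, and the induction hypothesis forces $M\cong\bm{1}$ or $\scD^{-1}S_k\nab N\cong\bm{1}$; the latter would give $N\cong S_k$ and $M\cong\bS_{\bm{k}}[1,p-1]$, which is excluded precisely by the hypothesis $\tfd(\bS_{\bm{k}}[1,p-1],S_{k})>0$ (a simple tensor product would mean strong commutation, hence $\tfd=0$). Note that this is the only place the hypothesis enters the inductive step; your sketch does not identify where the positivity is used, and without the $\tfd(\scD^{-1}S_k,\,\cdot\,)$ bookkeeping the phrase "$S_{k_1}$ divides one of $N_1,N_2$" has no precise meaning. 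Your base case also silently uses that a nontrivial real simple $M$ has $\tfd(\scD M,M)>0$; the paper's one-line subadditivity argument $1=\tfd(\scD S_k,S_k)\geq\tfd(\scD M,M)+\tfd(\scD N,N)$ is what you should supply there.
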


\begin{proof}
 We will show the assertion by the induction on $p$, assuming (ii) (the case (i) is similarly proved).
 In this proof, we write $k$ for $k_p$.
 First assume that $p=1$ (namely, $\bS_{\bm{k}}=S_k$), and $M,N \in \scC_{\fg}$ satisfies $S_{k} \cong M \otimes N$. 
 Recall that $S_k$ is a root module by Proposition \ref{Prop:basics_of_affine_cuspidal_mod}.
 Since $S_{k}$ is real, so are $M$ and $N$.
 By Proposition \ref{Prop:weakly_decreasing} (ii), we have
 \[ 1 = \tfd(\scD S_{k},S_{k}) = \tfd(\scD N \otimes \scD M, M \otimes N) \geq \tfd(\scD M, M) + \tfd(\scD N, N),
 \]
 which implies that either $\tfd(\scD M, M)=0$ or $\tfd(\scD N, N)=0$ holds. 
 Hence we have $M \cong \bm{1}$ or $N \cong \bm{1}$, as required.

 Let $p >1$, and assume that $\bS_{\bm{k}} \cong M \otimes N$.
 By Lemma \ref{Lem:invariance_by_head} (ii) and Proposition \ref{Prop:basics_of_affine_cuspidal_mod} (ii), we have 
 \begin{equation*}
  \tfd(\scD^{-1} S_k, \bS_{\bm{k}}) = \tfd(\scD^{-1} S_k, S_k)= 1.
 \end{equation*} 
 From this, we see that either $\tfd(\scD^{-1} S_k, M) =0$ or $\tfd(\scD^{-1} S_k, N)=0$ holds,
 and we may assume the former.
 It follows from Proposition \ref{Prop:bijective_of_head} that $\scD^{-1} S_k \nab \bS_{\bm{k}} \cong \bS_{\bm{k}}[1,p-1]$.
 Since $\tfd( \scD^{-1} S_k\nab N,M) = 0$, we have 
 \[ \bS_{\bm{k}}[1,p-1] \cong  \scD^{-1} S_k \nab (M \otimes N) \cong M \otimes (\scD^{-1} S_k\nab N). 
 \]
 Since $\bS_{\bm{k}}[1,p-1]$ is prime by the induction hypothesis, we have $M \cong \bm{1}$ or $\scD^{-1} S_k\nab N \cong \bm{1}$.
 If the latter occurs, we have $N \cong S_k$ and $M \cong \bS_{\bm{k}}[1,p-1]$ by Proposition \ref{Prop:bijective_of_head}, which contradicts the assumption $\tfd(\bS_{\bm{k}}[1,p-1],S_k ) > 0$.
 Hence $M \cong \bm{1}$ holds, and the proof is complete.
\end{proof}

\begin{Lem}\label{Lem:commutativity}
 Let $a,b,c \in [1,p]$ with $a\leq b \leq c$, and assume that 
 \begin{equation}\label{eq:assumption_of_middle}
  \tfd(S_{k_b},\bS_{\bm{k}}[a,b-1]) \leq 1 \ \ \text{and} \ \ \tfd(S_{k_b},\bS_{\bm{k}}[b+1,c]) \leq 1.
 \end{equation}
 Then $S_{k_b}$ and $\bS_{\bm{k}}[a,c]$ strongly commute.
\end{Lem}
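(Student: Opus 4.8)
The plan is to reduce the statement to a claim about $S_{k_b}$ strongly commuting with the head of a tensor product of cuspidal modules on each side, and then to combine these using the normality machinery from Subsection \ref{Subsection:R-matrices}. First I would write $\bS_{\bm{k}}[a,c] = \hd\big(\bS_{\bm{k}}[a,b-1] \otimes S_{k_b} \otimes \bS_{\bm{k}}[b+1,c]\big)$; this is legitimate because the factors $S_{k_a}, \ldots, S_{k_c}$ form a normal sequence and the head of their tensor product is associative up to the evident surjections (Proposition \ref{Prop:basics_of_affine_cuspidal_mod} (iii) together with Proposition \ref{Prop:simple_heads}). Set $X = \bS_{\bm{k}}[a,b-1]$, $Y = S_{k_b}$, $Z = \bS_{\bm{k}}[b+1,c]$; note all three are real (the outer two by Proposition \ref{Prop:reality_of_MN}, or more directly because heads of normal sequences of root modules are real, cf.\ Proposition \ref{Prop:basics_of_affine_cuspidal_mod}), and $Y$ is a root module.

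Next I would verify the hypotheses of Proposition \ref{Prop:three_terms} for the triple $(X,Y,Z)$: we need $\tfd(\scD X, Y) = \tfd(\scD Y, Z) = 0$ and that $X \otimes Y \otimes Z$ has a simple head. The simple-head condition is immediate from the normal-sequence statement above. For $\tfd(\scD X, Y) = 0$: since $X = \hd(S_{k_a}\otimes\cdots\otimes S_{k_{b-1}})$ and $\scD X = \scD S_{k_a} \nab \cdots$, one expands $\tfd(\scD X, S_{k_b})$ using Proposition \ref{Prop:weakly_decreasing} (i) — or better, the exact value from Proposition \ref{Prop:normal_implies_simple_head}/Lemma \ref{Lem:two_normal_sequences} — into a sum of $\tfd(\scD S_{k_j}, S_{k_b})$ for $a \leq j \leq b-1$, each of which vanishes because $(S_{k_j}, S_{k_b})$ is strongly unmixed (Proposition \ref{Prop:basics_of_affine_cuspidal_mod} (ii)) and $j < b$. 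Symmetrically $\tfd(\scD Y, Z) = 0$ since $\scD S_{k_b} = S_{k_b - N}$ and strong unmixedness handles $\tfd(\scD S_{k_b}, S_{k_j})$ for $b < j \leq c$; here one uses that the $N$-shifted cuspidal sequence is again cuspidal so that strong unmixedness applies across the shift. Proposition \ref{Prop:three_terms} then yields
\[
\tfd\big(Y, \bS_{\bm{k}}[a,c]\big) = \tfd\big(Y, X \nab Y\big) + \tfd\big(Y, Y \nab Z\big).
\]

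Finally I would bound each term on the right by $0$. By Proposition \ref{Prop:fundamental_properties_of_delta} (ii), since $Y$ is real, $\tfd(Y, X\nab Y) = 0$ is equivalent to $Y$ and $X \nab Y$ strongly commuting; but $X \nab Y = X \nab S_{k_b}$, and the hypothesis $\tfd(S_{k_b}, X) = \tfd(Y, \bS_{\bm{k}}[a,b-1]) \leq 1$ combined with Proposition \ref{Prop:strongly_decreasing} forces any simple subquotient $S$ of $Y \otimes X$ (in particular $X \nab Y \cong Y \Del X$ by \eqref{eq:head_socle}) to satisfy $\tfd(S, Y) < \tfd(X, Y) \leq 1$, hence $\tfd(X \nab Y, Y) = 0$ — unless $\tfd(X,Y) = 0$ already, in which case the same conclusion is trivial. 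Likewise $\tfd(Y, Y \nab Z) = 0$ from $\tfd(S_{k_b}, \bS_{\bm{k}}[b+1,c]) \leq 1$. Therefore $\tfd(Y, \bS_{\bm{k}}[a,c]) = 0$, and since $Y = S_{k_b}$ is real, Proposition \ref{Prop:fundamental_properties_of_delta} (ii) gives that $S_{k_b}$ and $\bS_{\bm{k}}[a,c]$ strongly commute. The main obstacle I anticipate is the bookkeeping in the second paragraph — confirming the vanishing $\tfd$ conditions cleanly, in particular handling the case $a = b$ or $b = c$ (where $X$ or $Z$ is $\bm{1}$ and the claim degenerates) and making sure the application of Proposition \ref{Prop:strongly_decreasing} to pass from $\tfd(S_{k_b}, X) \leq 1$ to $\tfd(X\nab S_{k_b}, S_{k_b}) = 0$ is correctly set up (it needs $\tfd(S_{k_b}, X) > 0$ as a hypothesis, so the $\tfd = 0$ case must be separated out).
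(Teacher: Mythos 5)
Your proposal is correct and follows essentially the same route as the paper: decompose $\bS_{\bm{k}}[a,c]$ as $\hd(X\otimes Y\otimes Z)$ with $X=\bS_{\bm{k}}[a,b-1]$, $Y=S_{k_b}$, $Z=\bS_{\bm{k}}[b+1,c]$, verify the hypotheses of Proposition \ref{Prop:three_terms} via strong unmixedness and the simple-head property of cuspidal sequences, and then kill both terms $\tfd(Y,X\nab Y)$ and $\tfd(Y,Y\nab Z)$ using Proposition \ref{Prop:strongly_decreasing} together with the hypothesis \eqref{eq:assumption_of_middle}, concluding by Proposition \ref{Prop:fundamental_properties_of_delta} (ii). The extra care you take with the degenerate cases ($\tfd(X,Y)=0$, or $X$ or $Z$ trivial) is sound but not needed beyond what the paper's citations already cover.
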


\begin{proof}
  Set
  \[ X=\bS_{\bm{k}}[a,b-1], \ \ Y=S_{k_b}, \ \ Z=\bS_{\bm{k}}[b+1,c].
  \]
  By Proposition \ref{Prop:basics_of_affine_cuspidal_mod} (ii), we have $\tfd(\scD X, Y) = 0$ and $\tfd(\scD Y, Z)=0$.
  This and Proposition \ref{Prop:basics_of_affine_cuspidal_mod} (iii) imply that $(X,Y,Z)$ satisfies the assumptions of Proposition \ref{Prop:three_terms},
  and hence we have 
  \[\tfd(S_{k_b},\bS_{\bm{k}}[a,c]) = \tfd\big(Y,\hd(X\otimes Y \otimes Z)\big) = \tfd(Y,X\nab Y) + \tfd(Y,Y \nab Z).
  \]
  By Proposition \ref{Prop:strongly_decreasing}, our assumption (\ref{eq:assumption_of_middle}) implies that
  $\tfd(Y,X\nab Y) = \tfd(Y,Y\nab Z)=0$. 
  Hence we have $\tfd(S_{k_b},\bS_{\bm{k}}[a,c])=0$.
  Now the assertion follows from Proposition \ref{Prop:fundamental_properties_of_delta} (ii).
\end{proof}

\begin{Prop}\label{Prop:reality}
 Let $a,b \in [1,p]$ with $a\leq b$,
 and assume that 
 \begin{equation}
  \tfd(S_{k_c},\bS_{\bm{k}}[a,c-1]) \leq 1 \ \ \text{and} \ \ \tfd(S_{k_c},\bS_{\bm{k}}[c+1,b]) \leq 1 
 \end{equation}
 for all $c \in [a,b]$.
 \begin{enumerate}\setlength{\leftskip}{-15pt}
  \item[(i)] $\bS_{\bm{k}}[a,b]$ is a real simple module.
  \item[(ii)] For any $a',b' \in [a,b]$ with $a' \leq b'$, $\bS_{\bm{k}}[a,b]$ and $\bS_{\bm{k}}[a',b']$ strongly commute.
 \end{enumerate}
\end{Prop}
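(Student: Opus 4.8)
The plan is to prove the two statements in the order (i) then (ii), the main inputs being Lemma~\ref{Lem:commutativity} and Proposition~\ref{Prop:reality_of_MN}; it is worth noting in advance that (i) uses only the first of the two inequalities in the hypothesis, while (ii) uses both. Before starting I would record the elementary identity
\[
 \bS_{\bm{k}}[a',b']\;\cong\;\bS_{\bm{k}}[a',b'-1]\nab S_{k_{b'}}\qquad(a'\le b'),
\]
which follows from Proposition~\ref{Prop:basics_of_affine_cuspidal_mod}(iii): the sequence $(S_{k_{a'}},\dots,S_{k_{b'}})$ is normal, so $\bS_{\bm{k}}[a',b']=\hd(S_{k_{a'}}\otimes\cdots\otimes S_{k_{b'}})$ is simple, and tensoring the canonical surjection $S_{k_{a'}}\otimes\cdots\otimes S_{k_{b'-1}}\twoheadrightarrow\bS_{\bm{k}}[a',b'-1]$ on the right by $S_{k_{b'}}$ induces a surjection $\bS_{\bm{k}}[a',b']\twoheadrightarrow\hd(\bS_{\bm{k}}[a',b'-1]\otimes S_{k_{b'}})$, which is an isomorphism because the source is simple and the target is nonzero.

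For part~(i) I would fix $a$ and prove by induction on $b'\in[a,b]$ that $\bS_{\bm{k}}[a,b']$ is real. The base case $b'=a$ holds because $\bS_{\bm{k}}[a,a]=S_{k_a}$ is a root module, hence real, by Proposition~\ref{Prop:basics_of_affine_cuspidal_mod}(i). For the inductive step I would use the identity above to write $\bS_{\bm{k}}[a,b']\cong\bS_{\bm{k}}[a,b'-1]\nab S_{k_{b'}}$, where $\bS_{\bm{k}}[a,b'-1]$ is real by the inductive hypothesis, $S_{k_{b'}}$ is real, and
\[
 \tfd\big(\bS_{\bm{k}}[a,b'-1],S_{k_{b'}}\big)=\tfd\big(S_{k_{b'}},\bS_{\bm{k}}[a,b'-1]\big)\le 1
\]
by the hypothesis of the proposition with $c=b'$; Proposition~\ref{Prop:reality_of_MN} then yields that $\bS_{\bm{k}}[a,b']$ is real, and $b'=b$ gives~(i). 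The point of running the induction this way, with $a$ fixed and $b'$ increasing, is that it never requires transferring the hypothesis of the proposition to a proper sub-interval --- which is exactly where the naive induction on the length of $[a,b]$ would get stuck, there being no obvious bound of $\tfd(S_{k_c},\bS_{\bm{k}}[c+1,b-1])$ in terms of $\tfd(S_{k_c},\bS_{\bm{k}}[c+1,b])$. I expect this bookkeeping to be the only genuinely delicate point of the proof.

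For part~(ii), I would first apply Lemma~\ref{Lem:commutativity} once for each $c\in[a,b]$, taking its distinguished index to be $c$ and its interval to be $[a,b]$ (legitimate since $c\in[a,b]$): its two hypotheses are then precisely $\tfd(S_{k_c},\bS_{\bm{k}}[a,c-1])\le1$ and $\tfd(S_{k_c},\bS_{\bm{k}}[c+1,b])\le1$, so the lemma gives that $S_{k_c}$ strongly commutes with $\bS_{\bm{k}}[a,b]$, whence $\tfd(S_{k_c},\bS_{\bm{k}}[a,b])=0$ by Proposition~\ref{Prop:fundamental_properties_of_delta}(ii) (using that $S_{k_c}$ is real). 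Now let $a',b'\in[a,b]$ with $a'\le b'$. Repeatedly applying Proposition~\ref{Prop:weakly_decreasing}(i) to the decompositions $\bS_{\bm{k}}[a',c]\cong\bS_{\bm{k}}[a',c-1]\nab S_{k_c}$ (each $\bS_{\bm{k}}[a',c]$ being a simple quotient of $\bS_{\bm{k}}[a',c-1]\otimes S_{k_c}$) gives
\[
 \tfd\big(\bS_{\bm{k}}[a,b],\bS_{\bm{k}}[a',b']\big)\;\le\;\sum_{c=a'}^{b'}\tfd\big(\bS_{\bm{k}}[a,b],S_{k_c}\big)\;=\;0,
\]
and since $\bS_{\bm{k}}[a,b]$ is real by~(i), Proposition~\ref{Prop:fundamental_properties_of_delta}(ii) shows that $\bS_{\bm{k}}[a,b]\otimes\bS_{\bm{k}}[a',b']$ is simple, i.e.\ the two modules strongly commute. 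This finishes~(ii); apart from the organisation of the induction in~(i), every step is a direct invocation of a result quoted above.
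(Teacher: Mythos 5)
Your proof is correct and follows exactly the route the paper takes: its (one-line) proof says that (i) is proved inductively from Proposition \ref{Prop:reality_of_MN} and that (ii) follows from Lemma \ref{Lem:commutativity} together with Propositions \ref{Prop:fundamental_properties_of_delta}~(ii) and \ref{Prop:weakly_decreasing}~(i), which are precisely the ingredients you invoke. Your remark about running the induction in (i) with $a$ fixed and the right endpoint increasing correctly identifies the only point where a careless induction would fail to have the needed hypothesis available.
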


\begin{proof}
 The assertion (i) is proved inductively using Proposition \ref{Prop:reality_of_MN},
 and then (ii) follows from Lemma \ref{Lem:commutativity} and Propositions \ref{Prop:fundamental_properties_of_delta} (ii) and \ref{Prop:weakly_decreasing} (i). 
\end{proof}

%The following proposition is easily proved from this lemma using Proposition \ref{Prop:weakly_decreasing}.

%\begin{Prop}\label{Prop:reality}
% Let $a,b \in [1,p]$ with $a\leq b$,
% and assume that 
% \begin{equation}
%  \tfd(S_{k_c},\bS_{\bm{k}}[a,c-1]) \leq 1 \ \ \text{and} \ \ \tfd(S_{k_c},\bS_{\bm{k}}[c+1,b]) \leq 1 
% \end{equation}
% for all $c \in [a,b]$.
% Then for any $a',b' \in [a,b]$ with $a' \leq b'$, $\bS_{\bm{k}}[a,b]$ and $\bS_{\bm{k}}[a',b']$ strongly commute.
% In particular, $\bS_{\bm{k}}[a,b]$ is real.
%\end{Prop}

%%%%%%%%%%%%%%%%%%%%%%%%%%%%%%%%%%%%%%%%%%%%%%%%%%%%%%%%%%%%%%%%%%%%%%%

\subsection{A sufficient condition to have a short exact sequence}

%Now we introduce the following theorem.

\begin{Thm}\label{Thm:Main_thm_general}
 Assume that $p \geq 2$, and both of the following two conditions are satisfied:
 \begin{itemize}\setlength{\leftskip}{-15pt}
  \item[{\normalfont(a)}] for any $1\leq a < b\leq p$, we have $\tfd(S_{k_a}, \bS_{\bm{k}}[a+1,b]) =1$, and
  \item[{\normalfont(b)}] for any $1\leq a < b \leq p$, we have $\tfd(\bS_{\bm{k}}[a,b-1], S_{k_b}) = 1$.
 \end{itemize}
 Then there exists a short exact sequence
 \begin{equation}\label{eq:short_exact}
  0 \to \hd\left(\bigotimes_{a=1}^{p-1} (S_{k_{a}} \Del S_{k_{a+1}})\right) \to \bS_{\bm{k}}[1,p-1] \otimes \bS_{\bm{k}}[2,p] \to \bS_{\bm{k}} \otimes \bS_{\bm{k}}[2,p-1]  \to 0,
 \end{equation}
 where the tensor product in the second term is ordered from left to right.
 Moreover, the first and third terms are simple.
\end{Thm}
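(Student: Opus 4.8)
The plan is to argue by induction on $p$. For $p=2$ the claim is Proposition~\ref{Prop:length2} applied to the real module $S_{k_1}$ and $S_{k_2}$, for which $\tfd(S_{k_1},S_{k_2})=1$ by~(a), after rewriting the submodule $S_{k_2}\nab S_{k_1}$ as $S_{k_1}\Del S_{k_2}=\hd(S_{k_1}\Del S_{k_2})$ via~\eqref{eq:head_socle}. Assume $p\geq3$. The structural consequences of~(a) and~(b) come first: the hypotheses of Proposition~\ref{Prop:reality} for every subinterval are immediate from (a), (b), so every $\bS_{\bm{k}}[a,b]$ is real, and $\bS_{\bm{k}}$ strongly commutes with $\bS_{\bm{k}}[2,p-1]$; in particular the third term $\bS_{\bm{k}}\otimes\bS_{\bm{k}}[2,p-1]$ is simple. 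Since $\tfd(S_{k_a},S_{k_{a+1}})=1$ by~(a), each $S_{k_a}\Del S_{k_{a+1}}\cong S_{k_{a+1}}\nab S_{k_a}$ is real (Proposition~\ref{Prop:reality_of_MN}); together with the induction hypothesis, which gives that $\hd\bigl(\bigotimes_{a=1}^{p-2}(S_{k_a}\Del S_{k_{a+1}})\bigr)$ is simple, Proposition~\ref{Prop:simple_heads}(ii) then shows $\hd\bigl(\bigotimes_{a=1}^{p-1}(S_{k_a}\Del S_{k_{a+1}})\bigr)$ is simple. I shall use throughout the identities $\bS_{\bm{k}}[a,b]=S_{k_a}\nab\bS_{\bm{k}}[a+1,b]=\bS_{\bm{k}}[a,b-1]\nab S_{k_b}$, which hold because taking heads is right exact.

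Next I would build the surjection in the sequence. From $\bS_{\bm{k}}[2,p]=\bS_{\bm{k}}[2,p-1]\nab S_{k_p}\cong S_{k_p}\Del\bS_{\bm{k}}[2,p-1]$ one gets an inclusion $\bS_{\bm{k}}[2,p]\hookrightarrow S_{k_p}\otimes\bS_{\bm{k}}[2,p-1]$, and there is the head surjection $\bS_{\bm{k}}[1,p-1]\otimes S_{k_p}\twoheadrightarrow\bS_{\bm{k}}$. Applying Proposition~\ref{Prop:simple_heads}(i) with $(M_1,M_2,M_3)=(\bS_{\bm{k}}[1,p-1],S_{k_p},\bS_{\bm{k}}[2,p-1])$, the composite
\[ \bS_{\bm{k}}[1,p-1]\otimes\bS_{\bm{k}}[2,p]\hookrightarrow\bS_{\bm{k}}[1,p-1]\otimes S_{k_p}\otimes\bS_{\bm{k}}[2,p-1]\twoheadrightarrow\bS_{\bm{k}}\otimes\bS_{\bm{k}}[2,p-1] \]
is nonzero, hence surjective since the target is simple; denote it $\psi$. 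I would then prove the estimate $\tfd(\bS_{\bm{k}}[1,p-1],\bS_{\bm{k}}[2,p])\leq1$: as $\bS_{\bm{k}}[1,p-1]=S_{k_1}\nab\bS_{\bm{k}}[2,p-1]$ is a subquotient of $S_{k_1}\otimes\bS_{\bm{k}}[2,p-1]$, Proposition~\ref{Prop:weakly_decreasing}(i) bounds the $\tfd$ by $\tfd(S_{k_1},\bS_{\bm{k}}[2,p])+\tfd(\bS_{\bm{k}}[2,p-1],\bS_{\bm{k}}[2,p])$, and the first term is $1$ by~(a) while the second is $0$ because $\bS_{\bm{k}}[2,p-1]$ and $\bS_{\bm{k}}[2,p]$ strongly commute (Proposition~\ref{Prop:reality}(ii) for the interval $[2,p]$).

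It follows from Proposition~\ref{Prop:length2} that $\bS_{\bm{k}}[1,p-1]\otimes\bS_{\bm{k}}[2,p]$ has composition length at most $2$. It also has simple head (Proposition~\ref{Prop:simple_heads}(ii)), so $\psi$ is the projection onto this head (which is therefore $\cong\bS_{\bm{k}}\otimes\bS_{\bm{k}}[2,p-1]$) and $\ker\psi$ is its unique maximal submodule; thus $\ker\psi$ is either $0$ or simple, and in the latter case $\ker\psi\cong\soc(\bS_{\bm{k}}[1,p-1]\otimes\bS_{\bm{k}}[2,p])\cong\bS_{\bm{k}}[2,p]\nab\bS_{\bm{k}}[1,p-1]$ by~\eqref{eq:head_socle}. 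It therefore remains to produce a nonzero homomorphism
\[ \hd\Bigl(\bigotimes_{a=1}^{p-1}(S_{k_a}\Del S_{k_{a+1}})\Bigr)\longrightarrow\bS_{\bm{k}}[1,p-1]\otimes\bS_{\bm{k}}[2,p] \]
whose composite with $\psi$ vanishes: then $\ker\psi\neq0$, hence is simple, hence is the image of this map, which, having simple source, is an isomorphism onto $\ker\psi$; this yields the desired short exact sequence with both ends simple. To construct the map I would use the inclusions $S_{k_a}\Del S_{k_{a+1}}\hookrightarrow S_{k_a}\otimes S_{k_{a+1}}$, the head surjections defining $\bS_{\bm{k}}[1,p-1]$ and $\bS_{\bm{k}}[2,p]$, the $(p-1)$-case short exact sequence, and the strong-commutativity results of Section~\ref{Section:QAA}, so as to reorder the common multiset of cuspidal tensor factors and land in $\ker\psi$ nontrivially.

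The main obstacle is this last construction, i.e.\ controlling the reordering so that both the non-vanishing of the map and the vanishing of its composite with $\psi$ hold. This requires a careful combination of the strong unmixedness of affine cuspidal modules (Proposition~\ref{Prop:basics_of_affine_cuspidal_mod}), the $\tfd$- and $\gL$-additivity statements (Propositions~\ref{Prop:weakly_decreasing}, \ref{Prop:three_terms}, \ref{Prop:normal_implies_simple_head}, Lemmas~\ref{Lem:invariance_by_head}, \ref{Lem:two_normal_sequences}, \ref{Lem:technical}), the interval-commutativity Lemma~\ref{Lem:commutativity}, and the induction hypothesis; it is also the step that pins down the precise ordering of the factors in the first term — equivalently, which of the two a priori possible short exact sequences actually occurs.
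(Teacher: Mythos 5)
Your setup matches the paper's: the surjection $\psi$ onto $\bS_{\bm{k}}\otimes\bS_{\bm{k}}[2,p-1]$, the bound $\tfd(\bS_{\bm{k}}[1,p-1],\bS_{\bm{k}}[2,p])\leq 1$, and the reduction via Proposition \ref{Prop:length2} are exactly Lemma \ref{Lem:for_Main1} and the final paragraph of the paper's argument. But the step you defer as ``the main obstacle'' is not a technical detail to be filled in by routine reordering --- it is the actual content of the theorem, namely Lemma \ref{Lem:for_Main2}: the identification $\bS_{\bm{k}}[2,p]\nab\bS_{\bm{k}}[1,p-1]\cong\hd\bigl(\bigotimes_{a=1}^{p-1}(S_{k_a}\Del S_{k_{a+1}})\bigr)$. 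The paper proves this by induction using two nontrivial claims: first, $\hd(\bS_{\bm{k}}[1,p-2]\otimes S_{k_p}\otimes S_{k_{p-1}})\cong S_{k_p}\nab\bS_{\bm{k}}[1,p-1]$, which needs Lemma \ref{Lem:technical} and a $\gL$-bookkeeping argument that uses hypothesis (b) in the form $2\,\tfd(\bS_{\bm{k}}[1,p-1],S_{k_p})=2$; second, that $\bS_{\bm{k}}[1,p-1]$ and $S_{k_p}\nab\bS_{\bm{k}}[2,p-1]$ strongly commute, proved by deriving a contradiction with (b) from the additivity of $\gL$ along normal sequences; and finally a separation argument via $\tfd(\scD S_{k_1},-)$ (using Lemma \ref{Lem:expression_of_socle}) to see that the surjection onto $\mathbb{H}$ kills the submodule $(S_{k_p}\nab\bS_{\bm{k}}[2,p-1])\otimes\bS_{\bm{k}}[1,p-1]$ and hence factors through $\bS_{\bm{k}}[2,p]\otimes\bS_{\bm{k}}[1,p-1]$. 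None of this appears in your proposal; listing the lemmas that enter is not a proof, and it is precisely here that one decides which of the two a priori possible exact sequences occurs.

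A secondary gap: your argument for the simplicity of the first term conflates $\hd(A\otimes B\otimes C)$ with $\hd\bigl(\hd(A\otimes B)\otimes C\bigr)$; Proposition \ref{Prop:simple_heads}(ii) only gives simplicity of the latter, which in general is merely a quotient of the former. The paper avoids this by invoking Lemma \ref{Lem:expression_of_socle} to write each $S_{k_a}\Del S_{k_{a+1}}$ as a head of a product of intermediate cuspidal modules, so that simplicity of $\mathbb{H}$ follows from Proposition \ref{Prop:basics_of_affine_cuspidal_mod}(iii). You should repair this point as well.
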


Our proof goes along a similar line as the one of \cite[Theorem 4.25]{kashiwara2103monoidal}.
First we note the following lemma, which is a special case of \cite[Theorem 6.12]{kashiwara2020pbw}.
For the reader's convenience, we give a proof here.

\begin{Lem}\label{Lem:expression_of_socle}
 Let $l,m \in \Z$ be such that $l<m$, and assume that $\tfd(S_l,S_m) >0$. Then we have
 \[ S_l \Del S_m \cong \hd\big( S_{l+1}^{\otimes c_{l+1}} \otimes S_{l+2}^{\otimes c_{l+2}}\otimes \cdots \otimes S_{m-1}^{\otimes c_{m-1}}\big) \ \ \ \text{for some } c_{l+1},\ldots,c_{m-1} \in \Z_{\geq 0}.
 \]
\end{Lem}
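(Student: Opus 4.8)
The plan is to realise $S_l\Del S_m$ as one of the composition factors of $S_l\otimes S_m$ and to read off its label in the dual PBW / upper global basis using the triangularity of Proposition~\ref{Prop:relations_of_FB}. First I would dispose of the two extremal values of $m-l$. If $m-l>N$, then $l<m-N$ and $\scD S_m=S_{m-N}$ by \eqref{eq:N-shift}, so the pair $(S_l,S_{m-N})$ is strongly unmixed (Proposition~\ref{Prop:basics_of_affine_cuspidal_mod}(ii)); hence $\tfd(S_l,S_m)=\tfd(\scD S_l,\scD S_m)=\tfd(\scD S_l,S_{m-N})=0$, contrary to the hypothesis, so in fact $m-l\le N$. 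If $m-l=N$, then $S_m=\scD^{-1}S_l$, and the rigidity isomorphism $\Hom{\scC_\fg}(\bm{1}\otimes S_l,S_l)\cong\Hom{\scC_\fg}(\bm{1},S_l\otimes\scD^{-1}S_l)$ yields a nonzero map $\bm{1}\to S_l\otimes S_m$ (corresponding to $\id_{S_l}$ on the left); since $S_l$ is real this tensor product has simple socle (Proposition~\ref{Prop:simple_heads}(ii)), so $S_l\Del S_m\cong\bm{1}$, the empty tensor product ($c_{l+1}=\dots=c_{m-1}=0$).

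So assume $1\le m-l\le N-1$. Using Lemma~\ref{Lem:change_of_cuspidal} I would shift $\bm i$ so that $1\le l<m\le N$; the shifted family is again a strong duality datum together with its affine cuspidal modules, and the assertion to be proved is unchanged under such a shift. Let $\bm e_1,\dots,\bm e_N$ be the standard basis of $\Z^N$. By \eqref{eq:Image_of_F}, $[S_l\otimes S_m]=\cL_{\cD}\big(F^{\bm i}(\bm e_l+\bm e_m)\big)$, while $\cL_{\cD}\big(B^{\bm i}(\bm e_l+\bm e_m)\big)=S_l\nab S_m$ by Lemma~\ref{Lem:parametrization}. By Proposition~\ref{Prop:relations_of_FB},
\[
F^{\bm i}(\bm e_l+\bm e_m)\in B^{\bm i}(\bm e_l+\bm e_m)+\sum_{\bm c'\prec\bm e_l+\bm e_m}q\Z[q]\,B^{\bm i}(\bm c'),
\]
and all $\bm c'$ occurring here satisfy $\sum_j c'_j\gb_j=\gb_l+\gb_m$, since both sides are weight vectors. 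Applying $\cL_{\cD}$, which satisfies $\cL_{\cD}(qx)=\cL_{\cD}(x)$ (because $\cF_{\cD}(qM)\cong\cF_{\cD}(M)$) and carries distinct elements of $\Bup$ to distinct simple classes (Lemma~\ref{Lem:injection_of_irr}), gives the decomposition into simple classes
\[
[S_l\otimes S_m]=[S_l\nab S_m]+\sum_{\bm c'\prec\bm e_l+\bm e_m}n_{\bm c'}\,[\cL_{\cD}(B^{\bm i}(\bm c'))],\qquad n_{\bm c'}\in\Z_{\geq 0}.
\]
In particular $S_l\nab S_m$ occurs in $[S_l\otimes S_m]$ with multiplicity exactly $1$.

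Now $S_l\Del S_m=\soc(S_l\otimes S_m)$ is a simple submodule, hence a composition factor, so $S_l\Del S_m\cong\cL_{\cD}(B^{\bm i}(\bm c'))$ for one of the tuples above. It cannot be $\bm e_l+\bm e_m$: as $\tfd(S_l,S_m)>0$ and $S_l$ is real, $S_l\otimes S_m$ is not simple (Proposition~\ref{Prop:fundamental_properties_of_delta}(ii)), so if $S_l\Del S_m\cong S_l\nab S_m$ this simple would occur in $[S_l\otimes S_m]$ with multiplicity at least $2$, which is absurd. Hence $\bm c'\prec\bm e_l+\bm e_m$, and a short inspection of the bi-lexicographic order — using $\sum_j c'_j\gb_j=\gb_l+\gb_m$, which excludes the index pairs $(l,l)$ and $(m,m)$ and leaves only $(l,m)$ as a witness of $\bm c'\prec\bm e_l+\bm e_m$ — forces $c'_j=0$ for $j\le l$ and for $j\ge m$. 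Therefore, by Lemma~\ref{Lem:parametrization},
\[
S_l\Del S_m\cong\cL_{\cD}(B^{\bm i}(\bm c'))=\hd\big(S_{l+1}^{\otimes c'_{l+1}}\otimes S_{l+2}^{\otimes c'_{l+2}}\otimes\cdots\otimes S_{m-1}^{\otimes c'_{m-1}}\big),
\]
which is the desired formula with $c_j:=c'_j$.

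The step I expect to be most delicate is the boundary bookkeeping: checking that the shift provided by Lemma~\ref{Lem:change_of_cuspidal} leaves the statement intact, and treating $m-l=N$ separately, where the socle collapses to $\bm{1}$. The conceptual crux is the multiplicity-one property of $S_l\nab S_m$ inside $[S_l\otimes S_m]$, which is precisely what rules out the spurious possibility $S_l\Del S_m\cong S_l\nab S_m$.
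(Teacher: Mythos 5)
Your proof is correct and follows essentially the same route as the paper's: reduce to $m-l\le N$, settle $m-l=N$ where the socle is trivial, shift the cuspidal sequence via Lemma \ref{Lem:change_of_cuspidal}, and read off the composition factors of $S_l\otimes S_m$ from the triangularity of Proposition \ref{Prop:relations_of_FB} together with the injectivity of $\cL_{\cD}$ on $\Bup$. The only difference is one of explicitness: the multiplicity-one argument ruling out $S_l\Del S_m\cong S_l\nab S_m$, and the weight/bi-lexicographic bookkeeping, are spelled out by you but left implicit in the paper.
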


\begin{proof}
 Since $\tfd(S_l,S_m) > 0$, we have $m\leq l+N$ by (\ref{eq:N-shift}) and Proposition \ref{Prop:basics_of_affine_cuspidal_mod} (ii).
 If the equality holds, we have $S_l \Del S_m \cong \bm{1}$, which implies the assertion with $c_j=0$ for all $j$. 
 Assume that $m <l+N$. 
 Extend $i_k \in I$ (fixed at the beginning of this section) to all $k \in \Z$ by $i_{k-N} = i_k^*$, and set $\bm{i}'=(i_{l},\ldots,i_{l+N-1}) \in R(w_0)$.
 By (\ref{eq:Image_of_F}) and Lemmas \ref{Lem:parametrization} and \ref{Lem:change_of_cuspidal}, 
 there is a strong duality datum $\cD'$ such that the algebra homomorphism $\cL_{\cD'}\colon U_\Z^-(\sg)^{\up} \to K(\scC_\fg)$
 satisfies
 \begin{align*}
  \cL_{\cD'}\big(F^{\bm{i}'}(\bm{d})\big) &= [S_{l}^{\otimes d_l}\otimes \cdots \otimes S_{l+N-1}^{\otimes d_{l+N-1}}], \ \ \text{and}\\
  \cL_{\cD'}\big(B^{\bm{i}'}(\bm{d})\big) &= [\hd(S_{l}^{\otimes d_l}\otimes \cdots \otimes S_{l+N-1}^{\otimes d_{l+N-1}})]
 \end{align*}
 for all $\bm{d}=(d_l,\ldots,d_{l+N-1}) \in \Z_{\geq 0}^N$.
 The assertion is now proved by applying this $\cL_{\cD'}$ to $F^{\bm{i}'}(\bm{d})$, where 
 \[ \bm{d} = (d_l,\ldots,d_{l+N-1}) \ \ \ \text{with} \ \  d_r = \gd_{r,l}+\gd_{r,m} \ \ \ (r \in [l,l+N-1]),
 \]
 and using Proposition \ref{Prop:relations_of_FB}.
\end{proof}

We devote the rest of this section to the proof of Theorem \ref{Thm:Main_thm_general}.
Until the end of the proof, we assume that the sequence $(S_{k_1},\ldots,S_{k_p})$ satisfies the assumptions (a) and (b) of the theorem.
Note that then the first and third terms of (\ref{eq:short_exact}) are both simple by Lemma \ref{Lem:expression_of_socle}, Proposition \ref{Prop:basics_of_affine_cuspidal_mod}, 
and Proposition \ref{Prop:reality}.

\begin{Lem}\label{Lem:for_Main1}\
 \begin{itemize}\setlength{\leftskip}{-15pt}
  \item[{\normalfont(i)}] We have $\tfd(\bS_{\bm{k}}[1,p-1],\bS_{\bm{k}}[2,p]) \leq 1.$
  \item[{\normalfont(ii)}] We have $\bS_{\bm{k}}[1,p-1] \nab \bS_{\bm{k}}[2,p] \cong  \bS_{\bm{k}} \otimes \bS_{\bm{k}}[2,p-1]$. 
 \end{itemize}
\end{Lem}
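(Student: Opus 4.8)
The plan is to derive both statements purely from the structural results recalled above. First I would record three ``telescoping'' isomorphisms,
\[ \bS_{\bm{k}}[1,p-1]\cong S_{k_1}\nab\bS_{\bm{k}}[2,p-1],\quad \bS_{\bm{k}}[2,p]\cong\bS_{\bm{k}}[2,p-1]\nab S_{k_p},\quad \bS_{\bm{k}}\cong\bS_{\bm{k}}[1,p-1]\nab S_{k_p}, \]
each obtained by applying $\hd(-)$ to an evident surjection of iterated tensor products of cuspidal modules (for instance $S_{k_1}\otimes\cdots\otimes S_{k_{p-1}}\twoheadrightarrow S_{k_1}\otimes\bS_{\bm{k}}[2,p-1]$) and observing that both source and target are simple --- the source by Proposition \ref{Prop:basics_of_affine_cuspidal_mod}(iii) (normal sequences have simple head), the target by Proposition \ref{Prop:simple_heads}(ii) (a cuspidal tensor factor is real), so a surjection between them is an isomorphism. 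I would also note at the outset that, since assumptions (a), (b) and the symmetry $\tfd(M,N)=\tfd(N,M)$ verify the hypotheses of Proposition \ref{Prop:reality} for every subinterval of $[1,p]$, all the modules $\bS_{\bm{k}}[a',b']$ are real and any two of them nested inside a common $\bS_{\bm{k}}[a,b]$ strongly commute; in particular $\bS_{\bm{k}}[2,p-1]$ strongly commutes with $\bS_{\bm{k}}[2,p]$ and with $\bS_{\bm{k}}$, so $\bS_{\bm{k}}\otimes\bS_{\bm{k}}[2,p-1]$ is simple.

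For part (i) I would use the first telescoping isomorphism to view $\bS_{\bm{k}}[1,p-1]$ as a simple subquotient of $S_{k_1}\otimes\bS_{\bm{k}}[2,p-1]$ and apply Proposition \ref{Prop:weakly_decreasing}(i):
\[ \tfd\big(\bS_{\bm{k}}[1,p-1],\bS_{\bm{k}}[2,p]\big)\le\tfd\big(S_{k_1},\bS_{\bm{k}}[2,p]\big)+\tfd\big(\bS_{\bm{k}}[2,p-1],\bS_{\bm{k}}[2,p]\big). \]
The first summand equals $1$ by assumption (a) with $(a,b)=(1,p)$, and the second is $0$ because the two modules strongly commute (Propositions \ref{Prop:reality}(ii) and \ref{Prop:fundamental_properties_of_delta}(ii)); this gives the bound.

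For part (ii) I would note that both sides are simple --- the right side as above, the left side $\bS_{\bm{k}}[1,p-1]\nab\bS_{\bm{k}}[2,p]$ by Proposition \ref{Prop:simple_heads}(ii) (both factors real) --- so it is enough to exhibit one nonzero homomorphism $\bS_{\bm{k}}[1,p-1]\otimes\bS_{\bm{k}}[2,p]\to\bS_{\bm{k}}\otimes\bS_{\bm{k}}[2,p-1]$: it is then automatically surjective, whence the two simple heads agree. I would build this map via Proposition \ref{Prop:simple_heads}(i) with $M_1=\bS_{\bm{k}}[1,p-1]$, $M_2=S_{k_p}$ (simple), $M_3=\bS_{\bm{k}}[2,p-1]$. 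For $g\colon M_1\otimes M_2\to L'$ I take the canonical surjection $\bS_{\bm{k}}[1,p-1]\otimes S_{k_p}\twoheadrightarrow\bS_{\bm{k}}$ from the third telescoping isomorphism. For $f\colon L\to M_2\otimes M_3$ I combine the second telescoping isomorphism with (\ref{eq:head_socle}): $\bS_{\bm{k}}[2,p]\cong\bS_{\bm{k}}[2,p-1]\nab S_{k_p}\cong S_{k_p}\Del\bS_{\bm{k}}[2,p-1]$, which is by definition the socle of $S_{k_p}\otimes\bS_{\bm{k}}[2,p-1]$ and therefore embeds into it; thus $L=\bS_{\bm{k}}[2,p]$ with $f$ this embedding. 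Proposition \ref{Prop:simple_heads}(i) then guarantees that the composite $\bS_{\bm{k}}[1,p-1]\otimes\bS_{\bm{k}}[2,p]\stackrel{\mathrm{id}\otimes f}{\longrightarrow}\bS_{\bm{k}}[1,p-1]\otimes S_{k_p}\otimes\bS_{\bm{k}}[2,p-1]\stackrel{g\otimes\mathrm{id}}{\longrightarrow}\bS_{\bm{k}}\otimes\bS_{\bm{k}}[2,p-1]$ is nonzero, which is exactly what is needed.

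The step I expect to be the crux is the construction in part (ii): one has to see that the right move is to insert the single cuspidal module $S_{k_p}$ between $\bS_{\bm{k}}[1,p-1]$ and $\bS_{\bm{k}}[2,p-1]$, and to use the head--socle identity (\ref{eq:head_socle}) to convert the description of $\bS_{\bm{k}}[2,p]$ as a head into an embedding as a socle, which is precisely what makes the two inputs of Proposition \ref{Prop:simple_heads}(i) simultaneously available. The remaining effort is routine bookkeeping: verifying the three telescoping isomorphisms, and checking (via the symmetry of $\tfd$) that (a) and (b) indeed supply the hypotheses of Proposition \ref{Prop:reality} for the intervals $[1,p-1]$, $[2,p]$, $[2,p-1]$ and $[1,p]$ used above.
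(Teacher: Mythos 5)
Your proposal is correct and follows essentially the same route as the paper: part (i) is the identical estimate via Proposition \ref{Prop:weakly_decreasing} combined with assumption (a) and strong commutation from Proposition \ref{Prop:reality}, and part (ii) is the same composition $\bS_{\bm{k}}[1,p-1]\otimes\bS_{\bm{k}}[2,p]\hookrightarrow\bS_{\bm{k}}[1,p-1]\otimes S_{k_p}\otimes\bS_{\bm{k}}[2,p-1]\twoheadrightarrow\bS_{\bm{k}}\otimes\bS_{\bm{k}}[2,p-1]$ built from (\ref{eq:head_socle}) and Proposition \ref{Prop:simple_heads}(i). The only difference is that you spell out the telescoping isomorphisms and reality checks that the paper leaves implicit.
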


\begin{proof}
 (i) The assertion holds since
 \[ \tfd(\bS_{\bm{k}}[1,p-1],\bS_{\bm{k}}[2,p]) \leq \tfd(S_{k_1},\bS_{\bm{k}}[2,p])+ \tfd(\bS_{\bm{k}}[2,p-1],\bS_{\bm{k}}[2,p]) =1
 \]
 by Proposition \ref{Prop:reality} and the assumption (a).
 (ii) There are homomorphisms 
 \[ \bS_{\bm{k}}[1,p-1] \otimes \bS_{\bm{k}}[2,p] \hookrightarrow \bS_{\bm{k}}[1,p-1] \otimes S_{k_p} \otimes \bS_{\bm{k}}[2,p-1] \twoheadrightarrow \bS_{\bm{k}} \otimes \bS_{\bm{k}}[2,p-1]
 \]
 by (\ref{eq:head_socle}), and the last term is simple.
 By Proposition \ref{Prop:simple_heads} (i), we obtain a surjection $\bS_{\bm{k}}[1,p-1] \otimes \bS_{\bm{k}}[2,p] \twoheadrightarrow \bS_{\bm{k}} \otimes \bS_{\bm{k}}[2,p-1]$, and the proof is complete. 
\end{proof}

\begin{Lem}\label{Lem:for_Main2}
 We have $\displaystyle \bS_{\bm{k}}[2,p] \nab \bS_{\bm{k}}[1,p-1] \cong \hd\Big(\bigotimes_{a=1}^{p-1} (S_{k_{a}} \Del S_{k_{a+1}})\Big)$.
\end{Lem}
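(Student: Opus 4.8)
The plan is to argue by induction on $p$, peeling the outer-left factor $M_1:=S_{k_1}\Del S_{k_2}\cong S_{k_2}\nab S_{k_1}$ (using \eqref{eq:head_socle}, as $S_{k_1}$ is real) off the right-hand tensor product. The base case $p=2$ reads $S_{k_2}\nab S_{k_1}\cong\hd(S_{k_1}\Del S_{k_2})$, which is \eqref{eq:head_socle} once more. For $p\ge 3$: since hypothesis (a) with $b=a+1$ gives $\tfd(S_{k_a},S_{k_{a+1}})=1>0$, Lemma \ref{Lem:expression_of_socle} writes each $M_a$ as the head of a tensor product of affine cuspidal modules whose indices all lie in the open interval $(k_a,k_{a+1})$. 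As these intervals are pairwise disjoint and increasing, concatenating the expressions and reordering the factors by increasing index exhibits $\bigotimes_{a=1}^{p-1}M_a$ as a quotient of a tensor product of affine cuspidal modules with strictly increasing indices, whose head is simple by Proposition \ref{Prop:basics_of_affine_cuspidal_mod}(iii); hence $\hd(\bigotimes_{a=1}^{p-1}M_a)$ is simple, and the evident surjections give $\hd(\bigotimes_{a=1}^{p-1}M_a)\cong M_1\nab\hd(\bigotimes_{a=2}^{p-1}M_a)$. Applying the induction hypothesis to $(k_2<\cdots<k_p)$ (which still satisfies (a), (b)) yields $\hd(\bigotimes_{a=2}^{p-1}M_a)\cong\bS_{\bm{k}}[3,p]\nab\bS_{\bm{k}}[2,p-1]$.

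It then remains to prove
\[
 \bS_{\bm{k}}[2,p]\nab\bS_{\bm{k}}[1,p-1]\;\cong\;(S_{k_2}\nab S_{k_1})\nab\big(\bS_{\bm{k}}[3,p]\nab\bS_{\bm{k}}[2,p-1]\big).
\]
Peeling the leftmost cuspidal factor off the normal sequences defining $\bS_{\bm{k}}[2,p]$ and $\bS_{\bm{k}}[1,p-1]$ (Proposition \ref{Prop:basics_of_affine_cuspidal_mod}(iii)) gives $\bS_{\bm{k}}[2,p]\cong S_{k_2}\nab\bS_{\bm{k}}[3,p]$ and $\bS_{\bm{k}}[1,p-1]\cong S_{k_1}\nab\bS_{\bm{k}}[2,p-1]$. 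Granting (1) that $S_{k_1}$ and $\bS_{\bm{k}}[3,p]$ strongly commute and (2) that the sequence $(S_{k_2},\bS_{\bm{k}}[3,p],S_{k_1},\bS_{\bm{k}}[2,p-1])$ is normal, by (1) the sequence $(S_{k_2},S_{k_1},\bS_{\bm{k}}[3,p],\bS_{\bm{k}}[2,p-1])$ is then normal too and the corresponding two four-fold tensor products are isomorphic; grouping the first two and last two factors along a normal sequence preserves the head, so the left-hand side above equals the head of $(S_{k_2},\bS_{\bm{k}}[3,p],S_{k_1},\bS_{\bm{k}}[2,p-1])$ while the right-hand side equals the head of $(S_{k_2},S_{k_1},\bS_{\bm{k}}[3,p],\bS_{\bm{k}}[2,p-1])$, and the displayed isomorphism follows, completing the induction step.

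The main obstacle is (1), i.e.\ $\tfd(S_{k_1},\bS_{\bm{k}}[3,p])=0$. The crude subadditivity estimates available from Section \ref{Section:QAA} (via the identities of Proposition \ref{Prop:bijective_of_head} together with the strong unmixedness of affine cuspidal modules from Proposition \ref{Prop:basics_of_affine_cuspidal_mod}(ii)) only yield $\tfd(S_{k_1},\bS_{\bm{k}}[3,p])\le 1$, so eliminating the value $1$ genuinely requires the precise equalities ``$=1$'' in hypotheses (a) and (b), not merely upper bounds. I would extract it by forcing the additivity $\tfd\big(S_{k_1},S_{k_2}\nab\bS_{\bm{k}}[3,p]\big)=\tfd(S_{k_1},S_{k_2})+\tfd(S_{k_1},\bS_{\bm{k}}[3,p])$ through Lemma \ref{Lem:two_normal_sequences} (or Proposition \ref{Prop:three_terms}, as in the proof of Lemma \ref{Lem:commutativity}), having first checked the requisite three-term normality from Proposition \ref{Prop:normality} and the strong unmixedness of the $S_k$; combined with hypothesis (a) this forces $\tfd(S_{k_1},\bS_{\bm{k}}[3,p])=1-1=0$. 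Once (1) is in hand, (2) follows from Proposition \ref{Prop:normality}, Lemma \ref{Lem:commutativity} and Proposition \ref{Prop:reality}. Alternatively, the whole lemma can be transported through the ring homomorphism $\cL_{\cD}$ into an identity of dual canonical basis elements and settled using the triangularity of Proposition \ref{Prop:relations_of_FB} (the mechanism already used in Lemma \ref{Lem:expression_of_socle}); the same difficulty then reappears as pinning down the leading term, in the bi-lexicographic order, of a product $B^{\bm{i}}(\bm{c})\cdot B^{\bm{i}}(\bm{c}')$.
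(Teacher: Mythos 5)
Your reduction of the lemma to the single isomorphism
\[
\bS_{\bm{k}}[2,p]\nab\bS_{\bm{k}}[1,p-1]\;\cong\;(S_{k_2}\nab S_{k_1})\nab\big(\bS_{\bm{k}}[3,p]\nab\bS_{\bm{k}}[2,p-1]\big)
\]
is sound as far as it goes: the base case, the simplicity of $\hd\big(\bigotimes_{a}(S_{k_a}\Del S_{k_{a+1}})\big)$ via Lemma \ref{Lem:expression_of_socle}, the peeling-off of $M_1$, and the application of the induction hypothesis are all fine. But the step you yourself single out as the main obstacle, namely $\tfd(S_{k_1},\bS_{\bm{k}}[3,p])=0$, is not merely hard to prove --- it is \emph{false} under hypotheses (a) and (b). Take $\sg$ of type $A_5$ and the prime snake $\bm{P}=\big((2,0),(2,2),(2,4)\big)$, which satisfies (a) and (b) by Lemma \ref{Lem:calculation_of_std} (this is how Theorem \ref{Thm:MainA} is deduced); here $p=3$ and $\bS_{\bm{k}}[3,p]=S_{k_3}$. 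Since $(2,2)\preceq(2,4)\preceq\scD^{-1}(2,0)=(4,6)$, the vertex $(2,4)$ is in prime snake position with respect to $(2,0)$, so Lemma \ref{Lem:calculation_of_std}(1)(a) gives $\tfd(S_{k_1},S_{k_3})=1$. (The same phenomenon already occurs for Kirillov--Reshetikhin modules, i.e.\ for the classical $T$-system.) Consequently $S_{k_1}$ and $\bS_{\bm{k}}[3,p]$ do not strongly commute, the two four-fold tensor products are genuinely different modules, and the middle swap on which your induction step rests is unavailable. Your proposed derivation of the vanishing through Lemma \ref{Lem:two_normal_sequences} cannot rescue this: that lemma is an equivalence, so the additivity $\tfd(S_{k_1},S_{k_2}\nab\bS_{\bm{k}}[3,p])=\tfd(S_{k_1},S_{k_2})+\tfd(S_{k_1},\bS_{\bm{k}}[3,p])$ requires $(S_{k_2},\bS_{\bm{k}}[3,p],S_{k_1})$ to be a normal sequence, and Proposition \ref{Prop:normality} would supply this only if $\scD S_{k_2}$ and $S_{k_1}$ strongly commuted --- a wrap-around condition that fails for affine cuspidal modules in general, and that must fail here because the conclusion it would imply is false.

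The paper's proof is structured precisely to avoid commuting $S_{k_1}$ past the interior of the other factor. It works with $\bS_{\bm{k}}[2,p-1]\otimes S_{k_p}\otimes\bS_{\bm{k}}[1,p-1]$, builds a surjection onto $\mathbb{H}=\hd\big(\bigotimes_a(S_{k_a}\Del S_{k_{a+1}})\big)$ using Lemma \ref{Lem:technical} and the induction hypothesis, and then invokes the length-two exact sequence $0\to S_{k_p}\nab\bS_{\bm{k}}[2,p-1]\to\bS_{\bm{k}}[2,p-1]\otimes S_{k_p}\to\bS_{\bm{k}}[2,p]\to0$ together with a $\tfd(\scD S_{k_1},-)$ computation to show that the surjection kills the submodule and hence factors through $\bS_{\bm{k}}[2,p]\otimes\bS_{\bm{k}}[1,p-1]$. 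To repair your argument you would need to replace the middle swap by a mechanism of this kind; as written, the induction step does not go through.
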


\begin{proof}
 We shall show the assertion by the induction on $p$. The case $p=2$ is obvious.
 Assume that $p>2$.
 In this proof, we write $k$ for $k_p$ and $k^{-}$ for $k_{p-1}$.\\[8pt]
% We begin with a proof of the following claim.\\[8pt]
 \textit{Claim 1.} \ We have $\hd (\bS_{\bm{k}}[1,p-2] \otimes S_{k} \otimes S_{k^-}) \cong S_{k} \nab \bS_{\bm{k}}[1,p-1]$.\\[-8pt]

 We apply Lemma \ref{Lem:technical} to $X=\bS_{\bm{k}}[1,p-2]$, $Y=S_k$, and $Z=S_{k^{-}}$.
 Since $\scD X$ and $Z$ strongly commute, $(X,Y,Z)$ is a normal sequence by Proposition \ref{Prop:normality} and the assumption (i) of the lemma follows.
 The assumption (ii) is equivalent to
 \begin{equation}\label{eq:proof_of_claim1}
  \gL(X,Y)-\gL(Y,Z)+\gL(Y,X\nab Z)=0 
 \end{equation}
 by Proposition \ref{Prop:fundamental_properties_of_delta} (i).
 It follows from our assumption (b) that
 \begin{align*}
  2 &= 2 \,\tfd(\bS_{\bm{k}}[1,p-1],S_k) = \gL(X\nab Z, Y) + \gL(Y, X\nab Z), \ \ \ \text{and}\\
  2 &= 2\,\tfd(S_{k^-},S_k) = \gL(Y,Z)+\gL(Z,Y),
 \end{align*}
 from which we have $\big(\gL(X\nab Z,Y)-\gL(Z,Y)\big)-\gL(Y,Z)+\gL(Y,X\nab Z) = 0$.
 Since $(X,Z,Y)$ is a normal sequence, we have $\gL(X \nab Z, Y) = \gL(X,Y)+\gL(Z,Y)$ by Proposition \ref{Prop:normal_implies_simple_head}.
 Hence (\ref{eq:proof_of_claim1}) holds, and Claim 1 follows from Lemma \ref{Lem:technical}.\\[-8pt]

 Therefore, we have the following homomorphisms:
 \begin{align}\label{eq:existence_of_surj}
  \bS_{\bm{k}}[2,p-1] \otimes S_k \otimes \bS_{\bm{k}}[1,p-1] &\twoheadrightarrow \bS_{\bm{k}}[2,p-1] \otimes (S_k \nab \bS_{\bm{k}}[1,p-1])\nonumber\\ 
  & \stackrel{\sim}{\to} \bS_{\bm{k}}[2,p-1] \otimes \hd (\bS_{\bm{k}}[1,p-2] \otimes S_{k} \otimes S_{k^-})\nonumber\\ 
  & \stackrel{\sim}{\to} \bS_{\bm{k}}[2,p-1] \otimes \big(\bS_{\bm{k}}[1,p-2] \nab ( S_{k^-} \Del S_{k})\big).
 \end{align}
 We see from Lemma \ref{Lem:expression_of_socle} that $\scD\, \bS_{\bm{k}}[2,p-1]$ and $S_{k^-} \Del S_k$ strongly commute,
 and thus $(\bS_{\bm{k}}[2,p-1], \bS_{\bm{k}}[1,p-2], S_{k^-} \Del S_{k})$ is a normal sequence by Proposition \ref{Prop:normality}.
 Hence we obtain a surjection 
 \[ \bS_{\bm{k}}[2,p-1] \otimes S_k \otimes \bS_{\bm{k}}[1,p-1] \twoheadrightarrow (\bS_{\bm{k}}[2,p-1] \nab \bS_{\bm{k}}[1,p-2]) \nab (S_{k^-} \Del S_{k}).
 \]
 By the induction hypothesis, we have $\displaystyle \bS_{\bm{k}}[2,p-1] \nab \bS_{\bm{k}}[1,p-2] \cong \hd\Big(\bigotimes_{a=1}^{p-2} (S_{k_a} \Del S_{k_{a+1}})\Big)$,
 and hence we obtain a surjection
 \begin{equation}\label{eq:surjection}
  \bS_{\bm{k}}[2,p-1] \otimes S_k \otimes \bS_{\bm{k}}[1,p-1] \twoheadrightarrow \hd\Big(\bigotimes_{a=1}^{p-1} (S_{k_a} \Del S_{k_{a+1}})\Big).
 \end{equation}
% Here we use the fact that $\bigotimes_{a=1}^{p-1} (S_{k_a} \Del S_{k_{a+1}})$ has a simple head, which follows from Lemma \ref{Lem:for_Main2} 
% and Proposition \ref{Prop:basics_of_affine_cuspidal_mod} (iii).
 Since $\tfd(\bS_{\bm{k}}[2,p-1],S_k) =1$, we have a short exact sequence
 \begin{equation}\label{eq:short_exact_seq}
  0 \to S_k \nab \bS_{\bm{k}}[2,p-1] \to \bS_{\bm{k}}[2,p-1] \otimes S_k \to \bS_{\bm{k}}[2,p] \to 0
 \end{equation}
 by Proposition \ref{Prop:length2}.\\[8pt]
 \textit{Claim 2.}  $\bS_{\bm{k}}[1,p-1]$ and $S_k \nab \bS_{\bm{k}}[2,p-1]$ strongly commute.\\[-8pt]

 In the proof of this claim, set $X=\bS_{\bm{k}}[1,p-1]$, $Y=S_k$ and $Z=\bS_{\bm{k}}[2,p-1]$.
 We have 
 \[ \tfd(X, Y \nab Z) \leq \tfd(X,Y) + \tfd(X,Z) = 1 + 0 =1,
 \]
 and hence it suffices to show that $\tfd(X, Y \nab Z) \neq \tfd(X,Y) + \tfd(X,Z)$. 
 If the equality holds, then Lemma \ref{Lem:two_normal_sequences} implies that $(X,Y,Z)$ is a normal sequence.
 Hence by Proposition \ref{Prop:normal_implies_simple_head}, we have 
 \[ \gL(X\nab Y,Z)=\gL(X,Z)+\gL(Y,Z).
 \]
 Moreover, $(Z,X,Y)$ is also a normal sequence since $\scD Z$ and $Y$ strongly commute, and hence we have 
 \[ \gL(Z, X \nab Y) = \gL(Z,X) + \gL(Z,Y).
 \] 
 On the other hand, since $\tfd(X \nab Y, Z) = 0$ and $\tfd (X,Z) = 0$, it follows from Proposition \ref{Prop:fundamental_properties_of_delta} (i) that
 \[ \gL(X \nab Y, Z) = -\gL(Z, X\nab Y) \ \ \text{and} \ \ \gL(X,Z) = -\gL(Z,X).
 \]
 Now by combining them, we have 
 \begin{align*}
  0 &= \gL(X\nab Y,Z) -\gL(X,Z)-\gL(Y,Z)= - \gL(Z, X \nab Y)+\gL(Z,X) - \gL(Y,Z) \\
    &= -\gL(Z,X) -\gL(Z,Y) +\gL(Z,X)- \gL(Y,Z) \\
    &= -\gL(Z,Y)-\gL(Y,Z) = -2\,\tfd(S_k,\bS_{\bm{k}}[2,p-1]),
 \end{align*}
 which contradicts the assumption (b). The proof is complete.\\[-8pt]

 Write $\displaystyle \mathbb{H} = \hd\Big(\bigotimes_{a=1}^{p-1} (S_{k_a} \Del S_{k_{a+1}})\Big)$.
 The simple modules $(S_k \nab \bS_{\bm{k}}[2,p-1])\otimes \bS_{\bm{k}}[1,p-1]$ and $\mathbb{H}$ are not isomorphic.
 Indeed, this follows since we have
 \[ \tfd\big(\scD S_{k_1}, (S_k \nab \bS_{\bm{k}}[2,p-1])\otimes \bS_{\bm{k}}[1,p-1]\big)\geq \tfd(\scD S_{k_1}, \bS_{\bm{k}}[1,p-1]\big) =1
 \] 
 by Lemma \ref{Lem:invariance_by_head}, and, on the other hand, 
 $\tfd(\scD S_{k_1}, \mathbb{H})=0$ by Lemma \ref{Lem:expression_of_socle}.
 Hence the composition 
 \[ (S_k \nab \bS_{\bm{k}}[2,p-1]) \otimes \bS_{\bm{k}}[1,p-1] \hookrightarrow \bS_{\bm{k}}[2,p-1] \otimes S_k \otimes \bS_{\bm{k}}[1,p-1] \twoheadrightarrow \mathbb{H}
 \]
 vanishes, where the second homomorphism is (\ref{eq:surjection}).
 Hence we obtain a surjection $\bS_{\bm{k}}[2,p] \otimes \bS_{\bm{k}}[1,p-1] \twoheadrightarrow \mathbb{H}$ by (\ref{eq:short_exact_seq}),
 which completes the proof of the lemma. 
\end{proof}

\noindent\textit{Proof of Theorem \ref{Thm:Main_thm_general}.} By Proposition \ref{Prop:length2} and Lemmas \ref{Lem:for_Main1} and \ref{Lem:for_Main2}, 
it suffices to show that $\bS_{\bm{k}} \otimes \bS_{\bm{k}}[2,p-1]$ and $\mathbb{H}$ are not isomorphic,
which follows from
\[ \tfd(\scD S_{k_1}, \bS_{\bm{k}} \otimes \bS_{\bm{k}}[2,p-1]) =1 \ \ \text{and} \ \ \tfd(\scD S_{k_1}, \mathbb{H}) = 0.
\]
\qed

%In the latter half of this paper, we will introduce two families of modules satisfying the assumptions of Theorem \ref{Thm:Main_thm_general}, which are constructed from a strong duality datum
%of type $A$.
%For that the following lemma is useful.

%%%%%%%%%%%%%%%%%%%%%%%%%%%%%%%%%%%%%%%%%%%%%%%%%%%%%%%%%%%%%%%%%%%%%%%%%%%%%%%%%%%%%%

\section{Snake modules associated with a strong duality datum of type $A$}\label{Section:Snake_modules}

%%%%%%%%%%%%%%%%%%%%%%%%%%%%%%%%%%%%%%%%%%%%%%%%%%%%%%%%%%%%%%%%%

\subsection{Quivers and reduced words}\label{Subsection:quivers}

\textit{In the remainder of this paper, we assume that $\sg=\mathfrak{sl}_{n+1}$, namely of type $A_n$}, whose index set is $I =[1,n]$ and the Dynkin diagram $\gD$ is given by
\[ \xymatrix@C=10pt@R=5pt@M=0pt{
 \circ \ar@{-}[r] & \circ \ar@{-}[r] & \circ \ar@{-}[r] & \ar@{.}[r] & \ar@{-}[r]& \circ  \ar@{-}[r] &\circ \\
 {\ \ \ 1 \ \ }& {\ \ \ 2 \ \ } &{\ \ \ 3 \ \ } & & & n-1 & \ n \
}
\]
We have 
\[ N=\frac{n(n+1)}{2},
\]
and $ i^* = n+1 -  i$ for $ i \in I $.
For $i,j \in I$ with $i\leq j$, we write $\ga_{i,j}=\ga_i+\ga_{i+1}+\cdots +\ga_{j} \in R^+$. 
If $i>j$, we set $\ga_{i,j}=0$.
We still assume that $\fg$ is an arbitrary affine Lie algebra.

\begin{Def}\normalfont \
 \begin{itemize}\setlength{\leftskip}{-15pt}
  \item[{\normalfont(i)}] A \textit{height function} (or \textit{untwisted height function}) on $I$ is a function $\xi\colon I \to \Z$ satisfying
   \[ |\xi_i-\xi_{i+1}| = 1 \ \ \text{for $1\leq i <n$},
   \]
   where we set $\xi_i=\xi(i)$ for simplicity. We denote by $\mathrm{HF}$ the set of height functions.\\
  \item[{\normalfont(ii)}] Assume that $n=2n_0-1$ for some $n_0 \in \Z_{\geq 2}$. 
   A \textit{twisted height function} on $I$ is a function $\xi\colon I \to \frac{1}{2}\Z$ satisfying
   \begin{gather*}
    \xi_i \in \Z \ \ \text{for $i \in I \setminus \{n_0\}$}, \ \ \ |\xi_i-\xi_{i+1}| =1 \ \ \text{for $i \in I \setminus\{n_0-1,n_0,n\}$}, \\
    |\xi_{n_0-1} - \xi_{n_0+1}| = 1 \ \ \ \text{and} \ \ \ |\xi_{n_0} - \min(\xi_{n_0-1},\xi_{n_0+1})| = 1/2.
   \end{gather*}
   We denote by $\mathrm{HF}^{\mathrm{tw}}$ the set of twisted height functions.
 \end{itemize} 
\end{Def}

\begin{Rem}\normalfont
 In \cite{zbMATH07355935}, a notion of a height function on a pair of a Dynkin diagram and a diagram automorphism was defined.
 In this terminology, (i) is a height function on $(\gD, \mathrm{id})$, and (ii) is that on $(\gD,( \ )^*)$, up to conventions.
\end{Rem}

Here are examples of untwisted and twisted height functions when $n=5$, where the numbers are the values of each function:
\begin{equation}\label{eq:Example_of_height}
 \xymatrix@C=10pt@R=5pt@M=0pt{
 \ 2 \ & \ 1 \ & \ 2 \ & \ 3 \ & \ 4 \  \\
 \circ \ar@{-}[r] & \circ \ar@{-}[r] & \circ \ar@{-}[r] & \circ  \ar@{-}[r] &\circ } \ \ \ \ \ \ 
   \ \ \ \xymatrix@C=10pt@R=5pt@M=0pt{
  -1 \ & \ 0 \ & -1/2 & \ 1 \ & \ 2 \ \\
 \circ \ar@{-}[r] & \circ \ar@{-}[r] & \circ \ar@{-}[r] & \circ  \ar@{-}[r] &\circ 
}
\end{equation}

Following  \cite{zbMATH07355935}, we will define several notions associated with an untwisted or a twisted height function.
Let $\xi \in \HF^{\flat}$ with $\flat \in \{\emptyset, \tw\}$, where $\HF^\emptyset := \HF$.
%Denote by $Q^\xi$ the Dynkin quiver of type $A_n$ with vertex set $I$ such that, for $i,j \in I$, there is an arrow from $i$ to $j$ if and only if $|i-j|=1$ and $\xi_i>\xi_j$.
We say $i \in I$ is a \textit{sink} (resp.\ \textit{source}) of $\xi$ if $\xi_i < \xi_j$ (resp.\ $\xi_i-d_i > \xi_j-d_j$) for all $j \in I$ such that $|i-j|=1$,
where we set
\[ d_i = 2 \ \ \text{for all $i \in I$} \ \ \ \text{if $\xi \in \HF$}, \ \ \text{and} \ \ d_i = \begin{cases} 2 & \text{for $i \in I \setminus\{n_0\}$}\\  1 & \text{for $i =n_0$}\end{cases}
   \ \ \ \text{if $\xi \in \HFt$}.
\]
If $i \in I$ is a sink (resp.\ source) of $\xi$, we define a new function $s_i\xi \in \HF^\flat$ by
\[ (s_i\xi)_j = \xi_j +d_i\gd_{i,j} \ \ \ (\text{resp.} \ (s_i\xi)_j = \xi_j-d_i\gd_{i,j}) \ \ \ \text{for all $j \in I$}.
\]
%We have $s_i \xi \in \HF$ (resp.\ $s_i \xi \in \HF^\tw$) if $\xi \in \HF$ (resp.\ $\xi \in \HF$).
We say a sequence $(i_1,\ldots,i_r)$ of elements of $I$ is \textit{adapted} (or \textit{sink-adapted}) to $\xi$ if $i_k$ is a sink of $s_{i_{k-1}}\cdots s_{i_1}\xi$ for all $k \in [1,r]$.
%The set of reduced words of $w_0$ adapted to a height function $\xi$ forms a single commutation equivalent class in $R(w_0)$ (see \cite{bedard1999commutation}).
The \textit{repetition quiver} $\wh{Q}^\xi$ associated with $\xi$ is a quiver whose vertex set $\wh{Q}^\xi_0$ and arrow set $\wh{Q}^\xi_1$ are given respectively by
\begin{align*}
 \wh{Q}^{\xi}_0 &= \{(i,k) \in I \times \frac{1}{2}\Z \mid k -\xi_i \in d_i\Z\},\\
 \wh{Q}^{\xi}_1 &= \{(i,k) \to (j,l) \mid (i,k),(j,l) \in \wh{Q}^\xi_0, |i-j|=1,\ l-k = \min(d_i,d_j)/2\}.
\end{align*} 
For example, $\wh{Q}^\xi$ for $\xi$ in (\ref{eq:Example_of_height}) are, respectively, as follows:
$$
\raisebox{3mm}{
\scalebox{0.9}{\xymatrix@!C=0.5mm@R=2mm{
(i \setminus k) && -2 &-1& 0 & 1& 2 & 3& 4 & 5& 6 & \\
1&&\bullet \ar@{->}[dr]&& \bullet \ar@{->}[dr] &&\bullet\ar@{->}[dr]
&&\bullet \ar@{->}[dr] && \bullet&\\
2&   &&\bullet \ar@{->}[dr]\ar@{->}[ur]&& \bullet \ar@{->}[dr]\ar@{->}[ur] &&\bullet \ar@{->}[dr]\ar@{->}[ur]
&& \bullet\ar@{->}[ur]\ar@{->}[dr] & &  \\ 
3& \ \ \cdots \cdots \ \ &\bullet \ar@{->}[dr] \ar@{->}[ur]&& \bullet \ar@{->}[dr] \ar@{->}[ur] &&\bullet\ar@{->}[dr] \ar@{->}[ur]
&&\bullet \ar@{->}[dr] \ar@{->}[ur] && \bullet& \cdots \cdots\\
4&&& \bullet \ar@{->}[ur]\ar@{->}[dr]&&\bullet \ar@{->}[ur]\ar@{->}[dr]&&\bullet \ar@{->}[ur]\ar@{->}[dr] &&\bullet \ar@{->}[ur]\ar@{->}[dr]& & \\
5&&\bullet \ar@{->}[ur]&& \bullet \ar@{->}[ur] && \bullet \ar@{->}[ur]
&&\bullet \ar@{->}[ur] && \bullet&}}}
$$
$$\raisebox{3.1em}{\scalebox{0.65}{\xymatrix@!C=0.1ex@R=0.5ex{
\mbox{{\Large $(i\setminus k)$}}  &  & \mbox{{\Large $-3$}} &&\mbox{{\Large $-2$}} && \mbox{{\Large $-1$}} && \mbox{{\Large $0$}} & &\mbox{{\Large $1$}} & & \mbox{{\Large $2$}}&  
& \mbox{{\Large $3$}} &  & \mbox{{\Large $4$}} &  & \mbox{{\Large $5$}} &  \\
\mbox{{\Large $1$}} &&\mbox{{\Large $\bullet$}} \ar@{->}[ddrr]&&&& \mbox{{\Large $\bullet$}} \ar@{->}[ddrr]&&&&  \mbox{{\Large $\bullet$}} \ar@{->}[ddrr] &&&& \mbox{{\Large $\bullet$}}\ar@{->}[ddrr]
&&&& \mbox{{\Large $\bullet$}}& \\{\phantom 2} \\
\mbox{{\Large $2$}} &&&&\mbox{{\Large $\bullet$}}\ar@{->}[dr] \ar@{->}[uurr] &&&& \mbox{{\Large $\bullet$}} \ar@{->}[dr]\ar@{->}[uurr] &&&&  \mbox{{\Large $\bullet$}} \ar@{->}[dr] \ar@{->}[uurr]
&&&& \mbox{{\Large $\bullet$}} \ar@{->}[dr] \ar@{->}[uurr] & \\
\mbox{{\Large $3$}} & \mbox{{\Large\ \ $\cdots \cdots$}}&& \mbox{{\Large $\bullet$}} \ar@{->}[ur] &&\mbox{{\Large $\bullet$}} \ar@{->}[dr] && \mbox{{\Large $\bullet$}} \ar@{->}[ur] 
&& \mbox{{\Large $\bullet$}} \ar@{->}[dr]
&& \mbox{{\Large $\bullet$}} \ar@{->}[ur] && \mbox{{\Large $\bullet$}} \ar@{->}[dr] & &
\mbox{{\Large $\bullet$}} \ar@{->}[ur]&&\mbox{{\Large $\bullet$}} \ar@{->}[dr] & & \mbox{\Large $\cdots\cdots$ \ \ }\\
\mbox{{\Large $4$}}&&\mbox{{\Large $\bullet$}}\ar@{->}[ddrr]\ar@{->}[ur]&&&&\mbox{{\Large $\bullet$}}\ar@{->}[ddrr]\ar@{->}[ur] &&&&  \mbox{{\Large $\bullet$}} \ar@{->}[ddrr]\ar@{->}[ur] &&&&
\mbox{{\Large $\bullet$}} \ar@{->}[ddrr] \ar@{->}[ur]
&&&& \mbox{{\Large $\bullet$}} & \\  {\phantom 2} & \\
\mbox{{\Large $5$}} &&&&\mbox{{\Large $\bullet$}} \ar@{->}[uurr] &&&& \mbox{{\Large $\bullet$}} \ar@{->}[uurr]
&&&& \mbox{{\Large $\bullet$}} \ar@{->}[uurr]  &&&& \mbox{{\Large $\bullet$}} \ar@{->}[uurr]& }}}
$$
For $\xi, \xi' \in \HF^{\flat}$ with $\flat\in \{\emptyset,\tw\}$, we have $\wh{Q}^\xi = \wh{Q}^{\xi'}$ if $\xi_i-\xi_i' \in 2\Z$ for some (or any) $i \in I$ such that $d_i=2$.
Let $\scD_\xi \colon \wh{Q}^\xi \to \wh{Q}^\xi$ denote the unique quiver automorphism satisfying 
\[ \scD_\xi(i,k)=(i^*,k-\wti{n}) \ \ \ \text{for all $(i,k) \in \wh{Q}^\xi_0$}, \ \ \ \text{where we set} \ \ \wti{n} = \begin{cases} n+1 & \text{if $\xi \in \HF$}, \\ n & \text{if $\xi \in \HFt$}.\end{cases}
\]
We also write $\scD$ for $\scD_\xi$ when $\xi$ is obvious.
Define a partial order $\preceq$ on $\wh{Q}^\xi_0$ by $(i,k) \preceq (j,l)$ if and only if there is an oriented path from $(i,k)$ to $(j,l)$ in $\wh{Q}^\xi$.

For $\xi \in \HF^\flat$ with $\flat \in \{\emptyset,\tw\}$, let $\gG^{\xi}$ denote the full subquiver of $\wh{Q}^\xi$ whose vertex set $\gG^\xi_0$ is given by
\[ \gG^\xi_0 = \{ (i,k) \in \wh{Q}^\xi_0 \mid \xi_i \leq k \leq n-1+\xi_{i^*}\}.
\]
We easily see that the number of the vertices of $\gG^\xi$ is $N$.
We say a total ordering $\gG^\xi_0=\{(i_1,k_1),\ldots,(i_N,k_N)\}$ of vertices of $\gG^\xi$ is a \textit{compatible reading} of $\gG^\xi$ 
if $r<s$ holds whenever there is an arrow $(i_r,k_r) \to (i_s,k_s)$ in $\gG^\xi$.

\begin{Prop}[{\cite[Section 3]{zbMATH07355935}}] \label{Prop:Bedard}
 Assume that $\xi \in \HF^{\flat}$ with $\flat \in \{\emptyset, \tw\}$.
 \begin{itemize}\setlength{\leftskip}{-15pt}
  \item[{\normalfont(i)}] The set of reduced words of $w_0$ adapted to $\xi$ forms a single commutation class in $R(w_0)$.
  \item[{\normalfont(ii)}] If $\{(i_1,k_1),\ldots,(i_N,k_N)\}$ is a compatible reading of $\gG^\xi$, then $(i_1,\ldots,i_N)$ is a reduced word of $w_0$ adapted to $\xi$.
   Conversely, any reduced word of $w_0$ adapted to $\xi$ is obtained from a compatible reading of $\gG^\xi$ in this way.
 \end{itemize}
\end{Prop}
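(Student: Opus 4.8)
The plan is to deduce both assertions from the type-$A$ instance of B\'edard's combinatorial description of adapted reduced words, which amounts to analysing how the repetition quiver $\wh Q^\xi$ and its finite full subquiver $\gG^\xi$ behave under the elementary reflections $s_i$ at sinks. For $\xi\in\HF$ this is classical; for $\xi\in\HFt$ the same argument applies essentially verbatim away from the distinguished column $n_0$ (where $d_{n_0}=1$, $n_0^*=n_0$, and the height may be half-integral), which needs a short separate inspection. The starting point, call it Step~1, is the reflection lemma: if $i\in I$ is a sink of $\xi$, then $s_i\xi$ again lies in $\HF$ (resp.\ $\HFt$), the vertex $i$ is a source of $s_i\xi$, and $\wh Q^{s_i\xi}=\wh Q^\xi$ (all immediate from the congruence $k-\xi_i\in d_i\Z$ and the identity $(s_i\xi)_i-\xi_i=d_i$); moreover $v_i:=(i,\xi_i)$ is the unique $\preceq$-minimal vertex of $\gG^\xi$ in its column, it is a source of $\gG^\xi$, and
\[ \gG^{s_i\xi}_0 \;=\; \bigl(\gG^\xi_0\setminus\{v_i\}\bigr)\;\cup\;\bigl\{(i^*,\,n-1+\xi_i+d_i)\bigr\}, \]
the newly adjoined vertex being a $\preceq$-maximal sink of $\gG^{s_i\xi}$. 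I would also record that the $\preceq$-minimal vertices of $\gG^\xi$ are exactly $\{(j,\xi_j)\mid j\text{ is a sink of }\xi\}$ and that two distinct sinks of $\xi$ are never adjacent in $I$ (since $|\xi_j-\xi_{j'}|\neq 0$ for adjacent $j,j'$), so the associated simple reflections commute. All of these are finite verifications on the explicit vertex and arrow sets of $\wh Q^\xi$.

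For part~(ii): to a reduced word $(i_1,\dots,i_N)$ adapted to $\xi$ I would attach the vertices $v_k:=\bigl(i_k,(s_{i_{k-1}}\cdots s_{i_1}\xi)_{i_k}\bigr)$; iterating Step~1 shows that these are pairwise distinct, exhaust the $N$ vertices of $\gG^\xi$, and form a compatible reading of $\gG^\xi$, which settles the ``conversely'' half. For the direct half, given a compatible reading of $\gG^\xi$ one shows, by induction on the length of the prefix already read and using Step~1 to keep track of which vertices of the successively reflected quivers remain, that the $k$-th vertex read must be $(i_k,\eta^{(k-1)}_{i_k})$ with $i_k$ a sink of $\eta^{(k-1)}:=s_{i_{k-1}}\cdots s_{i_1}\xi$ — here one uses that the vertices adjoined by earlier reflections are $\preceq$-maximal, hence irrelevant to finding the minimal remaining vertex. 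Thus $(i_1,\dots,i_N)$ is adapted to $\xi$; that it is a reduced word equal to $w_0$ then follows from the standard fact that any length-$\ell(w_0)=N$ sequence adapted to a height function is a reduced word of $w_0$.

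For part~(i): by part~(ii) the reduced words of $w_0$ adapted to $\xi$ are precisely the words obtained from linear extensions of the finite poset $(\gG^\xi_0,\preceq)$. Since any two linear extensions of a finite poset are connected by a chain of transpositions of $\preceq$-incomparable adjacent elements, it remains to see that whenever two $\preceq$-incomparable vertices $(i_r,k_r),(i_{r+1},k_{r+1})$ occur consecutively in a compatible reading one has $a_{i_ri_{r+1}}=0$: indeed $i_r\neq i_{r+1}$ because two vertices of $\gG^\xi$ in one column are $\preceq$-comparable (convexity of $\gG^\xi$ in $\wh Q^\xi$), and $|i_r-i_{r+1}|\neq 1$ because two $\preceq$-incomparable vertices in adjacent columns of $\gG^\xi$ cannot be consecutive in a compatible reading (by the explicit shape of $\gG^\xi$). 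Hence every such transposition is a commutation move, so all adapted reduced words lie in one commutation class; conversely a commutation move on an adapted reduced word merely interchanges two consecutive sinks, which commute by Step~1, so adaptedness is preserved and that commutation class is exactly the set of all adapted reduced words.

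The bulk of the work sits in Step~1 and the two incomparability claims of part~(i): establishing convexity of $\gG^\xi$ inside $\wh Q^\xi$, pinning down its $\preceq$-minimal vertices, and controlling the passage from $\gG^\xi$ to $\gG^{s_i\xi}$ — and, in the twisted case, discharging all of these at and around the column $n_0$, which is where a careful case analysis is genuinely unavoidable. An alternative route for the twisted case would be a folding argument from untwisted type $A_{2n_0-1}$, but making precise the dictionary between $\HFt$ and $(\ )^*$-stable height functions is itself some work, so I would favour the uniform repetition-quiver argument above.
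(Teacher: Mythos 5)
The paper offers no proof of this proposition: it is quoted verbatim from Fujita--Oh \cite[Section 3]{zbMATH07355935} (and, in the untwisted case, goes back to Gabriel/B\'edard). Your reconstruction follows exactly the reflection-functor argument used there --- the key lemma that a sink reflection removes the unique minimal vertex $(i,\xi_i)$ of $\gG^\xi$ and adjoins a maximal vertex $(i^*,n-1+\xi_i+d_i)$ without changing $\wh{Q}^\xi$, the identification of compatible readings with linear extensions, and the observation that incomparable consecutive vertices must lie in non-adjacent columns (in fact any two vertices of $\wh{Q}^\xi$ in the same or adjacent columns are comparable, so your weaker formulation is more than enough). So this is the same approach as the cited source, correctly outlined. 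The one genuine soft spot is that you defer to ``the standard fact that any length-$N$ sequence adapted to a height function is a reduced word of $w_0$'': for $\xi\in\HF$ this is classical, but for $\xi\in\HFt$ it is precisely one of the nontrivial assertions established in Fujita--Oh Section 3 (there is no path algebra whose Auslander--Reiten theory hands it to you for free), so a self-contained proof would still have to verify that the roots $s_{i_1}\cdots s_{i_{l-1}}(\ga_{i_l})$ attached to a compatible reading of $\gG^\xi$ are distinct positive roots exhausting $R^+$ --- e.g.\ by checking the bijection $\phi_\xi$ directly, as in Lemma \ref{Lem:Image_of_phi} for the special height functions used later. Since the proposition is itself a citation, deferring this is defensible, but it is where the real content of the twisted case lives.
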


Let $\xi \in \HF^{\flat}$ with $\flat \in \{\emptyset, \tw\}$, and take a compatible reading $\{(i_1,k_1),\ldots,(i_N,k_N)\}$ of $\gG^\xi$.
Define a bijection $\phi_\xi\colon \gG^\xi_0 \to R^+$ by 
\[ \phi_{\xi}(i_l,k_l) = \gb_l \ \ \text{with} \ \ \gb_l = s_{i_1}\cdots s_{i_{l-1}}(\ga_{i_l}) \ \ \text{for $l \in [1,N]$}.
\]
Here and below we write $\phi_{\xi}(i,k)$ instead of $\phi_{\xi}\big((i,k)\big)$ for simplicity.
The map $\phi_{\xi}$ does not depend on the choice of the compatible reading by Proposition \ref{Prop:Bedard}.
If we take $\xi$ as in (\ref{eq:Example_of_height}), $\gG^\xi$ and $\phi_\xi$ are given, respectively, as follows:
\begin{equation*} 
\raisebox{3mm}{
\scalebox{0.9}{\xymatrix@!C=0.5mm@R=2mm{
(i \setminus k) &1& 2 &3& 4 & 5& 6 & 7 & 8 \\
1 & & \ga_{1,2} \ar@{->}[dr] & & \ga_{3} \ar@{->}[dr] & & \ga_4\ar@{->}[dr] &&\ga_5&&\\
2  &\ga_2 \ar@{->}[dr] \ar@{->}[ur] && \ga_{1,3} \ar@{->}[dr]\ar@{->}[ur] && \ga_{3,4} \ar@{->}[dr] \ar@{->}[ur] && \ga_{4,5}  \ar@{->}[ur] & & \\ 
3&& \ga_{2,3} \ar@{->}[dr] \ar@{->}[ur] &&\ga_{1,4}\ar@{->}[dr] \ar@{->}[ur]
&& \ga_{3,5} \ar@{->}[ur]&& & \\
4& &&\ga_{2,4} \ar@{->}[ur]\ar@{->}[dr]&&\ga_{1,5} \ar@{->}[ur] \ar@{->}[dr]&&& & \\
5 & && & \ga_{2,5}\ar@{->}[ur] & & \ga_1 & &  }}}
\end{equation*}
\begin{equation*}
 \raisebox{3.1em}{\scalebox{0.8}{\xymatrix@!C=0.2ex@R=0.5ex{
\mbox{\large$(i\setminus k)$ \ \ \ }    & \mbox{\large$-1$} &&\mbox{\large$ 0$} && \mbox{\large$1$} && \mbox{\large$2$} & &\mbox{\large$3$} & & \mbox{\large$4$}&  
& \mbox{\large$5$} & \\
\mbox{\large$1$}&\mbox{\large$\ga_1$} \ar@{->}[ddrr]&&&& \mbox{\large$\ga_{2,3}$} \ar@{->}[ddrr]&&&&  \mbox{\large$\ga_{4}$} \ar@{->}[ddrr] &&&& \mbox{\large$\ga_5$} \\{\phantom 2} \\
\mbox{\large$2$} &&&\mbox{\large$\ga_{1,3}$}\ar@{->}[dr] \ar@{->}[uurr] &&&& \mbox{\large$\ga_{2,4}$} \ar@{->}[dr]\ar@{->}[uurr] &&&&  \mbox{\large$\ga_{4,5}$} \ar@{->}[uurr]
&&&&  & \\
\mbox{\large$3$}  && \mbox{\large$\ga_3$} \ar@{->}[ur] &&\mbox{\large$\ga_{1,2}$} \ar@{->}[dr] && \mbox{\large$\ga_{3,4}$} \ar@{->}[ur] && \mbox{\large$\ga_2$} \ar@{->}[dr]
&& \mbox{\large$\ga_{3,5}$} \ar@{->}[ur] &&   \\
\mbox{\large$4$}&&&&&\mbox{\large$\ga_{1,4}$}\ar@{->}[ddrr]\ar@{->}[ur] &&&&  \mbox{\large$\ga_{2,5}$} \ar@{->}[ur] &&&&
 \\  {\phantom 2} & \\
\mbox{\large$5$} &&& &&&& \mbox{\large$\ga_{1,5}$} \ar@{->}[uurr] &&&&  }}}
\end{equation*}
If $\xi$ is an untwisted height function, $\gG^\xi$ is isomorphic to the Auslander-Reiten quiver of the category of finite-dimensional modules 
over the path algebra of type $A_n$ associated with $\xi$  (see \cite{zbMATH03695413}).
%$\bk Q^\xi$ (see \cite{zbMATH03695413}), where $Q^\xi$ denotes the Dynkin quiver of type $A_n$ associated with the height function $\xi$.

%%%%%%%%%%%%%%%%%%%%%%%%%%%%%%%%%%%%%%%%%%%%%%%%%%%%%%%%%%%%%%%%%

\subsection{Snake modules}\label{Subsection:Snake_modules}

Let $\cD=\{\sL_i \}_{ i  \in I} \subseteq \scC_{\fg}$ be a strong duality datum associated with $\mathfrak{sl}_{n+1}$,
and assume that $\xi \in \HF^{\flat}$ with $\flat \in \{\emptyset,\tw\}$.
Choose an arbitrary compatible reading $\gG^\xi_0=\{(i_1,k_1),\ldots,(i_N,k_N)\}$, and set $\bm{i}=(i_1,\ldots,i_N) \in R(w_0)$.
Let $S_k^{\cD,\bm{ i}}$ ($k \in \Z$) be the corresponding affine cuspidal modules.
These modules are labeled by $\Z$, and we shall relabel them by $\wh{Q}_0^\xi$ as follows; set
\[ S_{i_r,k_r}^{\cD,\xi} := S_r^{\cD,\bm{i}} \ \ \text{for $r \in [1,N]$},
\]
and extend this to all $(i,k) \in \wh{Q}^\xi_0$ by $S_{\scD_\xi(i,k)}^{\cD,\xi} =\scD S_{i,k}^{\cD,\xi}$.
It follows that 
\begin{equation}\label{eq:correspondence_of_S}
  S_{\scD_\xi^t(i_r,k_r)}^{\cD,\xi} = S_{r-tN}^{\cD,\bm{i}} \ \ \ \text{for $1\leq r \leq N$, $t\in \Z$}.
\end{equation} 
It is easily seen from Lemma \ref{Lem:independent_of_2-move} that these $S_{i,k}^{\cD,\xi}$ do not depend on the choice of the compatible reading (though $S_k^{\cD,\bm{i}}$ do).
%We easily check from (\ref{eq:Li_Sk}) that $\sL_ i \cong T_{1,2n-2i+1}^{\cD}$ for $ i \in [1,n]$, and we have $\scD(T_{ i,k}) \cong T_{ i^*,k+n+1}$ for all $( i,k) \in \scI$.
We will write $S_{i,k}$ or $S_{i,k}^{\xi}$ for $S_{i,k}^{\cD,\xi}$ when no confusion is likely.

\begin{Rem}\label{Rem:fundamental_module_case}\normalfont
 Assume that $\xi \in \HF$ (resp.\ $\xi \in \HFt$) and $\fg$ is of type $A_n^{(1)}$ (resp.\ $B_{n_0}^{(1)}$).
 For each $i \in I$, set 
 \[ \sL_i = L(Y_{j,-l}) \ \ \ \text{(resp.\ $\sL_i=L(Y_{\hat{j},-2l})$ \ with \ $\hat{j} = \min(j,j^*)$)}
 \]
 (see Subsection \ref{Subsection:simple modules} for notation), where we put $(j,l) = \phi_\xi^{-1}(\ga_i) \in \gG^\xi_0$.
% Here $L(Y_{j,a})$ denotes the fundamental module with highest weight monomial $Y_{j,a}$, and we refer the readers to \cite{MR1745260,MR1810773} for the definition.
 Then $\cD = \{\sL_i\}_{i \in I}$ forms a strong duality datum associated with $\mathfrak{sl}_{n+1}$,
 and we have 
 \[ S_{i,k}^{\cD,\xi} \cong L(Y_{i,-k}) \ \ \ (\text{resp.\ } S_{i,k}^{\cD,\xi} \cong L(Y_{\hat{i},-2k})) \ \ \ \text{for all $(i,k) \in \wh{Q}^\xi_0$},
 \]
 see \cite{hernandez2015quantum,kang2015symmetric,zbMATH07355935,kashiwara2020pbw} (note that source-adapted reduced words are used by convention in these papers).
\end{Rem}

%Although the modules $S_{i,k}^{\cD,\xi}$ depend on the height function $\xi$, we see from the following lemma that any choice of $\xi$ loses no generality. 

\begin{Lem}\label{Lem:change_of_height_function}
 Let $\cD=\{\sL_i\}_{i \in I}$ be a strong duality datum associated with $\mathfrak{sl}_{n+1}$, and assume that $\xi,\xi' \in \HF^{\flat}$ with $\flat \in \{\emptyset,\tw\}$.
 \begin{itemize}\setlength{\leftskip}{-15pt}
  \item[{\normalfont(i)}] If $\xi' - \xi$ is a constant function whose value is $p \in \Z$, then we have $S_{i,k}^{\cD,\xi'} \cong S_{i,k-p}^{\cD,\xi}$ for all $(i,k) \in \wh{Q}^{\xi'}_0$.
  \item[{\normalfont(ii)}] Assume that $\wh{Q}^\xi = \wh{Q}^{\xi'}$, and set $\sL_i' = S_{\phi_{\xi'}^{-1}(\ga_i)}^{\cD,\xi}$ for $i\in I$.
   Then $\cD'=\{\sL'_i\}_{i \in I}$ forms a strong duality datum associated with $\mathfrak{sl}_{n+1}$, and we have
   \begin{equation}\label{eq:SikcD}
    S_{i,k}^{\cD,\xi} \cong S_{i,k}^{\cD',\xi'} \ \ \ \text{for all $(i,k) \in \wh{Q}^\xi_0$}.
   \end{equation}
 \end{itemize}
\end{Lem}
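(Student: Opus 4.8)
The plan is to deduce both parts from the structural results on affine cuspidal modules already at hand --- Lemmas \ref{Lem:change_of_cuspidal} and \ref{Lem:independent_of_2-move} --- together with the combinatorics of height functions from \cite{zbMATH07355935}. A preliminary observation, immediate from (\ref{eq:Li_Sk}), is that for \emph{any} strong duality datum $\mathcal{E}=\{\sL_i\}_{i\in I}$ associated with $\mathfrak{sl}_{n+1}$ and \emph{any} $\eta\in\HF^{\flat}$ (with $\flat\in\{\emptyset,\tw\}$) one has $\sL_i\cong S_{\phi_\eta^{-1}(\ga_i)}^{\mathcal{E},\eta}$; so a strong duality datum is recovered from its associated affine cuspidal modules, and it will be enough to match cuspidal modules.

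Part (i) is bookkeeping. A function $\xi$ and its constant translate $\xi'=\xi+p$ have the same sinks and sources at every stage of the sink-adaptation procedure, and $s_i(\xi+p)=(s_i\xi)+p$ for a sink or source $i$; hence a reduced word is adapted to $\xi$ if and only if it is adapted to $\xi'$, and any compatible reading $\{(i_1,k_1),\ldots,(i_N,k_N)\}$ of $\gG^{\xi}$ gives the compatible reading $\{(i_1,k_1+p),\ldots,(i_N,k_N+p)\}$ of $\gG^{\xi'}$. Therefore $S_{i_r,k_r+p}^{\cD,\xi'}=S_r^{\cD,\bm{i}}=S_{i_r,k_r}^{\cD,\xi}$ on the vertices of $\gG^{\xi'}$. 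As $\scD_\xi=\scD_{\xi'}$ (this automorphism depends only on whether the function lies in $\HF$ or $\HFt$), the $\scD$-equivariant extension (\ref{eq:correspondence_of_S}) propagates the isomorphism to all of $\wh{Q}^{\xi'}_0$, which is precisely the assertion $S_{i,k}^{\cD,\xi'}\cong S_{i,k-p}^{\cD,\xi}$.

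For part (ii) I would first reduce to an elementary move. Since $\wh{Q}^{\xi}=\wh{Q}^{\xi'}$, the $\scD$-sections $\gG^{\xi}$ and $\gG^{\xi'}$ sit inside one and the same translation quiver, and any two complete sections of a translation quiver are joined by a finite chain of reflections $\eta\mapsto s_j\eta$ at sinks or sources of the intermediate functions (see \cite{zbMATH07355935}; see also \cite{zbMATH03695413}). By induction on the length of such a chain it suffices to treat $\xi'=s_j\xi$ with $j$ a sink of $\xi$, the case of a source being handled symmetrically (with $a=-1$ in place of $a=1$ below). Choose a compatible reading of $\gG^{\xi}$ beginning with $j$, so $\bm{i}=(i_1,\ldots,i_N)$ with $i_1=j$ and $(i_1,k_1)=\phi_\xi^{-1}(\ga_j)$. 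From \cite{zbMATH07355935} one obtains that $\bm{i}':=(i_2,\ldots,i_N,i_1^*)$ is a compatible reading of $\gG^{s_j\xi}$ whose vertices are $(i_2,k_2),\ldots,(i_N,k_N),\scD_\xi^{-1}(i_1,k_1)$; in the $\Z$-labelling of $\wh{Q}^{\xi}_0$ fixed by (\ref{eq:correspondence_of_S}) this section is the shifted window $\{2,\ldots,N+1\}$. Applying Lemma \ref{Lem:change_of_cuspidal} with $a=1$ yields a strong duality datum $\cD^{(1)}$ for which $\{S_{1+k}^{\cD,\bm{i}}\}_{k\in\Z}$ are the affine cuspidal modules attached to $(\cD^{(1)},\bm{i}')$; comparing the two relabellings of this single bi-infinite sequence --- via (\ref{eq:correspondence_of_S}) for $(\cD,\bm{i})$ and for $(\cD^{(1)},\bm{i}')$ --- gives $S_{i,k}^{\cD^{(1)},\xi'}\cong S_{i,k}^{\cD,\xi}$ on every vertex of $\gG^{\xi'}$, hence on all of $\wh{Q}^{\xi'}_0=\wh{Q}^{\xi}_0$ by $\scD$-equivariance. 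The preliminary observation then forces $\cD^{(1)}=\cD'$, which completes the elementary move, and the induction proves (ii). Independence of all choices of compatible reading throughout is covered by Lemma \ref{Lem:independent_of_2-move} and the remark preceding the lemma.

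The step I expect to be the real work is the combinatorial dictionary invoked from \cite{zbMATH07355935}: that $\bm{i}'=(i_2,\ldots,i_N,i_1^*)$ is indeed a compatible reading of $\gG^{s_j\xi}$, and that the single-vertex change of section from $\gG^{\xi}$ to $\gG^{s_j\xi}$ --- removal of $\phi_\xi^{-1}(\ga_j)$ together with insertion of its $\scD_\xi^{-1}$-image --- matches exactly the unit window shift $a=1$ of Lemma \ref{Lem:change_of_cuspidal}, uniformly over the untwisted and the twisted cases, where the integers $d_i$ and $\wti{n}$ occurring in the definitions of sink, of $s_j\xi$, and of $\scD_\xi$ take different values. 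Once this dictionary is set up, everything else is formal.
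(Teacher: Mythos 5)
Your proposal is correct and follows essentially the same route as the paper: part (i) by direct inspection of the construction, and part (ii) by reducing to a single sink/source reflection $\xi'=s_j\xi$, choosing a compatible reading of $\gG^\xi$ starting at $j$, and invoking Lemma \ref{Lem:change_of_cuspidal} with the unit shift $a=1$ (the paper likewise treats the compatible-reading check for $\gG^{s_j\xi}$ as an easy verification). Your preliminary observation that $\cD'$ is recovered from the cuspidal modules via (\ref{eq:Li_Sk}) is exactly how the paper identifies the new duality datum.
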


\begin{proof}
 (i) This is obvious from the construction.
 (ii) It is easily seen that there is a sequence $j_1,j_2,\ldots,j_r$ of elements of $I$ such that $j_t$ is a sink or a source of $s_{j_{t-1}}\cdots s_{j_{1}}\xi$ for all $1\leq t \leq r$,
 and $s_{j_r}\cdots s_{j_1}\xi = \xi'$.
 Hence it suffices to show the assertion for $\xi'=s_j\xi$, with $j$ being a sink or a source of $\xi$.
 We show this for a sink $j$ (the other case is proved similarly).
 Take a compatible reading $\{(i_1,k_1),\ldots,(i_N,k_N)\}$ of $\gG^\xi$ such that $(i_1,k_1) = (j,\xi_j)$,
 and set  
 \[ (i_r',k_r')=(i_{r+1},k_{r+1}) \ \ \text{for $1\leq r <N$} \ \ \  and \ \ \ (i_N',k_N') = \scD_\xi^{-1}(i_1,k_1).
 \]
 We easily check that $\{(i_1',k_1'),\ldots,(i_N',k_N')\}$ is a compatible reading of $\gG^{\xi'}$.
 Set $\bm{i}=(i_1,\ldots,i_N)$, $\bm{i}'=(i_1',\ldots,i_N')$, and $S_k'= S_{k+1}^{\cD,\bm{i}}$ for all $k\in \Z$.
 For $i \in I$ and $r \in [1,N]$, 
 \begin{align*}
  s_{i_1'}\cdots s_{i_{r-1}'}(\ga_{i_r'}) = \ga_i \ \ \text{holds if and only if} \ \ S_{\phi_{\xi'}^{-1}(\ga_i)}^{\cD,\xi} = S_{i_r',k_r'}^{\cD,\xi} = S_r'.
 \end{align*}
 Therefore by Lemma \ref{Lem:change_of_cuspidal}, $\cD'= \big\{ S_{\phi_{\xi'}^{-1}(\ga_i)}^{\cD,\xi}\big\}_{i \in I} $ 
 forms a strong duality datum, and $\{S_k'\}_{k \in \Z}$ are the affine cuspidal modules corresponding to $\cD'$ and $\bm{i}'$.
 Now (\ref{eq:SikcD}) for $(i,k) \in \gG^\xi_0$ is proved by
 \begin{align*}
  S_{i_1,k_1}^{\cD,\xi} &\cong S_1^{\cD,\bm{i}} \cong S_0' \cong \scD S_N' \cong \scD S_{i_N',k_N'}^{\cD',\xi'}\cong S_{i_1,k_1}^{\cD',\xi'}, \ \text{and} \\  
  S_{i_r,k_r}^{\cD,\xi} &\cong S_r^{\cD,\bm{i}} \cong S_{r-1}' \cong S_{i_{r-1}',k_{r-1}'}^{\cD',\xi'}\cong S_{i_r,k_r}^{\cD',\xi'}  \ \ \text{for $r \in [2,N]$}.
 \end{align*}
 Then (\ref{eq:SikcD}) for general $(i,k) \in \wh{Q}^\xi_0$ also follows from the construction.
% and hence 
% \[ \phi^{-1}_{\xi'}(\ga_i) = (i_r',k_r') \ \ \text{holds if and only if} \ \ s_{i_1'}\cdots s_{i_{r-1}'}(\ga_{i_r'}) = \ga_i
% \]
% for $i \in I$ and $r \in [1,N]$.
% Set $\bm{i}=(i_1,\ldots,i_N)$, $\bm{i}'=(i_1',\ldots,i_N')$, and $S_k'= S_{k+1}^{\cD,\bm{i}}$ for all $k\in \Z$.
% We see from Lemma \ref{Lem:change_of_cuspidal} that $\cD'$ forms a strong duality datum, and $\{S_k'\}_{k \in \Z}$ are the affine cuspidal modules corresponding to $\cD'$ and $\bm{i}'$.
% Now (\ref{eq:SikcD}) for $(i,k) \in \gG^\xi_0$ is proved by
% \begin{align*}
%  S_{i_1,k_1}^{\cD,\xi} &\cong S_1^{\cD,\bm{i}} \cong S_0' \cong \scD S_N' \cong \scD S_{i_N',k_N'}^{\cD',\xi'}\cong S_{i_1,k_1}^{\cD',\xi'}, \ \text{and} \\  
%  S_{i_r,k_r}^{\cD,\xi} &\cong S_r^{\cD,\bm{i}} \cong S_{r-1}' \cong S_{i_{r-1}',k_{r-1}'}^{\cD',\xi'}\cong S_{i_r,k_r}^{\cD',\xi'}  \ \ \text{for $r \in [2,N]$}.
% \end{align*}
% Then (\ref{eq:SikcD}) for general $(i,k) \in \wh{Q}^\xi_0$ also follows from the construction.
\end{proof}

\begin{Lem}\label{Lem:fundamentals_of_T}
 Let $\cD=\{\sL_i\}$ be a strong duality datum associated with $\mathfrak{sl}_{n+1}$, and $\xi$ a height function or a twisted height function.
% Write $S_{i,k} = S_{i,k}^{\cD,\xi}$ for $(i,k) \in \wh{Q}^\xi$.
 \begin{itemize}\setlength{\leftskip}{-15pt}
  \item[{\normalfont(a)}] Each $S_{i,k}$ is a root module.
  \item[{\normalfont(b)}] Let $(i,k),(i',k') \in \wh{Q}_0^\xi$ be such that $(i,k) \not\succeq (i',k')$.
   Then the pair $(S_{i,k}, S_{i',k'})$ is strongly unmixed.
  \item[{\normalfont(c)}]  Let $(i_1,k_1),\ldots,(i_p,k_p)$ be a sequence of elements of $\wh{Q}_0^\xi$, and assume that $(i_r,k_{r}) \not\succeq (i_s,k_s)$ for all $r,s \in [1,p]$ such that $r<s$.
   Then for any sequence $a_1,\ldots,a_p$ of positive integers, the head of $S_{i_1,k_1}^{\otimes a_1} \otimes \cdots \otimes S_{i_p,k_p}^{\otimes a_p}$ is simple.
  \item[{\normalfont(d)}] If $(i,k),(i',k') \in \wh{Q}_0^\xi$ are incomparable, then $S_{i,k}$ and $S_{i',k'}$ strongly commute.
 \end{itemize}
\end{Lem}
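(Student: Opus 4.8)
The plan is to reduce all four statements to Proposition~\ref{Prop:basics_of_affine_cuspidal_mod}, using Proposition~\ref{Prop:relations_of_FB} in addition for part~(d). Part~(a) is immediate: by (\ref{eq:correspondence_of_S}) each $S_{i,k}^{\cD,\xi}$ equals $S_m^{\cD,\bm{i}}$ for some $m\in\Z$, and every such module is a root module by Proposition~\ref{Prop:basics_of_affine_cuspidal_mod}(i). For (b)--(d) the central point is the following reduction: \emph{for a sequence $(i_1,k_1),\dots,(i_p,k_p)$ of vertices of $\wh{Q}^\xi_0$ with $(i_r,k_r)\not\succeq(i_s,k_s)$ whenever $r<s$, there are a strong duality datum $\cD'$ associated with $\mathfrak{sl}_{n+1}$ and a reduced word $\bm{i}'\in R(w_0)$ whose affine cuspidal modules $S'_m=S^{\cD',\bm{i}'}_m$ satisfy $S_{i_r,k_r}^{\cD,\xi}\cong S'_{m_r}$ for suitable integers $m_1<\dots<m_p$; moreover if two of the vertices are incomparable, $\cD'$ and $\bm{i}'$ can be chosen so that those two become $S'_a$ and $S'_{a+1}$ for consecutive indices.} To prove this I would first record that the labeling $\wh{Q}^\xi_0\to\Z$ determined by a compatible reading of $\gG^\xi$ and extended $\scD_\xi$-equivariantly via (\ref{eq:correspondence_of_S}) is a linear extension of the order $\preceq$; this follows from Proposition~\ref{Prop:Bedard}(ii) together with the explicit description of $\gG^\xi_0$ and the $\scD_\xi$-periodicity, and is essentially \cite[Section~3]{zbMATH07355935}. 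The hypothesis on the sequence then says precisely that it is a linear extension of the induced sub-poset, hence it extends to a linear extension of $(\wh{Q}^\xi_0,\preceq)$ that agrees with the reference labeling outside a finite set and puts $(i_1,k_1),\dots,(i_p,k_p)$ in increasing order; and since any two incomparable elements of a poset can be placed adjacently in some linear extension, the refinement in the incomparable case can be secured as well. It then remains to realize the chosen linear extension (up to an inessential global shift) through the operations at our disposal --- moving inside a commutation class (Proposition~\ref{Prop:Bedard}(i), Lemma~\ref{Lem:independent_of_2-move}), replacing $\xi$ by another height or twisted height function with the same repetition quiver (Lemma~\ref{Lem:change_of_height_function}), and re-windowing (Lemma~\ref{Lem:change_of_cuspidal}) --- each of which alters only the $\Z$-labeling of the modules $S_{i,k}^{\cD,\xi}$, not the modules themselves.

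Granting this reduction: (c) follows from Proposition~\ref{Prop:basics_of_affine_cuspidal_mod}(iii) applied to the increasing run $m_1<\dots<m_p$ with the prescribed multiplicities, and (b) is its $p=2$ case combined with Proposition~\ref{Prop:basics_of_affine_cuspidal_mod}(ii). For (d), realize incomparable $(i,k),(i',k')$ as $S'_a,S'_{a+1}$, let $\gb_a,\gb_{a+1}$ be the corresponding positive roots attached to $\bm{i}'$ (as in (\ref{eq:dual_PBW})), and let $\bm{c}\in\Z_{\geq0}^N$ be the tuple with $c_a=c_{a+1}=1$ and all other entries $0$. In Proposition~\ref{Prop:relations_of_FB}, a weight comparison leaves only those $\bm{c}'$ with $\sum_l c'_l\gb_l=\gb_a+\gb_{a+1}$; such a $\bm{c}'\prec\bm{c}$ must be supported on $\{a,a+1\}$ (else the smallest or largest index where $\bm{c}'$ and $\bm{c}$ differ violates $\prec$), and linear independence of the distinct roots $\gb_a,\gb_{a+1}$ then forces $\bm{c}'=\bm{c}$, a contradiction. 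Hence $F^{\bm{i}'}(\bm{c})=B^{\bm{i}'}(\bm{c})$, so by (\ref{eq:Image_of_F}) and Lemma~\ref{Lem:parametrization} we get $[S'_a\otimes S'_{a+1}]=[\hd(S'_a\otimes S'_{a+1})]$ in $K(\scC_\fg)$; thus $S'_a\otimes S'_{a+1}$ is simple, i.e.\ $S_{i,k}$ and $S_{i',k'}$ strongly commute.

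I expect the main obstacle to be the last step of the reduction above: realizing a prescribed linear extension of $(\wh{Q}^\xi_0,\preceq)$ --- or at least one with the chosen vertices in the chosen order, and adjacent where required --- purely by commutation moves, changes of (twisted) height function with the same repetition quiver, and re-windowing. This rests on the combinatorics of type-$A$ repetition quivers from \cite{zbMATH07355935}: the shape of the slices $\gG^{\xi'}$, the fact that $\scD_\xi$ preserves $\preceq$, and, in the twisted case, that the twisted height functions furnish sufficiently many windows. Everything else is bookkeeping built on Propositions~\ref{Prop:basics_of_affine_cuspidal_mod} and~\ref{Prop:relations_of_FB}.
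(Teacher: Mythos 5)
Parts (a), (b) and (d) of your proposal follow essentially the paper's route: (a) is immediate from Proposition \ref{Prop:basics_of_affine_cuspidal_mod}\,(i); for (b) and (d) the paper likewise relabels via Lemma \ref{Lem:change_of_height_function}\,(ii) (and Lemma \ref{Lem:change_of_cuspidal}) so that the two modules become $S_r^{\cD',\bm{i}'}$ and $S_{r'}^{\cD',\bm{i}'}$ with $r<r'$ (consecutive in the incomparable case), and then invokes Proposition \ref{Prop:basics_of_affine_cuspidal_mod}\,(ii). Your endgame for (d) differs slightly but is correct: you deduce $F^{\bm{i}'}(\bm{c})=B^{\bm{i}'}(\bm{c})$ directly from the bi-lexicographic triangularity of Proposition \ref{Prop:relations_of_FB}, whereas the paper argues by contradiction through Lemma \ref{Lem:expression_of_socle} (if $S_{i,k}$ and $S_{i',k'}$ did not strongly commute then $S_{i,k}\Del S_{i',k'}\cong\bm{1}$, forcing $(i',k')=\scD^{-1}(i,k)$, which is comparable to $(i,k)$). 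Since that lemma is itself proved from Proposition \ref{Prop:relations_of_FB}, the two arguments are morally the same and yours is sound.

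The genuine gap is in (c). You reduce it to Proposition \ref{Prop:basics_of_affine_cuspidal_mod}\,(iii) by asserting that \emph{all} $p$ vertices can be realized simultaneously as an increasing run $S'_{m_1},\dots,S'_{m_p}$ for a single choice of $(\cD',\bm{i}')$, i.e.\ that the given order extends to a linear extension of $(\wh{Q}^\xi_0,\preceq)$ realizable by a compatible reading of some window together with $\scD$-periodic continuation. This is a nontrivial combinatorial claim about the rather restricted class of ``periodic/adapted'' linear extensions (especially when the $k_r$ span more than one window, and in the twisted case), and you do not prove it --- you correctly identify it as the main obstacle. It is also unnecessary: the paper derives (c) from the pairwise statement (b) alone. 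Indeed, by (b) and Proposition \ref{Prop:weakly_decreasing}\,(ii) the sequence $(S_{i_1,k_1}^{\otimes a_1},\dots,S_{i_p,k_p}^{\otimes a_p})$ of real simple modules is unmixed, hence normal by Proposition \ref{Prop:unmixed_normal}, and hence has simple head by Proposition \ref{Prop:normal_implies_simple_head}. Replacing your reduction for (c) by this argument closes the gap and removes the only unproved step in your proposal.
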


\begin{proof}
 (a) This follows from Proposition \ref{Prop:basics_of_affine_cuspidal_mod} (i).
 (b) Let $r,r'$ be the integers satisfying
  \[ S_{i,k} = S_r^{\cD,\bm{i}} \ \ \ \text{and} \ \ \ S_{i',k'} = S_{r'}^{\cD,\bm{i}},
  \]
 where $\bm{i} \in R(w_0)$ is an element coming from a compatible reading of $\gG^\xi$.
 By replacing $(\cD,\xi)$ and $\bm{i}$ using Lemma \ref{Lem:change_of_height_function} (ii), if necessary,
 we may assume $r<r'$.
 Then the assertion follows from Proposition \ref{Prop:basics_of_affine_cuspidal_mod} (ii).
 (c) This follows from (b) and Proposition \ref{Prop:unmixed_normal}. 
 (d) As above, we may assume that $S_{i,k}=S_{r}^{\cD,\bm{i}}$ and $S_{i',k'} = S_{r+1}^{\cD,\bm{i}}$ for some $\bm{i}$ and $r \in \Z$.
  If $S_{i,k}$ and $S_{i',k'}$ do not strongly commute, it follows from Lemma \ref{Lem:expression_of_socle} that $S_{i,k} \Del S_{i',k'} \cong \bm{1}$,
  which implies $S_{i,k} \cong \scD S_{i',k'}$, or equivalently $(i',k') = \scD^{-1}(i,k)$.
  This obviously contradicts the assumption, and the assertion is proved.
\end{proof}

Let $\cD$ be a strong duality datum associated with $\mathfrak{sl}_{n+1}$ and $\xi$ an untwisted or a twisted height function.
For a sequence $\bm{P} = \big((i_1,k_1),\ldots,(i_p,k_p)\big)$ of elements of $\wh{Q}^{\xi}_0$ satisfying $(i_r,k_r) \not\succeq (i_s,k_s)$ for all $1\leq r < s \leq p$,
set
\[ \bS^{\cD,\xi}(\bm{P})=\hd(S_{i_1,k_1}^{\cD,\xi} \otimes \cdots \otimes S_{i_p,k_p}^{\cD,\xi}),
\]
which is simple by Lemma \ref{Lem:fundamentals_of_T}.
We often write $\bS(\bm{P})$ for $\bS^{\cD,\xi}(\bm{P})$.
 
% It is easy to find a reduced word $\bm{i} =(i_1,\ldots,i_p)$ of $w_0$ adapted to $\xi^0$ such that $(s_{i_{k-1}}\cdots s_{k_1}\xi)_{i_k} > (s_{i_{l-1}}\cdots s_{l_1}\xi)_{i_l}$ implies $k<l$ for all $k,l \in [1,N]$.
% Then all the assertions are proved by applying Proposition \ref{Proposition:basics_of_affine_cuspidal_mod} to the affine cuspidal modules corresponding to $\cD$ and this $\bm{i}$. 
%\end{proof}

When $\xi$ is a twisted height function, we define four subsets $\wh{Q}^{\xi,\gtrless}_0$, $\wh{Q}^{\xi,\mathrm{D}}_0$ and $\wh{Q}^{\xi,\mathrm{U}}_0$ of $\wh{Q}^{\xi}_0$ by
\begin{gather}
 \wh{Q}^{\xi,\gtrless}_0 = \{ (i,k) \in \wh{Q}_0^{\xi} \mid i \gtrless n_0\}, \ \wh{Q}^{\xi,\mathrm{D}}_0 = \Big\{ (n_0,k) \in \wh{Q}^{\xi}_0 \Bigm| (n_0,k) \to 
   \Big(n_0+1,k+\frac{1}{2}\Big)  \in \wh{Q}^{\xi}_1\Big\},\nonumber\\
 \wh{Q}^{\xi,\mathrm{U}}_0 = \Big\{ (n_0,k) \in \wh{Q}^{\xi}_0 \Bigm| (n_0,k) \to 
   \Big(n_0-1,k+\frac{1}{2}\Big)  \in \wh{Q}^{\xi}_1\Big\}.\label{eq:def_of_UD}
\end{gather}
Here ``$\mathrm{D}$" (resp.\ ``$\mathrm{U}$") stands for ``downward"  (resp.\ ``upward"),  the direction of all the arrows incident to the vertices belonging to 
the subset.
When $n=5$, these are illustrated as follows, where $\circ$ (resp.\ $\heartsuit$, $\spadesuit$, $\bullet$) belong to $\wh{Q}^{\xi,<}_0$ (resp.\ $\wh{Q}^{\xi,\mathrm{U}}_0$,
$\wh{Q}^{\xi,\mathrm{D}}_0$, $\wh{Q}^{\xi,>}_0$):

$$\raisebox{3.1em}{\scalebox{0.6}{\xymatrix@!C=0.1ex@R=0.5ex{
\mbox{\Large $(i\setminus k)$}  &  & \mbox{\Large $-4$} &&\mbox{\Large $-3$} && \mbox{\Large $-2$} && \mbox{\Large $-1$} & &\mbox{\Large $0$} & & \mbox{\Large $1$}&  
& \mbox{\Large $2$} &  & \mbox{\Large $3$} &  & \mbox{\Large $4$} &  \\
\mbox{\Large $1$}&&\mbox{\LARGE $\circ$} \ar@{->}[ddrr]&&&& \mbox{\LARGE $\circ$} \ar@{->}[ddrr]&&&&  \mbox{\LARGE $\circ$} \ar@{->}[ddrr] &&&& \mbox{\LARGE $\circ$}\ar@{->}[ddrr]
&&&& \mbox{\LARGE$\circ$}& \\{\phantom 2} \\
\mbox{\Large $2$} &&&&\mbox{\LARGE $\circ$}\ar@{->}[dr] \ar@{->}[uurr] &&&& \mbox{\LARGE $\circ$} \ar@{->}[dr]\ar@{->}[uurr] &&&&  \mbox{\LARGE $\circ$} \ar@{->}[dr] \ar@{->}[uurr]
&&&& \mbox{\LARGE $\circ$} \ar@{->}[dr] \ar@{->}[uurr] & \\
\mbox{\Large $3$} & \cdots \cdots&& \mbox{\Large$\heartsuit$} \ar@{->}[ur] &&\mbox{\Large$\spadesuit$} \ar@{->}[dr] && \mbox{\Large$\heartsuit$} \ar@{->}[ur] && \mbox{\Large$\spadesuit$} \ar@{->}[dr]
&& \mbox{\Large$\heartsuit$} \ar@{->}[ur] && \mbox{\Large$\spadesuit$} \ar@{->}[dr] & &
\mbox{\Large$\heartsuit$} \ar@{->}[ur]&&\mbox{\Large$\spadesuit$} \ar@{->}[dr] & & \cdots\cdots\\
\mbox{\Large $4$}&&\mbox{\LARGE$\bullet$}\ar@{->}[ddrr]\ar@{->}[ur]&&&&\mbox{\LARGE$\bullet$}\ar@{->}[ddrr]\ar@{->}[ur] &&&&  \mbox{\LARGE$\bullet$} \ar@{->}[ddrr]\ar@{->}[ur] &&&&
\mbox{\LARGE$\bullet$}\ar@{->}[ddrr] \ar@{->}[ur]
&&&& \mbox{\LARGE$\bullet$} & \\  {\phantom 2} & \\
\mbox{\Large $5$} &&&&\mbox{\LARGE$\bullet$} \ar@{->}[uurr] &&&& \mbox{\LARGE$\bullet$} \ar@{->}[uurr]
&&&& \mbox{\LARGE$\bullet$} \ar@{->}[uurr]  &&&& \mbox{\LARGE$\bullet$} \ar@{->}[uurr]& }}}
$$
Note that $\scD\colon \wh{Q}^\xi \to \wh{Q}^\xi$ maps $\wh{Q}_0^{\xi,<}$ (resp.\ $\wh{Q}_0^{\xi,\mrD}$) bijectively onto $\wh{Q}_0^{\xi,>}$ (resp.\ $\wh{Q}_0^{\xi,\mrU}$), and vice versa.

\begin{Def}[\cite{zbMATH06084095}]\label{Def:snake position}\normalfont \ 
 \begin{itemize}\setlength{\leftskip}{-15pt}
  \item[(1)] Assume that $\xi$ is a height function, and $(i,k), (i',k') \in \wh{Q}_0^\xi$.
  \begin{itemize}\setlength{\leftskip}{-28pt}
   \item[(i)] We say $(i',k')$ is \textit{in snake position} with respect to $(i,k)$ if $(i,k+2) \preceq (i',k')$.
   \item[(ii)] We say $(i',k')$ is \textit{in prime snake position} with respect to $(i,k)$ if 
    \[ (i,k+2) \preceq (i',k') \preceq \scD^{-1}(i,k).
    \]
  \end{itemize}
  \item[(2)] Assume that $\xi$ is a twisted height function, and $(i,k),(i',k') \in \wh{Q}_0^{\xi}$.
   \begin{itemize}\setlength{\leftskip}{-28pt}
   \item[(i)] We say $(i',k')$ is \textit{in snake position} with respect to $(i,k)$ if $(i,k+2-\gd_{i,n_0}) \preceq (i',k')$,
    \begin{align*}
     (i',k') &\in \wh{Q}^{\xi, <} \sqcup \wh{Q}^{\xi,\mathrm{D}}\ \ \  \text{when $(i,k) \in \wh{Q}^{\xi,<} \sqcup \wh{Q}^{\xi,\mrU}$}, \ \ \ \text{and} \\   
     (i',k') &\in \wh{Q}^{\xi,>}  \sqcup \wh{Q}^{\xi,\mrU}      \ \ \  \text{when $(i,k) \in \wh{Q}^{\xi,>} \sqcup \wh{Q}^{\xi,\mrD}$}.
    \end{align*}    
   \item[(ii)] We say $(i',k')$ is \textit{in prime snake position} with respect to $(i,k)$ if $(i',k')$ is in snake position with respect to $(i,k)$, and 
    $(i',k') \preceq \scD^{-1}(i,k)$.
   \end{itemize}
 \end{itemize}
 When we would like to emphasize that $\xi$ is untwisted (resp.\ twisted), we say $(i',k')$ is in snake position \textit{of untwisted type} (resp.\ \textit{of twisted type}) with respect to $(i,k)$.
\end{Def}

When $n=5$ these are illustrated as follows, where $\bullet$ and $\circ$ are in snake position with respect to $\star$, 
and $\bullet$ are in prime snake position with respect to $\star$.
\begin{gather*}
\scalebox{0.9}{\xymatrix@!C=0.3mm@R=2mm{
(i \setminus k) &0 & 1& 2 & 3& 4&  5 & 6 & 7  \\
1& & \ar@{->}[rd]&  &\bullet\ar@{->}[rd]&  &\circ \ar@{->}[rd]&  & \circ  \\
2& \star \ar@{->}[ru] \ar@{->}[rd] && \bullet \ar@{->}[ru] \ar@{->}[rd] & & \bullet \ar@{->}[ru] \ar@{->}[rd]& &\circ \ar@{->}[ru] \ar@{->}[rd] &  \\
3& & \ar@{->}[ru] \ar@{->}[rd] && \bullet \ar@{->}[ru] \ar@{->}[rd] & & \bullet\ar@{->}[ru] \ar@{->}[rd] && \circ &  \\
4& \ar@{->}[ru] \ar@{->}[rd] & & \ar@{->}[ru] \ar@{->}[rd] && \bullet\ar@{->}[ru] \ar@{->}[rd] &  & \bullet\ar@{->}[ru] \ar@{->}[rd]& & \\
5& &\ar@{->}[ru]&  &\ar@{->}[ru]&  &\bullet\ar@{->}[ru] & & \circ \\
}}
\end{gather*}
\begin{gather*}
\raisebox{3.1em}{\scalebox{0.45}{\xymatrix@!C=0.1ex@R=0.5ex{
\mbox{\LARGE$(i\setminus k)$\ \ \ }  &&\mbox{\LARGE$-3$} && \mbox{\LARGE$-2$} && \mbox{\LARGE$-1$} & &\mbox{\LARGE$0$} & & \mbox{\LARGE$1$}&  
& \mbox{\LARGE$2$} &  & \mbox{\LARGE$3$} &  & \mbox{\LARGE$4$} &  \\
\mbox{\LARGE$1$}&&&& \ar@{->}[ddrr]&&&&  \mbox{\huge $\bullet$} \ar@{->}[ddrr] &&&& \mbox{\huge $\circ$}\ar@{->}[ddrr]
&&&& \mbox{\huge $\circ$}& \\{\phantom 2} \\
\mbox{\LARGE$2$} && \mbox{\huge $\star$} \ar@{->}[dr] \ar@{->}[uurr] &&&& \mbox{\huge $\bullet$} \ar@{->}[dr]\ar@{->}[uurr] &&&& \mbox{\huge $\bullet$} \ar@{->}[dr] \ar@{->}[uurr]
&&&& \mbox{\huge$\circ$} \ar@{->}[dr] \ar@{->}[uurr] & \\
\mbox{\LARGE$3$}  & && \ar@{->}[dr] &&  \ar@{->}[ur] &&\mbox{\huge $\bullet$} \ar@{->}[dr]
&&  \ar@{->}[ur] && \mbox{\huge$\bullet$} \ar@{->}[dr] & &
 \ar@{->}[ur]&&\mbox{\huge $\circ$} \ar@{->}[dr] & & \\
\mbox{\LARGE$4$}&&&&\ar@{->}[ddrr]\ar@{->}[ur] &&&&   \ar@{->}[ddrr]\ar@{->}[ur] &&&&
 \ar@{->}[ddrr] \ar@{->}[ur]
&&&&  & \\  {\phantom 2} & \\
\mbox{\LARGE$5$}&& \ar@{->}[uurr] &&&&  \ar@{->}[uurr]
&&&&  \ar@{->}[uurr]  &&&&  \ar@{->}[uurr]& }}} 
\raisebox{3.1em}{\scalebox{0.45}{\xymatrix@!C=0.1ex@R=0.5ex{
\mbox{\LARGE$(i\setminus k)$\ \ \ } &&\mbox{\LARGE$-3$} && \mbox{\LARGE$-2$} && \mbox{\LARGE$-1$} & &\mbox{\LARGE$0$} & & \mbox{\LARGE$1$}&  
& \mbox{\LARGE$2$} &  & \mbox{\LARGE$3$} &  & \mbox{\LARGE$4$}   \\
\mbox{\LARGE$1$}&&&& \ar@{->}[ddrr]&&&&   \ar@{->}[ddrr] &&&& \ar@{->}[ddrr]
&&&&  \\{\phantom 2} \\
\mbox{\LARGE$2$} &&  \ar@{->}[dr] \ar@{->}[uurr] &&&&  \ar@{->}[dr]\ar@{->}[uurr] &&&&  \ar@{->}[dr] \ar@{->}[uurr]
&&&&  \ar@{->}[dr] \ar@{->}[uurr]  \\
\mbox{\LARGE$3$} & && \mbox{\huge $\star$}\ar@{->}[dr] && \mbox{\huge $\bullet$} \ar@{->}[ur] && \ar@{->}[dr]
&& \mbox{\huge$\bullet$} \ar@{->}[ur] && \ar@{->}[dr] & &
\mbox{\huge$\bullet$} \ar@{->}[ur]&& \ar@{->}[dr] & \\
\mbox{\LARGE$4$}&&&&\ar@{->}[ddrr]\ar@{->}[ur] &&&& \mbox{\huge $\bullet$}  \ar@{->}[ddrr]\ar@{->}[ur] &&&&
 \mbox{\huge $\bullet$}\ar@{->}[ddrr] \ar@{->}[ur]
&&&& \mbox{\huge $\circ$}  \\  {\phantom 2} & \\
\mbox{\LARGE$5$} && \ar@{->}[uurr] &&&&  \ar@{->}[uurr]
&&&& \mbox{\huge $\bullet$} \ar@{->}[uurr]  &&&& \mbox{\huge $\circ$} \ar@{->}[uurr] }}}
\end{gather*}
\begin{Rem}\normalfont
 The definition of prime snake position for $\xi \in \HF$ can be rephrased in terms of the denominators of normalized $R$-matrices between fundamental modules of type $A_n^{(1)}$
 as follows: for $(i,k),(i',k') \in \wh{Q}_0^\xi$, $(i',k')$ is in prime snake position with respect to $(i,k)$ if and only if $d_{L(Y_{i',-k'}),L(Y_{i,-k})}(1)=0$. 
 This follows from the denominator formula in \cite{date1994calculation}.

 Similar assertion does not hold for $\xi \in \HF^{\tw}$. Assume that $\fg$ is of type $B_{n_0}^{(1)}$.
 For $(i,k), (i',k') \in \wh{Q}_0^\xi$, if $(i',k')$ is in prime snake position with respect to $(i,k)$, then $d_{L(Y_{\hat{i}',-2k'}),L(Y_{\hat{i},-2k})}(1)=0$ holds
 (see Remark \ref{Rem:fundamental_module_case} for the notation), 
 but the converse is not true (see \cite{oh2015denominators}).
\end{Rem}
The following definition (with $(\cD,\xi)$ taken as in Remark \ref{Rem:fundamental_module_case}) was introduced in \cite{zbMATH06084095}.

\begin{Def}\normalfont \label{Def:snake_module}
 Let $\xi$ be a height function or a twisted height function, and $\bm{P} = \big((i_1,k_1),\ldots,(i_p,k_p)\big)$ a sequence of elements of $\wh{Q}^\xi_0$.
 \begin{itemize}\setlength{\leftskip}{-15pt}
  \item[{\normalfont(i)}] We say $\bm{P}$ is a \textit{snake} (resp.\ \textit{prime snake}) if $(i_{s+1},k_{s+1})$ is in snake position (resp.\ in prime snake position) 
   with respect to $(i_{s},k_{s})$ for all $1 \leq  s < p$.
   We also say $\bm{P}$ is a snake \textit{of untwisted} or \textit{twisted type}, when we would like to emphasize the type of $\xi$.
   For a subset $\Omega$ of $\wh{Q}^\xi_0$, we say $\bm{P}$ is a snake in $\gO$ if $\bm{P}$ is a snake and all the elements of $\bm{P}$ belong to $\gO$.
  \item[{\normalfont(ii)}]
   Let $\cD$ be a strong duality datum associated with $\mathfrak{sl}_{n+1}$.
   If $\bm{P}$ is a snake, we call $\bS(\bm{P})=\bS^{\cD,\xi}(\bm{P})$ a \textit{snake module} (\textit{of untwisted} or \textit{twisted type}) associated with $\cD$ and $\xi$.
 \end{itemize}
\end{Def}

\begin{Lem}\label{Lem:Dualizing_label}
% Let $\xi \in \HF^{\flat}$ with $\flat \in \{\emptyset,\tw\}$.
 If $\bm{P}=\big((i_1,k_1),\ldots,(i_p,k_p)\big)$ is a snake,
 then so are 
 \[ \scD^{\pm 1}\bm{P}=\big(\scD^{\pm 1}(i_1,k_1),\ldots,\scD^{\pm 1}(i_p,k_p)\big),
 \]
 and we have $\scD^{\pm 1} \bS(\bm{P}) \cong \bS(\scD^{\pm 1} \bm{P})$.
 Moreover, if $\bm{P}$ is prime then so are $\scD^{\pm 1} \bm{P}$.
\end{Lem}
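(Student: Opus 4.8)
The plan is to treat separately the combinatorial assertion that $\scD^{\pm1}\bm{P}$ is again a (prime) snake and the module isomorphism $\scD^{\pm1}\bS(\bm{P})\cong\bS(\scD^{\pm1}\bm{P})$, and in both to rely only on two features of the quiver automorphism $\scD_\xi$: it is an automorphism of the poset $(\wh{Q}_0^\xi,\preceq)$, and it is given explicitly by $\scD_\xi(i,k)=(i^*,k-\wti{n})$. Recall also that any snake automatically satisfies $(i_r,k_r)\not\succeq(i_s,k_s)$ for $r<s$, which is what makes $\bS(\bm{P})$ defined.

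For the combinatorial assertion, the formula $\scD_\xi(i,k)=(i^*,k-\wti{n})$ shows that $\scD_\xi$ commutes with the shift $(i,k)\mapsto(i,k+m)$ of the second coordinate and that $i=n_0$ if and only if $i^*=n_0$; since in addition $\scD_\xi$ preserves $\preceq$ (and is a bijection), it follows at once that the numerical part of the snake-position condition — namely $(i,k+2)\preceq(i',k')$ in the untwisted case and $(i,k+2-\gd_{i,n_0})\preceq(i',k')$ in the twisted case — together with the extra prime-position condition $(i',k')\preceq\scD^{-1}(i,k)$, are preserved upon applying $\scD^{\pm1}$ to both $(i,k)$ and $(i',k')$. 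When $\xi$ is twisted there is the further side condition of Definition \ref{Def:snake position}(2)(i), requiring $(i',k')\in\wh{Q}_0^{\xi,<}\sqcup\wh{Q}_0^{\xi,\mrD}$ when $(i,k)\in\wh{Q}_0^{\xi,<}\sqcup\wh{Q}_0^{\xi,\mrU}$, and $(i',k')\in\wh{Q}_0^{\xi,>}\sqcup\wh{Q}_0^{\xi,\mrU}$ when $(i,k)\in\wh{Q}_0^{\xi,>}\sqcup\wh{Q}_0^{\xi,\mrD}$; but $\scD$ and $\scD^{-1}$ interchange $\wh{Q}_0^{\xi,<}\leftrightarrow\wh{Q}_0^{\xi,>}$ and $\wh{Q}_0^{\xi,\mrU}\leftrightarrow\wh{Q}_0^{\xi,\mrD}$ (see the note following (\ref{eq:def_of_UD})), hence carry $\wh{Q}_0^{\xi,<}\sqcup\wh{Q}_0^{\xi,\mrU}$ onto $\wh{Q}_0^{\xi,>}\sqcup\wh{Q}_0^{\xi,\mrD}$ and $\wh{Q}_0^{\xi,<}\sqcup\wh{Q}_0^{\xi,\mrD}$ onto $\wh{Q}_0^{\xi,>}\sqcup\wh{Q}_0^{\xi,\mrU}$, which is exactly the bookkeeping that exchanges the two cases. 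Applying all of this to each consecutive pair $\big((i_s,k_s),(i_{s+1},k_{s+1})\big)$ of $\bm{P}$ shows that $\scD^{\pm1}\bm{P}$ is a (prime) snake, and since $\scD^{\pm1}$ preserves $\preceq$, the condition $(i_r,k_r)\not\succeq(i_s,k_s)$ for $r<s$ passes to $\scD^{\pm1}\bm{P}$ as well, so $\bS(\scD^{\pm1}\bm{P})$ is defined.

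For the isomorphism, recall that by the very definition of the relabelled cuspidal modules one has $\scD^{\pm1}S_{i,k}^{\cD,\xi}=S_{\scD_\xi^{\pm1}(i,k)}^{\cD,\xi}$. The right-dual functor $\scD$ is an exact contravariant self-equivalence of $\scC_\fg$ with $\scD(M\otimes N)\cong\scD N\otimes\scD M$, so it reverses the order of tensor factors and sends the head of a module to the socle of its image; hence
\begin{align*}
 \scD\,\bS(\bm{P}) &= \scD\,\hd\big(S_{i_1,k_1}\otimes\cdots\otimes S_{i_p,k_p}\big)\\
  &\cong \soc\big(\scD S_{i_p,k_p}\otimes\cdots\otimes\scD S_{i_1,k_1}\big)\\
  &= \soc\big(S_{\scD(i_p,k_p)}\otimes\cdots\otimes S_{\scD(i_1,k_1)}\big).
\end{align*}
Since $\scD(i_r,k_r)\not\succeq\scD(i_s,k_s)$ for $r<s$, Lemma \ref{Lem:fundamentals_of_T}(b) and Proposition \ref{Prop:unmixed_normal} make $\big(S_{\scD(i_1,k_1)},\ldots,S_{\scD(i_p,k_p)}\big)$ a normal sequence, so Proposition \ref{Prop:normal_implies_simple_head} identifies the last socle above with $\hd\big(S_{\scD(i_1,k_1)}\otimes\cdots\otimes S_{\scD(i_p,k_p)}\big)=\bS(\scD\bm{P})$. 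This gives $\scD\,\bS(\bm{P})\cong\bS(\scD\bm{P})$, and replacing $\scD$ throughout by the left-dual functor $\scD^{-1}$ (which enjoys the analogous properties) yields the $\scD^{-1}$ case.

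The combinatorial part is a mechanical unwinding of the definitions once the two features of $\scD_\xi$ above are in hand, so I expect the only delicate point to be the module step: one has to use $\scD$ genuinely as a contravariant equivalence — so that it both reverses the order of tensor factors and exchanges head with socle — and then recover a head on the other side from the normality of the reversed sequence via Proposition \ref{Prop:normal_implies_simple_head}.
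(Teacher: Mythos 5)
Your proof is correct and follows essentially the same route as the paper: the combinatorial assertions are checked directly from the definitions (using that $\scD_\xi$ preserves $\preceq$ and swaps the relevant subsets in the twisted case), and the isomorphism is obtained by writing $\scD^{\pm1}\bS(\bm{P})$ as the socle of the reversed tensor product and then invoking normality of the sequence via Proposition \ref{Prop:normal_implies_simple_head} to convert that socle back into the head $\bS(\scD^{\pm1}\bm{P})$. No gaps.
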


\begin{proof}
 The first and last assertions are easily checked from the definition, and the second follows from Proposition \ref{Prop:normal_implies_simple_head}, since
 \begin{align*}
  \scD^{\pm 1} \bS(\bm{P}) &\cong \soc( S_{\scD^{\pm 1}(i_p,k_p)} \otimes \cdots \otimes S_{\scD^{\pm 1}(i_1,k_1)})\\
                           &\cong \hd (S_{\scD^{\pm 1}(i_1,k_1)} \otimes \cdots \otimes S_{\scD^{\pm 1}(i_p,k_p)}) = \bS(\scD^{\pm 1} \bm{P}).
 \end{align*}
\end{proof}
For a sequence $\bm{P} = \big((i_1,k_1),\ldots,(i_p,k_p)\big)$ of elements of $\wh{Q}^\xi_0$ and $a,b \in [1,p]$ with $a\leq b$, set $\bm{P}_{[a,b]} 
= \big((i_a,k_a),(i_{a+1},k_{a+1}),\ldots,(i_b,k_b)\big)$.

\begin{Lem}\label{Lem:for_prime}\
 \begin{itemize}\setlength{\leftskip}{-15pt}
 \item[{\normalfont(i)}] If $(i,k),(i',k') \in \wh{Q}^\xi_0$ satisfy $\scD^{-1}(i,k) \not\succeq (i',k')$,
  then $\scD^{\ell}S_{i,k}$ and $S_{i',k'}$ strongly commute for all $\ell \in \Z_{\geq 0}$.
 \item[{\normalfont(ii)}] Let $\bm{P}=\big((i_1,k_1),\ldots,(i_p,k_p)\big) \in (\wh{Q}^\xi_0)^p$ be a snake.
  If $(i_{r+1},k_{r+1})$ is not in prime snake position  with respect to $(i_r,k_r)$ for some $r$, then we have
  \[ \bS(\bm{P}) \cong \bS(\bm{P}_{[1,r]}) \otimes \bS(\bm{P}_{[r+1,p]}).
  \]
 \end{itemize}
\end{Lem}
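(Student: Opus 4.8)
Here is the strategy I would follow.

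\emph{Part (i).} The point is that the hypothesis $\scD^{-1}(i,k)\not\succeq(i',k')$ is precisely the hypothesis of Lemma \ref{Lem:fundamentals_of_T}(b) applied to the pair of vertices $\big(\scD^{-1}(i,k),(i',k')\big)$. So I would argue: by that lemma the pair $\big(S_{\scD^{-1}(i,k)},S_{i',k'}\big)$ is strongly unmixed, i.e.\ $\tfd\big(\scD^{l}S_{\scD^{-1}(i,k)},S_{i',k'}\big)=0$ for all $l\in\Z_{>0}$; since $\scD^{l}S_{\scD^{-1}(i,k)}=\scD^{l-1}S_{i,k}$ this reads $\tfd(\scD^{\ell}S_{i,k},S_{i',k'})=0$ for all $\ell\in\Z_{\geq 0}$. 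As each $S_{i,k}$ (for any vertex of $\wh{Q}^{\xi}_0$) is a root module by Lemma \ref{Lem:fundamentals_of_T}(a), hence real, Proposition \ref{Prop:fundamental_properties_of_delta}(ii) then yields that $\scD^{\ell}S_{i,k}$ and $S_{i',k'}$ strongly commute.

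\emph{Part (ii), reduction.} I would first prove the combinatorial claim that $\scD^{-1}(i_a,k_a)\not\succeq(i_b,k_b)$ for all $1\leq a\leq r<b\leq p$. Since $\scD$ is a quiver automorphism it preserves $\preceq$, and since $\bm P$ is a snake we have $(i_a,k_a)\preceq(i_r,k_r)$ and $(i_{r+1},k_{r+1})\preceq(i_b,k_b)$; hence $(i_b,k_b)\preceq\scD^{-1}(i_a,k_a)$ would force $(i_{r+1},k_{r+1})\preceq(i_b,k_b)\preceq\scD^{-1}(i_a,k_a)\preceq\scD^{-1}(i_r,k_r)$, which together with $(i_{r+1},k_{r+1})$ being in snake position with respect to $(i_r,k_r)$ would make it in \emph{prime} snake position with respect to $(i_r,k_r)$ — contrary to hypothesis. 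By part (i) with $\ell=0$, $S_{i_a,k_a}$ and $S_{i_b,k_b}$ strongly commute whenever $a\leq r<b$; applying Proposition \ref{Prop:weakly_decreasing}(i) repeatedly to the simple subquotients $\bS(\bm P_{[1,r]})$ and $\bS(\bm P_{[r+1,p]})$ of the respective tensor products of cuspidal modules (and the symmetry of $\tfd$), this gives $\tfd\big(\bS(\bm P_{[1,r]}),\bS(\bm P_{[r+1,p]})\big)=0$.

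\emph{Part (ii), conclusion.} Since $\bS(\bm P_{[1,r]})$ and $\bS(\bm P_{[r+1,p]})$ are quotients of $S_{i_1,k_1}\otimes\cdots\otimes S_{i_r,k_r}$ and $S_{i_{r+1},k_{r+1}}\otimes\cdots\otimes S_{i_p,k_p}$, their tensor product is a nonzero quotient of $S_{i_1,k_1}\otimes\cdots\otimes S_{i_p,k_p}$, whose head is the simple module $\bS(\bm P)$; hence there is a surjection $\bS(\bm P_{[1,r]})\otimes\bS(\bm P_{[r+1,p]})\twoheadrightarrow\bS(\bm P)$, and it suffices to see the source is simple. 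Given the vanishing just obtained, by Proposition \ref{Prop:fundamental_properties_of_delta}(ii) this reduces to knowing that one of the two factors is real. I would handle this by an induction on $p$ in which the reality of every snake module of length $\leq p$ is carried along: the base case is a single root module; in the inductive step, a non-prime snake splits (by the previous step applied to two sub-snakes of strictly smaller length) into a product of two shorter snake modules that strongly commute and are real by induction, hence the product is simple — so it equals $\bS(\bm P)$ — and is itself real by Proposition \ref{Prop:reality_of_MN}; while a prime snake module $\bS(\bm P)$ equals $\bS(\bm P_{[1,p-1]})\nab S_{i_p,k_p}$, with $\bS(\bm P_{[1,p-1]})$ real by induction and $S_{i_p,k_p}$ a root module, to which Proposition \ref{Prop:reality_of_MN} applies as soon as $\tfd\big(\bS(\bm P_{[1,p-1]}),S_{i_p,k_p}\big)\leq 1$. (If one is willing to invoke the reality of snake modules proved later, this last step is of course immediate.)

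The main obstacle I anticipate is precisely this bound. By part (i), every summand $\tfd(S_{i_t,k_t},S_{i_p,k_p})$ in the estimate $\tfd\big(\bS(\bm P_{[1,p-1]}),S_{i_p,k_p}\big)\leq\sum_{t<p}\tfd(S_{i_t,k_t},S_{i_p,k_p})$ for which $(i_p,k_p)$ is not in prime snake position with respect to $(i_t,k_t)$ vanishes, so the estimate reduces to the combinatorial statement that for a prime snake only the term $t=p-1$ survives, together with the single-step bound $\tfd(S_{i_{p-1},k_{p-1}},S_{i_p,k_p})\leq 1$; the latter I would extract from Lemma \ref{Lem:expression_of_socle} (and, when $(\cD,\xi)$ comes from a $Q$-datum, from the denominator formula recalled after Definition \ref{Def:snake position}).
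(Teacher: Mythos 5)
Part (i) of your proposal is correct and is exactly the paper's argument: apply Lemma \ref{Lem:fundamentals_of_T}(b) to the pair $\big(\scD^{-1}(i,k),(i',k')\big)$ and reindex. In part (ii), the combinatorial reduction (failure of primeness at step $r$ forces $\scD^{-1}(i_a,k_a)\not\succeq(i_b,k_b)$ for all $a\leq r<b$) and the resulting vanishing $\tfd\big(\bS(\bm P_{[1,r]}),\bS(\bm P_{[r+1,p]})\big)=0$ are also fine. The problem is the last step. You correctly observe that to get simplicity of $\bS(\bm P_{[1,r]})\otimes\bS(\bm P_{[r+1,p]})$ from $\tfd=0$ via Proposition \ref{Prop:fundamental_properties_of_delta}(ii) you need one factor to be real, but your proposed induction for reality rests on a false claim. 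You assert that for a prime snake ``only the term $t=p-1$ survives'' in $\sum_{t<p}\tfd(S_{i_t,k_t},S_{i_p,k_p})$, i.e.\ that $(i_p,k_p)$ is in prime snake position with respect to $(i_{p-1},k_{p-1})$ only. This fails already for the Kirillov--Reshetikhin-type prime snake $\big((i,k),(i,k+2),(i,k+4)\big)$ (take, say, $n=5$, $i=3$): here $(i,k+4)$ is in prime snake position with respect to $(i,k)$ as well, so both terms equal $1$ and subadditivity only gives $\tfd\big(\bS(\bm P_{[1,2]}),S_{i,k+4}\big)\leq 2$. The true bound $\tfd\big(\bS(\bm P_{[1,p-1]}),S_{i_p,k_p}\big)=1$ is precisely Lemma \ref{Lem:calculation_of_std}(c), whose proof is the entire point of the Reineke-algorithm machinery of Section \ref{Section:Untwisted}; it cannot be extracted from pairwise estimates, and invoking the reality of snake modules (Theorems \ref{Thm:MainA}(i), \ref{Thm:Main_twisted}(i)) here would be a forward reference to results this lemma is meant to precede.

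There is, however, a way to finish part (ii) without any reality of the two blocks, which is presumably what the paper's ``easily follows from (i)'' intends. The sequence $(S_{i_1,k_1},\ldots,S_{i_p,k_p})$ is strongly unmixed by Lemma \ref{Lem:fundamentals_of_T}(b), hence normal by Proposition \ref{Prop:unmixed_normal}, so by Proposition \ref{Prop:normal_implies_simple_head} the simple module $\bS(\bm P)$ is the image of the composition of R-matrices (\ref{eq:composition_of_R}). By part (i) with $\ell=0$, every R-matrix $\br_{S_{i_a,k_a},S_{i_b,k_b}}$ with $a\leq r<b$ occurring in that composition is an isomorphism (a nonzero map between the isomorphic simple modules $S_{i_a,k_a}\otimes S_{i_b,k_b}$ and $S_{i_b,k_b}\otimes S_{i_a,k_a}$). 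Rewriting the composition, via the Yang--Baxter relations, so that all these ``crossing'' factors come last, the image is carried isomorphically onto the image of the tensor product of the two within-block compositions, which is $\bS(\bm P_{[1,r]})\otimes\bS(\bm P_{[r+1,p]})$ by Proposition \ref{Prop:normal_implies_simple_head} applied to each block. This yields the desired isomorphism and, as a byproduct, the simplicity of the right-hand side. I recommend replacing your reality induction with this argument.
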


\begin{proof}
 Since $\scD^{\ell+1} S_{\scD^{-1}(i,k)} \cong \scD^{\ell}S_{i,k}$, the assertion (i) follows from Lemma \ref{Lem:fundamentals_of_T} (b).
  The assertion (ii) easily follows from (i).
\end{proof}

Later, we will prove that $\bS(\bm{P})$ is prime if $\bm{P}$ is a prime snake (Theorems \ref{Thm:MainA} (ii) and \ref{Thm:Main_twisted} (ii)).

%%%%%%%%%%%%%%%%%%%%%%%%%%%%%%%%%%%%%%%%%%%%%%%%%%%%%%%%%%%%%%%%%%%%%%%%%%%%%%

\section{The case of untwisted height functions}\label{Section:Untwisted}

%In this section, we focus only on (untwisted) height functions.
%The case of twisted height functions will be treated in the next section.

\subsection{Reineke's algorithm}\label{Subsection:Reineke}

In this section, we focus on (untwisted) height functions.
We will prove that, 
if $\xi$ is a height function and $\bm{P}$ is a prime snake in $\wh{Q}^\xi_0$, the module $\bS(\bm{P})$ satisfies the assumptions of Theorem \ref{Thm:Main_thm_general}, 
using Proposition \ref{Prop:Kashiwara_Park} (c).
For that, we need to calculate $\gee_i(b)$ and $\gee_i^*(b)$ for some $b \in \Bup$.
An algorithm for this was introduced by Reineke in \cite{zbMATH01031515}, which we recall in this subsection.
%For simplicity, we will state the results only in bipartite cases, which are enough for our purpose.

For $i \in I$, let $\bar{i} \in \{0,1\}$ be such that $\bar{i} \equiv i$ mod $2$. 
For $\gd \in \{0,1\}$, let $\xi^{(\gd)} \in \HF$ denote the unique height function satisfying $\xi^{(\gd)}_i \in \{0,1\}$ for all $i \in I$ and $\xi^{(\gd)}_1 = \gd$.
We have $\xi^{(\bar{i})}_i = 1$ for all $i \in I$.
We write $\gG^{(\gd)}$ for $\gG^{\xi^{(\gd)}}$, and $\phi_{(\gd)}$ for $\phi_{\xi^{(\gd)}}\colon \gG^{(\gd)}_0 \to R^+$.
For each $i \in I$, define a subset $\gO_i \subseteq \gG^{(\bar{i})}_0$ by
\[ \gO_i =\{(j,k) \in \gG^{\left(\bar{i}\right)}_0 \mid (i,1) \preceq (j,k) \preceq (i^*,n)\} = \{\phi_{(\bar{i})}^{-1}(\ga_{j,l})\mid 1\leq j\leq i \leq l \leq n\}.
\]
When $n=5$, $\gG^{(\gd)}$ and $\phi_{(\gd)}$ are as follows:
\begin{equation*}
\gG^{(0)}\colon \!\!\!\!\raisebox{75pt}{
\scalebox{0.9}{\xymatrix@!C=0.3mm@R=2mm{
(i \setminus k) &0 & 1& 2 & 3& 4&  5 \\
1& \ga_1 \ar@{->}[rd]&  &\fbox{$\ga_{2,3}$}\ar@{->}[rd]&  &\ga_{4,5} \ar@{->}[rd]&   \\
2 && \fbox{$\ga_{1,3}$} \ar@{->}[ru] \ar@{->}[rd] & & \fbox{$\ga_{2,5}$} \ar@{->}[ru] \ar@{->}[rd]&  & \ga_4 \\
3 & \ga_3\ar@{->}[ru] \ar@{->}[rd] && \fbox{$\ga_{1,5}$} \ar@{->}[ru] \ar@{->}[rd] & & \fbox{$\ga_{2,4}$} \ar@{->}[ru] \ar@{->}[rd]&  \\
4 & & \ga_{3,5} \ar@{->}[ru] \ar@{->}[rd] && \fbox{$\ga_{1,4}$}\ar@{->}[ru] \ar@{->}[rd] &  &\fbox{$\ga_2$}  \\
5 &\ga_5 \ar@{->}[ru]&  &\ga_{3,4} \ar@{->}[ru]&  &\fbox{$\ga_{1,2}$} \ar@{->}[ru] &  \\
}}}\ \ \ \ \ \ \gG^{(1)}\colon \!\!\!\!\raisebox{75pt}{
\scalebox{0.9}{\xymatrix@!C=0.3mm@R=2mm{
(i \setminus k) &0 & 1& 2 & 3& 4&  5 \\
1& & \ga_{1,2} \ar@{->}[rd]&  &\fbox{$\ga_{3,4}$}\ar@{->}[rd]&  &\ga_5  \\
2& \ga_2 \ar@{->}[ru] \ar@{->}[rd] && \fbox{$\ga_{1,4}$} \ar@{->}[ru] \ar@{->}[rd] & & \fbox{$\ga_{3,5}$} \ar@{->}[ru] \ar@{->}[rd]\\
3& & \fbox{$\ga_{2,4}$}\ar@{->}[ru] \ar@{->}[rd] && \fbox{$\ga_{1,5}$} \ar@{->}[ru] \ar@{->}[rd] & & \fbox{$\ga_3$}  \\
4& \ga_{4} \ar@{->}[ru] \ar@{->}[rd] & & \fbox{$\ga_{2,5}$} \ar@{->}[ru] \ar@{->}[rd] && \fbox{$\ga_{1,3}$}\ar@{->}[ru] \ar@{->}[rd] &  \\
5& &\ga_{4,5} \ar@{->}[ru]&  &\fbox{$\ga_{2,3}$} \ar@{->}[ru]&  &\ga_1  \\
}}}
\end{equation*}
Here the vertices belonging to $\gO_2$ in $\gG^{(0)}_0$ and $\gO_3$ in $\gG^{(1)}_0$ are boxed.
The proof of the following lemma is straightforward.

\begin{Lem}\label{Lem:Image_of_phi}
 Let $\gd \in \{0,1\}$.
 For $(i,k) \in \gG^{(\gd)}_0$, we have $\phi_{(\gd)}(i,k) = \ga_{x,y}$, where 
 \[ x= \begin{cases} i-k & \text{if} \ i-k > 0,\\ 
                           k-i+1   & \text{if} \ i-k \leq 0,\\\end{cases} \ \ \text{and} \ \ 
    y = \begin{cases} i+k & \text{if} \ i+k \leq n,\\ 
                           2n+1-i-k   & \text{if} \ i+k > n.\\\end{cases}
 \]
\end{Lem}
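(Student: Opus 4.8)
The plan is to compute $\phi_{(\gd)}$ directly from the definition, exploiting that it does not depend on the chosen compatible reading. First I would pin down the combinatorial shape of $\gG^{(\gd)}$: since $\xi^{(\gd)}$ alternates between $0$ and $1$, for every odd $i$ the value $\xi^{(\gd)}_i=\gd$ is independent of $i$, and because $i^*=n+1-i$ so is $\xi^{(\gd)}_{i^*}$; the same holds with ``even'' in place of ``odd''. Hence, for each integer $k$, the set $C_k:=\{\,i\in I\mid (i,k)\in\gG^{(\gd)}_0\,\}$ consists either of all the odd indices of $I$ or of all the even indices of $I$ (according to whether $k\equiv\gd$ or $k\not\equiv\gd\pmod 2$), and for $(i,k)\in\gG^{(\gd)}_0$ each $C_j$ with $0\le j<k$ is non-empty and of this ``full'' form (here $k\le n-1+\xi^{(\gd)}_{i^*}\le n$). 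Since every arrow of $\wh{Q}^{(\gd)}$ raises the second coordinate by $1$, listing the vertices of $\gG^{(\gd)}$ in order of increasing $k$ (with ties broken arbitrarily) is a compatible reading, so I may use it to compute $\phi_{(\gd)}$.

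For this reading, if the $l$-th letter sits at $(i,k)$ then $s_{i_1}\cdots s_{i_{l-1}}$ is the product, over the earlier columns $C_0,\dots,C_{k-1}$, of the ``Coxeter factors'' $\prod_{i'\in C_j}s_{i'}$, followed by a partial product over the part of $C_k$ preceding $(i,k)$. As the indices inside one column are pairwise at distance $\ge 2$, these reflections commute and the ones with index $\ne i$ fix $\ga_i$; therefore
\[ \phi_{(\gd)}(i,k)=w_k(\ga_i),\qquad w_k:=c_{(0)}c_{(1)}\cdots c_{(k-1)},\qquad c_{(j)}=\begin{cases}\prod_{i'\in I,\ i'\ \mathrm{odd}}s_{i'} & \text{if $j\equiv\gd\pmod 2$},\\ \prod_{i'\in I,\ i'\ \mathrm{even}}s_{i'} & \text{if $j\not\equiv\gd\pmod 2$}.\end{cases} \]
Passing to the permutation realization $W=\fS_{n+1}$ acting on $\bigoplus_{j=1}^{n+1}\Z\bm{e}_j$ with $s_i=(i,i+1)$, $\ga_i=\bm{e}_i-\bm{e}_{i+1}$ and $\ga_{x,y}=\bm{e}_x-\bm{e}_{y+1}$, the two possible factors become the involutions $c_{\mathrm{odd}}=(1\,2)(3\,4)(5\,6)\cdots$ and $c_{\mathrm{even}}=(2\,3)(4\,5)\cdots$, and $\phi_{(\gd)}(i,k)=\bm{e}_{w_k(i)}-\bm{e}_{w_k(i+1)}$.

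It then remains to evaluate $w_k(i)$ and $w_k(i+1)$, which I would do by induction on $k$: applying $c_{\mathrm{odd}}$ and $c_{\mathrm{even}}$ alternately moves a position one step at a time along a ``billiard'' path — starting from $i$ it runs downward and is reflected at the wall just below $1$ (a would-be position $p\le 0$ becoming $1-p$), while starting from $i+1$ it runs upward and is reflected at the wall just above $n+1$ (a would-be position $p\ge n+2$ becoming $2n+3-p$). Since $k\le n$ on $\gG^{(\gd)}_0$, at most one reflection occurs on each side, and one obtains
\[ w_k(i)=\begin{cases} i-k & (i-k>0),\\ k-i+1 & (i-k\le 0),\end{cases}\qquad w_k(i+1)=\begin{cases} i+k+1 & (i+k\le n),\\ 2n+2-i-k & (i+k> n).\end{cases} \]
Reading off $x=w_k(i)$ and $y+1=w_k(i+1)$ in $\phi_{(\gd)}(i,k)=\ga_{x,y}$ then gives precisely the asserted piecewise formulas.

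The step I expect to require the most care is this last one: setting up the billiard correctly — keeping the left/right action conventions straight and handling the reflections uniformly in the parities of $n$ and of $\gd$ — and checking the boundary behaviour, in particular that the two clauses meet up when $i-k=0$ or $i+k=n$ and that no second reflection intervenes for vertices on the edge of $\gG^{(\gd)}$. (One could instead appeal to the identification of $\gG^{(\gd)}$ with the Auslander--Reiten quiver of the representations of the type-$A_n$ quiver attached to $\xi^{(\gd)}$, recalled at the end of Subsection \ref{Subsection:quivers}, under which $\phi_{(\gd)}$ becomes the dimension-vector map, and then verify the formula on the projective vertices and against the mesh relations; but that route requires making the identification of $\phi_\xi$ with dimension vectors precise, so the direct computation above seems preferable.)
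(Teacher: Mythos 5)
Your computation is correct: the paper gives no proof of this lemma (it is dismissed as ``straightforward''), and your argument — reading $\gG^{(\gd)}$ column by column, reducing $\phi_{(\gd)}(i,k)$ to $c_{(0)}\cdots c_{(k-1)}(\ga_i)$ with the two alternating Coxeter-type involutions, and evaluating the resulting billiard trajectories of $i$ and $i+1$ in $\fS_{n+1}$ — is exactly the kind of direct verification the authors intend, and the parity bookkeeping and single-reflection claims all check out (the bound $j<k\le n$ does guarantee the earlier columns are full, and the trajectory pauses one step at each wall, consistent with your reflection formulas $p\mapsto 1-p$ and $p\mapsto 2n+3-p$). The only nitpick is that a column of the ``wrong'' parity can be empty when $n=1$, but the product over an empty column is the identity, so nothing breaks.
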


Let $\gd \in \{0,1\}$.
Take a compatible reading $\{(i_1,k_1),\ldots,(i_N,k_N)\}$ of $\gG^{(\gd)}$, and set $\bm{i} = (i_1,\ldots,i_N) \in R(w_0)$.
For a $\gG^{(\gd)}_0$-tuple $\bm{c}=(c_{i,k})_{(i,k) \in \gG^{(\gd)}_0}$ of nonnegative integers, we set
\begin{equation}\label{eq:natural_identification}
 B^{(\gd)}(\bm{c}) := B^{\bm{i}}(\bm{c})\in \Bup,
\end{equation}
where in the right-hand side $\bm{c}$ is regarded as an element of $\Z_{\geq 0}^N$ via the bijection $[1,N] \to \gG_0^{(\gd)}\colon r \mapsto (i_r,k_r)$.
We easily see from Propositions \ref{Prop:2-move_3-move} (i) and \ref{Prop:Bedard} that $B^{(\gd)}(\bm{c})$ does not depend on the choice of the compatible reading.
If $\bar{i} \neq \gd$, it follows from (\ref{eq:simple_case}) that $\gee_i(B^{(\gd)}(\bm{c})) = c_{i,0}$.

The other case is described as follows.
For $i \in I$, let $\mathcal{U}_i$ be the set of lower closed subsets of $\gO_i$.
That is, a subset $\Sigma \subseteq \gO_i$ belongs to $\mathcal{U}_i$ if and only if 
for any $P,Q \in \gO_i$, $P \in \Sigma$ and $Q \preceq P$ imply $Q \in \Sigma$.
%The following theorem is a translation of the results in to our setting.

\begin{Thm}[{\cite[Theorem 7.1]{zbMATH01031515}}]\label{Thm:Reineke}
 Let $j \in I$, and set $\gd=\bar{j}\in \{0,1\}$. 
 For any $\bm{c} =(c_{i,k}) \in \Z_{\geq 0}^{\gG^{(\gd)}_0}$, we have 
   \[ \gee_j\big(B^{(\gd)}(\bm{c})\big) = \max_{\gS \in \,\mathcal{U}_j} \Big(\sum_{ (i,k) \in \gS}(c_{i,k} -c_{i,k-2})\Big),
   \]
   where we set $c_{i,k} = 0$ if $k<0$.
\end{Thm}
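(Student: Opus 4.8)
The plan is to reduce the statement to the quiver-theoretic realization of the crystal $\Bup$ in type $A_n$ and to identify the right-hand side with the first coordinate of the Lusztig datum of $B^{(\gd)}(\bm c)$ after a single reflection at the source vertex $j$. First I would fix the quiver $Q=Q^{(\gd)}$ of type $A_n$ whose orientation is the one for which the sink-adapted reduced words of $w_0$ are adapted to $\xi^{(\gd)}$ (so that an edge $\{i,i'\}$ with $|i-i'|=1$ is oriented from the larger to the smaller value of $\xi^{(\gd)}$); since $\gd=\bar j$ we have $\xi^{(\gd)}_j=1$, which is a local maximum, so $j$ is a \emph{source} of $Q$. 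By the work of Lusztig and Ringel, for a fixed dimension vector the isomorphism classes of $Q$-representations are labelled by the families $\bm d=(d_\gamma)_{\gamma\in R^+}$, the label $\bm d$ corresponding to $M_{\bm d}:=\bigoplus_{\gamma\in R^+}M(\gamma)^{\oplus d_\gamma}$ with $M(\gamma)$ the indecomposable of dimension vector $\gamma$ (Gabriel); identifying $R^+$ with $\gG^{(\gd)}_0$ via $\phi_{(\gd)}$ (Lemma~\ref{Lem:Image_of_phi}), the Lusztig datum $\bm c$ labels the orbit of $M_{\bm c}$, and the Kashiwara operators are transported to Reineke's ``generic extension'' crystal on orbits, compatibly with $B^{(\gd)}(\bm c)=B^{\bm i}(\bm c)$ for any $\bm i$ adapted to $\xi^{(\gd)}$.

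Because $j$ is a source of $Q$, it is a sink of $s_j\xi^{(\gd)}$, so any reduced word $\bm i''$ of $w_0$ adapted to $s_j\xi^{(\gd)}$ begins with $j$; writing $\bm c''$ for the Lusztig datum of $B^{(\gd)}(\bm c)$ with respect to $\bm i''$, equation~(\ref{eq:simple_case}) gives $\gee_j\big(B^{(\gd)}(\bm c)\big)=c''_1$. The transition $\bm c\mapsto\bm c''$ is the effect on Lusztig data of the Bernstein--Gelfand--Ponomarev reflection at the source $j$; concretely it is realized by the sequence of $2$- and $3$-moves of Proposition~\ref{Prop:2-move_3-move} that pushes a letter $j$ to the front of a reduced word adapted to $\xi^{(\gd)}$, where each $3$-move across $j\pm1$ transforms the datum by the piecewise-linear rule of (\ref{eq:3-move}). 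The theorem is thereby reduced to the combinatorial identity
\[
  c''_1=\max_{\gS\in\mathcal{U}_j}\Big(\sum_{(i,k)\in\gS}\big(c_{i,k}-c_{i,k-2}\big)\Big),
\]
with the convention that $c_{i,k}=0$ whenever $k<0$.

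I would prove this identity by induction on $n$, shrinking the hook $\gO_j$ from its bottom vertex $(j,1)$ (with the evident modifications when $j=1$ or $j=n$, in which case $\gO_j$ is a line and $\mathcal{U}_j$ consists of its initial segments). The reflection at $j$ is computed by a chain of $3$-moves running up the two branches of the hook; each contributes one nested minimum through $c_0=\min(c_{k-1},c_{k+1})$ in (\ref{eq:3-move}), and unravelling the resulting composite of piecewise-linear maps---using $\min(a,b)=-\max(-a,-b)$ repeatedly---turns the formula for $c''_1$ into a maximum whose index set records, for each branch, how far up one goes before ``turning''. These data are precisely the lower-closed subsets $\gS\subseteq\gO_j$; the telescoping differences $c_{i,k}-c_{i,k-2}$ between consecutive vertices of a row of $\gG^{(\gd)}$ appear automatically, and the convention $c_{i,k}=0$ for $k<0$ is exactly what the unravelling produces at the outer boundary of the hook. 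The base cases $n\le2$ are checked by hand.

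The main obstacle is precisely this last step: proving that the composite of the min-plus maps coming from the successive $3$-moves collapses to a single maximum, and that its index set is \emph{exactly} $\mathcal{U}_j$---no more, no fewer subsets. I expect this to be cleanest via the induction on $|\gO_j|$ sketched above, so that $\mathcal{U}_j$ is built up inductively from the analogous family for the smaller rank, rather than by a direct expansion of the full composite of $3$-moves.
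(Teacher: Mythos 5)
This theorem is imported into the paper by citation to Reineke and is not proved there, so your proposal must stand on its own, and as written it does not. The reduction in your first two paragraphs is sound: since $\gd=\bar j$ forces $\xi^{(\gd)}_j=1$, the index $j$ is a source of $\xi^{(\gd)}$, hence a sink of $s_j\xi^{(\gd)}$, and there exists a reduced word $\bm i''$ adapted to $s_j\xi^{(\gd)}$ beginning with $j$ (not \emph{every} such word begins with $j$, since $s_j\xi^{(\gd)}$ generally has several sinks, but one suffices); writing $B^{(\gd)}(\bm c)=B^{\bm i''}(\bm c'')$, equation (\ref{eq:simple_case}) then gives $\gee_j\big(B^{(\gd)}(\bm c)\big)=c''_1$. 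But this merely restates the theorem as the tropical identity $c''_1=\max_{\gS\in\mathcal U_j}\sum_{(i,k)\in\gS}(c_{i,k}-c_{i,k-2})$, and that identity \emph{is} the entire content of Reineke's result. You do not establish it: you describe the expected shape of the answer and then explicitly flag the collapse of the nested minima into a single maximum indexed by exactly $\mathcal U_j$ as ``the main obstacle.'' A sketch whose acknowledged main obstacle coincides with the statement to be proved is a gap, not a proof.

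Two further points would need repair even if the induction were carried out. First, the passage from a word adapted to $\xi^{(\gd)}$ to one adapted to $s_j\xi^{(\gd)}$ is not effected by ``pushing a letter $j$ to the front'': a $\xi^{(\gd)}$-adapted word begins with all letters $i$ satisfying $\bar i\neq\gd$, and the letter sitting at $(j,1)$ cannot be braided past the preceding pattern $(j-1)(j+1)$ without first creating a consecutive triple $(j\pm1,j,j\pm1)$, which forces a cascade of moves propagating through the whole word (this is why the hook $\gO_j$ stretches all the way from $(j,1)$ to $(j^*,n)$); the relation between the two commutation classes is a global rearrangement, analogous to the rotation in Lemma \ref{Lem:change_of_cuspidal}, and you would have to exhibit the braid sequence explicitly and track the first coordinate through all of it. Second, the quiver-orbit and generic-extension material in your opening paragraph is never used afterwards; either develop it into an actual homological computation of $\gee_j$ for the generic representation (which is closer to Reineke's own route and avoids the piecewise-linear bookkeeping entirely) or drop it.
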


For $\gd \in \{0,1\}$, let $\gd^\vee =\ol{\gd+n} \in \{0,1\}$.
For $\bm{c} \in \Z_{\geq 0}^{\gG^{(\gd)}_0}$, define $\bm{c}^\vee \in \Z_{\geq 0}^{\gG^{(\gd^\vee)}_0}$ by $\bm{c}^\vee=(c_{i^*,n-k})_{(i,k) \in \gG^{(\gd^\vee)}_0}$.
By Lemma \ref{Lem:duality_of_B} we have $*B^{(\gd)}(\bm{c}) = B^{(\gd^\vee)}(\bm{c}^\vee)$, and hence 
\begin{equation}\label{eq:duality_of_gee}
  \gee_j^*\big(B^{(\gd)}(\bm{c})\big) = \gee_j\big(B^{(\gd^\vee)}(\bm{c}^\vee)\big)
\end{equation}
holds.
Using this, we can also calculate the values of $\gee_i^*$'s.

%%%%%%%%%%%%%%%%%%%%%%%%%%%%%%%%%%%%%%%%%%%%%%%%%%%%%%%%%%%%%%%%%%%%%%

\subsection{Snake modules associated with height functions}

Fix a strong duality datum $\cD=\{\sL_i\}_{i \in I}$ associated with $\mathfrak{sl}_{n+1}$, and a height function $\xi$.
We write $\wh{Q}$ for $\wh{Q}^\xi$, and $S_{i,k}$ for $S_{i,k}^{\cD,\xi}$ ($(i,k) \in \wh{Q}_0$).

\begin{Lem}\label{Lem:calculation_of_std}
 Assume that a sequence $\bm{P}=\big((i_1,k_1),\ldots,(i_p,k_p)\big)$ of elements of $\wh{Q}_0$ is a snake, and $(j,l) \in \wh{Q}_0$.
 \begin{itemize}\setlength{\leftskip}{-15pt}
  \item[{\normalfont(1)}] Suppose that $(j,l) \prec (i_1,k_1)$.
  \begin{itemize}\setlength{\leftskip}{-25pt}
   \item[{\normalfont(a)}] If $(i_1,k_1)$ is in prime snake position with respect to $(j,l)$, we have $\tfd\big(S_{j,l},\bS(\bm{P})\big)=1$.
   \item[{\normalfont(b)}] If $(i_1,k_1)$ is not in snake position with respect to $(j,l)$ {\normalfont(}that is, $i_1\in\{j \pm r\}$ and $k_1=l+r$ hold for some $r \in \Z_{>0}${\normalfont)},
    then we have $\tfd\big(S_{j,l},\bS(\bm{P})\big)=0$.
%   \item[{\normalfont(c)}] If $(i_1,k_1)$ is in snake position, but not in prime snake position, with respect to $(j,l)$, then we have $\tfd\big(S_{j,l},\bS(\bm{P})\big)=0$.
  \end{itemize}
  \item[{\normalfont(2)}] Suppose that $(i_p,k_p)\prec (j,l)$.
  \begin{itemize}\setlength{\leftskip}{-25pt}
   \item[{\normalfont(c)}] If $(j,l)$ is in prime snake position with respect to $(i_p,k_p)$, we have $\tfd\big(\bS(\bm{P}),S_{j,l}\big)=1$.
   \item[{\normalfont(d)}] If $(j,l)$ is not in snake position with respect to $(i_p,k_p)$,
    we have $\tfd\big(\bS(\bm{P}),S_{j,l}\big)=0$.
%   \item[{\normalfont(f)}] If $(j,l)$ is in snake position, but not in prime snake position, with respect to $(i_p,k_p)$, then we have $\tfd\big(\bS(\bm{P}),S_{j,l}\big) = 0$.
  \end{itemize}
 \end{itemize}
\end{Lem}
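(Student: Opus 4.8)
The plan is to rephrase each equality as a statement about the crystal $\Bup$ via Proposition~\ref{Prop:Kashiwara_Park}(c), and then to compute the relevant value of $\gee_j$ (or $\gee_j^*$) using Reineke's algorithm, Theorem~\ref{Thm:Reineke}. Since $S_{j,l}$ is a root module (Lemma~\ref{Lem:fundamentals_of_T}(a)) and $\bS(\bm{P})$ is simple, both are real and $\tfd$ is symmetric in its two arguments. I prove (a) and (b); statements (c) and (d) follow from the analogous argument with $\gee_j^*$ in place of $\gee_j$, using the $*$-duality $\gee_j^*\big(B^{(\gd)}(\bm{c})\big)=\gee_j\big(B^{(\gd^\vee)}(\bm{c}^\vee)\big)$ of \eqref{eq:duality_of_gee} together with Lemma~\ref{Lem:duality_of_B}, and I only indicate at the end how they differ. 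By Lemma~\ref{Lem:change_of_height_function}(i),(ii), which modifies $(\cD,\xi)$ without altering any of the $S_{i,k}$ or the combinatorial hypotheses on $\bm{P}$ and $(j,l)$, and since every $\wh{Q}^\xi$ equals $\wh{Q}^{(0)}$ or $\wh{Q}^{(1)}$, I would reduce to $\xi=\xi^{(\bar{j})}$ and, after a translation, to $(j,l)=(j,-1)$. By Lemma~\ref{Lem:Image_of_phi}, $\phi_{(\bar{j})}^{-1}(\ga_j)=(j^*,n)$, so $\sL_j=S_{j^*,n}$ and $S_{j,l}=\scD\sL_j$; hence Proposition~\ref{Prop:Kashiwara_Park}(c) gives $\tfd\big(S_{j,l},\cL_{\cD}(b)\big)=\gee_j(b)$ for all $b\in\Bup$. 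With this normalization $\gO_j$ is precisely the set of vertices in prime snake position with respect to $(j,l)$, with maximal element $\scD^{-1}(j,l)=(j^*,n)$.

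The next step is to strip off the ``upper'' part of the snake. If $(i_r,k_r)\not\preceq\scD^{-1}(j,l)$, then writing $S_{j,l}=\scD S_{\scD^{-1}(j,l)}$ and applying Lemma~\ref{Lem:fundamentals_of_T}(b) to the strongly unmixed pair $\big(S_{\scD^{-1}(j,l)},S_{i_r,k_r}\big)$ — legitimate because $\scD^{-1}(j,l)\not\succeq(i_r,k_r)$ — shows $\tfd(S_{j,l},S_{i_r,k_r})=0$, and similarly $\tfd(S_{j,l},\scD^{-1}S_{i_r,k_r})=0$ since $(j,l)\not\succeq(i_r,k_r)$. As $\bm{P}$ is a snake, $\{r\mid(i_r,k_r)\preceq\scD^{-1}(j,l)\}$ is an initial segment $[1,q]$, so Lemma~\ref{Lem:invariance_by_head}(i), applied repeatedly to peel the factors $S_{i_p,k_p},S_{i_{p-1},k_{p-1}},\dots$ off $\bS(\bm{P})\cong\bS(\bm{P}_{[1,p-1]})\nab S_{i_p,k_p}$ (which holds by Lemma~\ref{Lem:fundamentals_of_T}(c)), gives $\tfd(S_{j,l},\bS(\bm{P}))=\tfd\big(S_{j,l},\bS(\bm{P}_{[1,q]})\big)$. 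The vertices $(i_1,k_1),\dots,(i_q,k_q)$ all lie in the fundamental domain $\gG^{(\bar{j})}_0$, since $(i_1,k_1)\preceq(i_r,k_r)\preceq(j^*,n)$ with $(i_1,k_1),(j^*,n)\in\gG^{(\bar{j})}_0$ — in case (a) because $(i_1,k_1)\in\gO_j$, in case (b) because $(i_1,k_1)$ is a minimal ``boundary'' successor of $(j,l)$ — and $\gG^{(\bar{j})}_0$ is an interval in $\wh{Q}^{(\bar{j})}$. Hence Lemma~\ref{Lem:parametrization} gives $\bS(\bm{P}_{[1,q]})=\cL_{\cD}\big(B^{(\bar{j})}(\bm{c})\big)$, where $\bm{c}\in\Z_{\geq 0}^{\gG^{(\bar{j})}_0}$ is the indicator function of $\{(i_1,k_1),\dots,(i_q,k_q)\}$.

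Combining these reductions with Theorem~\ref{Thm:Reineke},
\[
 \tfd\big(S_{j,l},\bS(\bm{P})\big)=\gee_j\big(B^{(\bar{j})}(\bm{c})\big)=\max_{\gS\in\mathcal{U}_j}\ \sum_{(i,k)\in\gS}\big(c_{i,k}-c_{i,k-2}\big),
\]
and the remaining task, which I expect to be the main obstacle, is to evaluate this maximum over lower sets $\gS$ of $\gO_j$, where $\bm{c}$ is the indicator of a snake $T=\{(i_1,k_1),\dots,(i_q,k_q)\}$. In case (a), $(i_1,k_1)$ is in prime snake position with respect to $(j,l)$, so $(i_1,k_1)\in\gO_j$ and $T\neq\emptyset$; in case (b), $(i_1,k_1)$ is a boundary successor of $(j,l)$, lying just outside $\gO_j$ but inside $\gG^{(\bar{j})}_0$, so that it affects the formula only through the terms $c_{i,k-2}$. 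For each fixed $i$ the quantity $\sum_k(c_{i,k}-c_{i,k-2})$ telescopes, and lower-closedness of $\gS$ in $\gO_j$ forces every $+1$ contributed by a vertex of $T$ other than $(i_1,k_1)$ to be accompanied by a $-1$ already in $\gS$; this yields $1$ in case (a), realized by $\gS=\{Q\in\gO_j\mid Q\preceq(i_1,k_1)\}$, and $0$ in case (b), where the extra $-1$ supplied by the boundary vertex $(i_1,k_1)$ cancels the would-be leading $+1$. I would organize this as an induction on $q$, with the one- and two-vertex snakes as the essential base cases.

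Two further points need care. Near the ends $1$ and $n$ of the Dynkin diagram, the description of ``not in snake position'' in case (b) is governed by Lemma~\ref{Lem:Image_of_phi} rather than by the naive formula $i_1=j\pm r$, $k_1=l+r$, and one must verify that the above combinatorial computation is unaffected there; this bookkeeping is the most delicate part. Finally, for (c) and (d) the symmetric reductions arrange $S_{j,l}=\scD^{-1}\sL_j$ (via $(j,l)=\scD^{-1}\phi_{(\bar{j})}^{-1}(\ga_j)=(j,2n+1)$), so that $\tfd(\bS(\bm{P}),S_{j,l})=\tfd(S_{j,l},\bS(\bm{P}))=\gee_j^*(b)$ with $b$ the crystal element of $\bS(\bm{P})$; rewriting $\gee_j^*$ as a $\gee_j$ for the reflected datum via \eqref{eq:duality_of_gee} and Lemma~\ref{Lem:duality_of_B} reduces (c) and (d) to precisely the situation treated in (a) and (b).
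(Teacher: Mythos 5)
Your proposal follows essentially the same route as the paper's proof: normalize $(\cD,\xi)$ via Lemma \ref{Lem:change_of_height_function} so that $S_{j,l}=\scD\sL_j$, peel off the part of the snake beyond $\scD^{-1}(j,l)$ with Lemmas \ref{Lem:for_prime}(i) and \ref{Lem:invariance_by_head}(i), convert $\tfd$ to $\gee_j$ via Proposition \ref{Prop:Kashiwara_Park}(c), evaluate with Theorem \ref{Thm:Reineke}, and deduce (c),(d) from (a),(b) by the $*$-duality (\ref{eq:duality_of_gee}). The only difference is in the final combinatorial step, where the paper avoids your proposed induction and telescoping by observing that the points $P_{2s-1}=(i_s,k_s)$ and $P_{2s}=(i_s,k_s+2)$ form a single chain $P_1\prec P_2\preceq\cdots\prec P_{2p}$, so any lower-closed $\gS$ meets it in an initial segment and the sum is immediately $0$ or $1$ according to the parity of its length.
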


\begin{proof}
 We will use the notations in the previous subsection freely.
 (1) By replacing the pair $(\cD,\xi)$ using Lemma \ref{Lem:change_of_height_function}, we may assume that 
 \[ l=-1,
 \]
 and $(\cD,\xi)$ is the following specific one:
 \[ \xi = \xi^{(\gd)} \ \ \ \text{and} \ \ \ \cD = \{\sL_i\}_{i \in I} \ \ \text{with} \ \ 
    \sL_i = \begin{cases} S_{i,0} & \text{if $\bar{i} \neq \gd$} \\ 
                          S_{i^*,n} & \text{if $\bar{i} = \gd$}\end{cases} \ \ \text{for $i \in I$},
 \]
 where we set $\gd = \bar{j} \in \{0,1\}$.
 For any $(i,k) \in \wh{Q}_0$ satisfying $\scD^{-1}(j,l) \not\succeq (i,k)$, we have $\tfd(S_{j,l}, \scD^{-r} S_{i,k}) = 0$ for all $r\geq 0$ by Lemma \ref{Lem:for_prime} (i).
 Hence by Lemma \ref{Lem:invariance_by_head} (i), we may further assume that $(i_p,k_p) \preceq \scD^{-1}(j,l)$,
 which implies  $(i_s,k_s) \in \gG^{(\gd)}_0$ for all $s \in [1,p]$.
 We have $S_{j,l} =\scD S_{j^*,n} = \scD \sL_{j}$.
 Let $\{ \bm{e}_{i,k}\mid (i,k) \in\gG^{(\gd)}_0\}$ be the standard basis of $\Z^{\gG_0^{(\gd)}}$,
 and set $\bm{c} = \sum_{s=1}^p \bm{e}_{i_s,k_s}$.
 It follows from Lemma \ref{Lem:parametrization} that
 \begin{equation}\label{eq:cLcD}
  \cL_{\cD}\big(B^{(\gd)}(\bm{c})\big) \cong \bS(\bm{P}),
 \end{equation}
 and hence we have 
 \begin{equation}\label{eq:tfd=gee}
  \tfd\big(S_{j,l},\bS(\bm{P})\big)= \tfd\big(\scD \sL_j, \bS(\bm{P})\big) = \gee_j\big(B^{(\gd)}(\bm{c})\big)
 \end{equation}
 by Proposition \ref{Prop:Kashiwara_Park} (c).  
 We see from Theorem \ref{Thm:Reineke} that 
 \begin{equation}\label{gee_j}
  \gee_j\big(B^{(\gd)}(\bm{c})\big) = \max_{\gS \in \,\mathcal{U}_j} \big(\,\sharp \{s \in [1,p] \mid (i_s,k_s) \in \gS\} -\sharp\{s \in [1,p] \mid (i_s,k_s+2) \in \gS\}\big).
 \end{equation}
 Define $P_1,P_2,\ldots,P_{2p} \in \wh{Q}_0$ by 
 \[ P_{2s-1} = (i_s,k_s) \ \ \ \text{and} \ \ \ P_{2s} = (i_s,k_s+2) \ \ \ \text{for $s \in [1,p]$}.
 \]
 Note that $P_1 \prec P_2 \preceq P_3 \prec \cdots \preceq P_{2p-1} \prec P_{2p}$ holds by the definition of the snake position.
 Now we show the assertion (a).
 In this case, we have $P_1 \in \gO_j$.
 For any $\gS \in \mathcal{U}_j$,
 we easily see from the lower closedness that there is some $t \in [0,2p]$ such that $P_s \in \gS$ if and only if $s \in [1,t]$,
 and then we have
 \[ \sharp \{s \in [1,p] \mid P_{2s-1} \in \gS\} -\sharp\{s \in [1,p] \mid P_{2s} \in \gS\} = \begin{cases} 0 & \text{if $t \in 2\Z$}, \\ 1 & \text{if $t \in 2\Z-1$}.\end{cases}
 \]
 Hence (a) follows from (\ref{eq:tfd=gee}) and (\ref{gee_j}).
 In the case of (b), we have $P_1 \notin \gO_j$ and $P_2 \in \gO_j$, and the assertion is proved similarly.
% The assertion (c) is obvious since $(i_s,k_s) \notin \gO_j$ for all $s \in [1,p]$.

 (2)  Let $\gd = \ol{n-j} \in \{0,1\}$.
% If $\{\sL_i\}_{i \in I}$ is a strong duality datum associated with $\mathfrak{sl}_{n+1}$, then so is $\{\sL_{i^*}\}_{i \in I}$.
 Similarly as above, we may assume that 
 \[ l=n+1, \ \ \ \xi=\xi^{(\gd)},  \ \ \ and \ \ \ \cD=\{\sL_i\}_{i\in I} \ \ \text{with} \ \ \sL_i = \begin{cases} S_{i,0} & \text{if $\bar{i} \neq \gd$}\\
                                                                                                    S_{i^*,n} & \text{if $\bar{i} = \gd$}\end{cases} \ \ \text{for $i \in I$}.
 \]
 By a similar argument as above, we may further assume that $(i_s,k_s) \in \gG_0^{(\gd)}$ for all $s \in [1,p]$.
 It follows from Proposition \ref{Prop:Kashiwara_Park} that
 \[ \tfd\big(\bS(\bm{P}),S_{j,l}\big) = \tfd\big(\bS(\bm{P}), \scD^{-1} \sL_{j^*}\big)=\gee_{j^*}^*\big(B^{(\gd)}(\bm{c})\big),
 \]
 where we set $\bm{c} = \sum_{s=1}^p \bm{e}_{i_s,k_s} \in \Z_{\geq 0}^{\gG^{(\gd)}_0}$.
 We easily see that if $\bm{P}$ and $(j,l)$ satisfy the assumption of (c) (resp.\ (d)), then 
 \[ \bm{P}^\vee=\big((i^*_p,n-k_p),(i^*_{p-1},n-k_{p-1}),\ldots,(i^*_1,n-k_1)\big)
 \]
 and $(j^*,n-l)$ do that of (a) (resp.\ (b)).
 Hence the assertions (c) and (d) are proved from (the proof of) (a) and (b) by using (\ref{eq:duality_of_gee}).
\end{proof}

Let $(i,k),(i',k') \in \wh{Q}_0$, and suppose that $(i',k')$ is in prime snake position with respect to $(i,k)$.
We define $Q_{i,k}^{i',k'}$ and $R_{i,k}^{i',k'}$, each of which is an element of $\wh{Q}_0$ or the empty set, by
\begin{align*}
 Q^{i',k'}_{i,k} &= \begin{cases} \big(\frac{1}{2}(i+i'+k-k'),\frac{1}{2}(i-i'+k+k')\big) & \text{if} \ k' -k < i+i',\\
                                  \emptyset                                                & \text{if} \ k'-k = i+i',\end{cases}\\
 R_{i,k}^{i',k'} &= \begin{cases} \big(\frac{1}{2}(i+i'-k+k'),\frac{1}{2}(-i+i'+k+k')\big) & \text{if} \ k'-k < 2n+2-i-i',\\
                                  \emptyset                                                & \text{if} \ k'-k= 2n+2-i-i'.\end{cases}
\end{align*}
When $n=5$, these are illustrated as follows, where $(i,k)$ (resp.\ $(i'.k')$, $Q_{i,k}^{i',k'}$, $R_{i,k}^{i',k'}$) are shown as $\circ$ (resp.\ $\bullet$, $\ast$, $\star$):
\begin{equation*}
\scalebox{0.85}{\xymatrix@!C=0.3mm@R=2mm{
(i \setminus k) &0 & 1& 2 & 3& 4& 5& 6 & 7& 8&  9 &10\\
1& & \ast \ar@{-}[rrrddd]&  & \ar@{.}[rrrddd]& &\ar@{.}[rrrrdddd]&&\ar@{.}[rd] && \ar@{.}[rd] \\
2& \circ \ar@{-}[ru] \ar@{-}[rrrddd] && & & & & && \ast \ar@{-}[rrdd] \ar@{.}[ru]&& \\
3& & && & &  &&  && \\
4& \ar@{.}[rd] \ar@{.}[rrruuu]& &  & & \bullet \ar@{.}[rrruuu] \ar@{.}[rd]&  &\circ\ar@{-}[rrdd] \ar@{-}[rruu] && && \bullet \ar@{-}[lldd] \\
5& &\ar@{.}[rrrruuuu]&  &\star\ar@{-}[ru]&  & \ar@{.}[ru]& &\ar@{.}[rrruuu]&&& &\\
                &  &&& &&&&&&&& {\phantom  2} \\
                 &&&&&&&&&&&&&}}
\end{equation*}

\begin{Lem}\label{Lem:product_of_two}
 Let $(j,l),(j',l') \in \wh{Q}_0$, and assume that $(j',l')$ is in prime snake position with respect to $(j,l)$.
 Then we have $S_{j,l} \Del S_{j',l'} \cong S_{Q_{j,l}^{j',l'}} \otimes S_{R_{j,l}^{j',l'}}$, where we set $S_\emptyset = \bm{1}$.
\end{Lem}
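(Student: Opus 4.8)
Since $S_{j,l}$ and $S_{j',l'}$ are root modules by Lemma~\ref{Lem:fundamentals_of_T}~(a), hence real, and since $(j',l')$ being in prime snake position with respect to $(j,l)$ forces $(j,l)\prec(j',l')$ and, by Lemma~\ref{Lem:calculation_of_std}~(1)(a) applied to the length-one snake $\big((j',l')\big)$, $\tfd(S_{j,l},S_{j',l'})=1$, Proposition~\ref{Prop:length2} gives a short exact sequence $0\to S_{j',l'}\nab S_{j,l}\to S_{j,l}\otimes S_{j',l'}\to S_{j,l}\nab S_{j',l'}\to 0$, in which $S_{j',l'}\nab S_{j,l}\cong S_{j,l}\Del S_{j',l'}$ by~(\ref{eq:head_socle}). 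Hence $[S_{j,l}\Del S_{j',l'}]=[S_{j,l}\otimes S_{j',l'}]-[S_{j,l}\nab S_{j',l'}]$ in $K(\scC_\fg)$. The prime snake hypothesis also gives $(j,l)\prec(j',l')\preceq\scD^{-1}(j,l)$; in the extremal case $(j',l')=\scD^{-1}(j,l)$ one has $S_{j,l}\Del S_{j',l'}\cong\bm 1$ (Lemma~\ref{Lem:expression_of_socle}) and $Q^{j',l'}_{j,l}=R^{j',l'}_{j,l}=\emptyset$ by direct inspection, so from now on I assume $(j,l)\prec(j',l')\prec\scD^{-1}(j,l)$. Then, using Lemma~\ref{Lem:change_of_height_function} and passing to the window reduced word $(i_a,\dots,i_{a+N-1})$ and the associated strong duality datum exactly as in the proof of Lemma~\ref{Lem:expression_of_socle}, I may assume $S_{j,l}=S_a^{\cD,\bm i}$, $S_{j',l'}=S_b^{\cD,\bm i}$ with $1\le a<b\le N$, and that $\cL_{\cD}\big(F^{\bm i}(\bm c)\big)=[S_1^{\otimes c_1}\otimes\cdots\otimes S_N^{\otimes c_N}]$, $\cL_{\cD}\big(B^{\bm i}(\bm c)\big)=[\hd(S_1^{\otimes c_1}\otimes\cdots\otimes S_N^{\otimes c_N})]$. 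Thus $[S_{j,l}\otimes S_{j',l'}]=\cL_{\cD}\big(F^{\bm i}(\bm e_a+\bm e_b)\big)$ by~(\ref{eq:Image_of_F}) and $[S_{j,l}\nab S_{j',l'}]=\cL_{\cD}\big(B^{\bm i}(\bm e_a+\bm e_b)\big)$ by Lemma~\ref{Lem:parametrization}, where $\bm e_a,\bm e_b$ denote standard basis vectors.

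Applying $\cL_{\cD}$ to the triangularity relation of Proposition~\ref{Prop:relations_of_FB} and using that $\cL_{\cD}$ kills the grading shift $q$ and sends distinct elements of $\Bup$ to distinct simple classes (Lemma~\ref{Lem:injection_of_irr}), while $[S_{j,l}\Del S_{j',l'}]$ is the class of a single simple module, I conclude that there is a unique $\bm c_0\prec\bm e_a+\bm e_b$ with $S_{j,l}\Del S_{j',l'}\cong\cL_{\cD}\big(B^{\bm i}(\bm c_0)\big)$, namely the one whose coefficient in the dual PBW straightening of $F^{\bm i}(\bm e_a+\bm e_b)$ is nonzero (hence equal to $1$) at $q=1$. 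It therefore remains to compute this straightening, i.e.\ to expand the product $F^{\bm i}(\gb_a)\,F^{\bm i}(\gb_b)$ of the two dual root vectors in $U_q^-(\mathfrak{sl}_{n+1})$ in the dual PBW, and then the dual canonical, basis. This is the main work. Via $\phi_\xi$ the roots $\gb_a=\phi_\xi(j,l)$ and $\gb_b=\phi_\xi(j',l')$ are segments $\ga_{x,y}$, $\ga_{x',y'}$; using Lemma~\ref{Lem:Image_of_phi} the prime snake condition translates into the combinatorial position of these segments for which the Levendorskii--Soibelman type straightening relation for dual root vectors in type $A$ (equivalently, the Hall-algebra straightening of the two segment modules over the $A_n$-quiver attached to $\xi$) produces a single correction term, of the form $q^{\ge 1}B^{\bm i}(\bm e_c+\bm e_{c'})$, where $c,c'$ are the positions in $\gG^\xi_0$ of the two segments obtained from $\ga_{x,y},\ga_{x',y'}$ by the straightening. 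Carrying these two segments back through $\phi_\xi^{-1}$ (again by Lemma~\ref{Lem:Image_of_phi}) identifies $\{c,c'\}$ with $\{Q^{j',l'}_{j,l},R^{j',l'}_{j,l}\}$; in the two boundary cases $k'-k=i+i'$ and $k'-k=2n+2-i-i'$ one of these degenerates to an empty segment, the correction term loses the corresponding factor, and this matches the convention $S_\emptyset=\bm 1$.

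Finally, $\bm c_0=\bm e_c+\bm e_{c'}$ has support consisting of the (at most two) distinct vertices $Q^{j',l'}_{j,l}$ and $R^{j',l'}_{j,l}$, each with multiplicity $1$; a direct check from the explicit formulas shows these two vertices are incomparable in $\preceq$ (and differ from $(j,l)$ and $(j',l')$), so by Lemma~\ref{Lem:fundamentals_of_T}~(d) the modules $S_{Q^{j',l'}_{j,l}}$ and $S_{R^{j',l'}_{j,l}}$ strongly commute, whence $S_{Q^{j',l'}_{j,l}}\otimes S_{R^{j',l'}_{j,l}}\cong\hd\big(S_{Q^{j',l'}_{j,l}}\otimes S_{R^{j',l'}_{j,l}}\big)\cong\cL_{\cD}\big(B^{\bm i}(\bm c_0)\big)\cong S_{j,l}\Del S_{j',l'}$, which is the assertion. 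The one nontrivial ingredient is the type-$A$ straightening computation of $F^{\bm i}(\gb_a)F^{\bm i}(\gb_b)$ together with the translation of prime snake position into segment combinatorics and the bookkeeping of the two degenerate cases; the rest is formal.
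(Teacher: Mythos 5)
Your reductions are sound and track the paper's own setup closely: Lemma \ref{Lem:calculation_of_std} to get $\tfd(S_{j,l},S_{j',l'})=1$, Proposition \ref{Prop:length2} for the length-two filtration, the passage to a window reduced word, and the observation that Proposition \ref{Prop:relations_of_FB} together with Lemma \ref{Lem:injection_of_irr} forces $S_{j,l}\Del S_{j',l'}\cong\cL_{\cD}\big(B^{\bm{i}}(\bm{c}_0)\big)$ for a unique $\bm{c}_0\prec\bm{e}_a+\bm{e}_b$. The gap is that you never determine $\bm{c}_0$. You declare the Levendorskii--Soibelman/segment straightening of $F^{\bm{i}}(\gb_a)F^{\bm{i}}(\gb_b)$ to be ``the main work'' and then assert its outcome --- a single correction term supported on ``the two segments obtained by the straightening'' --- without saying what those segments are, why there is only one correction term, or checking that $\phi_\xi^{-1}$ carries them to $Q_{j,l}^{j',l'}$ and $R_{j,l}^{j',l'}$. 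Since the identification of the second composition factor is precisely the content of the lemma, the proof is not complete as written; an appeal to known type-$A$ segment combinatorics would at minimum require a precise statement and a matching of conventions (the ordering of the window, and the boundary inequalities $k'-k<i+i'$ and $k'-k<2n+2-i-i'$ that govern when a factor degenerates).

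The paper closes exactly this step by a weight argument that bypasses the straightening. From Lemma \ref{Lem:expression_of_socle} one knows $S_{j,l}\Del S_{j',l'}\cong\hd\big(\bigotimes_{(i,k)\in\Pi}S_{i,k}^{\otimes a_{i,k}}\big)$ with $\Pi=\{(i,k)\mid(j,l)\prec(i,k)\prec(j',l')\}$. From $\sL_j\nab(S_{j,l}\Del S_{j',l'})\cong\sL_j\nab(S_{j',l'}\nab\scD\sL_j)\cong S_{j',l'}$ one reads off that the weight of $\cL_{\cD}^{-1}(S_{j,l}\Del S_{j',l'})$ is $-\phi_{(\gd)}(j',l')+\ga_j=-(\ga_{x,j-1}+\ga_{j+1,y})$, and since $\phi_{(\gd)}(\Pi)$ is contained in $\{\ga_{r,s}\mid r\le j\le s\}\sqcup\{\ga_{r,j-1}\}\sqcup\{\ga_{j+1,s}\}$, the only nonnegative solution of $\sum_{(i,k)\in\Pi}a_{i,k}\,\phi_{(\gd)}(i,k)=\ga_{x,j-1}+\ga_{j+1,y}$ puts $a_{i,k}=1$ exactly at $\phi_{(\gd)}^{-1}(\ga_{x,j-1})=Q_{j,l}^{j',l'}$ and $\phi_{(\gd)}^{-1}(\ga_{j+1,y})=R_{j,l}^{j',l'}$. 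If you wish to keep your route you must either carry out the dual PBW straightening explicitly or substitute an argument of this kind; the rest of your write-up, including the treatment of the degenerate cases, the incomparability of $Q_{j,l}^{j',l'}$ and $R_{j,l}^{j',l'}$, and the final appeal to Lemma \ref{Lem:fundamentals_of_T} (d), is correct.
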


\begin{proof}
 Essentially, this is a formula for the product of two dual root vectors, which has previously been known (see \cite{zbMATH07051972}).
 For the reader's convenience, we give a proof.

 As above, we may assume that 
 \[ l=-1, \ \ \ \xi=\xi^{(\gd)} \ \ \text{with} \ \ \gd = \bar{j}, \ \ \ \text{and} \ \ \ \cD=\{\sL_i\}_{i \in I} \ \ \text{with} \ \ 
    \sL_i=\begin{cases} S_{i,0} & \text{if $\bar{i}\neq\gd$},\\ S_{i^*,n} & \text{if $\bar{i}=\gd$}.
    \end{cases}
 \] 
 Set 
 \[ \Pi := \{ (i,k) \in \wh{Q}_0 \mid (j,l) \prec (i,k) \prec (j',l')\} \subseteq \gG^{(\gd)}_0.
 \]
 It follows from Lemma \ref{Lem:calculation_of_std} that $\tfd(S_{j,l},S_{j',l'}) =1$, and thus by Lemma \ref{Lem:expression_of_socle} we have 
 \begin{equation}\label{eq:SjlSjl}
  S_{j,l} \Del S_{j',l'} \cong \hd\Big(\bigotimes_{(i,k) \in \Pi} S_{i,k}^{\otimes a_{i,k}}\Big)
 \end{equation}
 for some $a_{i,k} \in \Z_{\geq 0}$, where the factors are ordered compatibly with $\preceq$. 
 Set $\bm{a} = (a_{i,k}) \in \Z_{\geq 0}^{\gG_0^{(\gd)}}$, where $a_{i,k} = 0$ if $(i,k) \notin \Pi$.
 As in (\ref{eq:cLcD}), the image of $B^{(\gd)}(\bm{a})$ under $\cL_\cD\colon \Bup \to \Irr(\scC_\fg)$ is isomorphic to the right-hand side of (\ref{eq:SjlSjl}),
 and we have
 \[ \wt\big(B^{(\gd)}(\bm{a})\big) = -\sum_{(i,k) \in \Pi} a_{i,k} \phi_{(\gd)}(i,k).
 \]
 On the other hand, we have 
 \[ \sL_j \nab (S_{j,l} \Del S_{j',l'}) \cong \sL_j \nab (S_{j',l'} \nab \scD \sL_j) \cong S_{j',l'},
 \] 
 and since $\sL_j$ and $S_{j',l'}$ are the images under $\cL_{\cD}$ of the dual root vectors of weight $-\ga_j$ and $-\phi_{(\gd)}(j',l')$ respectively,
 we see that the weight of $\cL_\cD^{-1}(S_{j,l} \Del S_{j',l'}) \in \Bup$ is $-\phi_{(\gd)}(j',l') +\ga_j$. 
 Hence it follows from (\ref{eq:SjlSjl}) that 
 \begin{equation}\label{eq:sumikT}
  \sum_{(i,k) \in \Pi} a_{i,k} \phi_{(\gd)}(i,k)= \ga_{x,y} -\ga_j = \ga_{x,j-1} + \ga_{j+1,y},
 \end{equation}
 where we set $\phi_{(\gd)}(j',l') =\ga_{x,y}$.
 We easily check from Lemma \ref{Lem:Image_of_phi} that 
 \[ \phi_{(\gd)}(\Pi) \subseteq \{\ga_{r,s} \mid r\leq j \leq s\} \sqcup \{\ga_{r,j-1} \mid r\leq j-1 \} \sqcup \{\ga_{j+1,s}\mid j+1 \leq s\},
 \]
 and from this we see at once that (\ref{eq:sumikT}) holds only when $\bm{a}$ is given as follows:
 \[ a_{i,k} = 1 \ \ \text{if $\phi_{(\gd)}(i,k) \in \{\ga_{x,j-1}, \ga_{j+1,y}\}$} \ \ \ and \ \ \ a_{i,k}=0 \ \ \text{otherwise}.
 \]
 It is easily seen from Lemma \ref{Lem:Image_of_phi} that $x=j$ if $Q_{j,l}^{j',l'} =\emptyset$ and $\phi_{(\gd)}(Q_{j,l}^{j',l'}) = \ga_{x,j-1}$ otherwise.
 Similarly, we see that $y=j$ if $R_{j,l}^{j',l'} =\emptyset$ and $\phi_{(\gd)}(R_{j,l}^{j',l'})=\ga_{j+1,y}$ otherwise.
 Now, since $Q_{j,l}^{j',l'}$ and $R_{j,l}^{j',l'}$ are incomparable when they are nonempty, (\ref{eq:SjlSjl}), together with Lemma \ref{Lem:fundamentals_of_T} (d), completes the proof.
\end{proof}

The following lemma is proved by inspection.

\begin{Lem}[{\cite[Proposition 3.2]{MR2960028}}]\label{Lem:being_snake}
 Let $\bm{P} = \big((i_1,k_1),\ldots,(i_p,k_p)\big) \in (\wh{Q}_0)^p$ be a prime snake with $p \geq 2$, and set
 \[ \bm{Q}= (Q_{i_1,k_1}^{i_2,k_2},\ldots,Q_{i_{p-1},k_{p-1}}^{i_p,k_p}) \ \ \ \text{and} \ \ \ \bm{R} = (R_{i_1,k_1}^{i_2,k_2},\ldots,R_{i_{p-1},k_{p-1}}^{i_p,k_p}),
 \]
 where $\emptyset$ are ignored. 
 Then $\bm{Q}$ and $\bm{R}$ are snakes with no elements in common.
\end{Lem}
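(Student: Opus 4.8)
This is a purely combinatorial statement about the repetition quiver $\wh{Q}=\wh{Q}^\xi$ and the explicit coordinate formulas for $Q^{i',k'}_{i,k}$ and $R^{i',k'}_{i,k}$, and the plan is to reduce it to elementary inequalities between integers by choosing the right coordinates on vertices. On $\wh{Q}_0$ I would use the two functions $u(i,k)=i+k$ and $v(i,k)=i-k$. Since for untwisted $\xi$ the arrows of $\wh{Q}$ join $(i,k)$ to $(i\pm1,k+1)$ whenever both are vertices, and since $n\ge 2$ lets one pad any such path by bouncing between two adjacent columns, a straightforward check gives, for $P,P'\in\wh{Q}_0$, that $P\preceq P'$ if and only if $u(P)\le u(P')$ and $v(P)\ge v(P')$. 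In particular $P'$ is in snake position with respect to $P$ if and only if $u(P')\ge u(P)+2$ and $v(P')\le v(P)-2$. I would establish this characterization of $\preceq$ first; it is the only non-formal ingredient, but it is a standard fact about type $A$ repetition quivers, so I do not expect it to be a real obstacle.

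The second step is a one-line computation. Writing $P_r=(i_r,k_r)$, $u_r=u(P_r)$, $v_r=v(P_r)$, and $Q_s=Q^{i_{s+1},k_{s+1}}_{i_s,k_s}$, $R_s=R^{i_{s+1},k_{s+1}}_{i_s,k_s}$ (these are defined for $1\le s\le p-1$ precisely because $\bm{P}$ is a \emph{prime} snake, and are honest vertices of $\wh{Q}$ when nonempty, by the definition of $Q$ and $R$), the formulas for $Q^{i',k'}_{i,k}$ and $R^{i',k'}_{i,k}$ give at once
\[
 u(Q_s)=u_s,\qquad v(Q_s)=v_{s+1},\qquad u(R_s)=u_{s+1},\qquad v(R_s)=v_s .
\]
Since $\bm{P}$ is a snake, the characterization above applied to each link yields $u_{s+1}\ge u_s+2$ and $v_{s+1}\le v_s-2$ for $1\le s<p$; thus $(u_r)_r$ is strictly increasing and $(v_r)_r$ strictly decreasing.

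It then remains to assemble the three assertions. For ``$\bm{Q}$ is a snake'': if $Q_a$ and $Q_b$ ($a<b$) are consecutive nonempty entries of $\bm{Q}$ (so every $Q_r$ with $a<r<b$ is empty), then $u(Q_b)=u_b\ge u_a+2(b-a)\ge u(Q_a)+2$ and $v(Q_b)=v_{b+1}\le v_{a+1}-2(b-a)\le v(Q_a)-2$, so $Q_b$ is in snake position with respect to $Q_a$; running the same argument with $u(R_s)=u_{s+1}$, $v(R_s)=v_s$ shows $\bm{R}$ is a snake. For the disjointness, suppose $Q_s=R_t$. Comparing $u$-values gives $u_s=u_{t+1}$, so $s=t+1$ since $(u_r)_r$ is strictly increasing; comparing $v$-values then gives $v_{s+1}=v_t=v_{s-1}$, contradicting the strict decrease of $(v_r)_r$ (all three indices lie in $[1,p]$ because $1\le t\le p-1$ and $Q_s$, $R_t$ are defined). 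Hence $\bm{Q}$ and $\bm{R}$ have no element in common, which completes the proof. The whole argument is short once the coordinates $u,v$ are in place — consistent with the ``by inspection'' nature of the statement — and the only point requiring care is the characterization of $\preceq$ recorded at the start.
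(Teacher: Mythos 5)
Your proof is correct, and it supplies an actual argument where the paper offers none: the paper disposes of this lemma with ``proved by inspection'' and a citation to Mukhin--Young. The coordinates $u=i+k$, $v=i-k$ are exactly the right linearization: arrows of $\wh{Q}^\xi$ either increase $u$ by $2$ and fix $v$, or fix $u$ and decrease $v$ by $2$, so $\preceq$ becomes the product order on $(u,-v)$ for $n\geq 2$, and your identities $u(Q_s)=u_s$, $v(Q_s)=v_{s+1}$, $u(R_s)=u_{s+1}$, $v(R_s)=v_s$ check out directly against the displayed formulas for $Q_{i,k}^{i',k'}$ and $R_{i,k}^{i',k'}$; the monotonicity of $(u_r)$ and $(v_r)$ then gives both the snake property and the disjointness exactly as you argue. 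Two points you assert without carrying out, both routine but worth a line each if this were written up: (i) that a nonempty $Q_s$ or $R_s$ is genuinely a vertex of $\wh{Q}_0$ --- integrality and the parity constraint $k\equiv\xi_i\pmod 2$ follow because $i+k$ has constant parity on $\wh{Q}_0$, and the first coordinate lands in $[1,n]$ because the strict inequalities $k'-k<i+i'$ and $k'-k<2n+2-i-i'$ improve to $\leq$ with a gap of $2$ by parity; and (ii) the reachability direction of your characterization of $\preceq$, which needs the hypothesis $n\geq 2$ (for $n=1$ the characterization fails, but there every $Q$ and $R$ is empty and the lemma is vacuous, so nothing is lost).
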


Now we give the main theorem of this section, which is a generalization of \cite[Theorem 3.4]{zbMATH06988693} and \cite[Proposition 3.1, Theorem 4.1]{MR2960028} in type $A$.

\begin{Thm}\label{Thm:MainA} 
 Let $\bm{P} = \big((i_1,k_1),\ldots,(i_p,k_p)\big) \in (\wh{Q}_0)^p$ be a snake.
 \begin{itemize}\setlength{\leftskip}{-15pt}
  \item[{\normalfont(i)}] The simple module $\bS(\bm{P})$ is real.
  \item[{\normalfont(ii)}] If $\bm{P}$ is prime, then $\bS(\bm{P})$ is prime.
  \item[{\normalfont(iii)}] Assume that $\bm{P}$ is prime with $p \geq 2$, and set
   \[ \bm{Q}= (Q_{i_1,k_1}^{i_2,k_2},\ldots,Q_{i_{p-1},k_{p-1}}^{i_p,k_p}) \ \ \ \text{and} \ \ \ \bm{R}=(R_{i_1,k_1}^{i_2,k_2},\ldots,
      R_{i_{p-1},k_{p-1}}^{i_p.k_p}).
   \]
   Then $\bS(\bm{Q})$ and $\bS(\bm{R})$ strongly commute, and there is a short exact sequence 
   \begin{equation}\label{eq:Main_ses}
    0 \to \bS(\bm{Q}) \otimes \bS(\bm{R}) \to \bS(\bm{P}_{[1,p-1]}) \otimes \bS(\bm{P}_{[2,p]}) \to \bS(\bm{P}) \otimes \bS(\bm{P}_{[2,p-1]}) \to 0.
   \end{equation}
  \end{itemize}
\end{Thm}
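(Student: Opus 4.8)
The plan is to treat (i)--(iii) in turn; parts (i) and (ii) will follow quickly from the results of Section~\ref{Section:Reality,etc} together with the computation Lemma~\ref{Lem:calculation_of_std}, while (iii) is the substantive part and will be reduced to Theorem~\ref{Thm:Main_thm_general}. A preliminary step, used throughout, is to connect the snake picture with the setting of Section~\ref{Section:Reality,etc}: since $\bm P=\big((i_1,k_1),\ldots,(i_p,k_p)\big)$ is a snake we have $(i_1,k_1)\prec\cdots\prec(i_p,k_p)$ in $\wh Q^\xi$, and after fixing a compatible reading of $\gG^\xi$ and extending the corresponding affine cuspidal modules to $\Z$ one has $S^{\cD,\xi}_{i_s,k_s}\cong S^{\cD,\bm i}_{m_s}$ for a strictly increasing sequence of integers $m_1<\cdots<m_p$; this is built into (\ref{eq:correspondence_of_S}) and the definition of a compatible reading, the point being that the partial order $\preceq$ on $\wh Q_0^\xi$ refines the linear order given by the affine cuspidal index. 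Writing $S_s:=S^{\cD,\bm i}_{m_s}$, we then have $\bS(\bm P_{[a,b]})=\bS_{\bm k}[a,b]$ for all $a\le b$ in the notation of Section~\ref{Section:Reality,etc}, and the first term of (\ref{eq:Main_ses}) becomes the first term of (\ref{eq:short_exact}).

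For (i) I would apply Proposition~\ref{Prop:reality}(i), so it suffices to check $\tfd\big(S_{i_c,k_c},\bS(\bm P_{[1,c-1]})\big)\le 1$ and $\tfd\big(S_{i_c,k_c},\bS(\bm P_{[c+1,p]})\big)\le 1$ for every $c\in[1,p]$. For the second: if $(i_{c+1},k_{c+1})$ is in prime snake position with respect to $(i_c,k_c)$ then this $\tfd$ equals $1$ by Lemma~\ref{Lem:calculation_of_std}(1)(a); if $(i_{c+1},k_{c+1})$ is in snake position but not in prime snake position, then $\scD^{-1}(i_c,k_c)\not\succeq(i_s,k_s)$ for all $s\ge c+1$, so by Lemma~\ref{Lem:for_prime}(i) and Proposition~\ref{Prop:weakly_decreasing}(i) the $\tfd$ equals $0$. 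The first inequality is symmetric, using Lemma~\ref{Lem:calculation_of_std}(2)(c) and Lemma~\ref{Lem:for_prime}(i) at the other end. For (ii), assuming $\bm P$ prime: if $p=1$ then $\bS(\bm P)$ is a root module (Lemma~\ref{Lem:fundamentals_of_T}(a)), hence prime exactly as in the $p=1$ case of the proof of Proposition~\ref{Prop:primeness}; if $p\ge2$, then since each $(i_{a+1},k_{a+1})$ is in prime snake position with respect to $(i_a,k_a)$, Lemma~\ref{Lem:calculation_of_std}(1)(a) gives $\tfd\big(S_{i_a,k_a},\bS(\bm P_{[a+1,p]})\big)=1>0$ for all $a<p$, and Proposition~\ref{Prop:primeness}(i) applies.

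For (iii), the same two uses of Lemma~\ref{Lem:calculation_of_std}, now run over all $1\le a<b\le p$ (each $\bm P_{[a+1,b]}$, resp.\ $\bm P_{[a,b-1]}$, being a prime subsnake with the required prime-snake-position at its boundary vertex), show that conditions (a) and (b) of Theorem~\ref{Thm:Main_thm_general} hold for $(S_1,\ldots,S_p)$. That theorem then yields the short exact sequence with third term $\bS(\bm P)\otimes\bS(\bm P_{[2,p-1]})$ and first term $\hd\big(\bigotimes_{a=1}^{p-1}(S_{i_a,k_a}\Del S_{i_{a+1},k_{a+1}})\big)$, both simple. It remains to identify this first term with $\bS(\bm Q)\otimes\bS(\bm R)$, which will simultaneously prove that $\bS(\bm Q)$ and $\bS(\bm R)$ strongly commute, since the first term is simple. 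By Lemma~\ref{Lem:product_of_two}, $S_{i_a,k_a}\Del S_{i_{a+1},k_{a+1}}\cong S_{Q_a}\otimes S_{R_a}$ with $Q_a=Q^{i_{a+1},k_{a+1}}_{i_a,k_a}$ and $R_a=R^{i_{a+1},k_{a+1}}_{i_a,k_a}$, so the first term is $\hd\big(\bigotimes_{a=1}^{p-1}(S_{Q_a}\otimes S_{R_a})\big)$. By Lemma~\ref{Lem:being_snake}, $\bm Q=(Q_1,\ldots,Q_{p-1})$ and $\bm R=(R_1,\ldots,R_{p-1})$ are snakes with no common vertices; I would add (by the same inspection of coordinates used for Lemma~\ref{Lem:being_snake}) that every vertex of $\bm Q$ is incomparable to every vertex of $\bm R$. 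Granting this, Lemma~\ref{Lem:fundamentals_of_T}(d) shows that each $S_{Q_a}$ strongly commutes with each $S_{R_b}$; since $\bS(\bm Q)$ and $\bS(\bm R)$ are real by part (i) and are heads of $\bigotimes_a S_{Q_a}$, resp.\ $\bigotimes_b S_{R_b}$, Proposition~\ref{Prop:weakly_decreasing}(i) gives $\tfd(\bS(\bm Q),\bS(\bm R))=0$, whence $\bS(\bm Q)\otimes\bS(\bm R)$ is simple by Proposition~\ref{Prop:fundamental_properties_of_delta}(ii). Finally, since the $S_{Q_a}$ and $S_{R_b}$ strongly commute pairwise, one can reorder the tensor factors of $\bigotimes_{a=1}^{p-1}(S_{Q_a}\otimes S_{R_a})$ by a sequence of $R$-matrix isomorphisms into $\big(\bigotimes_a S_{Q_a}\big)\otimes\big(\bigotimes_b S_{R_b}\big)$, which surjects onto $\bS(\bm Q)\otimes\bS(\bm R)$; as the latter is simple, it is the head, which finishes the identification.

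I expect the main obstacle to be the combinatorial claim that the $\bm Q$- and $\bm R$-vertices are pairwise incomparable (a strengthening of Lemma~\ref{Lem:being_snake}), and, at a more structural level, the bookkeeping in the preliminary step that turns a snake labelled by $\wh Q^\xi_0$ into an increasing sequence of integers so that Theorem~\ref{Thm:Main_thm_general} can be applied. Everything else is a routine assembly of the results quoted above.
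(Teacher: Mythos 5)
Your treatment of (i), (ii), and of the verification that conditions (a) and (b) of Theorem \ref{Thm:Main_thm_general} hold is correct and matches the paper's (terse) argument. The gap is in the identification of the first term of the sequence with $\bS(\bm{Q})\otimes\bS(\bm{R})$. The claim you flag as the "main obstacle" --- that every vertex of $\bm{Q}$ is incomparable to every vertex of $\bm{R}$ --- is in fact false, and the failure is not repairable by the route you propose. Writing a vertex in the coordinates $(x,y)=(k-i,k+i)$, one checks from the formulas that $Q_a=(x_{a+1},y_a)$ and $R_a=(x_a,y_{a+1})$, where $(x_s,y_s)$ are the coordinates of $(i_s,k_s)$; since the snake condition forces both $x_s$ and $y_s$ to be nondecreasing (increasing by at least $2$ at each step), for every $a<b$ one has $Q_a\preceq R_b$, so these pairs are always comparable. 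Worse, for $b\geq a+2$ the vertex $R_b$ can be in \emph{prime snake position} with respect to $Q_a$. Concretely, for $n=5$ take the prime snake $\bm{P}=\big((2,0),(2,2),(2,4),(2,6)\big)$; then $\bm{Q}=\big((1,1),(1,3),(1,5)\big)$ and $\bm{R}=\big((3,1),(3,3),(3,5)\big)$, and $(3,5)$ is in prime snake position with respect to $(1,1)$ (we have $(1,3)\preceq(3,5)\preceq\scD^{-1}(1,1)=(5,7)$), so by Lemma \ref{Lem:calculation_of_std} (1)(a) $\tfd(S_{Q_1},S_{R_3})=1$ and these two cuspidal modules do \emph{not} strongly commute. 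Consequently the appeal to Lemma \ref{Lem:fundamentals_of_T} (d), the conclusion $\tfd(\bS(\bm{Q}),\bS(\bm{R}))=0$ via Proposition \ref{Prop:weakly_decreasing} (i), and the reordering of the tensor factors of $\bigotimes_a(S_{Q_a}\Del S_{R_a})$ all break down.

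What makes the statement true nonetheless is that the relevant vanishing is a property of the \emph{heads}, not of the individual factors: in the example above $\tfd\big(\hd(S_{1,1}\otimes S_{1,3}),S_{3,5}\big)=0$ even though $\tfd(S_{1,1},S_{3,5})=1$, because $(3,5)$ is not in snake position with respect to the last vertex $(1,3)$ of $\bm{Q}'$ (Lemma \ref{Lem:calculation_of_std} (2)(d)). The paper exploits exactly this: it proves $\hd\big(\bigotimes_a(S_{i_a,k_a}\Del S_{i_{a+1},k_{a+1}})\big)\cong\bS(\bm{Q})\otimes\bS(\bm{R})$ by induction on $p$, at each step reducing to $\tfd(\bS(\bm{Q}'),S_R)=\tfd(\bS(\bm{R}'),S_Q)=0$ for the last pair $Q,R$ only, which is established by Lemma \ref{Lem:calculation_of_std} (d) when $Q'\neq\emptyset$ and by a separate coordinate argument (showing $\scD^{-1}Q''\not\succeq R$ for all $Q''$ in $\bm{Q}'$, then invoking Lemma \ref{Lem:for_prime}) when $Q'=\emptyset$. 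You would need to replace your reordering step by an argument of this kind.
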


\begin{proof}
 Using Lemma \ref{Lem:calculation_of_std}, the assertion (i) follows from Proposition \ref{Prop:reality} and Lemma \ref{Lem:for_prime} (i), and (ii) from Proposition \ref{Prop:primeness}.
 For (iii), it suffices to show by Theorem \ref{Thm:Main_thm_general} that 
 \begin{equation}\label{eq:hdleft}
  \hd\Big(\bigotimes_{a=1}^{p-1} (S_{i_a,k_a} \Del S_{i_{a+1},k_{a+1}})\Big) \cong \bS(\bm{Q}) \otimes \bS(\bm{R}),
 \end{equation}
 which we prove by the induction on $p$.
 The case $p=2$ is just Lemma \ref{Lem:product_of_two}.
 Assume that $p>2$, and write $Q$ for $Q_{i_{p-1},k_{p-1}}^{i_p,k_p}$ and $R$ for $R_{i_{p-1},k_{p-1}}^{i_p,k_p}$. 
 By the induction hypothesis, we have
 \[ \hd\Big(\bigotimes_{a=1}^{p-1} (S_{i_a,k_a} \Del S_{i_{a+1},k_{a+1}})\Big) \cong \big(\bS(\bm{Q}') \otimes \bS(\bm{R}')\big) \nab (S_{Q} \otimes S_{R}),
 \]
 where we set
 \[ \bm{Q}'= (Q_{i_1,k_1}^{i_2,k_2},\ldots,Q_{i_{p-2},k_{p-2}}^{i_{p-1},k_{p-1}}) \ \ \ \text{and} \ \ \ \bm{R}'=(R_{i_1,k_1}^{i_2,k_2},\ldots,
      R_{i_{p-2},k_{p-2}}^{i_{p-1}.k_{p-1}}).
 \]
 To prove (\ref{eq:hdleft}), it is enough to show that $\tfd(\bS(\bm{Q}'), S_{R})= \tfd(\bS(\bm{R}'),S_Q) = 0$.
 Indeed if this holds, then we have 
 \[ \tfd\big(\bS(\bm{Q}), \bS(\bm{R})\big) = \tfd\big(\bS(\bm{Q}') \nab S_Q, \bS(\bm{R}')\nab S_R\big) = 0,
 \]
 and there is a surjection
 \begin{align*}
  \big(\bS(\bm{Q}') \otimes \bS(\bm{R}')\big) \otimes (S_{Q} \otimes S_{R}) &\cong \big(\bS(\bm{Q}') \otimes S_Q\big) \otimes \big(\bS(\bm{R}') \otimes S_R\big)\\
  & \twoheadrightarrow \bS(\bm{Q}) \otimes \bS(\bm{R}),
 \end{align*}
 which implies (\ref{eq:hdleft}).
 Let us prove $\tfd(\bS(\bm{Q}'), S_{R})=0$ (the other is proved similarly).
 We may assume that $R\neq \emptyset$.
 Set $Q'=Q_{i_{p-2},k_{p-2}}^{i_{p-1},k_{p-1}}$.
 If $Q' \neq \emptyset$, then $\tfd\big(\bS(\bm{Q}'), S_R\big)=0$ follows from Lemma \ref{Lem:calculation_of_std} (d).
 Assume that $Q' = \emptyset$, which implies that $ k_{p-1}-k_{p-2}=i_{p-1}+i_{p-2}$.
 We easily see that every element $Q''$ appearing in $\bm{Q}'$ satisfies 
 \begin{equation}\label{eq:Q''}
   Q'' \preceq (i_{p-2}-1,k_{p-2}-1) \ \ \text{if $i_{p-2} \neq 1$} \ \ \ \text{and} \ \ \ Q'' \preceq (i_{p-2}, k_{p-2}-2) \ \ \text{if $i_{p-2}=1$}. 
 \end{equation}
 By Lemma \ref{Lem:for_prime}, it suffices to show that $\scD^{-1}Q'' \not\succeq R$ holds if $Q''$ satisfies (\ref{eq:Q''}).
 We show this in the case $i_{p-2} \neq 1$ (the case $i_{p-2}=1$ is proved similarly).
 We have 
 \[ \scD^{-1} Q'' \preceq (-i_{p-2}+n+2, k_{p-2}+n) = (i_{p-1}+s,k_{p-1}-2+s)
 \]
 for suitable $s \in \Z$.
 On the other hand, $R = (i_{p-1}+r,k_{p-1}+r)$ holds for some $r \in \Z_{>0}$, and therefore we have $\scD^{-1}Q'' \not\succeq R$, as required.
 The proof is complete.
\end{proof}

\begin{Exa}\normalfont
 Here we give an example of the short exact sequence (\ref{eq:Main_ses}), not corresponding to any of the Mukhin--Young's extended $T$-systems in \cite{MR2960028}.

 Let $\fg$ be of type $A_3^{(1)}$, and set $\cD=\{L_i\}_{i \in [1,3]}$ with $L_i = L(Y_{1,2i-1})$,
 which forms a strong duality datum associated with $\mathfrak{sl}_{4}$.
 Define $\xi \in \HF$ by $\xi_i = i$ ($i \in [1,3]$).
 Then it can be proved using Lemma \ref{Lem:product_of_two} and (\ref{eq:monomial_of_tensor}) that
 \begin{align*}
  S_{1,2i-1} &= L(Y_{1,2i-1}) \ \ (i \in [1,3]), \ \ \ S_{2,2} = L(Y_{1,3}Y_{1,1}),\\
  S_{2,4} &= L(Y_{1,5}Y_{1,3}), \ \ \text{and} \ \ S_{3,3} = L(Y_{1,5}Y_{1,3}Y_{1,1}).
 \end{align*}
 By applying $\scD^{\pm 1}$, all $S_{i,k}$ ($(i,k) \in \wh{Q}_0^\xi$) are obtained (see (\ref{eq:dual_of_monomial})).
 Now the sequence (\ref{eq:Main_ses}) for a snake $\bm{P}=\big((2,0),(2,2),(1,5)\big)$ is as follows:
 \begin{align*}
  0 &\to L(Y_{1,1}) \otimes \hd\big(L(Y_{1,9}) \otimes L(Y_{1,5}Y_{1,3}Y_{1,1})\big)\\ &\to \hd\big(L(Y_{3,9}Y_{3,7}) \otimes L(Y_{1,3}Y_{1,1})\big) \otimes 
  \hd\big(L(Y_{1,3}Y_{1,1})\otimes L(Y_{1,5})\big)\\
    &\to \hd\big(L(Y_{3,9}Y_{3,7}) \otimes L(Y_{1,3}Y_{1,1})\otimes L(Y_{1,5})\big) \otimes L(Y_{1,3}Y_{1,1}) \to 0,
 \end{align*}
 or more explictily,
 \[ 0 \!\to\! L(Y_{1,9}Y_{1,5}Y_{1,3}Y_{1,1}^2)\! \to\! L(Y_{3,9}Y_{3,7}Y_{1,3}Y_{1,1}) \otimes L(Y_{2,4}Y_{1,1})\! \to\! L(Y_{3,9}Y_{3,7}Y_{2,4}Y_{1,3}Y_{1,1}^2)\! \to\! 0.
 \]
\end{Exa}

%%%%%%%%%%%%%%%%%%%%%%%%%%%%%%%%%%%%%%%%%%%%%%%%%%%%%%%%%%%%

\section{The case of twisted height functions}\label{Section:twisted}

%Throughout this section, we assume that $n = 2n_0 -1$ with $n_0 \in \Z_{\geq 2}$.
%In this section we mainly treat twisted height functions, but untwisted ones also appear.
%Therefore, to distinguish them, we use the symbol ``$\xi$" for untwisted one, and ``$\zeta$" for twisted one in the sequel.

\subsection{Change of a reduced word}

Throughout this section, we assume that $n = 2n_0 -1$ with $n_0 \in \Z_{\geq 2}$.
Unlike the untwisted case, the results in \cite{zbMATH01031515} cannot be applied to reduced expressions adapted to twisted height functions.
In this subsection, we will describe the connection between untwisted and twisted cases,
which enables us to apply the results in the previous section to twisted cases.

Let $\theta$ be the untwisted height function defined by
\[ \theta_i = \begin{cases} i & \text{for $i \in [1,n_0]$},\\ i-2 & \text{for $i \in [n_0+1,n]$},\end{cases}
\]
and $\Theta$ the twisted height function defined by
\[ \Theta_i = \begin{cases} i & \text{for $i \in [1,n_0-1]$},\\ n_0-1/2 & \text{for $i = n_0$}, \\ i-1 & \text{for $i \in [n_0+1,n]$}.\end{cases}
\]
In this section, we will treat these specific functions only.
%In the sequel, we write $\wh{Q}$ for $\wh{Q}^\theta$, $\wh{Q}^{\tw}$ for $\wh{Q}^{\Theta}$, 
%$\gG$ for $\gG^{\theta}$, and $\gG^\tw$ for $\gG^{\Theta}$ for notational convenience.
When $n=7$, $\gG^\theta$ and $\gG^\Theta$ are given as follows:
\begin{gather*}
\gG^\theta\colon
\raisebox{5em}{\scalebox{0.5}{\xymatrix@!C=0.5ex@R=0ex{
\mbox{\LARGE$(i\setminus k)$\ \ \ }  & \mbox{\LARGE $1$} && \mbox{\LARGE $2$} && \mbox{\LARGE $3$} & & \mbox{\LARGE $4$} & & \mbox{\LARGE$5$} &  & \mbox{\LARGE$6$} &  & \mbox{\LARGE$7$} & & 
\mbox{\LARGE$8$} && \mbox{\LARGE$9$} &&\mbox{\LARGE$10$} &&\mbox{\LARGE$11$} \\
\mbox{\LARGE$1$} &\mbox{\huge $\bullet$}\ar@{->}[ddrr]&&&& \mbox{\huge $\bullet$} \ar@{->}[ddrr]  &&&& \mbox{\huge $\bullet$} \ar@{->}[ddrr] &&&& \mbox{\huge $\bullet$} \ar@{->}[ddrr]
&&&& \mbox{\huge $\bullet$}\ar@{->}[ddrr]&&&&\mbox{\huge $\bullet$} \\ {\phantom 2} \\
\mbox{\LARGE$2$}&&&\mbox{\huge $\bullet$}\ar@{->}[ddrr]\ar@{->}[uurr]&& && \mbox{\huge $\bullet$}\ar@{->}[ddrr]\ar@{->}[uurr]&&&&  \mbox{\huge $\bullet$} \ar@{->}[ddrr]\ar@{->}[uurr] &&&& \mbox{\huge $\bullet$}\ar@{->}[ddrr]
\ar@{->}[uurr]& & & & \mbox{\huge $\bullet$}\ar@{->}[uurr]\\ {\phantom 2}  \\ 
\mbox{\LARGE$3$} &&&&& \mbox{\huge $\bullet$} \ar@{->}[ddrr] \ar@{->}[uurr] &&&& \mbox{\huge $\bullet$} \ar@{->}[ddrr]\ar@{->}[uurr] &&&& \mbox{\huge $\bullet$} \ar@{->}[ddrr] \ar@{->}[uurr]
&&&& \mbox{\huge $\bullet$} \ar@{->}[uurr]\ar@{->}[ddrr]& \\ {\phantom 2} \\
\mbox{\LARGE$4$}&&&&&&&\mbox{\huge $\bullet$}\ar@{->}[ddrr]\ar@{->}[uurr] &&&& \mbox{\huge $\bullet$}  \ar@{->}[ddrr]\ar@{->}[uurr] &&&&\mbox{\huge $\bullet$}\ar@{->}[uurr]
\ar@{->}[ddrr]&&&&\mbox{\huge $\bullet$}\\ {\phantom 2} \\
\mbox{\LARGE$5$} &&&&& \mbox{\huge $\bullet$}\ar@{->}[uurr]\ar@{->}[ddrr] &&&& \mbox{\huge $\bullet$} \ar@{->}[uurr]\ar@{->}[ddrr]
&&&& \mbox{\huge $\bullet$} \ar@{->}[uurr] \ar@{->}[ddrr] &&&&  \mbox{\huge $\bullet$}\ar@{->}[uurr]\\ {\phantom 2} \\ 
\mbox{\LARGE$6$}&&&&&&& \mbox{\huge $\bullet$}\ar@{->}[uurr]\ar@{->}[ddrr] &&&&  \mbox{\huge $\bullet$} \ar@{->}[uurr] \ar@{->}[ddrr]&&&& \mbox{\huge $\bullet$}\ar@{->}[uurr] &&&& \\
 &&  &&&&  && & && & {\phantom 2} \\
\mbox{\LARGE$7$}&&&&&  &&&& \mbox{\huge $\bullet$}\ar@{->}[uurr] &&&&\mbox{\huge $\bullet$}\ar@{->}[uurr]&&&&
}}}
\end{gather*}
\begin{gather*}
\gG^\Theta\colon
\raisebox{4.5em}{\scalebox{0.5}{\xymatrix@!C=0.5ex@R=0ex{
\mbox{\LARGE$(i\setminus k)$\ \ \ }  & \mbox{\LARGE $1$} && \mbox{\LARGE $2$} && \mbox{\LARGE $3$} & & \mbox{\LARGE $4$} & & \mbox{\LARGE$5$} &  & \mbox{\LARGE$6$} &  & \mbox{\LARGE$7$} & & 
\mbox{\LARGE$8$} && \mbox{\LARGE$9$} &&\mbox{\LARGE$10$} &&\mbox{\LARGE$11$} \\
\mbox{\LARGE$1$} &\mbox{\huge $\bullet$}\ar@{->}[ddrr]&&&& \mbox{\huge $\bullet$} \ar@{->}[ddrr]  &&&& \mbox{\huge $\bullet$} \ar@{->}[ddrr] &&&& \mbox{\huge $\bullet$} \ar@{->}[ddrr]
&&&& \mbox{\huge $\bullet$}\ar@{->}[ddrr]&&&&\mbox{\huge $\bullet$} \\ {\phantom 2} \\
\mbox{\LARGE$2$}&&&\mbox{\huge $\bullet$}\ar@{->}[ddrr]\ar@{->}[uurr]&& && \mbox{\huge $\bullet$}\ar@{->}[ddrr]\ar@{->}[uurr]&&&&  \mbox{\huge $\bullet$} \ar@{->}[ddrr]\ar@{->}[uurr] &&&& \mbox{\huge $\bullet$}\ar@{->}[ddrr]
\ar@{->}[uurr]& & & & \mbox{\huge $\bullet$}\ar@{->}[uurr]\\ {\phantom 2}  \\ 
\mbox{\LARGE$3$} &&&&& \mbox{\huge $\bullet$} \ar@{->}[dr] \ar@{->}[uurr] &&&& \mbox{\huge $\bullet$} \ar@{->}[dr]\ar@{->}[uurr] &&&& \mbox{\huge $\bullet$} \ar@{->}[dr] \ar@{->}[uurr]
&&&& \mbox{\huge $\bullet$} \ar@{->}[uurr]\ar@{->}[dr]&\\
\mbox{\LARGE$4$}  &&&&&& \mbox{\huge $\bullet$}\ar@{->}[dr]  && \mbox{\huge $\bullet$} \ar@{->}[ur] &&\mbox{\huge $\bullet$} \ar@{->}[dr] 
&& \mbox{\huge $\bullet$} \ar@{->}[ur] && \mbox{\huge$\bullet$} \ar@{->}[dr] & &
\mbox{\huge $\bullet$} \ar@{->}[ur] & &\mbox{\huge $\bullet$} \\
\mbox{\LARGE$5$}&&&&&&&\mbox{\huge $\bullet$}\ar@{->}[ur]\ar@{->}[ddrr]  &&&& \mbox{\huge $\bullet$}  \ar@{->}[ur] \ar@{->}[ddrr] 
&&&&\mbox{\huge $\bullet$}\ar@{->}[ur]  \\  {\phantom 2} & \\
\mbox{\LARGE$6$} &&&&&  &&&& \mbox{\huge $\bullet$} \ar@{->}[uurr]\ar@{->}[ddrr]
&&&& \mbox{\huge $\bullet$} \ar@{->}[uurr] &&&&  \\ {\phantom 2} \\
\mbox{\LARGE$7$}&&&&& && &&&&  \mbox{\huge $\bullet$} \ar@{->}[uurr] &&&& }}}
\end{gather*}
Similarly as in (\ref{eq:natural_identification}), given a $\gG_0^\gt$-tuple $\bm{c} =(c_{i,k})_{(i,k) \in \gG_0^\gt}$ of nonnegative integers,
define $B^{\theta}(\bm{c}) \in \Bup$ by taking a compatible reading $\gG_0^\gt=\{(i_1,k_1),\ldots,(i_N,k_N)\}$, putting $\bm{i}=(i_1,\ldots,i_N) \in R(w_0)$,
and setting $B^{\theta}(\bm{c}) = B^{\bm{i}}(\bm{c})$.
We also define $B^{\Theta}(\bm{c}') \in \Bup$ similarly for a $\gG^\gT_0$-tuple $\bm{c}'=(c_{i,k}')_{(i,k) \in \gG^\gT_0}$ of nonnegative integers.
%In the sequel, we will simply write $B(\bm{c})$ for $B^{\theta}(\bm{c})$, and $B^\tw(\bm{c}')$ for $B^{\Theta}(\bm{c}')$.
Given $\bm{c}'$, the goal of this subsection is to give a formula for $\bm{c}$ satisfying $B^\gt(\bm{c}) = B^{\gT}(\bm{c}')$ using Proposition \ref{Prop:2-move_3-move} (ii), under certain conditions.

We prepare several notations. 
For $j \in [n_0+1,n]$, let $V\!\langle j\rangle$ be the subset of $I \times \frac{1}{2}\Z$ defined by 
%\begin{align*}
%  V\!\langle j\rangle  &= \{(i,i+2k)  \mid i < n_0, \  k \in[0,n-i-1]\} \\ 
%   & \sqcup \{(n_0,n_0+2k) \mid k \in [0,n_0-1]\} \sqcup \{(i,i-2+2k) \mid n_0<i<j,\ k\in [0,n-i+1]\}\\
%   & \sqcup \{(j,j-3/2+k) \mid k \in [0,2n-2j+1]\}\sqcup \{(i,i-1+2k) \mid i>j, k \in [0,n-i]\}.     
%\end{align*}
\begin{align*}
  V\!\langle j\rangle  = \big(\gG_0^{\theta} \cap ([1,j-1] \times \Z)\big) &\sqcup \{(j,j-3/2+k) \mid k \in [0,2n-2j+1]\}\\ &\sqcup \{(i,i-1+2k) \mid i>j, k \in [0,n-i]\}.     
\end{align*}
We also set $V\!\langle n_0\rangle = \gG^\Theta_0$ and $V\!\langle n+1\rangle = \gG^\gt_0$.
When $n=7$, these are given as follows, where 
$\star$ in $\Vj$ denote the points $(j,j+2r-1/2),(j+1,j+2r),(j,j+2r+1/2)$ with $r \in [0,n-j-1]$ appearing in Lemma \ref{Lem:3-move_path} below
 (the dotted lines connecting points are for illustrative purposes only):
\begin{gather*}
\raisebox{4em}{\scalebox{0.35}{\xymatrix@!C=0.5ex@R=0ex{&&&&&&&&&&& \mbox{\Huge $V\!\langle 4 \rangle = \gG_0^\gT$} \\
\mbox{\Large$(i\setminus k)$}  & \mbox{\LARGE $1$} && \mbox{\LARGE $2$} && \mbox{\LARGE $3$} & & \mbox{\LARGE $4$} & & \mbox{\LARGE$5$} &  & \mbox{\LARGE$6$} &  & \mbox{\LARGE$7$} & & 
\mbox{\LARGE$8$} && \mbox{\LARGE$9$} &&\mbox{\LARGE$10$} &&\mbox{\LARGE$11$} \\
\mbox{\LARGE$1$} &\mbox{\Huge $\bullet$}\ar@{.}[ddrr]&&&& \mbox{\Huge $\bullet$} \ar@{.}[ddrr]  &&&& \mbox{\Huge $\bullet$} \ar@{.}[ddrr] &&&& \mbox{\Huge $\bullet$} \ar@{.}[ddrr]
&&&& \mbox{\Huge $\bullet$}\ar@{.}[ddrr]&&&&\mbox{\Huge $\bullet$} \\ {\phantom 2} \\
\mbox{\LARGE$2$}&&&\mbox{\Huge $\bullet$}\ar@{.}[ddrr]\ar@{.}[uurr]&& && \mbox{\Huge $\bullet$}\ar@{.}[ddrr]\ar@{.}[uurr]&&&&  \mbox{\Huge $\bullet$} \ar@{.}[ddrr]\ar@{.}[uurr] &&&& \mbox{\Huge $\bullet$}\ar@{.}[ddrr]
\ar@{.}[uurr]& & & & \mbox{\Huge $\bullet$}\ar@{.}[uurr]\\ {\phantom 2}  \\ 
\mbox{\LARGE$3$} &&&&& \mbox{\Huge $\bullet$} \ar@{.}[dr] \ar@{.}[uurr] &&&& \mbox{\Huge $\bullet$} \ar@{.}[dr]\ar@{.}[uurr] &&&& \mbox{\Huge $\bullet$} \ar@{.}[dr] \ar@{.}[uurr]
&&&& \mbox{\Huge $\bullet$} \ar@{.}[uurr]\ar@{.}[dr]&\\
\mbox{\LARGE$4$}  &&&&&& \mbox{\Huge $\star$}\ar@{.}[dr]  && \mbox{\Huge $\star$} \ar@{.}[ur] &&\mbox{\Huge $\star$} \ar@{.}[dr] 
&& \mbox{\Huge $\star$} \ar@{.}[ur] && \mbox{\Huge$\star$} \ar@{.}[dr] & &
\mbox{\Huge $\star$} \ar@{.}[ur] & &\mbox{\Huge $\bullet$} \\
\mbox{\LARGE$5$}&&&&&&&\mbox{\Huge $\star$}\ar@{.}[ur]\ar@{.}[ddrr]  &&&& \mbox{\Huge $\star$}  \ar@{.}[ur] \ar@{.}[ddrr] 
&&&&\mbox{\Huge $\star$}\ar@{.}[ur]  \\  {\phantom 2} & \\
\mbox{\LARGE$6$} &&&&&  &&&& \mbox{\Huge $\bullet$} \ar@{.}[uurr]\ar@{.}[ddrr]
&&&& \mbox{\Huge $\bullet$} \ar@{.}[uurr] &&&&  \\ {\phantom 2} \\
\mbox{\LARGE$7$}&&&&& && &&&&  \mbox{\Huge $\bullet$} \ar@{.}[uurr] &&&& }}} 
\raisebox{4em}{\scalebox{0.35}{\xymatrix@!C=0.5ex@R=0ex{&&&&&&&&&&& \mbox{\Huge $V\!\langle 5 \rangle$} \\
\mbox{\LARGE$(i\setminus k)$}  & \mbox{\LARGE $1$} && \mbox{\LARGE $2$} && \mbox{\LARGE $3$} & & \mbox{\LARGE $4$} & & \mbox{\LARGE$5$} &  & \mbox{\LARGE$6$} &  & \mbox{\LARGE$7$} & & 
\mbox{\LARGE$8$} && \mbox{\LARGE$9$} &&\mbox{\LARGE$10$} &&\mbox{\LARGE$11$} \\
\mbox{\LARGE$1$} &\mbox{\Huge $\bullet$}\ar@{.}[ddrr]&&&& \mbox{\Huge $\bullet$} \ar@{.}[ddrr]  &&&& \mbox{\Huge $\bullet$} \ar@{.}[ddrr] &&&& \mbox{\Huge $\bullet$} \ar@{.}[ddrr]
&&&& \mbox{\Huge $\bullet$}\ar@{.}[ddrr]&&&&\mbox{\Huge $\bullet$} \\ {\phantom 2} \\
\mbox{\LARGE$2$}&&&\mbox{\Huge $\bullet$}\ar@{.}[ddrr]\ar@{.}[uurr]&& && \mbox{\Huge $\bullet$}\ar@{.}[ddrr]\ar@{.}[uurr]&&&&  \mbox{\Huge $\bullet$} \ar@{.}[ddrr]\ar@{.}[uurr] &&&& \mbox{\Huge $\bullet$}\ar@{.}[ddrr]
\ar@{.}[uurr]& & & & \mbox{\Huge $\bullet$}\ar@{.}[uurr]\\ {\phantom 2}  \\ 
\mbox{\LARGE$3$} &&&&& \mbox{\Huge $\bullet$} \ar@{.}[ddrr] \ar@{.}[uurr] &&&& \mbox{\Huge $\bullet$} \ar@{.}[ddrr]\ar@{.}[uurr] &&&& \mbox{\Huge $\bullet$} \ar@{.}[ddrr] \ar@{.}[uurr]
&&&& \mbox{\Huge $\bullet$}\ar@{.}[uurr]\ar@{.}[ddrr]& &\\ {\phantom 2} \\
\mbox{\LARGE$4$}&&&&&&&\mbox{\Huge $\bullet$}\ar@{.}[dr]\ar@{.}[uurr] &&&& \mbox{\Huge $\bullet$}  \ar@{.}[dr]\ar@{.}[uurr] &&&&\mbox{\Huge $\bullet$}\ar@{.}[uurr]\ar@{.}[dr]
 &&&&\mbox{\Huge $\bullet$} \\  
\mbox{\LARGE$5$}  &&&&&& \mbox{\Huge $\bullet$} \ar@{.}[ur] &&\mbox{\Huge $\star$}\ar@{.}[dr]
&& \mbox{\Huge $\star$} \ar@{.}[ur] && \mbox{\Huge$\star$} \ar@{.}[dr] & &
\mbox{\Huge $\star$} \ar@{.}[ur] & & \mbox{\Huge $\bullet$} \\
\mbox{\LARGE$6$} &&&&& &&&& \mbox{\Huge $\star$} \ar@{.}[ur]\ar@{.}[ddrr]
&&&& \mbox{\Huge $\star$} \ar@{.}[ur] &&&&  \\ {\phantom 2} \\
\mbox{\LARGE$7$}&&&&& && &&&&  \mbox{\Huge $\bullet$} \ar@{.}[uurr] &&&&  
  }}} 
\end{gather*}
\begin{gather*}
\raisebox{4em}{\scalebox{0.35}{\xymatrix@!C=0.5ex@R=0ex{&&&&&&&&&&& \mbox{\Huge $V\!\langle 6 \rangle$} \\
\mbox{\LARGE$(i\setminus k)$}  & \mbox{\LARGE $1$} && \mbox{\LARGE $2$} && \mbox{\LARGE $3$} & & \mbox{\LARGE $4$} & & \mbox{\LARGE$5$} &  & \mbox{\LARGE$6$} &  & \mbox{\LARGE$7$} & & 
\mbox{\LARGE$8$} && \mbox{\LARGE$9$} &&\mbox{\LARGE$10$} &&\mbox{\LARGE$11$} \\
\mbox{\LARGE$1$} &\mbox{\Huge $\bullet$}\ar@{.}[ddrr]&&&& \mbox{\Huge $\bullet$} \ar@{.}[ddrr]  &&&& \mbox{\Huge $\bullet$} \ar@{.}[ddrr] &&&& \mbox{\Huge $\bullet$} \ar@{.}[ddrr]
&&&& \mbox{\Huge $\bullet$}\ar@{.}[ddrr]&&&&\mbox{\Huge $\bullet$} \\ {\phantom 2} \\
\mbox{\LARGE$2$}&&&\mbox{\Huge $\bullet$}\ar@{.}[ddrr]\ar@{.}[uurr]&& && \mbox{\Huge $\bullet$}\ar@{.}[ddrr]\ar@{.}[uurr]&&&&  \mbox{\Huge $\bullet$} \ar@{.}[ddrr]\ar@{.}[uurr] &&&& \mbox{\Huge $\bullet$}\ar@{.}[ddrr]
\ar@{.}[uurr]& & & & \mbox{\Huge $\bullet$}\ar@{.}[uurr]\\ {\phantom 2}  \\ 
\mbox{\LARGE$3$} &&&&& \mbox{\Huge $\bullet$} \ar@{.}[ddrr] \ar@{.}[uurr] &&&& \mbox{\Huge $\bullet$} \ar@{.}[ddrr]\ar@{.}[uurr] &&&& \mbox{\Huge $\bullet$} \ar@{.}[ddrr] \ar@{.}[uurr]
&&&& \mbox{\Huge $\bullet$} \ar@{.}[uurr]\ar@{.}[ddrr]& \\ {\phantom 2} \\
\mbox{\LARGE$4$}&&&&&&&\mbox{\Huge $\bullet$}\ar@{.}[ddrr]\ar@{.}[uurr] &&&& \mbox{\Huge $\bullet$}  \ar@{.}[ddrr]\ar@{.}[uurr] &&&&\mbox{\Huge $\bullet$}\ar@{.}[uurr]
\ar@{.}[ddrr]&&&&\mbox{\Huge $\bullet$}\\ {\phantom 2} \\
\mbox{\LARGE$5$} &&&&& \mbox{\Huge $\bullet$}\ar@{.}[uurr]&&&& \mbox{\Huge $\bullet$} \ar@{.}[uurr]\ar@{.}[dr]
&&&& \mbox{\Huge $\bullet$} \ar@{.}[uurr] \ar@{.}[dr] &&&&  \mbox{\Huge $\bullet$} \ar@{.}[uurr]\\ 
\mbox{\LARGE$6$}  &&&&&&   &&\mbox{\Huge $\bullet$}\ar@{.}[ur]
&& \mbox{\Huge $\star$} \ar@{.}[dr] && \mbox{\Huge$\star$} \ar@{.}[ur] & &
\mbox{\Huge $\bullet$}& & &&  \\
\mbox{\LARGE$7$}& &&&&&& &&&&  \mbox{\Huge $\star$} \ar@{.}[ur] &&&& }}} 
\raisebox{4em}{\scalebox{0.35}{\xymatrix@!C=0.5ex@R=0ex{&&&&&&&&&&& \mbox{\Huge $V\!\langle 7 \rangle$} \\
\mbox{\LARGE$(i\setminus k)$}  & \mbox{\LARGE $1$} && \mbox{\LARGE $2$} && \mbox{\LARGE $3$} & & \mbox{\LARGE $4$} & & \mbox{\LARGE$5$} &  & \mbox{\LARGE$6$} &  & \mbox{\LARGE$7$} & & 
\mbox{\LARGE$8$} && \mbox{\LARGE$9$} &&\mbox{\LARGE$10$} &&\mbox{\LARGE$11$} \\
\mbox{\LARGE$1$} &\mbox{\Huge $\bullet$}\ar@{.}[ddrr]&&&& \mbox{\Huge $\bullet$} \ar@{.}[ddrr]  &&&& \mbox{\Huge $\bullet$} \ar@{.}[ddrr] &&&& \mbox{\Huge $\bullet$} \ar@{.}[ddrr]
&&&& \mbox{\Huge $\bullet$}\ar@{.}[ddrr]&&&&\mbox{\Huge $\bullet$} \\ {\phantom 2} \\
\mbox{\LARGE$2$}&&&\mbox{\Huge $\bullet$}\ar@{.}[ddrr]\ar@{.}[uurr]&& && \mbox{\Huge $\bullet$}\ar@{.}[ddrr]\ar@{.}[uurr]&&&&  \mbox{\Huge $\bullet$} \ar@{.}[ddrr]\ar@{.}[uurr] &&&& \mbox{\Huge $\bullet$}\ar@{.}[ddrr]
\ar@{.}[uurr]& & & & \mbox{\Huge $\bullet$}\ar@{.}[uurr]\\ {\phantom 2}  \\ 
\mbox{\LARGE$3$} &&&&& \mbox{\Huge $\bullet$} \ar@{.}[ddrr] \ar@{.}[uurr] &&&& \mbox{\Huge $\bullet$} \ar@{.}[ddrr]\ar@{.}[uurr] &&&& \mbox{\Huge $\bullet$} \ar@{.}[ddrr] \ar@{.}[uurr]
&&&& \mbox{\Huge $\bullet$} \ar@{.}[uurr]\ar@{.}[ddrr]& \\ {\phantom 2} \\
\mbox{\LARGE$4$}&&&&&&&\mbox{\Huge $\bullet$}\ar@{.}[ddrr]\ar@{.}[uurr] &&&& \mbox{\Huge $\bullet$}  \ar@{.}[ddrr]\ar@{.}[uurr] &&&&\mbox{\Huge $\bullet$}\ar@{.}[uurr]
\ar@{.}[ddrr]&&&&\mbox{\Huge $\bullet$}\\ {\phantom 2} \\
\mbox{\LARGE$5$} &&&&& \mbox{\Huge $\bullet$}\ar@{.}[uurr]\ar@{.}[ddrr] &&&& \mbox{\Huge $\bullet$} \ar@{.}[uurr]\ar@{.}[ddrr]
&&&& \mbox{\Huge $\bullet$} \ar@{.}[uurr] \ar@{.}[ddrr] &&&&  \mbox{\Huge $\bullet$}\ar@{.}[uurr]\\ {\phantom 2} \\ 
\mbox{\LARGE$6$}&&&&&&& \mbox{\Huge $\bullet$}\ar@{.}[uurr]&&&&  \mbox{\Huge $\bullet$} \ar@{.}[uurr]\ar@{.}[dr] &&&& \mbox{\Huge $\bullet$}\ar@{.}[uurr] &&&& \\
\mbox{\LARGE$7$} &&&&&& &&&& \mbox{\Huge $\bullet$} \ar@{.}[ur] && \mbox{\Huge$\bullet$}& & & &
}}}
\end{gather*}
\begin{gather}\label{Graph}
\raisebox{4em}{\scalebox{0.35}{\xymatrix@!C=0.5ex@R=0ex{&&&&&&&&&&& \mbox{\Huge $V\!\langle 8 \rangle = \gG_0^\gt$} \\
\mbox{\LARGE$(i\setminus k)$}  & \mbox{\LARGE $1$} && \mbox{\LARGE $2$} && \mbox{\LARGE $3$} & & \mbox{\LARGE $4$} & & \mbox{\LARGE$5$} &  & \mbox{\LARGE$6$} &  & \mbox{\LARGE$7$} & & 
\mbox{\LARGE$8$} && \mbox{\LARGE$9$} &&\mbox{\LARGE$10$} &&\mbox{\LARGE$11$} \\
\mbox{\LARGE$1$} &\mbox{\Huge $\bullet$}\ar@{.}[ddrr]&&&& \mbox{\Huge $\bullet$} \ar@{.}[ddrr]  &&&& \mbox{\Huge $\bullet$} \ar@{.}[ddrr] &&&& \mbox{\Huge $\bullet$} \ar@{.}[ddrr]
&&&& \mbox{\Huge $\bullet$}\ar@{.}[ddrr]&&&&\mbox{\Huge $\bullet$} \\ {\phantom 2} \\
\mbox{\LARGE$2$}&&&\mbox{\Huge $\bullet$}\ar@{.}[ddrr]\ar@{.}[uurr]&& && \mbox{\Huge $\bullet$}\ar@{.}[ddrr]\ar@{.}[uurr]&&&&  \mbox{\Huge $\bullet$} \ar@{.}[ddrr]\ar@{.}[uurr] &&&& \mbox{\Huge $\bullet$}\ar@{.}[ddrr]
\ar@{.}[uurr]& & & & \mbox{\Huge $\bullet$}\ar@{.}[uurr]\\ {\phantom 2}  \\ 
\mbox{\LARGE$3$} &&&&& \mbox{\Huge $\bullet$} \ar@{.}[ddrr] \ar@{.}[uurr] &&&& \mbox{\Huge $\bullet$} \ar@{.}[ddrr]\ar@{.}[uurr] &&&& \mbox{\Huge $\bullet$} \ar@{.}[ddrr] \ar@{.}[uurr]
&&&& \mbox{\Huge $\bullet$} \ar@{.}[uurr]\ar@{.}[ddrr]& \\ {\phantom 2} \\
\mbox{\LARGE$4$}&&&&&&&\mbox{\Huge $\bullet$}\ar@{.}[ddrr]\ar@{.}[uurr] &&&& \mbox{\Huge $\bullet$}  \ar@{.}[ddrr]\ar@{.}[uurr] &&&&\mbox{\Huge $\bullet$}\ar@{.}[uurr]
\ar@{.}[ddrr]&&&&\mbox{\Huge $\bullet$}\\ {\phantom 2} \\
\mbox{\LARGE$5$} &&&&& \mbox{\Huge $\bullet$}\ar@{.}[uurr]\ar@{.}[ddrr] &&&& \mbox{\Huge $\bullet$} \ar@{.}[uurr]\ar@{.}[ddrr]
&&&& \mbox{\Huge $\bullet$} \ar@{.}[uurr] \ar@{.}[ddrr] &&&&  \mbox{\Huge $\bullet$}\ar@{.}[uurr]\\ {\phantom 2} \\ 
\mbox{\LARGE$6$}&&&&&&& \mbox{\Huge $\bullet$}\ar@{.}[uurr]\ar@{.}[ddrr] &&&&  \mbox{\Huge $\bullet$} \ar@{.}[uurr] \ar@{.}[ddrr]&&&& \mbox{\Huge $\bullet$}\ar@{.}[uurr] &&&& \\
 &&  &&&&  && & && & {\phantom 2} \\
\mbox{\LARGE$7$}&&&&&  &&&& \mbox{\Huge $\bullet$}\ar@{.}[uurr] &&&&\mbox{\Huge $\bullet$}\ar@{.}[uurr]&&&&
}}} 
\end{gather}
We have $\sharp\, \Vj = N$ for all $j \in [n_0,n+1]$.
For each $j \in [n_0,n+1]$, take and fix a total ordering $\Vj = \{(i_1,k_1),\ldots,(i_N,k_N)\}$ such that $r<s$ holds whenever $k_r<k_s$,
and set $\bm{i}\langle j\rangle = (i_1,\ldots,i_N) \in I^N$.
For $(i,k) \in V\!\langle j \rangle$, denote by $i^{(k)}$ the letter $i_r$ in $\bm{i}\langle j \rangle$,
where $r \in [1,N]$ is such that $(i_r,k_r)=(i,k)$.
%If $(i,k) =(i_r,k_r) \in V\!\langle j \rangle$, denote by $i^{(k)}$ the letter $i_r$ in $\bm{i}\langle j \rangle$.
Obviously, $\bm{i}\langle n_0\rangle$ (resp.\ $\bm{i}\langle n+1\rangle$) is adapted to the twisted height function $\Theta$ (resp.\ the height function $\theta$),
and we easily see that $\bm{i}\langle n\rangle$ and $\bm{i}\langle n+1\rangle$ are commutation equivalent.
Let $[\bm{i}\langle j\rangle]$ denote the commutation class containing $\bm{i}\langle j\rangle$.

\begin{Lem}\label{Lem:3-move_path}\ 
 \begin{itemize}\setlength{\leftskip}{-15pt}
  \item[{\normalfont(i)}] For each $j \in [n_0,n-1]$, 
   there is a word $\bm{i} \in [\bm{i}\langle j\rangle]$ containing consecutive subwords 
   \begin{equation*}\label{eq:Coxeter_relation}
    \big(j^{(j+2r-1/2)}, (j+1)^{(j+2r)},j^{(j+2r+1/2)}\big) \ \ \ \text{for all } \ \  r \in [0,n-j-1].
   \end{equation*}
   Let $\bm{i}'$ be the word obtained from $\bm{i}$ by transforming all these subwords into $(j+1,j,j+1)$. 
   Then we have $\bm{i}' \in [\bm{i}\langle j+1 \rangle]$.
  \item[{\normalfont(ii)}] We have $\bm{i}\aj \in R(w_0)$ for all $j \in [n_0,n+1]$.
 \end{itemize}

 \begin{proof}
  The first assertion of (i) is easily checked,
  and the second is also checked directly by noting that the transformation $(j,j+1,j) \to (j+1,j,j+1)$ is expressed pictorially as follows:
  \[ \raisebox{3.1em}{\scalebox{0.7}{\xymatrix@!C=0.1ex@R=0.5ex{
       &  r-1 & & r & & r+1\\
    j  &      & \mbox{\LARGE $\bullet$}\ar@{.}[ul] \ar@{.}[dr]  && \mbox{\LARGE $\bullet$} \ar@{.}[ur]\\
    j+1&      & & \mbox{\LARGE $\bullet$} \ar@{.}[ld] \ar@{.}[rd]\ar@{.}[ur]&& \\
       &      & & && {\phantom 2}}}} \Rightarrow
    \raisebox{3.1em}{\scalebox{0.7}{\xymatrix@!C=0.1ex@R=0.5ex{
       &  r-1\ar@{.}[ddrr] & & r & & r+1 \\
       & &  & & & & {\phantom j}  \\
    j  &       &  & \mbox{\LARGE $\bullet$} \ar@{.}[uurr] \ar@{.}[dr] && \\
    j+1& &\mbox{\LARGE $\bullet$}\ar@{.}[ur] &  &\mbox{\LARGE $\bullet$}& }}} 
  \]
  See (\ref{Graph}). Now (ii) is obvious since $\bm{i}\langle n_0\rangle \in R(w_0)$.
 \end{proof}
\end{Lem}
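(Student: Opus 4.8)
The plan is to prove Lemma \ref{Lem:3-move_path} by a careful but essentially combinatorial analysis of the quivers $\gG^\gt$ and $\gG^\Theta$ and the intermediate subsets $\Vj$. The statement is really about the interplay between commutation classes of reduced words and braid (rank-two $A_2$) moves, read off geometrically from the repetition quiver, so the main work is bookkeeping rather than any deep structural input; I will organize it so that the routine verifications are isolated and the genuinely new point is made explicit.

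First I would fix, for each $j \in [n_0,n+1]$, the total ordering of $\Vj$ by increasing second coordinate $k$ (breaking ties arbitrarily, which is harmless since points of $\Vj$ with equal $k$ are pairwise non-adjacent in the underlying Dynkin diagram, hence commute), and record the resulting word $\bm{i}\aj$. The key observation is that $\Vj$ and $\Vj[j+1]$ differ only in the ``staircase'' region over vertices labelled $j$ and $j{+}1$: in $\Vj$ the $j$-th row carries the half-integer points $(j,j-3/2+k)$ densely while the $(j{+}1)$-th row carries only the sparse points $(j+1,j-1+2k)$, and in $\Vj[j+1]$ the roles are reversed. For the first assertion of (i), I would argue that within the commutation class $[\bm{i}\aj]$ one may reorder the letters so that, for each $r \in [0,n-j-1]$, the three letters corresponding to the vertices $(j,j+2r-1/2)$, $(j+1,j+2r)$, $(j,j+2r+1/2)$ occur consecutively and in that order: this is possible because any vertex $(i',k')$ with $k'$ strictly between $j+2r-3/2$ and $j+2r+3/2$ and $i' \notin \{j,j+1\}$ is non-adjacent to both $j$ and $j+1$ (so commutes past), while the vertices of $\Vj$ with second coordinate in the relevant window that \emph{are} in rows $j,j{+}1$ are exactly these three — one reads this straight off the picture (\ref{Graph}). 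I would phrase this as: the full subquiver of $\gG^\gt$ (or rather of the ambient $\wh Q$) on $\Vj$ restricted to the window has exactly the shape of a single source $(j+1,j+2r)$ with the two sinks $(j,j\pm 1/2 + 2r)$ below it in the partial order $\preceq$, forcing the compatible reading to place them consecutively up to commutation.

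Next, for the second assertion of (i), I would check that replacing each consecutive block $(j,j+1,j)$ by $(j+1,j,j+1)$ produces a word whose associated quiver data is precisely $\Vj[j+1]$. Concretely, the $A_2$-braid move at positions $(r-1,r,r+1)$ carrying $(c_{r-1},c_r,c_{r+1})$ to the values in \eqref{eq:3-move} corresponds geometrically to the local picture drawn in the proof: the two lower sink-vertices in row $j$ get ``pushed up'' into row $j{+}1$ and the source in row $j{+}1$ gets ``pushed down'' into row $j$, which is exactly the difference between the staircases of $\Vj$ and $\Vj[j+1]$. Performing this for all $r$ simultaneously (the blocks for different $r$ are disjoint and can be handled independently since, again, the intervening letters commute) transforms $\bm{i}$ into a word that is a compatible reading of $\gG$-data supported on $\Vj[j+1]$, hence lies in $[\bm{i}\aj[j+1]]$ by Proposition \ref{Prop:Bedard}. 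I would also separately note the base and top of the induction: $\bm{i}\langle n_0\rangle$ is by construction adapted to $\Theta$ and $\bm{i}\langle n+1\rangle$ to $\theta$, and $\bm{i}\langle n\rangle \sim \bm{i}\langle n+1\rangle$ because for $j=n$ the staircase is empty so $V\!\langle n\rangle$ and $V\!\langle n+1\rangle$ coincide as sets and differ only by the tie-breaking among commuting letters.

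Finally, part (ii) is immediate: $\bm{i}\langle n_0\rangle \in R(w_0)$ since it is adapted to the twisted height function $\Theta$ (Proposition \ref{Prop:Bedard}(ii)), and by part (i) together with the fact that $A_2$-braid moves and commutations preserve the property of being a reduced word of $w_0$, each $\bm{i}\aj$ for $j \in [n_0,n+1]$ is obtained from $\bm{i}\langle n_0\rangle$ by a sequence of such moves, hence is again in $R(w_0)$. The main obstacle I anticipate is not conceptual but presentational: making the claim ``within $[\bm{i}\aj]$ one can bring all the required triples into consecutive position simultaneously'' fully rigorous requires a clean statement that the relevant vertices of $\Vj$ are totally ordered by $k$ in a way that interleaves correctly with the non-$\{j,j+1\}$ vertices, and that the braid blocks for distinct $r$ do not interfere — this is where one must be careful to commute letters past each other in the right order rather than wave hands at the picture. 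I would therefore prove a small auxiliary claim describing the induced subquiver of $\wh Q$ on the ``column-window'' rows $j, j+1$ of $\Vj$, from which both the consecutiveness and the non-interference follow mechanically.
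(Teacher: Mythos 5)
Your proposal is correct in substance and follows essentially the same route as the paper's (very terse) proof: a direct inspection of the sets $\Vj$, the pictorial reading of the $A_2$-braid move as exchanging the two staircases in rows $j$ and $j+1$, and the observation that commutations and braid moves preserve membership in $R(w_0)$, so (ii) follows from $\bm{i}\langle n_0\rangle \in R(w_0)$. Two side justifications need minor repair, though neither affects the argument: the claim that every vertex $(i',k')$ in the open window with $i'\notin\{j,j+1\}$ is non-adjacent to both $j$ and $j+1$ is false as stated (rows $j-1$ and $j+2$ are adjacent); the correct point is a parity check showing that rows $j-1$ and $j+2$ of $\Vj$ have no vertex at $k=j+2r$ at all, since their $k$-coordinates are $\equiv j+1 \pmod 2$ while $j+2r\equiv j$. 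Likewise $V\langle n\rangle$ and $V\langle n+1\rangle$ do \emph{not} coincide as sets (the two row-$n$ points sit at $n-3/2,\,n-1/2$ versus $n-2,\,n$); the words $\bm{i}\langle n\rangle$ and $\bm{i}\langle n+1\rangle$ are nonetheless commutation equivalent because the displaced letters only pass letters in rows non-adjacent to $n$, again by parity.
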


For each $j \in [n_0,n]$, define the map 
\[ \rho_{\langle j\rangle}\colon \Z_{\geq 0}^{\Vj} \to \Z_{\geq 0}^{V\!\langle j+1\rangle}, \ \ \ \bm{c}=(c_{i,k})_{(i,k) \in \Vj} \mapsto  \bm{c}'=(c_{i,k}')_{(i,k) \in V\!\langle j+1\rangle}
\]
as follows:
for $r \in [0,n-j-1]$, $(c_{j+1,j+2r-1/2}',c_{j,j+2r}',c_{j+1,j+2r+1/2}')$ is obtained by applying the transformation (\ref{eq:3-move}) to
$(c_{j,j+2r-1/2},c_{j+1,j+2r},c_{j,j+2r+1/2})$,
and the other $c_{i,k}'$'s are determined by $c_{i,k}' = c_{i,k+t}$, where
\[ t = \begin{cases} 1/2 & \text{if} \ (i,k)=(j,j-2),\\ -1/2 & \text{if} \ (i,k)=(j,2n-j),\\
                     0 & \text{if} \ i \neq j,j+1.\end{cases}
\]
Set
\[ \rho =\rho_{\langle n\rangle}\circ\rho_{\langle n-1\rangle }\circ \cdots \circ \rho_{\langle n_0\rangle} \colon \Z_{\geq 0}^{\gG^\gT_0} \to \Z_{\geq 0}^{\gG^\gt_0}.
\]
For any $\bm{c} \in \Z_{\geq 0}^{\gG^\gT_0}$, it follows from Lemma \ref{Lem:3-move_path} and Proposition \ref{Prop:2-move_3-move} (ii) that 
\begin{equation}\label{eq:coincidence_of_two_B}
 B^\gT(\bm{c}) = B^\gt\big(\rho(\bm{c})\big).
\end{equation}
Let $\{\bm{e}_{i,k}\mid (i,k) \in \gG_0^\Theta\}$ denote the standard basis of $\Z^{\gG_0^\gT}$.
We would like to give an explicit formula for $\rho(\bm{c})$ when $\bm{c}$ is expressed as $\sum_{r=1}^p \bm{e}_{i_r,k_r}$ with $\big((i_1,k_1),\ldots,(i_p,k_p)\big)$ 
being a snake in $\gG^\gT_0\subseteq \wh{Q}^\gT_0$.
For this, the following remark is useful.

\begin{Rem}\normalfont\label{Rem:3-terms}
 Under the transformation $(c_{k-1},c_k,c_{k+1}) \mapsto (c_{k-1}',c_k',c_{k+1}')$ in (\ref{eq:3-move}), we have 
 \begin{align*}
  \text{(i)} \ \ \ \ \ &(1,0,0) \mapsto (0,0,1), \ \ \ \ \ \ \ \ \ \
  \text{(ii)}\ \ \ \ \   (0,0,1) \mapsto (1,0,0),\\
  \text{(iii)}\ \ \ \ \   &(0,1,0) \mapsto (1,0,1), \ \ \ \ \ \ \ \ \ 
  \text{(iv)}\ \ \ \ \  (1,0,1) \mapsto (0,1,0).
 \end{align*} 
 Pictorially, these are expressed as follows:
 \begin{gather*}
  \raisebox{23pt}{(i)} \raisebox{3.1em}{\scalebox{0.6}{\xymatrix@!C=0.1ex@R=0.5ex{
       & \mbox{\LARGE $\star$} \ar@{.}[dr]  && \mbox{\LARGE $\bullet$} \\
       & & \mbox{\LARGE $\bullet$} \ar@{.}[ld] \ar@{.}[rd]\ar@{.}[ur]&& \\
            & & && {\phantom 2}}}} \raisebox{20pt}{{$\Rightarrow$}}
    \raisebox{3.1em}{\scalebox{0.6}{\xymatrix@!C=0.1ex@R=0.5ex{
        & & & & & {\phantom j}  \\
        &   &  & \mbox{\LARGE $\bullet$} \ar@{.}[ul]\ar@{.}[ur] \ar@{.}[dr] && \\
        & &\mbox{\LARGE $\bullet$}\ar@{.}[ur] &  &\mbox{\LARGE $\star$}& }}} 
    \raisebox{23pt}{(ii)} \raisebox{3.1em}{\scalebox{0.6}{\xymatrix@!C=0.1ex@R=0.5ex{
       & \mbox{\LARGE $\bullet$} \ar@{.}[dr]  && \mbox{\LARGE $\star$} \\
       & & \mbox{\LARGE $\bullet$} \ar@{.}[ld] \ar@{.}[rd]\ar@{.}[ur]&& \\
            & & && {\phantom 2}}}} \raisebox{20pt}{{$\Rightarrow$}}
    \raisebox{3.1em}{\scalebox{0.6}{\xymatrix@!C=0.1ex@R=0.5ex{
        & & & & & {\phantom j}  \\
        &   &  & \mbox{\LARGE $\bullet$} \ar@{.}[ul]\ar@{.}[ur] \ar@{.}[dr] && \\
        & &\mbox{\LARGE $\star$}\ar@{.}[ur] &  &\mbox{\LARGE $\bullet$}& }}} \\
   \raisebox{23pt}{(iii)} \raisebox{3.1em}{\scalebox{0.6}{\xymatrix@!C=0.1ex@R=0.5ex{
       & \mbox{\LARGE $\bullet$} \ar@{.}[dr]  && \mbox{\LARGE $\bullet$} \\
       & & \mbox{\LARGE $\star$} \ar@{.}[ld] \ar@{.}[rd]\ar@{.}[ur]&& \\
            & & && {\phantom 2}}}} \raisebox{20pt}{{$\Rightarrow$}}
    \raisebox{3.1em}{\scalebox{0.6}{\xymatrix@!C=0.1ex@R=0.5ex{
        & & & & & {\phantom j}  \\
        &   &  & \mbox{\LARGE $\bullet$} \ar@{.}[ul]\ar@{.}[ur] \ar@{.}[dr] && \\
        & &\mbox{\LARGE $\star$}\ar@{.}[ur] &  &\mbox{\LARGE $\star$}& }}} 
    \raisebox{23pt}{(ii)} \raisebox{3.1em}{\scalebox{0.6}{\xymatrix@!C=0.1ex@R=0.5ex{
       & \mbox{\LARGE $\star$} \ar@{.}[dr]  && \mbox{\LARGE $\star$} \\
       & & \mbox{\LARGE $\bullet$} \ar@{.}[ld] \ar@{.}[rd]\ar@{.}[ur]&& \\
            & & && {\phantom 2}}}} \raisebox{20pt}{{$\Rightarrow$}}
    \raisebox{3.1em}{\scalebox{0.6}{\xymatrix@!C=0.1ex@R=0.5ex{
        & & & & & {\phantom j}  \\
        &   &  & \mbox{\LARGE $\star$} \ar@{.}[ul]\ar@{.}[ur] \ar@{.}[dr] && \\
        & &\mbox{\LARGE $\bullet$}\ar@{.}[ur] &  &\mbox{\LARGE $\bullet$}& }}}
 \end{gather*}
 Therefore, (i) (resp.\ (ii)) is seen as moving to lower right (resp.\ lower left), (iii) as splitting in two points, and (iv) as combining two points into one.
\end{Rem}

For $\flat \in \{<,>, \mrU,\mrD\}$, we write $\gG_0^{\gT,\flat} := \gG_0^\gT \cap \wh{Q}_0^{\gT,\flat}$.
We also write $\gG_0^{\gT,\geq} := \gG_0^{\gT} \setminus \gG_0^{\gT,<}$.
%Let $\iota\colon I \to \frac{1}{2}\Z$ be the map defined by 
%\[ \iota(i) = \begin{cases} i & \text{if $i < n_0$}, \\ n_0-1/2 & \text{if $i=n_0$},\\ i-1 & \text{if $i > n_0$}.\end{cases}
%\]
For $(i,k) \in \gG_0^{\gT,\geq}$, define $X^-_{i,k}$ and $X^+_{i,k}$, each of which is an element of $\gG_0^\gt$ (not $\gG_0^\Theta$) or the empty set, as follows:
\[ X^-_{i,k} = \begin{cases}  \emptyset & \text{if} \ (i,k) \in \gG^{\gT,\mrD}_0,\\
                              \big(\frac{1}{2}(\Theta_i + k +2), \frac{1}{2}(\Theta_i+k-2)\big) & \text{if} \ (i,k) \in \gG_0^{\gT,>} \sqcup \gG^{\gT,\mrU}_0,
               \end{cases}
\]
and
\[ X^+_{i,k} = \begin{cases} \emptyset & \text{if} \ (i,k) \in \gG^{\gT,\mrU}_0,\\
                            \big(\frac{1}{2}(\Theta_i-k+2n),\frac{1}{2}(-\Theta_i+k+2n)\big) & \text{if} \ (i,k) \in \gG_0^{\gT,>} \sqcup \gG_0^{\gT,\mrD}.\end{cases}
\]
In addition, for $(i,k), (i',k') \in \gG^{\gT,\geq}_0$ such that $(i',k')$ is in snake position (of twisted type) with respect to $(i,k)$,
define $X_{i,k}^{i',k'}$, which is an element of $\gG_0^\gt$ or the empty set, by
\[ X_{i,k}^{i',k'} = \begin{cases} \emptyset & \text{if} \ (i,k) \in \gG^{\gT,\mrU}_0,\\
                                   \big(\frac{1}{2}(\Theta_i+\Theta_{i'}-k+k'), \frac{1}{2}(-\Theta_i+\Theta_{i'}+k+k')\big) & \text{if} \ (i,k) \in \gG_0^{\gT,>} \sqcup \gG_0^{\gT,\mrD}. \\
                                   \end{cases} 
\]
When $n=7$, these are illustrated as follows. 
Here $\gG^{\gT,<}_0$ is omitted, $(i,k)$ and $(i',k')$ are shown as $\circ$ whose first coordinates are read from the left-hand scale, 
and $X^{\pm}_{i,k}$ and $X_{i,k}^{i',k'}$ are
shown as $\star$ whose first coordinates are read from the right-hand scale:  
%, where $\circ$ and $\bullet$ are the elements of $W_{n_0}$ (see (\ref{Graph})):
\begin{gather*}
\raisebox{2em}{\scalebox{0.45}{\xymatrix@!C=0.5ex@R=0ex{&&&&&&&\mbox{\huge$X_{i,k}^-$}\\
\mbox{\LARGE$(i\setminus k)$\ \ \ }&\mbox{\LARGE$3$}&&\mbox{\LARGE$4$}&&\mbox{\LARGE$5$}&&\mbox{\LARGE$6$}&&\mbox{\LARGE$7$}&&\mbox{\LARGE$8$}&&\mbox{\LARGE$9$}&&
\mbox{\LARGE $(k\, \slash \, i)$}\\
\mbox{\LARGE$4$}&& \mbox{\huge $\bullet$}\ar@{.}[dr]  && \mbox{\huge $\circ$}\ar@2{->}[dddlll]  &&\mbox{\huge $\bullet$} \ar@{.}[dr] 
&& \mbox{\huge $\bullet$}  && \mbox{\huge$\bullet$} \ar@{.}[dr] & &\mbox{\huge$\bullet$}&& \mbox{\huge $\bullet$}\\
\mbox{\LARGE$5$}& &&\mbox{\huge $\bullet$}\ar@{.}[ddrr]  &&&& \mbox{\huge $\bullet$}  \ar@{.}[ur] \ar@{.}[ddrr] 
&&&&\mbox{\huge $\circ$}\ar@{.}[ur]  \ar@2{->}[ddddddllllll] &&&&\mbox{\LARGE$4$}\\ & {\phantom 2} & &&&&&&&&&&& \\
\mbox{\LARGE$6$}&\mbox{\huge $\star$}&&&& \mbox{\huge $\circ$} \ar@2{->}[ddll]\ar@{.}[uurr]\ar@{.}[ddrr]
&&&& \mbox{\huge $\bullet$}   &&&&& &\mbox{\LARGE$5$}\\& {\phantom 2} \\
\mbox{\LARGE$7$}& &&\mbox{\huge $\star$} &&&&  \mbox{\huge $\bullet$}  &&&&&&&&\mbox{\LARGE$6$}  \\ {\phantom 2} \\
&&&&& \mbox{\huge $\star$}&&&&&&&&&&\mbox{\LARGE$7$}
 }}} \ \ 
\raisebox{2em}{\scalebox{0.45}{\xymatrix@!C=0.5ex@R=0ex{&&&&&&&\mbox{\huge$X_{i,k}^+$}\\
\mbox{\LARGE$(i\setminus k)$\ }&&\mbox{\LARGE$4$}&&\mbox{\LARGE$5$}&&\mbox{\LARGE$6$}&&\mbox{\LARGE$7$}&&\mbox{\LARGE$8$}&&\mbox{\LARGE$9$}&&\mbox{\LARGE$10$}&
\mbox{\LARGE \ \ \ $(k\, \slash \, i)$}\\
\mbox{\LARGE$4$}& \mbox{\huge $\bullet$}\ar@{.}[dr]  && \mbox{\huge $\bullet$}  &&\mbox{\huge $\circ$}  \ar@2{->}[dddddrrrrr] 
&& \mbox{\huge $\bullet$}  && \mbox{\huge$\bullet$} \ar@{.}[dr] & &\mbox{\huge$\bullet$}&&\mbox{\huge$\circ$}\ar@2{->}[dr] \\
\mbox{\LARGE$5$}&&\mbox{\huge $\bullet$}\ar@{.}[ur]\ar@{.}[ddrr]  &&&& \mbox{\huge $\bullet$}  \ar@{.}[ur] 
&&&&\mbox{\huge $\circ$}\ar@2{->}[ddrr]\ar@{.}[ur]  &&  &&\mbox{\huge $\star$}&\mbox{\LARGE$4$}\\ & {\phantom 2} & &&&&&&&&&&&\\
\mbox{\LARGE$6$}&&&& \mbox{\huge $\circ$} \ar@{.}[uurr]\ar@2{->}[ddddrrrr]
&&&& \mbox{\huge $\bullet$} \ar@{.}[uurr]  &&&&\mbox{\huge $\star$} &&&\mbox{\LARGE$5$}\\ {\phantom 2} \\
\mbox{\LARGE$7$}&& &&&&  \mbox{\huge $\bullet$} \ar@{.}[uurr] &&&& \mbox{\huge $\star$}&&&&&\mbox{\LARGE$6$}\\ {\phantom 2} \\
&&&& &&&& \mbox{\huge $\star$}&&&&&&&\mbox{\LARGE$7$}
 }}}
\end{gather*}
\begin{gather*}
\raisebox{2em}{\scalebox{0.45}{\xymatrix@!C=0.5ex@R=0ex{&&&&&&&\mbox{\huge$X_{i,k}^{i',k'}$}\\
&\mbox{\LARGE$(i\setminus k)$}&&\mbox{\LARGE$4$}&&\mbox{\LARGE$5$}&&\mbox{\LARGE$6$}&&\mbox{\LARGE$7$}&&\mbox{\LARGE$8$}&&\mbox{\LARGE$9$}&&
\mbox{\LARGE $(k\, \slash \, i)$}\\
&\mbox{\LARGE$4$}& \mbox{\huge $\circ$}\ar@2{->}[dddddrrrrr]  && \mbox{\huge $\bullet$}  &&\mbox{\huge $\circ$} \ar@2{->}[dr] 
&& \mbox{\huge $\circ$} \ar@2{->}[dl] && \mbox{\huge$\bullet$} \ar@{.}[dr] & &\mbox{\huge$\bullet$}&&\mbox{\huge$\bullet$}\\
 &\mbox{\LARGE$5$}&&\mbox{\huge $\bullet$}\ar@{.}[ur]  &&&& \mbox{\huge $\star$}  \ar@{.}[ddrr] 
&&&&\mbox{\huge $\bullet$}\ar@{.}[ur]&&&&\mbox{\LARGE$4$}  \\  {\phantom 2} & \\
&\mbox{\LARGE$6$}&&&& \mbox{\huge $\bullet$} \ar@{.}[uurr]
&&&& \mbox{\huge $\circ$} \ar@{.}[uurr] \ar@2{->}[ddll] &&&& &&\mbox{\LARGE$5$}\\ {\phantom 2} \\
& \mbox{\LARGE$7$}&& &&&&  \mbox{\huge $\star$}  &&&&&&&&\mbox{\LARGE$6$} \\ {\phantom 2}\\
&&&&&&&&&&&&&&&\mbox{\LARGE$7$}
 }}}
\end{gather*}
For a sequence $\bm{P}=\big((i_1,k_1),\ldots,(i_p,k_p)\big)$ of elements of $\gG^\xi_0$ with $\xi \in \{\gt,\gT\}$,
set $\bm{e}(\bm{P}) = \sum_r \bm{e}_{i_r,k_r} \in \Z_{\geq 0}^{\gG_0^\xi}$.

\begin{Prop}\label{Prop:change_of_words}
 Assume that $\bm{P} = \big((i_1,k_1),\ldots,(i_p,k_p)\big)$ is a snake of twisted type in $\gG^\gT_0 \subseteq \wh{Q}^{\Theta}_0$.  
 Let $1=r_0 \leq r_1 < \cdots < r_{t-1} <r_t < r_{t+1}=p+1$ be the unique increasing sequence satisfying the following for all $a \in [0,t]$:
 \begin{itemize}\setlength{\leftskip}{-15pt}
  \item[{\normalfont(i)}] if $a \in 2\Z$, then $(i_s,k_s) \in \gG^{\gT,<}_0$ holds for all $s \in [r_a,r_{a+1}-1]$, and
  \item[{\normalfont(ii)}] if $a \in 2\Z +1$, then $(i_s,k_s) \in \gG^{\gT,\geq}_0$ holds for all $s \in [r_a,r_{a+1}-1]$.
 \end{itemize}
 For each $a \in [1,t]$ with $a \in 2\Z+1$, set
 \begin{align*}
  \bm{P}^{(a)} &= \big((i_r,k_r),\ldots,(i_{r'-1},k_{r'-1})\big) \in (\gG_0^{\gT,\geq})^{r'-r}, \ \ \ \text{and}\\
    \bm{Q}^{(a)} &= (X_{i_{r},k_{r}}^-, X_{i_{r},k_{r}}^{i_{r+1},k_{r+1}},\ldots,X_{i_{r'-2},k_{r'-2}}^{i_{r'-1},k_{r'-1}}, X_{i_{r'-1},k_{r'-1}}^+) \in (\gG_0^\gt)^{r''} \
 \text{for some $r'' \in \Z_{> 0}$},
 \end{align*}
 where $r=r_a$ and $r'=r_{a+1}$, and $\emptyset$ are ignored.
 Let $\bm{P}^\dagger$ denote the sequence of elements of $\gG_0^\gt$ obtained from $\bm{P}$ by replacing the subsequences 
 $\bm{P}^{(a)}$ with $\bm{Q}^{(a)}$ for all $a \in [1,t]$ such that $a \in 2\Z+1$.
 Then $\bm{P}^\dagger$ is a snake of untwisted type in $\gG_0^\gt$ and we have $\rho\big(\bm{e}(\bm{P})\big) = \bm{e}(\bm{P}^\dagger)$.
\end{Prop}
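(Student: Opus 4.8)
The proof is purely combinatorial. By (\ref{eq:coincidence_of_two_B}), $\rho$ already translates the transition matrices, $B^{\gT}(\bm c)=B^{\gt}\big(\rho(\bm c)\big)$, so all that remains is the explicit identity $\rho\big(\bm e(\bm P)\big)=\bm e(\bm P^\dagger)$; note that $\bm e(\bm P)$ is a $0$-$1$ vector, since a snake is a strictly $\preceq$-increasing sequence. The plan is to first discard the part of $\bm P$ that $\rho$ fixes, then analyse one maximal run lying in $\gG_0^{\gT,\geq}$ by running through the factors $\rho_{\langle n_0\rangle},\rho_{\langle n_0+1\rangle},\ldots,\rho_{\langle n\rangle}$ one at a time. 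For the first reduction: each $\rho_{\langle j\rangle}$ with $j\in[n_0,n]$ modifies only the coordinates indexed by vertices whose first coordinate is $j$ or $j+1$, and since $j\geq n_0$ it fixes every coordinate indexed by a vertex of $\gG_0^{\gT,<}$; a direct check of the upper bounds defining $\gG^\xi_0$ gives $\gG_0^{\gT,<}=\gG_0^{\gt,<}$. Hence the even runs $\bm P_{[r_a,r_{a+1}-1]}$ ($a\in 2\Z$) are carried verbatim to $\bm P^\dagger$, and it suffices to compute the effect of $\rho$ on a single odd run $\bm P^{(a)}$, a twisted snake lying in $\gG_0^{\gT,\geq}$.

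Next I would pin down the shape of such a run. Using Definition \ref{Def:snake position}(2) together with the disjointness of $\wh Q_0^{\gT,<}$, $\wh Q_0^{\gT,>}$, $\wh Q_0^{\gT,\mrU}$, $\wh Q_0^{\gT,\mrD}$ (see (\ref{eq:def_of_UD})), one sees that inside $\bm P^{(a)}$ the vertices alternate between the two ``sides'' of the snake-position rule; in particular, if $\bm P^{(a)}$ is preceded (resp.\ followed) by a $<$-run it must begin (resp.\ end) at a vertex of column $n_0$, of type $\mrD$ (resp.\ $\mrU$). This is exactly the configuration visible in the pictures of the sets $\Vj$: consecutive vertices of the run occupy either a single column-$j$/column-$(j+1)$ edge or a zigzag triple $\big((j,j+2r-\tfrac32),(j+1,j+2r),(j,j+2r+\tfrac12)\big)$ of the kind transformed by $\rho_{\langle j\rangle}$ via (\ref{eq:3-move}) and Lemma \ref{Lem:3-move_path}.

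The core of the argument is then an induction on $j$ from $n_0$ to $n$, computing $\rho_{\langle j\rangle}\circ\cdots\circ\rho_{\langle n_0\rangle}\big(\bm e(\bm P^{(a)})\big)$. At stage $j$ the active support of the (partially transformed) run lies in columns $\geq j$ of $\Vj$, and $\rho_{\langle j\rangle}$ acts on each zigzag triple by exactly one of the four elementary moves of Remark \ref{Rem:3-terms}: a run vertex at the ``entrance'' of a triple is pushed to the lower right (move (i)) and eventually comes to rest at a position given by the $X^-_{i,k}$-recipe or, if it was an interior vertex of the run, by the $X^{i',k'}_{i,k}$-recipe; a $(0,1,0)$-zigzag formed by two consecutive run vertices splits (move (iii)) into the two prescribed positions, with the last vertex contributing the $X^+$-term; and vertices strictly inside columns $<j$ or $>j+1$ are merely re-indexed via the clause $c'_{i,k}=c_{i,k+t}$. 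Carrying this case analysis across all $n-n_0$ stages shows the image is supported exactly on $\{X^-_{i_r,k_r}\}\cup\{X^{i_{s+1},k_{s+1}}_{i_s,k_s}\}\cup\{X^+_{i_{r'-1},k_{r'-1}}\}$ with empty sets discarded, that is, on $\bm Q^{(a)}$, and that every intermediate vector stays $0$-$1$, no coordinate collision occurring because the run is $\preceq$-increasing; in particular $r''>0$, since an $X^-$- or $X^+$-term is always defined once the run is not a single $\mrU$-then-$\mrD$ transition, which the previous paragraph excludes.

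Finally, one checks that the assembled sequence $\bm P^\dagger$ is a snake of untwisted type: within an odd run this follows from the closed forms of $X^\pm_{i,k}$ and $X^{i',k'}_{i,k}$ by comparing $\preceq$-relations in $\wh Q^\gt$ (a verification of the same nature as Lemmas \ref{Lem:being_snake} and \ref{Lem:product_of_two}), while across the boundary between an even and an adjacent odd run it follows because the odd run produces there a $\gG_0^\gt$-vertex of the form $X^{\mp}$ which, by the shape analysis, is again in snake position of untwisted type relative to the inert neighbours; combining this with the first and third steps gives $\rho\big(\bm e(\bm P)\big)=\bm e(\bm P^\dagger)$. The main obstacle is the third step: matching, at each of the $n-n_0$ stages, the local shape of the transformed run to the correct elementary move of Remark \ref{Rem:3-terms} and verifying that the mass lands precisely at the positions named by the $X^-$, $X^+$ and $X^{i',k'}_{i,k}$ formulas. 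The computation is finite but delicate, with the diagrams for $\Vj$, $X^\pm_{i,k}$ and $X^{i',k'}_{i,k}$ serving as the organising device.
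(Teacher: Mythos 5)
Your proposal is correct and follows essentially the same route as the paper, which likewise proves $\rho\big(\bm{e}(\bm{P})\big)=\bm{e}(\bm{P}^\dagger)$ by tracing the elementary moves of Remark \ref{Rem:3-terms} through the factors $\rho_{\langle j\rangle}$ and verifies by direct inspection that $\bm{P}^\dagger$ is an untwisted snake. Your write-up merely organizes that finite check more explicitly (reduction to the odd runs, induction on $j$ from $n_0$ to $n$) than the paper's terse argument, which substitutes a worked example for the case analysis.
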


\begin{proof}
 By tracing the transformations appearing in $\rho$ using Remark \ref{Rem:3-terms}, $\rho\big(\bm{e}(\bm{P})\big) = \bm{e}(\bm{P}^{\dagger})$ is proved directly.
 See the following example, which is the case where $n=7$, 
 \[ \bm{P}=\big((5,4), (5,6), (4,17/2), (4,19/2)\big) \ \ \text{and} \ \ \bm{P}^\dagger = \big((5,3),(5,5),(5,7),(4,10)\big).
 \]
\begin{gather*}
\raisebox{0em}{\scalebox{0.35}{\xymatrix@!C=0.5ex@R=0ex{
\mbox{\Large$(i \setminus k)$}  & \mbox{\LARGE $1$} && \mbox{\LARGE $2$} & & \mbox{\LARGE $3$} & & \mbox{\LARGE$4$} &  & \mbox{\LARGE$5$} &  & \mbox{\LARGE$6$} & & 
\mbox{\LARGE$7$} &  & \mbox{\LARGE$8$}&  & \mbox{\LARGE$9$}&  & \mbox{\LARGE$10$}&  & \mbox{\LARGE$11$}\\
\mbox{\LARGE$1$} &\mbox{\Huge $\bullet$} \ar@{.}[ddrr]&&&& \mbox{\Huge $\bullet$} \ar@{.}[ddrr]  &&&& \mbox{\Huge $\bullet$} \ar@{.}[ddrr] &&&& \mbox{\Huge $\bullet$} \ar@{.}[ddrr]
&&&& \mbox{\Huge $\bullet$}\ar@{.}[ddrr] &&&&\mbox{\Huge $\bullet$}\\ {\phantom 2} \\
\mbox{\LARGE$2$} &&&\mbox{\Huge $\bullet$} \ar@{.}[ddrr]\ar@{.}[uurr]&&&& \mbox{\Huge $\bullet$}\ar@{.}[ddrr]\ar@{.}[uurr]&&&&  \mbox{\Huge $\bullet$} \ar@{.}[ddrr]\ar@{.}[uurr] &&&& \mbox{\Huge $\bullet$}
\ar@{.}[uurr]\ar@{.}[ddrr] &&&&\mbox{\Huge $\bullet$}\ar@{.}[uurr]\\ {\phantom 2}  \\ 
\mbox{\LARGE$3$} &&&&& \mbox{\Huge $\bullet$} \ar@{.}[dr] \ar@{.}[uurr] &&&& \mbox{\Huge $\bullet$} \ar@{.}[dr]\ar@{.}[uurr] &&&& \mbox{\Huge $\bullet$} \ar@{.}[dr] \ar@{.}[uurr]
&&&& \mbox{\Huge $\bullet$} \ar@{.}[uurr]\ar@{.}[dr]&\\
\mbox{\LARGE$4$} &&&&&& \mbox{\Huge $\bullet$}\ar@{.}[dr]  && \mbox{\Huge $\bullet$} \ar@{.}[ur] &&\mbox{\Huge $\bullet$} \ar@{.}[dr] 
&& \mbox{\Huge $\bullet$} \ar@{.}[ur] && \mbox{\Huge$\bullet$} \ar@{.}[dr] & &
\mbox{\Huge $\circ$} \ar@{.}[ur] \ar@2{->}[dddlll]& &\mbox{\Huge $\circ$}\ar@2{->}[dr] \\
\mbox{\LARGE$5$} &&&&&&&\mbox{\Huge $\circ$}\ar@{.}[ur] \ar@2{->}[ddrr] \ar@2{->}[ddll]&&&& \mbox{\Huge $\circ$}  \ar@{.}[ur] \ar@2{->}[ddrr] \ar@2{->}[ddll]
&&&&\mbox{\Huge $\bullet$}  &&&& \\  {\phantom 2} & \\
\mbox{\LARGE$6$} &&&&&  &&&& \mbox{\Huge $\bullet$} \ar@{.}[ddrr]
&&&& \mbox{\Huge $\bullet$}  &&&& \\ {\phantom 2} \\
\mbox{\LARGE$7$} &&&&& && &&&&  \mbox{\Huge $\bullet$} \ar@{.}[uurr] &&&& }}} 
\hspace{-10pt}\raisebox{-4em}{$\Rightarrow$}\raisebox{0em}{\scalebox{0.35}{\xymatrix@!C=0.5ex@R=0ex{
\mbox{\Large$(i\setminus k)$} & \mbox{\LARGE$1$} && \mbox{\LARGE$2$} && \mbox{\LARGE$3$} & & \mbox{\LARGE$4$} & & \mbox{\LARGE$5$} &  
& \mbox{\LARGE$6$} &  & \mbox{\LARGE$7$}&  & \mbox{\LARGE$8$}&  & \mbox{\LARGE$9$}&  & \mbox{\LARGE$10$}&  & \mbox{\LARGE$11$} \\
\mbox{\LARGE$1$} &\mbox{\Huge $\bullet$} \ar@{.}[ddrr]&&&& \mbox{\Huge $\bullet$} \ar@{.}[ddrr]  &&&& \mbox{\Huge $\bullet$} \ar@{.}[ddrr] &&&& \mbox{\Huge $\bullet$} \ar@{.}[ddrr]
&&&& \mbox{\Huge $\bullet$}\ar@{.}[ddrr]&&&&\mbox{\Huge $\bullet$}\\ {\phantom 2} \\
\mbox{\LARGE$2$}&&&\mbox{\Huge $\bullet$} \ar@{.}[ddrr]\ar@{.}[uurr]&& && \mbox{\Huge $\bullet$}\ar@{.}[ddrr]\ar@{.}[uurr]&&&&  \mbox{\Huge $\bullet$} \ar@{.}[ddrr]\ar@{.}[uurr] &&&& \mbox{\Huge $\bullet$}\ar@{.}[ddrr]
\ar@{.}[uurr] &&&& \mbox{\Huge $\bullet$}\ar@{.}[uurr]\\ {\phantom 2} \\ 
\mbox{\LARGE$3$} &&&&& \mbox{\Huge $\bullet$} \ar@{.}[ddrr] \ar@{.}[uurr] &&&& \mbox{\Huge $\bullet$} \ar@{.}[ddrr]\ar@{.}[uurr] &&&& \mbox{\Huge $\bullet$} \ar@{.}[ddrr] \ar@{.}[uurr]
&&&& \mbox{\Huge $\bullet$} \ar@{.}[uurr]\ar@{.}[ddrr]&\\ {\phantom 2} \\
\mbox{\LARGE$4$}&&&&&&&\mbox{\Huge $\bullet$}\ar@{.}[ddrr]\ar@{.}[uurr] &&&& \mbox{\Huge $\bullet$}  \ar@{.}[ddrr]\ar@{.}[uurr] &&&&\mbox{\Huge $\bullet$}\ar@{.}[uurr]
\ar@{.}[ddrr]&&&&\mbox{\Huge $\circ$}\\ {\phantom 2} \\
\mbox{\LARGE$5$} &&&&& \mbox{\Huge $\circ$}\ar@{.}[uurr]\ar@{.}[ddrr] &&&& \mbox{\Huge $\circ$} \ar@{.}[uurr]\ar@{.}[ddrr]
&&&& \mbox{\Huge $\circ$} \ar@{.}[uurr] \ar@{.}[ddrr] &&&&  \mbox{\Huge $\bullet$}\ar@{.}[uurr]\\ {\phantom 2} \\ 
\mbox{\LARGE$6$}&&&&&&& \mbox{\Huge $\bullet$}\ar@{.}[uurr]\ar@{.}[ddrr] &&&&  \mbox{\Huge $\bullet$} \ar@{.}[uurr] \ar@{.}[ddrr]&&&& \mbox{\Huge $\bullet$}\ar@{.}[uurr] &&&& \\
 &&  &&&&  && & && & {\phantom 2} \\
\mbox{\LARGE$7$}&&&&& &&&& \mbox{\Huge $\bullet$}\ar@{.}[uurr] &&&&\mbox{\Huge $\bullet$}\ar@{.}[uurr]&&&&
}}} 
\end{gather*}
It is also checked directly by inspection that $\bm{P}^{\dagger}$ is a snake of untwisted type.
\end{proof}

\begin{Exa}\normalfont
 When $n=15$ and 
 \[ \bm{P}=\big((9,8),(9,10),(8,25/2),(7,15),(8,35/2),(9,20)\big),
 \]
 we have
 \begin{align*}
  \bm{P}^\dagger &= (X_{9,8}^-,X_{9,8}^{9,10}, X_{9,10}^{8,25/2}, (7,15),X_{8,35/2}^{9,20},X_{9,20}^+)\\
                 &=\big((9,7),(9,9),(9,11),(7,15),(9,19),(9,21)\big).
 \end{align*}
\end{Exa}

%%%%%%%%%%%%%%%%%%%%%%%%%%%%%%%%%%%%%%%%%%%%%%%%%%%%%%%%%%%%%%%%%%%%%%%%%%%%%%%%%%%%%%%%%%%%%%%%%%%

\subsection{Snake modules associated with twisted height functions}

Fix a strong duality datum $\cD = \{\sL_i\}_{i \in I} \subseteq \scC_\fg$ associated with $\mathfrak{sl}_{n+1}$.
In the sequel, we usually omit $\cD$ and write $S_{i,k}^{\Theta}$ for $S_{i,k}^{\cD,\Theta}$, etc.
All the assertions in this subsection will be stated for $S_{i,k}^{\Theta}$, but no generality is lost by this specific choice of a twisted height function by Lemma \ref{Lem:change_of_height_function}.
%In the sequel, we write 
%$S_{i,k}$ for $S_{i,k}^{\cD,\theta}$, $S_{i,k}^\tw$ for $S_{i,k}^{\cD,\Theta}$, $\bS(\bm{P})$ for $\bS^{\cD,\theta}(\bm{P})$ and $\bS^\tw\!(\bm{P})$ for $\bS^{\cD,\Theta}(\bm{P})$.
%By Lemma \ref{Lem:change_of_height_function}, no generality is lost by these specific choices of functions.

By Proposition \ref{Prop:change_of_words}, (\ref{eq:coincidence_of_two_B}) and Lemma \ref{Lem:parametrization}, the following lemma is immediate.

\begin{Lem}\label{Lem:twisted=untwisted}
 Assume that 
 $\bm{P} =\big((i_1,k_1),\ldots,(i_p,k_p)\big)$ is a snake in $\gG^\gT_0$,
 and let $\bm{P}^\dagger$ be the snake in $\gG^{\gt}_0$ given in Proposition \ref{Prop:change_of_words}.
 Then the snake module $\bS^\Theta(\bm{P})$ of twisted type is isomorphic to the snake module $\bS^\theta(\bm{P}^\dagger)$ of untwisted type.
\end{Lem}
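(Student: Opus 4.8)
The plan is to realize both snake modules as the image, under the map $\cL_\cD\colon \Bup \to \Irr(\scC_\fg)$ of Lemma \ref{Lem:injection_of_irr}, of a \emph{single} element of the upper global basis, and then to deduce the equality of these two elements of $\Bup$ directly from the transition data already established in this subsection.

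First I would observe that, since $\bm{P}$ is a snake, Definition \ref{Def:snake position} forces $(i_1,k_1) \prec (i_2,k_2) \prec \cdots \prec (i_p,k_p)$ in $\wh{Q}^{\Theta}_0$ (each snake step yields a strict $\preceq$-inequality, and $\preceq$ is transitive). In particular these $p$ vertices form a $\preceq$-chain, so the sequence extends to a compatible reading of $\gG^\Theta_0$ in which $(i_1,k_1),\ldots,(i_p,k_p)$ occur in this order. Writing $\bm{i}$ for the associated reduced word adapted to $\Theta$, one has $S_{i_r,k_r}^{\cD,\Theta} = S_{m_r}^{\cD,\bm{i}}$ for a strictly increasing sequence $m_1 < \cdots < m_p$, and Lemma \ref{Lem:parametrization} (applied exactly as in the derivation of (\ref{eq:cLcD}) inside the proof of Lemma \ref{Lem:calculation_of_std}) gives
\[
 \bS^\Theta(\bm{P}) = \hd\Bigl(\bigotimes_{r=1}^p S_{i_r,k_r}^{\cD,\Theta}\Bigr) \cong \cL_\cD\bigl(B^\Theta(\bm{e}(\bm{P}))\bigr).
\]
The same reasoning applied to the snake $\bm{P}^\dagger$ of untwisted type in $\gG^\theta_0$ (which is genuinely a snake by Proposition \ref{Prop:change_of_words}) yields $\bS^\theta(\bm{P}^\dagger) \cong \cL_\cD\bigl(B^\theta(\bm{e}(\bm{P}^\dagger))\bigr)$.

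Next I would combine the two inputs. Proposition \ref{Prop:change_of_words} gives $\rho\bigl(\bm{e}(\bm{P})\bigr) = \bm{e}(\bm{P}^\dagger)$, while (\ref{eq:coincidence_of_two_B}) gives $B^\Theta(\bm{c}) = B^\theta\bigl(\rho(\bm{c})\bigr)$ for every $\bm{c} \in \Z_{\geq 0}^{\gG^\Theta_0}$; specializing $\bm{c} = \bm{e}(\bm{P})$ therefore produces the identity $B^\Theta(\bm{e}(\bm{P})) = B^\theta(\bm{e}(\bm{P}^\dagger))$ of elements of $\Bup$. Applying the (well-defined, by Lemma \ref{Lem:injection_of_irr}) map $\cL_\cD$ to this identity and comparing with the two isomorphisms above gives $\bS^\Theta(\bm{P}) \cong \bS^\theta(\bm{P}^\dagger)$, which is the assertion.

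Since essentially all the combinatorial work has been carried out in Proposition \ref{Prop:change_of_words}, I do not expect a real obstacle. The only point that needs a word of justification is the very first identification: the tensor product defining $\bS^\Theta(\bm{P})$ has its factors taken in the order prescribed by $\bm{P}$, which need not coincide with the compatible reading used to define $B^\Theta$, and one must check that it nonetheless computes $\cL_\cD\bigl(B^\Theta(\bm{e}(\bm{P}))\bigr)$. This is immediate from the independence of $B^\Theta(\bm{c})$ of the chosen compatible reading (Propositions \ref{Prop:2-move_3-move}(i) and \ref{Prop:Bedard}, exactly as in the paragraph following (\ref{eq:natural_identification})) together with the simplicity, and reorderability along $\preceq$, of heads of such tensor products (Lemma \ref{Lem:fundamentals_of_T}).
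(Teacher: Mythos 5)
Your proposal is correct and follows essentially the same route as the paper, which derives the lemma directly from Proposition \ref{Prop:change_of_words}, the identity (\ref{eq:coincidence_of_two_B}), and Lemma \ref{Lem:parametrization} by identifying both snake modules as $\cL_\cD$ of a single upper global basis element. Your extra remark on the ordering of tensor factors is a reasonable point of care, and your resolution (the snake is a $\preceq$-chain, so every compatible reading lists its vertices in the given order) is the right one.
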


\begin{Rem}\normalfont
 We can also show that for any snake $\bm{P}$ of twisted type (not necessarily contained in $\gG^\gT_0$),
 the snake module $\bS^{\Theta}(\bm{P})$ of twisted type is isomorphic to a snake module of untwisted type.
 We will not use this fact in the sequel.
\end{Rem}

%In the sequel, we will write $\iota$ for $\iota_{n_0}$ defined in (\ref{equation:def_of_iota}).

\begin{Lem}\label{Lem:twist_cases}
 Let $\bm{P}=\big((i_1,k_1),\ldots,(i_p,k_p)\big)$ be a snake in $\wh{Q}^{\gT}_0$, and $(j,l) \in \wh{Q}^\gT_0$.
 \begin{itemize}\setlength{\leftskip}{-13pt}
  \item[{\normalfont (1)}] Suppose that $(j,l) \prec (i_1,k_1)$.
  \begin{itemize}\setlength{\leftskip}{-30pt}
   \item[{\normalfont (a)}] If $(i_1,k_1)$ is in prime snake position with respect to $(j,l)$, we have $\tfd\big(S^\Theta_{j,l},\bS^\gT(\bm{P})\big)=1$.
   \item[{\normalfont (b)}] Assume that $\Theta_{i_1} \in\{ \Theta_j \pm r\}$ and $k_1 = l+r$ hold for some $r \in \frac{1}{2}\Z_{> 0}$.
    Then we have $\tfd\big(S^\Theta_{j,l},\bS^\Theta(\bm{P})\big)=0$.
   \item[{\normalfont(c)}] If $(j,l) \in \wh{Q}^{\gT,\mrU}_0$ and $(i_1,k_1) \in \wh{Q}^{\gT,>}_0\sqcup \wh{Q}^{\gT,\mrU}_0$, or $(j,l) \in \wh{Q}^{\gT,\mrD}_0$ 
    and $(i_1,k_1) \in \wh{Q}^{\gT,<}_0\sqcup \wh{Q}^{\gT,\mrD}_0$,
    then we have $\tfd\big(S^\gT_{j,l},\bS^\gT(\bm{P})\big)=0$.
 \end{itemize}
  \item[{\normalfont (2)}] Suppose that $(i_p,k_p) \prec (j,l)$.
  \begin{itemize}\setlength{\leftskip}{-30pt}
   \item[{\normalfont (d)}] If $(j,l)$ is in prime snake position with respect to $(i_p,k_p)$, we have $\tfd\big(\bS^\gT(\bm{P}),S^\gT_{j,l}\big)=1$.
   \item[{\normalfont (e)}] Assume that $\Theta_j \in \{\Theta_{i_p} \pm r\}$ and $l = k_p+r$ hold for some $r \in \frac{1}{2}\Z_{>0}$. Then we have
    $\tfd\big(\bS^\gT(\bm{P}),S^\gT_{j,l}\big)=0$.
   \item[{\normalfont (f)}] If $(i_p,k_p) \in \wh{Q}^{\gT,<}_0\sqcup \wh{Q}^{\gT,\mrU}_0$ and $(j,l) \in \wh{Q}^{\gT,\mrU}_0$, or $(i_p,k_p) \in \wh{Q}^{\gT,>}\sqcup \wh{Q}^{\gT,\mrD}_0$
    and $(j,l) \in \wh{Q}^{\gT,\mrD}_0$ ,
    then we have $\tfd\big(\bS^\gT(\bm{P}),S^\gT_{j,l}\big)=0$.
  \end{itemize}
 \end{itemize}
\end{Lem}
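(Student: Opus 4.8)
The plan is to deduce Lemma \ref{Lem:twist_cases} from the corresponding untwisted statement, Lemma \ref{Lem:calculation_of_std}, by transporting the snake $\bm{P}$ and the extra vertex $(j,l)$ from the twisted picture $\gG^\gT$ into the untwisted picture $\gG^\gt$ via the transition map $\rho$ studied in Proposition \ref{Prop:change_of_words}. The first reduction is the usual one: by Lemma \ref{Lem:change_of_height_function} and Lemma \ref{Lem:for_prime}(i) together with Lemma \ref{Lem:invariance_by_head}, I may assume that the relevant vertex $(j,l)$ and all of $\bm{P}$ lie in $\gG^\gT_0$, after replacing $\cD$ by a suitable strong duality datum; indeed one adjoins $(j,l)$ to $\bm{P}$ as its first (resp.\ last) entry, so that the whole configuration is a snake in $\gG^\gT_0 \subseteq \wh{Q}^\gT_0$, then applies Lemma \ref{Lem:twisted=untwisted}. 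Concretely, for case (a) one considers the snake $\widetilde{\bm{P}} = \big((j,l),(i_1,k_1),\ldots,(i_p,k_p)\big)$ in $\gG^\gT_0$; the key point is that $\tfd\big(S^\gT_{j,l},\bS^\gT(\bm{P})\big)$ can be read off from the Lusztig datum of $\bS^\gT(\widetilde{\bm{P}})$ with respect to $\bm{i}\langle n_0\rangle$ (by Proposition \ref{Prop:Kashiwara_Park}(c), as in the proof of Lemma \ref{Lem:calculation_of_std}), and by (\ref{eq:coincidence_of_two_B}) this equals the same invariant computed from $\rho\big(\bm{e}(\widetilde{\bm{P}})\big) = \bm{e}(\widetilde{\bm{P}}^\dagger)$ in the untwisted parametrization.

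The heart of the argument is then to match the positional hypotheses in the twisted world with those in the untwisted world under $\bm{P} \mapsto \bm{P}^\dagger$. For case (a), I would check that if $(i_1,k_1)$ is in prime snake position (of twisted type) with respect to $(j,l)$, then in $\widetilde{\bm{P}}^\dagger$ the first entry is in prime snake position (of untwisted type) with respect to the second, whence Lemma \ref{Lem:calculation_of_std}(1)(a) applied to $\bm{P}^\dagger$ gives the value $1$; here one uses the explicit description of the $X^{\pm}$ and $X_{i,k}^{i',k'}$ vertices and the fact, recorded after (\ref{eq:def_of_UD}), that $\scD$ interchanges the strata $\wh{Q}^{\gT,<}_0 \leftrightarrow \wh{Q}^{\gT,>}_0$ and $\wh{Q}^{\gT,\mrD}_0 \leftrightarrow \wh{Q}^{\gT,\mrU}_0$. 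For case (b), the hypothesis $\Theta_{i_1} \in \{\Theta_j \pm r\}$, $k_1 = l+r$ says exactly that $(i_1,k_1)$ is \emph{not} in snake position with respect to $(j,l)$ (the twisted analogue of the ``$i_1 \in \{j\pm r\}$, $k_1 = l+r$'' condition in Lemma \ref{Lem:calculation_of_std}(1)(b)); I expect this to translate, after applying $\rho$, into the statement that the corresponding untwisted vertices are not in snake position, giving $0$ by Lemma \ref{Lem:calculation_of_std}(1)(b). Cases (d), (e), (f) are handled symmetrically, using the $*$-duality relation (\ref{eq:duality_of_gee}) exactly as in part (2) of the proof of Lemma \ref{Lem:calculation_of_std}: one passes to $\bm{P}^\vee$ and applies the already-proved cases (a), (b), (c).

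Case (c) (and its mirror (f)) is genuinely new relative to the untwisted situation and is the one I expect to require the most care. The hypothesis there — $(j,l) \in \wh{Q}^{\gT,\mrU}_0$ and $(i_1,k_1) \in \wh{Q}^{\gT,>}_0 \sqcup \wh{Q}^{\gT,\mrU}_0$, or the $\mrD$/$<$ mirror — is precisely the ``wrong stratum'' obstruction appearing in the definition of twisted snake position: $(i_1,k_1)$ is forbidden from being in snake position with respect to $(j,l)$ not because of a coordinate inequality but because it lies in the wrong half of the diagram. Under the transition map, when $(j,l)$ sits on the $n_0$-row in the $\mrU$-stratum, the vertex $X^{\pm}$ or $X_{\cdot}^{\cdot}$ it produces (or rather, its \emph{absence}: by definition $X^-_{j,l} = \emptyset$ when $(j,l)\in\gG^{\gT,\mrD}_0$ and $X^+_{j,l}=\emptyset$ when $(j,l)\in\gG^{\gT,\mrU}_0$) together with the location of the image of $(i_1,k_1)$ will force the untwisted incomparability $\scD^{-1}(\text{image of }(j,l)) \not\succeq (\text{image of }(i_1,k_1))$, so that $S_{j,l}^\gT$ strongly commutes with every cuspidal factor of $\bS^\gT(\bm{P})$ by Lemma \ref{Lem:fundamentals_of_T}(b) and Lemma \ref{Lem:for_prime}(i), and the $\tfd$ vanishes by Lemma \ref{Lem:invariance_by_head}. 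The main obstacle will be to verify this incomparability cleanly for all the sub-cases of the stratum conditions; I would do it by a direct but careful inspection of the vertex formulas for $X^\pm_{i,k}$ and $X_{i,k}^{i',k'}$ in terms of $\Theta$, checking the inequalities defining $\preceq$ on $\wh{Q}^\gt_0$, rather than by any conceptual shortcut. Once (c) and (f) are in place, the remaining cases follow from Lemma \ref{Lem:twisted=untwisted} and Lemma \ref{Lem:calculation_of_std} as indicated.
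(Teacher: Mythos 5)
Your overall strategy---transporting everything to the untwisted picture via $\rho$ and Proposition \ref{Prop:change_of_words} and then quoting Lemma \ref{Lem:calculation_of_std}---is indeed the route the paper takes, but two steps in your plan would fail as written. First, the transfer of the test vertex $(j,l)$ is not a vertex-to-vertex correspondence. After the correct normalization (one needs $S^\gT_{j,l}=\scD\,\sL_j$ to invoke Proposition \ref{Prop:Kashiwara_Park}~(c), which forces $l=\Theta_j-d_j$, so $(j,l)$ sits just \emph{outside} $\gG^\gT_0$ and one daggers the singleton $\scD^{-1}(j,l)$ separately from $\bm{P}$), the image $\big(\scD^{-1}(j,l)\big)^\dagger$ consists of \emph{two} untwisted vertices whenever $j<n_0$, so that $S^\gT_{j,l}\cong S^\gt_{1,-3}\nab S^\gt_{j,j-2}$ is a head of two untwisted cuspidal modules. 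One must then show that $\scD^{s}S^\gt_{1,-3}$ strongly commutes with $\bS^\gt(\bm{P}^\dagger)$ for all $s\ge 0$ (here the stratum constraint on $(i_1,k_1)$ coming from the definition of twisted snake position is used) and strip off that factor with Lemma \ref{Lem:invariance_by_head}. Your proposal never confronts this, and the variant in which you adjoin $(j,l)$ to $\bm{P}$ and dagger $\widetilde{\bm{P}}$ as a whole makes it worse: the dagger map is non-local (the first entry of $\bm{P}^\dagger$ is $X^-_{i_1,k_1}$, whereas in $\widetilde{\bm{P}}^\dagger$ it is replaced by $X^-_{j,l},X_{j,l}^{i_1,k_1}$), so knowing $\bS^\gT(\widetilde{\bm{P}})\cong\bS^\gt(\widetilde{\bm{P}}^\dagger)$ does not let you isolate $\tfd\big(S^\gT_{j,l},\bS^\gT(\bm{P})\big)$.

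Second, your mechanism for case (c) is wrong. You propose to derive the vanishing from an untwisted incomparability $\scD^{-1}(\text{image of }(j,l))\not\succeq \FT(\bm{P}^\dagger)$ plus Lemma \ref{Lem:for_prime}~(i), but that incomparability is generally false: after normalization one finds $S^\gT_{j,l}\cong S^\gt_{n_0,n_0-2}$ and $\FT(\bm{P}^\dagger)=(n_0+s,n_0-2+s)$ for some $s>0$, and for small $s$ this vertex \emph{is} $\preceq\scD^{-1}(n_0,n_0-2)$ in $\wh{Q}^\gt_0$. The two modules nevertheless strongly commute, but because they lie on a common diagonal, i.e.\ by the Reineke-based case (b) of Lemma \ref{Lem:calculation_of_std}, not by incomparability. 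So case (c) must be reduced to the untwisted ``not in snake position'' computation rather than to strong commutation of unlinked cuspidal modules; with that correction (and the two-factor repair above) the rest of your outline, including the treatment of (d)--(f) by $*$-duality, matches the paper's argument.
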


\begin{proof}
 (1) By applying Lemma \ref{Lem:Dualizing_label}, if necessary, we may (and do) assume in the proof of each assertion that 
  \begin{itemize}\setlength{\leftskip}{-10pt}
   \item[(a)] $(j,l) \in \wh{Q}^{\gT,<}_0 \sqcup \wh{Q}^{\gT,\mrU}_0$,
   \item[(b)] $\Theta_{i_1} =\Theta_{j}-r$ and $k_1 = l+r$ for some $r$, and in particular, $j \notin \wh{Q}^{\gT,\mrD}_0$,
   \item[(c)] $(j,l) \in \wh{Q}^{\gT,\mrU}_0$ and $(i_1,k_1) \in \wh{Q}^{\gT,>}_0\sqcup \wh{Q}^{\gT,\mrU}_0$,
  \end{itemize}
  respectively.
  In all these three cases, we may further assume by a similar argument as in Lemma \ref{Lem:calculation_of_std} that
  \begin{equation}\label{eq:may_assume}
   l= \Theta_j-d_j \ \ \ \text{and} \ \ \ (i_s,k_s) \in \gG^\gT_0 \ \ \text{for all $s \in [1,p]$},
  \end{equation}
  where we set $d_i = 2 -\gd_{i,n_0}$ for $i \in I$.
  It follows from Lemma \ref{Lem:twisted=untwisted} that 
  \begin{align}
   \tfd\big(S^\gT_{j,l},\bS^\gT(\bm{P})\big) &=\tfd\big(\scD S^\gT_{j^*,\Theta_j-d_j+n}, \bS^\gT(\bm{P})\big)\nonumber\\
                                                      & =\tfd\big(\scD\, \bS^\gt\big(\big((j^*,\Theta_j-d_j+n)\big)^\dagger\big), \bS^\gt(\bm{P}^{\dagger})\big),\label{eq:tfdbigSjl}
  \end{align} 
  and it is directly checked from Proposition \ref{Prop:change_of_words} that 
  \[ \big((j^*,\Theta_j-d_j+n)\big)^{\dagger} = \begin{cases} \big((n,-2+n),(j^*,j-1+n)\big) & \text{if $j <n_0$}, \\ \big((n_0,n_0-1+n)\big) & \text{if $(j,l) = (n_0,n_0-3/2)$},\\
                                                            \big((j^*,j-3+n)\big) & \text{if $j>n_0$}, \end{cases}
  \]
  which implies 
  \[ \scD \,\bS^\gt\big((j^*,\Theta_j-d_j+n)^\dagger\big) \cong \begin{cases}  S^\gt_{1,-3} \nab S^\gt_{j,j-2} & \text{if $j<n_0$} , \\ S^\gt_{n_0,n_0-2} & \text{if $j=n_0$},\\
                                                                         S^\gt_{j,j-4} & \text{if $j>n_0$}.\end{cases}
  \]
  On the other hand, we can also show using the same proposition that 
  \begin{equation}\label{eq:Pdagger}
   \mathrm{FT}(\bm{P}^{\dagger}) = \begin{cases} (i_1, k_1)  & \text{if $(i_1,k_1) \in \gG^{\gT,<}_0$},\\
                                                   (i_1+s,k_1+s+1/2) \ \text{for some $s$} & \text{if $(i_1,k_1) \in \gG^{\gT,\mrD}_0$},\\
                                                   \big(\frac{1}{2}(\Theta_{i_1}+k_1+2),\frac{1}{2}(\Theta_{i_1}+k_1-2)\big) & \text{if $(i_1,k_1) \in  \gG^{\gT,>}_0 
   \sqcup\gG^{\gT,\mrU}_0$},
                                       \end{cases} 
  \end{equation}
  where we denote by $\FT (\bm{P}^\dagger)$ the first term of the sequence $\bm{P}^\dagger$.

  Now let us prove the assertion (a), where we are assuming that $(j,l) \in \wh{Q}^{\gT,<}_0 \sqcup \wh{Q}^{\gT,\mrU}_0$ and $l=\Theta_j-d_j$.
  By the above calculations, we have 
  \begin{equation}\label{eq:tfdbigStw}
   \tfd\big(S^\gT_{j,l},\bS^\gT(\bm{P})\big) = \begin{cases} \tfd\big(S^\gt_{1,-3} \nab S^\gt_{j,j-2}, \bS^\gt(\bm{P}^{\dagger})\big) & \text{if $j < n_0$},\\
                                                                        \tfd\big(S^\gt_{n_0,n_0-2},\bS^\gt(\bm{P}^{\dagger})\big) & \text{if $j = n_0$}.\end{cases}
  \end{equation}
  By the definition of the snake position, we have $(i_1,k_1) \in \gG^{\gT,<}_0\sqcup \gG^{\gT,\mrD}_0$,
  and then we easily see from (\ref{eq:Pdagger}) that 
  \[ \scD^{-1}_\theta(1,-3) = (n,n-2) \not \succeq \FT(\bm{P}^\dagger),
  \] 
  which implies that 
  $\scD^s S^\gt_{1,-3}$ and $\bS^\gt(\bm{P}^\dagger)$ strongly commute for all $s \in \Z_{\geq 0}$ by Lemma \ref{Lem:for_prime} (i).
  Therefore (\ref{eq:tfdbigStw}), together with Lemma \ref{Lem:invariance_by_head}, gives
  \[ \tfd\big(S^\gT_{j,l},\bS^\gT(\bm{P})\big) = \tfd\big(S^\gt_{j,j-2}, \bS^\gt(\bm{P}^{\dagger})\big) \ \ \ \text{ for all $j \leq n_0$}.
  \]
  It is easy to check using (\ref{eq:Pdagger}) that, if $(i_1,k_1)$ is in prime snake position with respect to $(j,l)$ in $\wh{Q}^{\gT}_0$, then
  $\mathrm{FT}(\bm{P}^{\dagger})$ is in prime snake position with respect to $(j,j-2)$ in $\wh{Q}^\gt_0$. 
  Since $\bm{P}^\dagger$ is a snake in $\wh{Q}_0^\gt$ by Proposition \ref{Prop:change_of_words}, the assertion (a) now follows from Lemma \ref{Lem:calculation_of_std} (a).

  Next let us prove (b), where we are assuming that
  \begin{equation}\label{eq:assumption_of_b}
   \Theta_{i_1} = \Theta_{j}-r, \ \ \ k_1 = l+r \ \ \text{for some $r$}, \ \ \ \text{and} \ \ \  l=\Theta_j-d_j.
  \end{equation}
  If $j \leq n_0$, then $i_1 < n_0$ holds and by the same calculation as above we have 
  \[ \tfd\big(S^\gT_{j,l},\bS^\gT(\bm{P})\big) = \tfd\big(S^\gt_{j,j-2}, \bS^\gt(\bm{P}^{\dagger})\big) \ \ \ \text{and} \ \ \ \FT(\bm{P}^\dagger) =(i_1,k_1).
  \]
  The assertion (b) then follows from Lemma \ref{Lem:calculation_of_std} (b).
  If $j>n_0$, on the other hand, we have 
  \[ \tfd\big(S^\gT_{j,l},\bS^\gT(\bm{P})\big) = \tfd\big(S^\gt_{j,j-4}, \bS^\gt(\bm{P}^\dagger)\big).
  \] 
  Noting that $(i_1,k_1) \notin \gG_0^{\gT,\mrD}$, it is easy to show from (\ref{eq:Pdagger}) and (\ref{eq:assumption_of_b}) that $\FT(\bm{P}^\dagger) =(j - r', j-4+r')$ holds for some $r'$,
  and hence the assertion follows from Lemma \ref{Lem:calculation_of_std} (b) in this case as well.

  Finally let us prove (c), where we are assuming that 
  \[ (j,l) =(n_0,n_0-3/2)\in \wh{Q}^{\gT,\mrU}_0 \ \ \  \text{and}  \ \ \ (i_1,k_1) \in \gG^{\gT,>}_0 \sqcup \gG_0^{\gT,\mrU}.
  \]
  As calculated above, we have $S^\gT_{j,l} \cong S^\gt_{n_0,n_0-2}$ and $\FT(\bm{P}^\dagger) = (n_0+s,n_0-2+s)$
  for suitable $s$. 
  Hence the assertion follows from Lemma \ref{Lem:calculation_of_std} (b).

   All the assertions of (2) are proved similarly.
\end{proof}

Let $(i,k),(i',k') \in \wh{Q}^\gT_0$, and suppose that $(i',k')$ is in prime snake position with respect to $(i,k)$.
We define $\bm{Q}_{i,k}^{i',k'}$ and $\bm{R}_{i,k}^{i',k'}$, each of which is a sequence of zero, one or two elements of $\wh{Q}^\gT_0$, as follows:
if $(i,k) \in \wh{Q}^{\gT,<}_0 \sqcup \wh{Q}^{\gT,\mrU}_0$, we set
\begin{align*}
 \bm{Q}_{i,k}^{i',k'} &= \begin{cases} \emptyset & (k'-k = \Theta_i+\Theta_{i'}),\\
                                      \big((\frac{1}{2}(\Theta_i+\Theta_{i'}+k-k'),\frac{1}{2}(\Theta_i-\Theta_{i'}+k+k'))\big) & (k'-k <\Theta_i+\Theta_{i'}),
                         \end{cases}\\[5pt]
 \bm{R}_{i,k}^{i',k'} &= \begin{cases} {\scalebox{1}{$\big((\frac{1}{2}(i+i'-k+k'),\frac{1}{2}(-i+i'+k+k'))\big)$}} & {\scalebox{0.9}{$(i,i'<n_0, k'-k <2n_0-i-i'),$}}\\
                                      {\scalebox{1}{$\big((n_0,-i+k+n_0-\frac{1}{2}), (n_0,i'+k'-n_0+\frac{1}{2})\big)$}} & {\scalebox{0.9}{$(i,i'<n_0, k'-k\geq 2n_0-i-i'),$}}\\
                                      \big((n_0,-i+k+n_0-\frac{1}{2})\big)  & (i<n_0, i'=n_0),\\
                                      \big((n_0,i'+k'-n_0+\frac{1}{2})\big) & (i=n_0, i'<n_0),\\
                                      \emptyset                             & (i=i'=n_0).\end{cases}
\end{align*}
If $(i,k) \in \wh{Q}^{\gT,>}_0 \sqcup \wh{Q}^{\gT,\mrD}_0$, putting $(j,l)=\scD(i,k)$ and $(j',l') = \scD(i',k')$, 
we set $\bm{Q}_{i,k}^{i',k'} = \scD^{-1} \bm{R}_{j,l}^{j',l'}$ and $\bm{R}_{i,k}^{i',k'} =
\scD^{-1}\bm{Q}_{j,l}^{j',l'}$.
These are illustrated as follows, where $(i,k)$ (resp.\ $(i',k')$, $\bm{Q}_{i,k}^{i',k'}$, $\bm{R}_{i,k}^{i',k'}$) are shown as $\circ$ (resp.\ $\bullet$, $\ast$, $\star$):
\begin{gather*}
\raisebox{0em}{\scalebox{0.45}{\xymatrix@!C=0.5ex@R=0ex{
\mbox{\Large$(i \setminus k)$}  & \mbox{\Large $0$} && \mbox{\Large $1$} && \mbox{\Large $2$} & & \mbox{\Large $3$} & & \mbox{\Large$4$} &  & \mbox{\Large$5$} &  & \mbox{\Large$6$} & & 
\mbox{\Large$7$} && \mbox{\Large $8$} && \mbox{\Large $9$} && \mbox{\Large $10$} && \mbox{\Large $11$} && \mbox{\Large $12$} && \mbox{\Large $13$} &&\mbox{\Large $14$} && \mbox{\Large $15$} &&
\mbox{\Large $16$}\\
\mbox{\Large$1$} &&& \mbox{\huge $\ast$} \ar@{-}[ddrr]  &&&&  \ar@{.}[ddrr] &&&&  \ar@{.}[ddrr]
&&&& \ar@{.}[ddrr]&&&& \ar@{.}[ddrr] &&&& \ar@{.}[ddrr]&&&&\mbox{\huge $\ast$}\ar@{-}[ddddrrrr]&&&&\ar@{.}[ddrr]&&  \\ {\phantom 2} \\
\mbox{\Large$2$} &\mbox{\huge $\circ$}\ar@{-}[ddrr]\ar@{-}[uurr]&&&& \mbox{\huge $\bullet$}\ar@{.}[ddrr]\ar@{.}[uurr]&&&&  \mbox{\huge $\ast$} \ar@{-}[ddrr]\ar@{.}[uurr] &&&& \ar@{.}[ddrr]
\ar@{.}[uurr]& & & & \mbox{\huge $\ast$}\ar@{-}[dddrrr]\ar@{.}[uurr]&&&&\ar@{.}[ddrr]\ar@{.}[uurr]&&&& \ar@{.}[ddrr]&&&&\ar@{.}[uurr]&&&&&&&\\ {\phantom 2}  \\ 
\mbox{\Large$3$} &&& \mbox{\huge $\star$} \ar@{.}[dr] \ar@{-}[uurr] &&&& \mbox{\huge $\circ$} \ar@{-}[dr]\ar@{-}[uurr] &&&& \mbox{\huge $\bullet$} \ar@{.}[dr] \ar@{.}[uurr]
&&&& \mbox{\huge $\circ$} \ar@{-}[uurr]\ar@{-}[dr]&&&& \ar@{.}[uurr] &&&& 
\ar@{.}[dr] &&&&\ar@{.}[dr]\ar@{.}[uurr]&&&& \mbox{\huge $\bullet$}\ar@{.}[uurr]\ar@{.}[dr]  &&&&&\\
\mbox{\Large$4$}  &&\ar@{.}[ur]&& \ar@{.}[dr]  &&  \ar@{.}[ur] &&\mbox{\huge $\star$} \ar@{.}[dr] 
&& \mbox{\huge $\star$} \ar@{-}[ur] && \mbox{\huge$\circ$} \ar@{-}[dr] & &
\mbox{\huge $\bullet$} \ar@{.}[ur] \ar@{.}[dl]& &\mbox{\huge$\star$}\ar@{.}[dr]&&\mbox{\huge$\ast$}\ar@{.}[ur]&&
\mbox{\huge$\bullet$}\ar@{.}[dr]&& \mbox{\huge$\circ$}\ar@{-}[uuuuurrrrr]  &&  
\ar@{.}[dr]&& \mbox{\huge$\bullet$}\ar@{.}[ur]  &&  \ar@{.}[dr]&& \mbox{\huge$\star$}\ar@{-}[ur]  &&  \ar@{.}[dr]&&&&&&&&&&&&&&&&&& \\
\mbox{\Large$5$}&\ar@{.}[ddrr]\ar@{.}[ur]&&&&\mbox{\huge $\ast$}\ar@{.}[ur] \ar@{-}[ddrr] \ar@{-}[ddll]&&&&   
\ar@{.}[ur] \ar@{.}[ddrr] \ar@{.}[ddll]
&&&&\mbox{\huge $\star$} \ar@{.}[ddrr] \ar@{-}[ur]&&&&  \ar@{.}[ddrr] &&&&\ar@{.}[ddrr] \ar@{.}[ur]
&&&&\ar@{.}[ddrr] &&&&\ar@{.}[ur]\ar@{.}[ddrr]&&&&&&&&&&\\  {\phantom 2} & \\
\mbox{\Large$6$} &&& \mbox{\huge $\circ$}\ar@{-}[ddrr]&&&& \mbox{\huge $\bullet$} \ar@{.}[ddrr]
&&&&   \ar@{.}[uurr] \ar@{.}[ddrr] &&&& \mbox{\huge $\circ$}\ar@{-}[uuurrr]\ar@{-}[ddddrrrr]  &&&& \ar@{.}[uurr] \ar@{.}[ddrr]
&&&& \ar@{.}[ddrr]&&&&\ar@{.}[uurr] \ar@{.}[ddrr]&&&&\ar@{.}[uurr]\ar@{.}[ddrr]&&&&&&&&&&&\\ {\phantom 2} \\
\mbox{\Large$7$}&\ar@{.}[uurr]&& && \mbox{\huge $\star$}\ar@{-}[uurr]&&&&   \ar@{.}[uurr] &&&& \ar@{.}[uurr]
&&&&\ar@{.}[uurr]&&&&&&&&\ar@{.}[uurr]&&&&\ar@{.}[uurr]&&&&&&&&&&&&&&&&&&&&&&&&&&&&&&&\\  {\phantom { 1}} \\ {\phantom { 1 }}
&&&&&&&&&&&&&&&&&&&\ar@{-}[uuuuuuurrrrrrr]&&&&&&&&&&&&&&&&&&&&&&&&&&&&&&&&&&&&}}} 
\end{gather*}

\begin{Lem}
 Let $(i,k),(i',k') \in \wh{Q}^\gT_0$, and assume that $(i',k')$ is in prime snake position with respect to $(i,k)$.
 Then we have 
 \[ S^\gT_{i,k} \Del S^\gT_{i',k'} \cong \bS^\gT(\bm{Q}_{i,k}^{i',k'}) \otimes \bS^\gT(\bm{R}_{i,k}^{i',k'}).
 \]
\end{Lem}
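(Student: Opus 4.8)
The plan is to transcribe the proof of Lemma~\ref{Lem:product_of_two} into the twisted setting, converting the statement into a weight identity in $\Bup$ and then solving that identity combinatorially. First I would reduce. Since $S^{\gT}_{i,k}$ and $S^{\gT}_{i',k'}$ are root modules, \eqref{eq:head_socle} gives $\scD(M\Del N)\cong\scD M\Del\scD N$ for such modules; as the formulas defining $\bm{Q}_{i,k}^{i',k'}$ and $\bm{R}_{i,k}^{i',k'}$ are set up precisely so that $\scD$ intertwines them and exchanges their roles (together with $\scD^{\pm1}\bS^{\gT}(\bm{Q})\cong\bS^{\gT}(\scD^{\pm1}\bm{Q})$ from Lemma~\ref{Lem:Dualizing_label}, and the fact that $\scD$ swaps $\wh{Q}^{\gT,<}_0\leftrightarrow\wh{Q}^{\gT,>}_0$ and $\wh{Q}^{\gT,\mrD}_0\leftrightarrow\wh{Q}^{\gT,\mrU}_0$), I may assume $(i,k)\in\wh{Q}^{\gT,<}_0\sqcup\wh{Q}^{\gT,\mrU}_0$. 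By Lemma~\ref{Lem:change_of_height_function} I may further assume that $(i,k)$ together with the whole interval $[(i,k),(i',k')]$ lies in $\gG^{\gT}_0$, and that $\cD=\{S^{\gT}_{\phi_{\gT}^{-1}(\alpha_m)}\}_{m\in I}$ with $\scD^{-1}(i,k)=\phi_{\gT}^{-1}(\alpha_{m_0})$; in particular $S^{\gT}_{i,k}\cong\scD\sL_{m_0}$, mirroring the choice $S_{j,l}=\scD\sL_j$ made in Lemma~\ref{Lem:product_of_two}.

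With these normalisations, by Lemma~\ref{Lem:twist_cases}\,(d) (taking $p=1$) we have $\tfd(S^{\gT}_{i,k},S^{\gT}_{i',k'})=1$, so Lemma~\ref{Lem:expression_of_socle} produces $\bm{a}=(a_{i'',k''})$ supported on $\Pi:=\{(i'',k''):(i,k)\prec(i'',k'')\prec(i',k')\}\subseteq\gG^{\gT}_0$ with $S^{\gT}_{i,k}\Del S^{\gT}_{i',k'}\cong\cL_{\cD}(B^{\gT}(\bm{a}))$. On the other hand $S^{\gT}_{i,k}\Del S^{\gT}_{i',k'}\cong S^{\gT}_{i',k'}\nab S^{\gT}_{i,k}\cong S^{\gT}_{i',k'}\nab\scD\sL_{m_0}$ by \eqref{eq:head_socle}, and since $\gee_{m_0}\big(F^{\gT}(\phi_{\gT}(i',k'))\big)=\tfd(\scD\sL_{m_0},S^{\gT}_{i',k'})=1\neq0$, Proposition~\ref{Prop:Kashiwara_Park} gives $\cL_{\cD}\big(\tilde{e}_{m_0}F^{\gT}(\phi_{\gT}(i',k'))\big)\cong S^{\gT}_{i,k}\Del S^{\gT}_{i',k'}$. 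Comparing weights then yields
\[
\sum_{(i'',k'')\in\Pi}a_{i'',k''}\,\phi_{\gT}(i'',k'')=\phi_{\gT}(i',k')-\alpha_{m_0}=\phi_{\gT}(i',k')-\phi_{\gT}\big(\scD^{-1}(i,k)\big),
\]
an identity in the root lattice of $\mathfrak{sl}_{n+1}$ whose right-hand side, by type $A$ combinatorics, is a positive root, or a sum of two positive roots obtained from $\phi_{\gT}(i',k')$ by deleting the node $m_0$.

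The decisive step is solving this identity, where I expect two regimes. If $\Pi$ avoids the central column $\{i''=n_0\}$ --- equivalently $k'-k$ is small and $\bm{R}_{i,k}^{i',k'}$ (or $\bm{Q}_{i,k}^{i',k'}$) has a single entry --- then $\Pi$ lies in a region on which $\gG^{\gT}_0$ and $\gG^{\gt}_0$ coincide and on which $\rho$ acts as the identity (by the definition of $\rho$ and \eqref{eq:coincidence_of_two_B}, Proposition~\ref{Prop:change_of_words}); thus $\bm{a}$ equals the corresponding untwisted vector and the claim follows directly from Lemma~\ref{Lem:product_of_two}. If $\Pi$ meets the central column, I would either use an explicit description of $\phi_{\gT}$ on $\gG^{\gT}_0$ (the twisted analogue of Lemma~\ref{Lem:Image_of_phi}), or trace the transformations in $\rho$ via Remark~\ref{Rem:3-terms} so as to reduce to the untwisted $\phi_{\gt}$; in either case one checks that the unique nonnegative solution of the identity is the $0/1$-vector supported exactly on the entries of $\bm{Q}_{i,k}^{i',k'}$ and $\bm{R}_{i,k}^{i',k'}$. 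Here the ``splitting'' move \eqref{eq:3-move} with $c_0=0$ (Remark~\ref{Rem:3-terms}\,(iii)) is what produces the two $n_0$-labelled entries of $\bm{R}_{i,k}^{i',k'}$ (resp.\ $\bm{Q}_{i,k}^{i',k'}$), which is the one genuinely new feature relative to the untwisted case.

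Finally, inspection of the defining formulas shows that every entry of $\bm{Q}_{i,k}^{i',k'}$ is incomparable with every entry of $\bm{R}_{i,k}^{i',k'}$, so by Lemma~\ref{Lem:fundamentals_of_T}\,(c),(d) and Proposition~\ref{Prop:weakly_decreasing}\,(i) the modules $\bS^{\gT}(\bm{Q}_{i,k}^{i',k'})$ and $\bS^{\gT}(\bm{R}_{i,k}^{i',k'})$ strongly commute and $\cL_{\cD}(B^{\gT}(\bm{a}))\cong\hd\big(\bigotimes_{(i'',k'')\in\bm{Q}\sqcup\bm{R}}S^{\gT}_{i'',k''}\big)\cong\bS^{\gT}(\bm{Q}_{i,k}^{i',k'})\otimes\bS^{\gT}(\bm{R}_{i,k}^{i',k'})$; undoing the reductions of the first paragraph gives the statement in general. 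The hard part will be the crossing case in the third paragraph: keeping track of the twisted repetition quiver around $n_0$ and verifying that $\rho$ produces precisely the two-term sequences prescribed by the definitions of $\bm{Q}_{i,k}^{i',k'}$ and $\bm{R}_{i,k}^{i',k'}$.
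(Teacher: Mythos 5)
Your strategy is viable in outline but genuinely different from the paper's, and the difference is worth spelling out. You propose to rerun the proof of Lemma \ref{Lem:product_of_two} inside the \emph{twisted} Lusztig parametrization: normalize so that $S^\gT_{i,k}\cong\scD\sL_{m_0}$, write the socle via Lemma \ref{Lem:expression_of_socle} as $\cL_{\cD}(B^\gT(\bm{a}))$ with $\bm{a}$ supported on the open interval $\Pi$, and determine $\bm{a}$ from the weight identity $\sum a_{i'',k''}\,\phi_\gT(i'',k'')=\phi_\gT(i',k')-\ga_{m_0}$. The paper never computes in the twisted parametrization at all. After the same duality reduction to $(i,k)\in\wh{Q}^{\gT,<}_0\sqcup\wh{Q}^{\gT,\mrU}_0$ and a normalization via Lemma \ref{Lem:change_of_height_function} (to $(i,k)=(n_0,n_0-3/2)$ in the $\mrU$ case, to $k=i$ in the $<$ case), it uses Proposition \ref{Prop:change_of_words} together with Lemma \ref{Lem:twisted=untwisted} to identify both factors with untwisted cuspidal modules (e.g.\ $S^\gT_{n_0,n_0-3/2}\cong S^\gt_{n_0,n_0-2}$), applies the untwisted Lemma \ref{Lem:product_of_two} verbatim, and translates $S^\gt_{Q}\otimes S^\gt_{R}$ back into $\bS^\gT(\bm{Q}_{i,k}^{i',k'})\otimes\bS^\gT(\bm{R}_{i,k}^{i',k'})$ through the same transition formula; the two-term $n_0$-column sequences in $\bm{Q}$ and $\bm{R}$ are precisely the $\dagger$-preimages of single untwisted vertices. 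So the paper's route buys a proof with no new combinatorics, at the price of having already established Proposition \ref{Prop:change_of_words}; your route would make the lemma independent of that transition formula, at the price of new combinatorial input.

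That new input is where your proposal has a real gap: the sentence asserting that ``the unique nonnegative solution of the identity is the $0/1$-vector supported exactly on the entries of $\bm{Q}_{i,k}^{i',k'}$ and $\bm{R}_{i,k}^{i',k'}$'' is, in the crossing case, the entire content of the lemma restated rather than proved. Closing it requires a twisted analogue of Lemma \ref{Lem:Image_of_phi} and of the containment of $\phi(\Pi)$ in three explicit families of roots that drives the uniqueness argument in Lemma \ref{Lem:product_of_two}; neither is available in the paper, and the behaviour of $\phi_\gT$ around the column $i=n_0$ (where roots ``split'' as in Remark \ref{Rem:3-terms}(iii)) is exactly the delicate part. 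Your fallback of ``tracing $\rho$'' is, once carried out, essentially the paper's proof. Two smaller points: after your normalization it is $\scD^{-1}(i,k)$, not $(i,k)$, that lies in $\gG^\gT_0$ (compare $(j,-1)\notin\gG^{(\gd)}_0$ in Lemma \ref{Lem:product_of_two}), so only the open interval $\Pi$ sits inside $\gG^\gT_0$; and the two entries of $\bm{R}_{i,k}^{i',k'}$ in column $n_0$ are comparable, so $\bS^\gT(\bm{R}_{i,k}^{i',k'})$ is a genuine head --- your incomparability argument only separates $\bm{Q}$ from $\bm{R}$, which suffices, but should be stated as such.
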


\begin{proof}
 Since
 \[ \scD(S^\gT_{i,k} \Del S^\gT_{i',k'}) \cong S^\gT_{\scD(i',k')} \nab S^\gT_{\scD(i,k)} \cong S^\gT_{\scD(i,k)} \Del S^\gT_{\scD(i',k')},
 \]
 we may assume by using Lemma \ref{Lem:Dualizing_label} that $(i,k) \in \wh{Q}^{\gT,<}_0 \sqcup \wh{Q}^{\gT,\mrU}_0$.
First let us consider the case $(i,k) \in \wh{Q}^{\gT,\mrU}_0$.
We may further assume by Lemma \ref{Lem:change_of_height_function} that $(i,k)=(n_0,n_0-3/2)$,
which forces $(i',k') \in \gG^{\gT,<}_0 \sqcup \gG^{\gT,\mrD}_0$.
If $(i',k') \in \gG^{\gT,<}_0$, it is proved directly from Proposition \ref{Prop:change_of_words} and Lemma \ref{Lem:product_of_two} that
\begin{align*}
  S^\gT_{i,k} \Del S^\gT_{i',k'} &\cong S^\gt_{n_0,n_0-2} \Del S^\gt_{i',k'} \cong S^\gt_{Q_{n_0,n_0-2}^{i',k'}} \otimes S^\gt_{R_{n_0,n_0-2}^{i',k'}}\\
                                         &\cong \bS^\gT(\bm{Q}_{i,k}^{i',k'}) \otimes \bS^\gT(\bm{R}_{i,k}^{i',k'}),
\end{align*}
and the assertion is proved. The case $(i',k') \in \gG^{\gT,\mrD}_0$ is proved by a similar calculation.

Next let us consider the case $(i,k) \in \wh{Q}^{\gT,<}_0$.
We may assume that $k=i$,
and then by the definition of the prime snake position we have $(i',k') \in \gG^{\gT,<}_0 \sqcup \gG^{\gT,\mrD}_0$.
Now the assertion is proved by a similar argument as above.
\end{proof}

Similarly to Lemma \ref{Lem:being_snake}, the following is proved by inspection.

\begin{Lem}[{\cite[Proposition 3.2]{MR2960028}}]
  Let $p \in \Z_{\geq 2}$, and $\bm{P}=\big((i_1,k_1),\ldots,(i_p,k_p)\big)$ be a prime snake in $\wh{Q}^\gT_0$.
  Set 
  \[ \bm{Q} = \bm{Q}_{i_1,k_1}^{i_2,k_2} * \cdots *\bm{Q}_{i_{p-1},k_{p-1}}^{i_p,k_p} \ \ \ \text{and} \ \ \ \bm{R} = \bm{R}_{i_1,k_1}^{i_2,k_2} * \cdots *\bm{R}_{i_{p-1},k_{p-1}}^{i_p,k_p},
  \]
  where $*$ denotes the concatenation.
  Then $\bm{Q}$ and $\bm{R}$ are snakes with no elements in common.
\end{Lem}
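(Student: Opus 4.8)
The plan is to prove the statement by direct inspection of the closed-form expressions defining $\bm{Q}_{i,k}^{i',k'}$ and $\bm{R}_{i,k}^{i',k'}$, exactly as in the untwisted Lemma~\ref{Lem:being_snake}, with the case analysis organised so that the genuinely new work is confined to the configurations near the row $n_0$.

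First I would reduce the bookkeeping. A concatenation of blocks is a snake precisely when, within each block $\bm{Q}_{i_r,k_r}^{i_{r+1},k_{r+1}}$, its (at most two) entries are in snake position and, for each $r$, the last entry of $\bm{Q}_{i_{r-1},k_{r-1}}^{i_r,k_r}$ and the first entry of $\bm{Q}_{i_r,k_r}^{i_{r+1},k_{r+1}}$ are in snake position; likewise for $\bm{R}$, while disjointness is a pointwise condition. On $\wh{Q}^{\gT,>}_0\sqcup\wh{Q}^{\gT,\mrD}_0$ the pair $\bigl(\bm{Q}_{i,k}^{i',k'},\bm{R}_{i,k}^{i',k'}\bigr)$ is \emph{defined} to be $\bigl(\scD^{-1}\bm{R}_{\scD(i,k)}^{\scD(i',k')},\scD^{-1}\bm{Q}_{\scD(i,k)}^{\scD(i',k')}\bigr)$, and $\scD$ preserves the (prime) snake-position relation (Lemma~\ref{Lem:Dualizing_label}); hence in every pairwise check, up to the symmetry $\scD$ (which interchanges the roles of $\bm{Q}$ and $\bm{R}$), I may assume the ``earlier'' vertex lies in $\wh{Q}^{\gT,<}_0\sqcup\wh{Q}^{\gT,\mrU}_0$. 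Using translation invariance of $\wh{Q}^\gT_0$ in the $k$-coordinate (or Lemma~\ref{Lem:change_of_height_function}(i)) I may further normalise that vertex, e.g. to $(n_0,n_0-3/2)$ when it lies in $\wh{Q}^{\gT,\mrU}_0$, as was done in Lemma~\ref{Lem:twist_cases}.

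Next I would use the structure of a prime snake of twisted type, recorded in Definition~\ref{Def:snake position}(2): from a vertex in $\wh{Q}^{\gT,<}_0\sqcup\wh{Q}^{\gT,\mrU}_0$ the next vertex of the snake lies in $\wh{Q}^{\gT,<}_0\sqcup\wh{Q}^{\gT,\mrD}_0$, and symmetrically, so $\bm{P}$ decomposes into the runs $\bm{P}^{(a)}$ of Proposition~\ref{Prop:change_of_words}. Away from the row $n_0$ — when the vertices involved in a check lie in $\wh{Q}^{\gT,<}_0$ and the $\bm{Q}$- and $\bm{R}$-blocks consist of single vertices — the relevant formulas coincide termwise with the untwisted formulas $Q,R$ of Section~\ref{Section:Untwisted} (where $\Theta_i=i$), and the corresponding verifications are identical to those in the proof of Lemma~\ref{Lem:being_snake}. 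The new content is thus concentrated in the configurations in which some block $\bm{Q}_{i_r,k_r}^{i_{r+1},k_{r+1}}$ or $\bm{R}_{i_r,k_r}^{i_{r+1},k_{r+1}}$ acquires two entries (necessarily of the form $(n_0,\cdot)$), together with the blocks adjacent to such a block. In each such case one substitutes the explicit coordinates — for instance $\bigl(\tfrac12(\Theta_i+\Theta_{i'}+k-k'),\tfrac12(\Theta_i-\Theta_{i'}+k+k')\bigr)$ for the single-vertex form of $\bm{Q}_{i,k}^{i',k'}$, and the pair $(n_0,-i+k+n_0-\tfrac12)$, $(n_0,i'+k'-n_0+\tfrac12)$ for the two-vertex form of $\bm{R}_{i,k}^{i',k'}$ — and checks the inequalities defining ``in snake position'' (the $\preceq$-inequality together with membership in the appropriate $\wh{Q}^{\gT,<}_0\sqcup\wh{Q}^{\gT,\mrD}_0$ or $\wh{Q}^{\gT,>}_0\sqcup\wh{Q}^{\gT,\mrU}_0$), using only the prime-snake inequalities $(i_r,k_r+2-\gd_{i_r,n_0})\preceq(i_{r+1},k_{r+1})\preceq\scD^{-1}(i_r,k_r)$ available for $\bm{P}$. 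Disjointness is checked in the same manner: a single-vertex $\bm{Q}$-entry has strictly smaller first coordinate than any comparable $\bm{R}$-entry (or satisfies a dual inequality), and entries of the form $(n_0,\cdot)$ coming from $\bm{Q}$ and from $\bm{R}$ are separated by their $k$-coordinates.

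The anticipated obstacle is organisational rather than conceptual. The $n_0$-crossing splits into several subcases according to whether the two vertices of a pair lie on the same side of $n_0$ or on opposite sides, whether either of them equals $n_0$, and which of $\mrU,\mrD$ is involved; in a few of these the ``corner'' of the snake genuinely doubles into two vertices $(n_0,\cdot)$, so the snake-position verification there is not a formal consequence of the untwisted case and must be carried out directly on the coordinate formulas. Keeping the $\scD$-reduction (with its $\bm{Q}\leftrightarrow\bm{R}$ swap) and the run decomposition consistent across these subcases is the only point that requires care; everything else is the same linear bookkeeping as in Lemma~\ref{Lem:being_snake}.
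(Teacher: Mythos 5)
Your proposal matches the paper's treatment: the paper proves this lemma only by the remark that it "is proved by inspection, similarly to Lemma \ref{Lem:being_snake}" (itself cited to Mukhin--Young), i.e.\ by exactly the direct case-check on the coordinate formulas that you outline, with the $\scD$-symmetry swapping $\bm{Q}$ and $\bm{R}$ and the genuinely new cases concentrated at the row $n_0$. Your roadmap is sound and in fact more explicit than the paper's one-line proof, the only caveat being that (like the paper) it does not execute the full list of subcases.
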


Now we give the main theorem of this section, which is a generalization of \cite[Theorem 3.4]{zbMATH06988693} and \cite[Proposition 3.1, Theorem 4.1]{MR2960028} in type $B$.

\begin{Thm}\label{Thm:Main_twisted}
  Let $\bm{P} = \big((i_1,k_1),\ldots,(i_p,k_p)\big)$ be a snake in $\wh{Q}^\gT_0$.
  \begin{itemize}\setlength{\leftskip}{-15pt}
   \item[{\normalfont(i)}] The simple module $\bS^\gT(\bm{P})$ is real.
   \item[{\normalfont(ii)}] If $\bm{P}$ is prime, then $\bS^\gT(\bm{P})$ is prime.
   \item[{\normalfont(iii)}] Assume that $\bm{P}$ is prime with $p \geq 2$, and set  
  \[ \bm{Q} = \bm{Q}_{i_1,k_1}^{i_2,k_2} * \cdots *\bm{Q}_{i_{p-1},k_{p-1}}^{i_p,k_p} \ \ \ \text{and} \ \ \ \bm{R} = \bm{R}_{i_1,k_1}^{i_2,k_2} * \cdots *\bm{R}_{i_{p-1},k_{p-1}}^{i_p,k_p}.
  \]
  Then $\bS^\gT(\bm{Q})$ and $\bS^\gT(\bm{R})$ strongly commute, and there is a short exact sequence
  \begin{equation}\label{eq:B_ses}
   0 \to \bS^\gT (\bm{Q}) \otimes \bS^\gT (\bm{R}) \to \bS^\gT(\bm{P}_{[1,p-1]}) \otimes \bS^\gT(\bm{P}_{[2,p]}) \to \bS^\gT(\bm{P}) \otimes 
     \bS^\gT(\bm{P}_{[2,p-1]}) \to 0.
  \end{equation}
  \end{itemize}
\end{Thm}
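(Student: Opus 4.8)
The plan is to run the argument in complete parallel with the proof of Theorem~\ref{Thm:MainA}, substituting for the auxiliary results of Section~\ref{Section:Untwisted} the twisted counterparts proved above: Lemma~\ref{Lem:twist_cases} plays the role of Lemma~\ref{Lem:calculation_of_std}, the product formula $S^\gT_{i,k}\Del S^\gT_{i',k'}\cong\bS^\gT(\bm Q_{i,k}^{i',k'})\otimes\bS^\gT(\bm R_{i,k}^{i',k'})$ the role of Lemma~\ref{Lem:product_of_two}, and the last lemma above (the twisted analogue of Lemma~\ref{Lem:being_snake}) the role of Lemma~\ref{Lem:being_snake}. As in Section~\ref{Section:Reality,etc}, via Lemmas~\ref{Lem:change_of_cuspidal} and~\ref{Lem:change_of_height_function} the modules $S^\gT_{i_1,k_1},\dots,S^\gT_{i_p,k_p}$ are realized as affine cuspidal modules $S_{r_1},\dots,S_{r_p}$ with $r_1<\cdots<r_p$, so that all of that section applies with $\bS_{\bm k}[a,b]=\bS^\gT(\bm P_{[a,b]})$.

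Assertion~(i) follows from Proposition~\ref{Prop:reality} and Lemma~\ref{Lem:for_prime}(i), the required bounds $\tfd(S^\gT_{i_c,k_c},\bS^\gT(\bm P_{[a,c-1]}))\le1$ and $\tfd(S^\gT_{i_c,k_c},\bS^\gT(\bm P_{[c+1,b]}))\le1$ being supplied by Lemma~\ref{Lem:twist_cases}, whose cases~(a)--(f) output only the values $0$ and $1$ and exhaust the possible relative positions of snake terms. Assertion~(ii) follows from Proposition~\ref{Prop:primeness}: when $\bm P$ is prime, $(i_{a+1},k_{a+1})$ is in prime snake position with respect to $(i_a,k_a)$, so Lemma~\ref{Lem:twist_cases}(a) gives $\tfd(S^\gT_{i_a,k_a},\bS^\gT(\bm P_{[a+1,p]}))=1>0$ for every $a\in[1,p-1]$, which is condition~(i) of Proposition~\ref{Prop:primeness}.

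For~(iii) one first checks hypotheses~(a),~(b) of Theorem~\ref{Thm:Main_thm_general}: for $1\le a<b\le p$ the subsnake $\bm P_{[a+1,b]}$ starts at $(i_{a+1},k_{a+1})$, which is in prime snake position with respect to $(i_a,k_a)$, so Lemma~\ref{Lem:twist_cases}(a) yields $\tfd(S^\gT_{i_a,k_a},\bS^\gT(\bm P_{[a+1,b]}))=1$, and Lemma~\ref{Lem:twist_cases}(d) yields symmetrically $\tfd(\bS^\gT(\bm P_{[a,b-1]}),S^\gT_{i_b,k_b})=1$. Theorem~\ref{Thm:Main_thm_general} then gives the exact sequence~(\ref{eq:B_ses}) with first term $\hd\big(\bigotimes_{a=1}^{p-1}(S^\gT_{i_a,k_a}\Del S^\gT_{i_{a+1},k_{a+1}})\big)$, and one must identify this head with $\bS^\gT(\bm Q)\otimes\bS^\gT(\bm R)$ and check that the two factors strongly commute. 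For $p=2$ the identification is the product formula above. For $p>2$ one argues by induction as in the proof of Theorem~\ref{Thm:MainA}(iii): combining the induction hypothesis for $\bm P_{[1,p-1]}$ with the $p=2$ case for the last factor, the identification reduces to the two vanishings $\tfd(\bS^\gT(\bm Q'),\bS^\gT(\bm R_{i_{p-1},k_{p-1}}^{i_p,k_p}))=0$ and $\tfd(\bS^\gT(\bm R'),\bS^\gT(\bm Q_{i_{p-1},k_{p-1}}^{i_p,k_p}))=0$, where $\bm Q',\bm R'$ are the sequences built from $\bm P_{[1,p-1]}$; by Lemma~\ref{Lem:for_prime}(i) these in turn follow from the positional statement that the $\scD^{-1}$-translate of any vertex of $\bm Q'$ (resp.\ $\bm R'$) is not $\succeq$ any vertex of $\bm R_{i_{p-1},k_{p-1}}^{i_p,k_p}$ (resp.\ $\bm Q_{i_{p-1},k_{p-1}}^{i_p,k_p}$), checked from the explicit coordinates together with the twisted analogue of Lemma~\ref{Lem:being_snake}. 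The strong commutation of $\bS^\gT(\bm Q)$ and $\bS^\gT(\bm R)$ follows from that same lemma (they share no vertex), Lemma~\ref{Lem:fundamentals_of_T}(d) and Proposition~\ref{Prop:weakly_decreasing}(i); the exactness of~(\ref{eq:B_ses}) and the identification of its other two terms follow from Proposition~\ref{Prop:length2} with Lemmas~\ref{Lem:for_Main1} and~\ref{Lem:for_Main2} (the sub-snakes $\bm P_{[1,p-1]}$, $\bm P_{[2,p]}$ being prime, hence real by~(i)), the two composition factors being distinguished by $\tfd(\scD S^\gT_{i_1,k_1},-)$ exactly as in the proof of Theorem~\ref{Thm:Main_thm_general}.

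The main obstacle will be the positional bookkeeping in this induction for~(iii). Unlike the untwisted case, $\bm R_{i,k}^{i',k'}$ (and dually $\bm Q_{i,k}^{i',k'}$) can be a \emph{two}-element sequence supported at the central vertex $n_0$, and testing comparability under $\preceq$ requires tracking which of the four regions $\wh Q^{\gT,<}_0$, $\wh Q^{\gT,>}_0$, $\wh Q^{\gT,\mrU}_0$, $\wh Q^{\gT,\mrD}_0$ each vertex lies in, so the case analysis behind the two vanishing $\tfd$'s is markedly more involved than its counterpart in Theorem~\ref{Thm:MainA}, even though it remains elementary once all coordinates are written out. (For a snake already contained in $\gG^\gT_0$ one could instead transport the whole statement from Theorem~\ref{Thm:MainA} through Lemma~\ref{Lem:twisted=untwisted} and Proposition~\ref{Prop:change_of_words}, but the argument via Lemma~\ref{Lem:twist_cases} is uniform over all of $\wh Q^\gT_0$ and is the route I would take.)
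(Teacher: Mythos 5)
Your overall architecture matches the paper's: (i) and (ii) via Propositions \ref{Prop:reality} and \ref{Prop:primeness} fed by Lemma \ref{Lem:twist_cases}, and (iii) via Theorem \ref{Thm:Main_thm_general} plus an induction that reduces everything to the two vanishings $\tfd\big(\bS^\gT(\bm{Q}'),\bS^\gT(\bm{R}_{i_{p-1},k_{p-1}}^{i_p,k_p})\big)=\tfd\big(\bS^\gT(\bm{R}'),\bS^\gT(\bm{Q}_{i_{p-1},k_{p-1}}^{i_p,k_p})\big)=0$. Up to that point the proposal is correct.

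The gap is in how you justify those two vanishings. You claim they follow from Lemma \ref{Lem:for_prime}(i) via the positional statement that $\scD^{-1}Q\not\succeq R$ for every vertex $Q$ of $\bm{Q}'$ and every $R$ in $\bm{R}_{i_{p-1},k_{p-1}}^{i_p,k_p}$. That statement is false in general: it already fails for the last vertex $Q=\bm{Q}_{i_{p-2},k_{p-2}}^{i_{p-1},k_{p-1}}$ whenever that vertex is nonempty. Concretely, take $n=9$, $n_0=5$ and the prime snake $\big((3,3),(3,5),(3,7)\big)\subseteq\wh{Q}^{\gT,<}_0$; then $Q=\bm{Q}_{3,3}^{3,5}=\big((2,4)\big)$, $R=\bm{R}_{3,5}^{3,7}=\big((4,6)\big)$, and $\scD^{-1}(2,4)=(8,13)\succeq(4,6)$, so Lemma \ref{Lem:for_prime}(i) gives nothing for this pair (the modules $\scD^{\ell}S^\gT_{2,4}$ and $S^\gT_{4,6}$ need not commute for all $\ell\geq 0$). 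The vanishing is nevertheless true, but for a different reason: $R$ sits on the ``forbidden diagonal'' through $Q$, i.e.\ it is not in snake position with respect to $Q$, and one must invoke Lemma \ref{Lem:twist_cases}(e)--(f) (whose proof goes back through Proposition \ref{Prop:change_of_words} to Reineke's algorithm), not mere unmixedness. This is exactly the case split the paper makes, both here and already in Theorem \ref{Thm:MainA}: if $\bm{Q}_{i_{p-2},k_{p-2}}^{i_{p-1},k_{p-1}}\neq\emptyset$ the conclusion comes from Lemma \ref{Lem:twist_cases}(e),(f); only when it is empty does the positional argument via Lemma \ref{Lem:for_prime} apply, and in the twisted setting the empty case itself splits further according to whether $(i_{p-2},k_{p-2})$ lies in $\wh{Q}^{\gT,<}_0\sqcup\wh{Q}^{\gT,\mrU}_0$ or in $\wh{Q}^{\gT,\mrD}_0$, the latter requiring a look one step further back at $\bm{Q}_{i_{p-3},k_{p-3}}^{i_{p-2},k_{p-2}}$. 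So the ``markedly more involved bookkeeping'' you anticipate is real, but it cannot all be routed through Lemma \ref{Lem:for_prime}(i); without the appeal to Lemma \ref{Lem:twist_cases}(e)--(f) for the adjacent pair, the induction does not close.
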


\begin{proof}
  The proof is similar to that of Theorem \ref{Thm:MainA}.

  Using Lemma \ref{Lem:twist_cases}, the assertions (i) and (ii) are proved from Propositions \ref{Prop:reality} and \ref{Prop:primeness}, respectively.
  For (iii), by the same argument as in the proof of Theorem \ref{Thm:MainA}, it suffices to show for $p \geq 3$ that 
  \[ \tfd\big(\bS^\gT(\bm{Q}'),\bS^\gT(\bm{R}_{i_{p-1},k_{p-1}}^{i_p,k_p})\big) = 0=\tfd\big(\bS^\gT(\bm{R}'),\bS^\gT(\bm{Q}_{i_{p-1},k_{p-1}}^{i_p,k_p})\big),
  \]
  where we set 
  \[ \bm{Q}' = \bm{Q}_{i_1,k_1}^{i_2,k_2} * \cdots * \bm{Q}_{i_{p-2},k_{p-2}}^{i_{p-1},k_{p-1}} \ \ \ \text{and} \ \ \ \bm{R}' = \bm{R}_{i_1,k_1}^{i_2,k_2}*\cdots * 
     \bm{R}_{i_{p-2},k_{p-2}}^{i_{p-1},k_{p-1}}.
  \]
  Let us show the former equality (the latter is proved similarly).
  Set $\tilde{\bm{R}} = \bm{R}_{i_{p-1},k_{p-1}}^{i_p,k_p}$, which we may assume not to be the empty set. 
  If $\bm{Q}_{i_{p-2},k_{p-2}}^{i_{p-1},k_{p-1}} \neq \emptyset$, then the equality is proved from Lemma \ref{Lem:twist_cases} (e) and (f).
  Assume that $\bm{Q}_{i_{p-2},k_{p-2}}^{i_{p-1},k_{p-1}} = \emptyset$.
  If $(i_{p-2},k_{p-2}) \in \wh{Q}^{\gT,<}_0 \sqcup \wh{Q}^{\gT,\mrU}_0$, then the same argument as in the proof of Theorem \ref{Thm:MainA} shows that
  \begin{equation}\label{eq:Rnotpreceq}
   R \not\preceq \scD^{-1} Q \ \ \ \text{for all $Q \in \bm{Q}'$ and $R \in \tilde{\bm{R}}$},
  \end{equation}
  and hence the equality holds.
  If $(i_{p-2},k_{p-2}) \in \wh{Q}^{\gT,>}_0 \sqcup \wh{Q}^{\gT,\mrD}_0$, on the other hand, $\bm{Q}_{i_{p-2},k_{p-2}}^{i_{p-1},k_{p-1}} = \emptyset$ implies 
  \[ (i_{p-2},k_{p-2}) \in \wh{Q}^{\gT,\mrD}_0, \ \ \ \ (i_{p-1},k_{p-1}) \in \wh{Q}^{\gT,\mrU}_0 \ \ \ \text{and} \ \ \ \tilde{\bm{R}} \subseteq \wh{Q}^{\gT,\mrU}_0.
  \]
  If $p=3$ or $\bm{Q}_{i_{p-3},k_{p-3}}^{i_{p-2},k_{p-2}} =\emptyset$, then (\ref{eq:Rnotpreceq}) holds as well, and the equality follows.
  If $\bm{Q}_{i_{p-3},k_{p-3}}^{i_{p-2},k_{p-2}} \neq \emptyset$, on the other hand, we have $\bm{Q}_{i_{p-3},k_{p-3}}^{i_{p-2},k_{p-2}} \subseteq \wh{Q}_0^{\gT,<}$
  and therefore the equality follows from Lemma \ref{Lem:twist_cases} (f).
  The proof is complete.
\end{proof}

\begin{Exa}\normalfont
 Assume that $\fg$ is of type $A_3^{(1)}$, and let $\cD = \{\sL_i\}_{i \in [1,3]} \subseteq \scC_\fg$ be the strong duality datum of type $\mathfrak{sl}_4$ defined by 
 \[ \sL_1=L(Y_{1,7}),\ \ \ \sL_2=L(Y_{2,4}),\ \ \text{and} \ \ \sL_3=L(Y_{3,7}).
 \]
 Then we have 
 \begin{align*}
   S_{1,1}^{\Theta} &= L(Y_{1,7}), \ \ S_{2,3/2}^{\Theta} = L(Y_{3,5}), \ \ S_{3,2}^{\Theta}=L(Y_{3,7}Y_{3,5}), \ \ S_{2,5/2}^{\Theta} = L(Y_{3,7}),\\ 
    S_{1,3}^{\Theta} &= L(Y_{1,5}), \ \ S_{2,7/2}^{\Theta} = L(Y_{2,4}),
 \end{align*}
 and the exact sequence (\ref{eq:B_ses}) for $\bm{P} = \big((3,2),(2,9/2),(2,11/2)\big)$ is as follows:
 \begin{align*}
  0 &\to \hd\big(L(Y_{3,7})\otimes L(Y_{1,3}Y_{1,1})\big) \to \hd\big(L(Y_{3,7}Y_{3,5})\otimes L(Y_{1,1})\big) \otimes \hd (L(Y_{1,1}) \otimes L(Y_{1,3})\big)\\
    &\to \hd\big(L(Y_{3,7}Y_{3,5})\otimes L(Y_{1,1}) \otimes L(Y_{1,3})\big) \otimes L(Y_{1,1}) \to 0,
 \end{align*}
 or more explicitly,
 \[ 0 \to L(Y_{3,7}Y_{1,3}Y_{1,1}) \to L(Y_{3,7}Y_{3,5}Y_{1,1}) \otimes L(Y_{2,2}) \to L(Y_{3,7}Y_{3,5}Y_{2,2}Y_{1,1}) \to 0.
 \]
\end{Exa}

\ \\
\noindent\textbf{Acknowledgments.}\ \ \
The author would like to thank Yoshiyuki Kimura, Se-jin Oh, and Hironori Oya for helpful comments.
He is also grateful to the anonymous referees for careful reading the manuscript and valuable comments.
He was supported by JSPS Grant-in-Aid for Scientific Research (C) No.~20K03554.

\def\cprime{$'$} \def\cprime{$'$} \def\cprime{$'$} \def\cprime{$'$}

%\bibliographystyle{alpha}
%\bibliography{bibliography}

\end{document}